\DeclareMathOperator\C{\mathbb C}
\DeclareMathOperator\Z{\mathbb Z}
\newtheorem{theorem}{Theorem}[section]
\newtheorem{lemma}[theorem]{Lemma}
\newtheorem{prop}[theorem]{Proposition}
\theoremstyle{definition}
\newtheorem{definition}[theorem]{Definition}
\newtheorem{example}[theorem]{Example}
\theoremstyle{remark}
\newtheorem{remark}[theorem]{Remark}
\theoremstyle{remarks}
\newtheorem{remarks}[theorem]{Remarks}
\numberwithin{equation}{section}
\newcommand{\dontprint}[1]\relax
\renewcommand{\Re}{\operatorname{Re}}
\newcommand{\coker}{\operatorname{coker}}
\newcommand{\De}{\Delta}
\newcommand{\La}{\Lambda}
\newcommand{\Ga}{\Gamma}
\newcommand{\Aut}{\operatorname{Aut}}
\newcommand{\End}{\operatorname{End}}
\newcommand{\und}{\underline}
\newcommand{\Pic}{\operatorname{Pic}}
\newcommand{\hra}{\hookrightarrow}
\newcommand{\we}{\wedge}
\renewcommand{\P}{{\mathbb P}}
\newcommand{\A}{{\mathbb A}}
\newcommand{\wt}{\widetilde}
\newcommand{\ot}{\otimes}
\newcommand{\Hom}{\operatorname{Hom}}
\newcommand{\Ext}{\operatorname{Ext}}
\newcommand{\Om}{\Omega}
\newcommand{\XX}{{\mathcal X}}
\newcommand{\YY}{{\mathcal Y}}
\newcommand{\ZZ}{{\mathcal Z}}
\newcommand{\VV}{{\mathcal V}}
\newcommand{\DD}{{\mathcal D}}
\newcommand{\EE}{{\mathcal E}}
\renewcommand{\SS}{{\mathcal S}}
\newcommand{\FF}{{\mathcal F}}
\newcommand{\Th}{\Theta}
\newcommand{\Bun}{\operatorname{Bun}}
\newcommand{\LL}{{\mathcal L}}
\newcommand{\MM}{{\mathcal M}}
\newcommand{\OO}{{\mathcal O}}
\newcommand{\PP}{{\mathcal P}}
\newcommand{\UU}{{\mathcal U}}
\newcommand{\WW}{{\mathcal W}}
\newcommand{\si}{\sigma}
\newcommand{\de}{\delta}
\newcommand{\sub}{\subset}
\newcommand{\ov}{\overline}
\newcommand{\im}{\operatorname{im}}
\newcommand{\om}{\omega}
\newcommand{\la}{\lambda}
\renewcommand{\a}{\alpha}
\renewcommand{\b}{\beta}
\newcommand{\GL}{\operatorname{GL}}
\newcommand{\G}{{\mathbb G}}
\renewcommand{\th}{\theta}
\newcommand{\ga}{\gamma}
\newcommand{\lan}{\langle}
\newcommand{\ran}{\rangle}
\newcommand{\rk}{{\operatorname{rk}}}
\newcommand{\codim}{{\operatorname{codim}}}
\newcommand{\SL}{{\operatorname{SL}}}
\newcommand{\fm}{{\frak m}}
\newcommand{\fg}{{\frak g}}
\newcommand{\Q}{{\mathbb Q}}
\newcommand{\vol}{{\operatorname{vol}}}
\title[Schwartz $\kappa$-densities near stable bundles]{Schwartz $\kappa$-densities on the moduli stack of rank $2$ bundles near stable bundles}
\author{David Kazhdan}
\author{Alexander Polishchuk}
\thanks{D.K. is partially supported by the ERC grant 101142781.
A.P. is partially supported by the NSF grant DMS-2349388, by the Simons Travel grant MPS-TSM-00002745,
and within the framework of the HSE University Basic Research Program.}
\begin{document}

\begin{abstract}
Let $C$ be a curve over a non-archimedean local field of characteristic zero.
We formulate algebro-geometric statements that imply boundedness of functions on the moduli space of stable bundles
of rank $2$ and fixed odd degree determinant over $C$, coming from the Schwartz space of $\kappa$-densities on the corresponding stack of bundles
(earlier we proved that these functions are locally constant on the locus of very stable bundles).
We prove the relevant algebro-geometric statements for curves of genus $2$ and for non-hyperelliptic curves of genus $3$.
\end{abstract}

\maketitle

\section{Introduction}

We work over a non-archimedean local field $k$ of characteristic zero.
Let $C$ be an irreducible smooth proper curve over $k$. 

The analytic Langlands program is concerned with the study of Hecke operators on certain spaces of functions 
associated with the stack of $G$-bundles on $C$ (see \cite{EFK1, EFK2, EFK3},
\cite{BK} and \cite{BKP}). It is conjectured in this program that the Hecke operators act on appropriate $L^2$-spaces
and that their eigenfunctions belong to the Schwartz space of half-densities on the stack of $G$-bundles defined in \cite{GK}.

Let $\Bun_{\La_0}$ denote the moduli stack of rank $2$ bundles over $C$ with determinant $\La_0$, and let $\MM_{\La_0}$ be the coarse moduli space
of stable bundles of rank $2$ with determinant $\La_0$.
In \cite{BKP-Schwartz} we considered the Schwartz spaces $\SS(\Bun_{\La_0},|\om|^{\kappa})$ (defined as in \cite{GK}), where $\kappa\in \C$, and defined natural maps
for $\Re(\kappa)\ge 1/2$,
$$\pi_{\kappa}:\SS(\Bun_{\La_0},|\om|^{\kappa})\to C^\infty(\MM^{vs}_{\La_0},|\om|^{\kappa}),$$
given by integration over orbits, where $\MM^{vs}_{\La_0}$ is the open subset of very stable bundles, $C^\infty(\cdot)$ denotes the space of locally constant sections.

In the present work,
assuming $\deg(\La_0)$ is odd,
we study the behavior of locally constant functions on $\MM^{vs}_{\La_0}$ obtained as above near stable (but not very stable) bundles.
We pose the following conjecture about this behavior.

\medskip

\noindent
{\bf Conjecture A}. {\it Assume $\deg(\La_0)$ is odd.
For $\Re(\kappa)> 1/2$ or $\kappa=1/2$, and any $\mu\in \SS(\Bun_{\La_0},|\om|^{\kappa})$, the locally constant $\kappa$-density $\pi_{\kappa}(\mu)$ on $\MM^{vs}_{\La_0}$ extends to
a continuous section of $|\om|^{\kappa}$ on $\MM_{\La_0}$. 
}

\medskip

We will check this for genus $2$ and $\kappa=1/2$ (as a consequence of Theorems \ref{main-conj-thm} and \ref{ConjB-g2-thm}).
We also consider a weaker conjecture, in which continuity is replaced by a weaker notion of a bounded ($\kappa$-twisted) distribution (see Def.\ \ref{bounded-def}).

\medskip

\noindent
{\bf Conjecture A'}. {\it  Assume $\deg(\La_0)$ is odd.
For $\Re(\kappa)\ge 1/2$ and any $\mu\in \SS(\Bun_{\La_0},|\om|^{\kappa})$, $\pi_{\kappa}(\mu)$ extends to
a bounded distribution in $\DD(\MM_{\La_0},|\om|^{\kappa})$. In particular, for $\kappa=1/2$, $\pi_{1/2}(\mu)$ is an $L^2$-half-density on $\MM_{\La_0}$.
}

\medskip

We will check Conjecture A' for non-hyperelliptic curves of genus $3$ (as a consequence of Theorems \ref{main-conj-thm} and \ref{ConjB-g3-thm}).

In general, we formulate a set of algebro-geometric conjectures related to Brill-Noether loci associated with stable bundles of rank $2$ and degree $2g-1$, 
most notably, Conjecture B in  Sec.\ \ref{BT-sec} stating
that a certain scheme $F_E$ associated with such a bundle $E$
has rational singularities. More precisely, $F_E$ is the total space of the family of projective spaces $\P H^0(C,\xi\ot E)$, as $\xi$ varies through line bundles of degree $0$.
Our main result, Theorem \ref{main-conj-thm}, is that these algebro-geometric statements imply Conjecture A'.
We make some partial checks for these algebro-geometric statements (including verification for curves of genus $2$ and non-hyperelliptic curves of genus $3$).
Debarre in \cite{D} gave another proof of Conjecture B for genus $2$ as well as a proof of the fact that each scheme $F_E$ is irreducible of dimension $g$.

Our main tool to deduce results about distributions over $p$-adic fields from algebro-geometric results is
the Aizenbud-Avni's theorem on continuity of push-forwards of smooth distributions under flat morphisms whose fibers have rational singularities
(see \cite{AA}). We deduce from this theorem a slightly more general version involving bounded distributions (see Prop.\ \ref{push-forward-prop}).
On the other hand, we use a geometric picture developed by Bertram \cite{Bertram} and Thaddeus \cite{Thaddeus} that tells how
to go from the projective space of extensions to the moduli space $\MM_{\La_0}$ through a series of controled birational transformations.

The difficulty of checking our conjecture on rationality of singularities of $F_E$ is the absence of any obvious resolution of singularities.
The way we prove this for genus $2$ and $3$ is by listing possible types of singularities explicitly, showing that each of them can be specialized 
to a normal toric singularity known to be rational, and using Elkik's theorem that deformations of rational singularities are rational (see \cite{Elkik}). 

\section{Push-forwards of Schwartz $\kappa$-densities}

\subsection{Background on Schwartz spaces and distributions}

For a smooth variety $X$ over $k$, a line bundle $L$ on $X$, and a complex number $\kappa$,
we denote by $|L|^{\kappa}$ the complex line bundle over $X(k)$, obtained from $L$ and from the character
$|\cdot |^{\kappa}:k^*\to \C^*$. The transition functions of $|L|^{\kappa}$ are locally constant, so one can
define the space $C^\infty(X(k),|L|^|\kappa|)$ (resp., $\SS(X(k),|L|^{\kappa})$) of locally constant sections of $|L|^{\kappa}$ (resp., with compact support),
More generally, we can consider similar spaces fo $|L_1|^{\kappa_1}\ot |L_2|^{\kappa_2}$, etc.
For a nonvanishing section $s:\OO_X\to L$ we denote by $|s|^{\kappa}$ the corresponding trivialization of $|L|^{\kappa}$. 

We define the space of distributions $\DD(X(k),|L|^{\kappa})$ as the dual to $\SS(X(k),|\om_X|\ot |L|^{-\kappa})$. 
For an open subset $U\sub X$, there is a natural restriction map $\DD(X,|L|^{\kappa})\to \DD(U,|L|_U|^{\kappa})$, in particular, we can talk about the support
of a distribution. 

The reason for the above notation is that the space of appropriate (e.g., locally constant) sections of $|L|^{\kappa}$ embeds into $\DD(X(k),|L|^{\kappa})$,
as we will recall now.


Recall that for every global nonvanishing section $\eta\in \om_X$, the corresponding density $|\eta|$ defines a positive measure on $X(k)$.
We have natural spaces $L^1(X(k),|\om_X|)$ (resp., $L^1_{loc}(X(k),|\om_X|)$) of (resp., locally) absolutely integrable densities defined as follows.
Let $U|\sub X$ be a dense open subvariety with a nonvanishing section $\eta\in \Ga(U,\om_X)$. Then for any $\nu\in \Ga(X(k),|\om_X|)$ on $X(k)$, we write
$\nu|_U=f\cdot |\eta|$, and require the positive density $|f|\cdot |\eta|$ to be integrable on $U(k)$ (resp., on $U(k)\cap K$ for any compact open $K\sub X(k)$). This condition does not depend on a choice
of $(U,\eta)$. We identify $L^1$ or $L^1_{loc}$ densities which differ on a subset of measure zero, so that we have embeddings
$$L^1(X(k),|\om_X|)\sub L^1_{loc}(X(k),|\om_X|)\sub \DD(X(k),|\om_X|).$$

\begin{definition} More generally, we define a subspace $L^1_{loc}(X(k),|L|^{\kappa})\sub \DD(X(k),|L|^{\kappa})$ as the subspace of distributions $\nu$, for which 
there exists an open subvariety $U\sub X$, trivializations $\eta_U$ of $\om_U$ and
$s_U$ of $L|_U$ and a function $f$ on $U(k)$, such that $|f|\cdot |\eta_U|$ is integrable on $U(k)\cap K$ for any compact open $K\sub X(k)$, and
for any $\varphi\in \SS(X(k),|\om_X|\ot |L|^{-\kappa})$, one has
$$\nu(\varphi)=\int f |s_U|^{\kappa}\varphi$$
 (this condition does not depend on a choice of $\eta_U$ and $s_U$).
\end{definition}

We have embeddings
$$C^\infty(X(k),|L|^{\kappa})\sub C^0(X(k),|L|^{\kappa})\sub L^1_{loc}(X(k),|L|^{\kappa})\sub\DD(X(k),|L|^{\kappa}),$$
where $C^0$ denotes the space of continuous sections.



\begin{definition}\label{bounded-def}
We say that a distribution $\nu\in \DD(X(k),|L|^{\kappa})$ is
{\it bounded} if for every point $x$, and for trivializations $s$ of $L$ and $\eta$ of $\om_X$ near $x$, there exists a compact neighborhood $B$ of $x$ and a positive constant $C$,
such that we have 
$$|\nu(f\cdot |\eta|/|s|^{\kappa})|\le C\cdot \int |f|\cdot |\eta|$$
for every locally constant function $f$ supported on $B$.
\end{definition}

\begin{lemma}
(i) If $\nu\in \DD(X(k),|L|^{\kappa})$ is bounded then for any $g\in C^\infty(X(k))$, $\nu\cdot f$ is also bounded.

\noindent
(ii) Let $X$ be a smooth $k$-variety, $U\sub X$ an open subvariety, $\nu\in \DD(X(k),|L|^{\kappa})$ a bounded element, such that $\nu|_U$ is locally constant.
Then for any trivialization $s$ of $L$ on an open $V\sub X$, and any compact $K\sub V(k)$, 
the function $\nu|_U/|s|^{\kappa}$ on $U(k)\cap K$ is bounded.

\noindent
(iii) Assume in addition $X$ is proper, and $\nu\in\DD(X(k),|\om|^{1/2})$ is a bounded element such that $\nu|_U$ is locally constant. Then $\nu|_U$ is in $L^2$.
\end{lemma}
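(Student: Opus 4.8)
The plan is to reduce the global $L^2$ statement to the local boundedness estimate furnished by part (ii), using compactness of $X(k)$. First I would choose a finite cover of $X(k)$ by compact open subsets $B_1,\dots,B_n$ on each of which we have simultaneous trivializations $s_i$ of $L=\om_X$ and $\eta_i$ of $\om_X$, together with constants $C_i$ as in Definition \ref{bounded-def}; this is possible because $X$ is proper, hence $X(k)$ is compact. Passing to a locally constant partition of unity $1=\sum \chi_i$ subordinate to this cover (which exists in the non-archimedean setting, with each $\chi_i$ the characteristic function of a compact open set), it suffices to show that $\chi_i\cdot\nu|_U$ is in $L^2$ for each $i$, and for this we may work entirely inside a single chart $B_i$.

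Next, on $B_i$ write $\nu|_U = h_i\,|\eta_i|/|s_i|^{\kappa}$ with $\kappa=1/2$, where $h_i$ is the locally constant function on $U(k)\cap B_i$ given by $\nu|_U/|s_i|^{1/2}\cdot(1/|\eta_i|)$; here I use that $\nu|_U$ is locally constant, so it is represented by an actual function against the reference density $|\eta_i|$. By part (ii) applied with $V$ the chart carrying $s_i$ and $K=B_i$, the function $h_i$ is \emph{bounded} on $U(k)\cap B_i$, say $|h_i|\le M_i$. Now I compute the $L^2$-norm: since $\nu|_U$ is an $\om^{1/2}$-section, $|\nu|_U|^2$ is naturally a density, and on $B_i$ it equals $|h_i|^2\,|\eta_i|$ (the $|s_i|^{-1/2}$ factors pair up to give back a density independent of $s_i$). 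Then
$$\int_{U(k)\cap B_i} |h_i|^2\,|\eta_i| \;\le\; M_i^2\int_{U(k)\cap B_i}|\eta_i| \;\le\; M_i^2\int_{B_i}|\eta_i| \;<\;\infty,$$
the last integral being finite because $B_i$ is compact and $|\eta_i|$ is a continuous density. Summing over $i$ gives $\nu|_U\in L^2(X(k),|\om|^{1/2})$.

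The only genuine point requiring care — and the place where the hypothesis $\kappa=1/2$ is essential — is the bookkeeping that $|\nu|_U|^2$ is a genuine density precisely when the twist exponent is $1/2$, so that the bounded function estimate from (ii) converts directly into a finite integral against a compactly supported density; for other values of $\kappa$ one would not get a density and no such conclusion is available. A secondary technical point is the existence of the locally constant partition of unity subordinate to a finite cover by compact opens, but this is standard for totally disconnected locally compact spaces and needs only the remark that $X(k)$ is such a space. Everything else is routine: the reduction to charts, the identification of $\nu|_U$ with a bounded function via (ii), and the elementary estimate of the integral.
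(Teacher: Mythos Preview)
Your proof of part (iii) is correct and follows essentially the same route as the paper: reduce to a finite cover of the compact space $X(k)$ by compact opens carrying a trivialization $\eta_i$ of $\om_X$, use part (ii) to see that the locally constant function $g$ representing $\nu|_U$ with respect to $|\eta_i|^{1/2}$ is bounded on each piece, and conclude that $|g|^2|\eta_i|$ has finite integral. The only cosmetic differences are that the paper takes a \emph{disjoint} cover (so the partition of unity is just characteristic functions $\delta_{V_i}$, and part (i) is invoked to say $\delta_{V_i}\cdot\nu$ is again bounded) and works with a single trivialization $\eta_i$ of $\om_X$ rather than carrying separate $s_i$ and $\eta_i$ for the same bundle --- your extra bookkeeping with $|\eta_i|/|s_i|^{1/2}$ is harmless but unnecessary since $L=\om_X$.
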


\begin{proof}
(i) We have for $f$ with compact support $B$,
$$|\nu(g\cdot f\cdot |\eta|/|s|^{\kappa})|\le C\cdot \int |g|\cdot |f|\cdot |\eta|\le C\cdot \max_B |g|\cdot \int |f|\cdot |\eta|.$$

\noindent
(ii) It is enough to prove this with $K$ being a sufficiently small open compact neighborhood of any point $x\in V(k)$. In particular, we can assume that there exists a trivialization
$\eta$ of $\om_X$ over $V$.
Since $\nu$ is bounded, we can assume that there exists
a positive constant $C$ such that $\nu(f|\eta|/|s|^{\kappa})|\le C\cdot \int |f|\cdot |\eta|$, for any locally constant function $f$ supported on $K$.
We have $\nu|_U=g\cdot |s|^{\kappa}$, with $g$ locally constant on $U(k)$.
For a given point $y\in U(k)\cap K$, let $B_y\sub U(k)\cap K$ be an open compact neighborhood such that $g|_{B_y}$ is constant. Then
we have 
$$|\nu(\de_{B_y}\cdot |\eta|/|s|^{\kappa})=|g(y)|\cdot \vol_{|\eta|}(B_y)\le C\cdot \vol_{|\eta|}(B_y),$$
hence $|g|\le C$ on $U(k)\cap K$.

\noindent
(iii) Let $X(k)=\sqcup V_i$ be a disjoint covering by compact open subsets, such that each $V_i$ is contained in a Zariski open $U_i$, such that there exists a trivialization $\eta_i$ of
$\om_X$ on $U_i$. Then we can write $\nu=\sum \de_{V_i}\cdot\nu$, and it is enough to check that each $\de_{V_i}\cdot\nu$ is in $L^2$.
By (i), each $\de_{V_i}\cdot \nu$ is bounded, so by (ii), we have
$\de_{V_i}\cdot \nu|_U=g\cdot |\eta_i|^{1/2}$, with $g$ locally constant, bounded and supported on $U(k)\cap V_i$. Hence,
$|g|^2$ is also such, and so $|g|^2\cdot |\eta_i|$ is integrable (since $\mu_{|\eta_i|}(U(k)\cap V_i)<\infty$).
\end{proof}

For an admissible stack $\XX$ over $k$ and a line bundle $L$ on $\XX$, one defines the Schwartz space $\SS(\XX(k),|L|^{\kappa})$ using smooth presentations
surjective on points (see \cite{GK}). In particular, if $f:X\to \XX$ is smooth then we have a well defined push-forward map
$$f_*:\SS(X(k),|\om_f|\ot |f^*L|^{\kappa})\to \SS(\XX(k),|L|^{\kappa}).$$
In the case $\XX=[X/G]$, with $G=\GL_N$, the space $\SS(\XX(k),|L|^{\kappa})$ can be identified with the space of $G(k)$-coinvariants of 
$\SS(X(k),|L|^{\kappa}\ot |{\bigwedge}^{top}\fg|^{-1})$.  


\subsection{Push-forwards for varieties}\label{push-forward-sec}


Let $f:X\to Y$ be a morphism between smooth varieties. Sometimes it is possible to define push-forwards of distributions along $f$.
Let $L$ be a line bundle on $Y$.

\begin{definition}\label{int-def}
Given $\nu\in \DD(X(k),|\om_f|\ot |f^*L|^{\kappa})$, let us say that $\nu$ is {\it $f$-integrable} if for any $\varphi\in \SS(Y(k),|\om_Y|\ot |L|^{-\kappa})$,
the distribution $\nu\cdot f^*\varphi\in \DD(X(k),|\om_X|)$ is in $L^1(X(k),|\om_X|)$.
\end{definition}

If $\nu$ is $f$-integrable then we have a well defined push-forward $f_*\nu$ given by
$$f_*\nu(\varphi)=\int \nu\cdot f^*\varphi.$$

For example, if $\nu\in L^1_{loc}(X(k),|\om_f|\ot |f^*L|^{\kappa})$ has compact support then it is $f$-integrable.

\begin{lemma}\label{composition-integrable-lem}
(i) Let $g:T\to X$ be a smooth morphism. Assume $\nu\in C^\infty(T(k),|\om_{fg}|\ot |(fg)^*L|^{\kappa})$ has relatively compact support over $X$, so
we have $g_*\nu\in C^\infty(X(k),|\om_f|\ot |f^*L|^{\kappa})$. If $\nu$ is $fg$-integrable then 
$g_*\nu$ is $f$-integrable. The converse is true if there exist trivializations $s$ of $L$ on $Y$, $\eta_Y$ of $\om_Y$, $\eta_f$ of $\om_f$ and $\eta_g$ of $\om_g$ such that
\begin{equation}\label{nu-f-g-triv-eq}
\nu=\psi\cdot |\eta_f\cdot f^*\eta_g|\cdot |(fg)^*s|^{\kappa},
\end{equation}
with $\psi\ge 0$.
In either case 
$$(fg)_*\psi=f_*(g_*\psi).$$


\noindent
(ii) Let $U\sub Y$ be an open subvariety, and let $f_U:f^{-1}(U)\to U$ denote the induced morphism. 
If $\nu\in \DD(X(k),|\om_f|\ot |f^*L|^{\kappa})$ is $f$-integrable then $\nu|_{f^{-1}(U)}$ is $f_U$-integrable
and $f_*\nu|_U=(f_U)_*(\nu|_{f^{-1}(U)})$.
\end{lemma}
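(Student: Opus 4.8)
The plan is to prove the two parts of Lemma~\ref{composition-integrable-lem} by reducing everything to the definition of $f$-integrability, i.e.\ to an $L^1$-statement about honest densities on $X(k)$, where the push-forward of densities under a smooth morphism behaves well because $g$ is a submersion on $k$-points and hence locally looks like a projection $X(k)\times\Disk^m\to X(k)$ with $\Disk$ the ring of integers. Throughout I will use the standard fact (fiber integration / Fubini for smooth morphisms of $k$-varieties) that for a smooth $g:T\to X$ and a nonnegative density $\rho$ on $T(k)$ with relatively compact support over $X$, the fiberwise integral $g_*\rho$ is a well-defined nonnegative density on $X(k)$ and $\int_{T(k)}\rho=\int_{X(k)}g_*\rho$, and that this continues to hold (both sides possibly $+\infty$) without the compact-support hypothesis by monotone convergence.

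For part (i), first I would unwind what it means for $g_*\nu$ to be $f$-integrable: fix $\varphi\in\SS(Y(k),|\om_Y|\ot|L|^{-\kappa})$ and observe that, since $g$ is smooth, $(g_*\nu)\cdot f^*\varphi = g_*\bigl(\nu\cdot (fg)^*\varphi\bigr)$ as distributions in $\DD(X(k),|\om_X|)$ — this is just the projection formula for push-forward along $g$ together with $g^*f^*=( fg)^*$. Hence $\int_{X(k)}|(g_*\nu)\cdot f^*\varphi| \le \int_{X(k)} g_*\bigl(|\nu\cdot(fg)^*\varphi|\bigr) = \int_{T(k)} |\nu\cdot (fg)^*\varphi|$, where the last equality is fiber integration for the nonnegative density $|\nu\cdot(fg)^*\varphi|$. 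This immediately gives the forward implication: if $\nu$ is $fg$-integrable then the right-hand side is finite for all $\varphi$, so $g_*\nu$ is $f$-integrable, and moreover equality of the resulting linear functionals gives $(fg)_*\nu=f_*(g_*\nu)$. For the converse, the inequality above becomes an equality precisely when there is no cancellation in the fiber integral, which is exactly what the hypothesis~\eqref{nu-f-g-triv-eq} guarantees: writing $\nu=\psi\cdot|\eta_f\cdot f^*\eta_g|\cdot|(fg)^*s|^{\kappa}$ with $\psi\ge 0$, the density $\nu\cdot(fg)^*\varphi$ has a fixed sign (given by the sign of $\varphi$ pulled back, which is locally constant), so $|g_*(\nu\cdot(fg)^*\varphi)| = g_*|\nu\cdot(fg)^*\varphi|$ pointwise on $X(k)$, and then $f$-integrability of $g_*\nu$ forces $\int_{T(k)}|\nu\cdot(fg)^*\varphi|<\infty$, i.e.\ $fg$-integrability of $\nu$. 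The identity $(fg)_*\psi=f_*(g_*\psi)$ again follows by the same Fubini computation. I should be slightly careful that $g_*\nu\in C^\infty$ genuinely holds under the relatively-compact-support hypothesis — this is the statement quoted before Lemma~\ref{composition-integrable-lem} about push-forwards of Schwartz/smooth densities along smooth morphisms — and that the trivializations in \eqref{nu-f-g-triv-eq} exist at least locally, which suffices since integrability is a local condition and can be checked on a cover.

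For part (ii), this is essentially formal. Given $\varphi\in\SS(U(k),|\om_U|\ot|L|_U|^{-\kappa})$, extend it by zero... more precisely, note that $f^{-1}(U)$ is open in $X$ and $\nu\cdot f^*\varphi$, a priori defined on $f^{-1}(U)(k)$, extends by zero to an element of $\DD(X(k),|\om_X|)$ because $f^*\varphi$ vanishes near the boundary is not quite right — instead I would argue directly: for $\varphi\in\SS(U(k),\dots)$ one can pick $\tilde\varphi\in\SS(Y(k),\dots)$ with $f^*\tilde\varphi$ agreeing with a chosen extension, or simply use that $L^1$-membership is checked on the open subvariety $f^{-1}(U)$ in the definition of $f_U$-integrability and that $L^1$ on an open subset is implied by $L^1$ on the ambient space after multiplying by $f_U^*\varphi$, which has support contained in a set where the ambient $L^1$ control applies. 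Then $(\nu|_{f^{-1}(U)})\cdot f_U^*\varphi$ is the restriction of $\nu\cdot f^*(\text{extension})$, hence in $L^1(f^{-1}(U)(k),|\om_X|)$, giving $f_U$-integrability; and the asserted identity $f_*\nu|_U=(f_U)_*(\nu|_{f^{-1}(U)})$ is then immediate from $\int_{f^{-1}(U)(k)} = \int_{X(k)}$ applied to a density supported in $f^{-1}(U)(k)$.

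The main obstacle is the converse direction of part~(i): making rigorous the passage from "$|g_*(\text{density})| \le g_*|\text{density}|$" to equality under hypothesis~\eqref{nu-f-g-triv-eq}. The point to get right is that the sign of $\nu\cdot(fg)^*\varphi$ is constant \emph{along each fiber of $g$} (since $\psi\ge 0$ and the pulled-back volume factors $|\eta_f\cdot f^*\eta_g|$ and $|(fg)^*s|^\kappa$ are nonnegative densities, while $(fg)^*\varphi = g^*(f^*\varphi)$ is pulled back from $X$ and hence literally constant along $g$-fibers up to the nonnegative $|\eta_g|$-factor that we have separated out), so no cancellation can occur in the fiberwise integral; once this sign-coherence is spelled out the rest is a routine application of Fubini and monotone convergence. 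Everything else is bookkeeping with trivializations and the observation that both integrability and the push-forward identities are local on $Y$.
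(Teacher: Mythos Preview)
Your proposal is correct and follows essentially the same approach as the paper: for part~(i) you use the projection formula $(g_*\nu)\cdot f^*\varphi=g_*(\nu\cdot(fg)^*\varphi)$ and Fubini, exactly as the paper does, and for part~(ii) the paper simply says ``this follows easily from definitions,'' which your extension-by-zero argument makes precise. The only cosmetic difference is in the converse of~(i): the paper reduces to the case $\varphi\ge 0$ (any Schwartz test section is a finite $\C$-linear combination of nonnegative ones), so that $\eta=\psi\cdot(fg)^*\varphi\cdot|\eta_f\cdot f^*\eta_g\cdot(fg)^*\eta_Y|$ is itself nonnegative and Fubini applies directly, whereas you argue via sign-coherence along fibers; these are equivalent and equally valid.
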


\begin{proof}
(i) Given $\varphi\in \SS(Y(k),|\om_Y|\ot |L|^{-\kappa})$, we look at the density $\eta:=\nu\cdot (fg)^*\varphi\in C^\infty(T(k),|\om_T|)$ with relatively compact support over $X$.
We have a well defined density $g_*\eta\in C^\infty(X(k),|\om_X|)$. If $\eta\in L^1$ then by Fubini's theorem, $g_*\eta\in L^1$ and
$\int \eta=\int g_*\eta$, which proves the first assertion.

Now assume $\nu$ has form \eqref{nu-f-g-triv-eq}, and take $\varphi\cdot |\eta_Y|\cdot |s|^{-\kappa}\in \SS(Y(k),|\om_Y|\ot |L|^{-\kappa})$, where $\varphi$ is locally constant
with compact support. We want to prove that the density on $T$,
$$\eta:=\nu\cdot (fg)^*(\varphi\cdot |\eta_Y|\cdot |s|^{-\kappa})=\psi\cdot (fg)^*\varphi\cdot |\eta_f\cdot f^*\eta_g\cdot (fg)^*\eta_Y|,$$
is in $L^1$. Without loss of generality we can assume that $\varphi\ge 0$. Then $\eta$ is non-negative, so by Fubini's theorem, if  
$g_*\eta=g_*\nu\cdot f^*(\varphi\cdot |\eta_Y|\cdot |s|^{-\kappa})$ is in $L^1$ then so is $\eta$.

\noindent
(ii) This follows easily from definitions.
\end{proof}


\begin{prop}\label{push-forward-prop} 
Let $f:S\to T$ be a flat morphism of smooth varieties such that the fibers of $f$ have at most rational singularities.
Then for a line bundle $L$ on $T$ and a bounded element $\nu\in \DD(S(k),|\om_f|\ot |f^*L|^{\kappa})$ with compact support, the push-forward 
$f_*\nu\in \DD(T(k),|L|^{\kappa})$ is bounded
(with compact support).
\end{prop}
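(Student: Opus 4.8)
The plan is to reduce the statement to the Aizenbud--Avni continuity theorem for push-forwards of \emph{smooth} distributions along flat morphisms with rational-singularity fibers (\cite{AA}), using the boundedness hypothesis to replace the smoothness assumption on $\nu$. First I would unwind the definitions locally: the question is local on $T(k)$, so I fix a point $t\in T(k)$, choose trivializations $s$ of $L$ and $\eta_T$ of $\om_T$ near $t$, and correspondingly a trivialization $\eta_f$ of $\om_f$ near a point of $S(k)$; I must produce a compact neighborhood $B$ of $t$ and a constant $C$ with $|f_*\nu(\phi\cdot|\eta_T|/|s|^{\kappa})|\le C\int|\phi|\,|\eta_T|$ for all locally constant $\phi$ supported on $B$. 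Writing $\nu=\psi\cdot|\eta_f|\cdot|f^*s|^{\kappa}$ with $\psi$ a (bounded, by Lemma's part (ii) applied on the trivializing open set, after multiplying by $\de_{V_i}$ as in part (iii)) function on $S(k)$ of compact support, the pairing becomes $\int_{S(k)}\psi\cdot f^*\phi\cdot|\eta_f\cdot f^*\eta_T|$, which is exactly the value at $\phi$ of the push-forward of the \emph{density} $|\psi|\cdot|\eta_f\cdot f^*\eta_T|$ along $f$ — or rather is dominated in absolute value by it.

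The key step is then: the positive density $|\psi|\cdot|\eta_f\cdot f^*\eta_T|$ on $S(k)$, since $\psi$ is bounded by some $M$ and has compact support, is dominated by $M$ times the smooth density $\de_K\cdot|\eta_f\cdot f^*\eta_T|$ for a suitable compact open $K\supseteq\operatorname{supp}\psi$. By the Aizenbud--Avni theorem, $f_*(\de_K\cdot|\eta_f|\cdot|f^*s|^{\kappa})$ is a \emph{continuous} section of $|L|^{\kappa}$ near $t$ — here is where flatness and rationality of the fibers of $f$ enter, guaranteeing that the integral over each fiber converges and depends continuously on the base point. A continuous section of $|L|^{\kappa}$ on a compact neighborhood $B$ is bounded, say by $C_0$, in the trivialization $|s|^{\kappa}$. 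Combining: for $\phi\ge 0$ locally constant supported on $B$,
$$
|f_*\nu(\phi\cdot|\eta_T|/|s|^{\kappa})|\le \int_{S(k)}|\psi|\cdot f^*\phi\cdot|\eta_f\cdot f^*\eta_T|\le M\int_{S(k)}\de_K\cdot f^*\phi\cdot|\eta_f\cdot f^*\eta_T|,
$$
and the last integral equals $\int_{B}\phi\cdot g\cdot|\eta_T|$ where $g=f_*(\de_K\cdot|\eta_f|)/|\eta_T|$ is continuous hence $\le C_0$, giving the bound $MC_0\int|\phi|\,|\eta_T|$. The general (not necessarily non-negative) $\phi$ follows by splitting into the locally constant functions $\de_{B_j}$ on which $\phi$ is constant, or simply by $|\phi|\le(\max|\phi|)\de_B$ combined with additivity; compact support of $f_*\nu$ is immediate from compact support of $\nu$ together with the fact that $f$ restricted to $\operatorname{supp}\nu$ is proper onto its (compact) image.

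The main obstacle I anticipate is the bookkeeping needed to get from a \emph{bounded distribution} $\nu$ to an honest bounded \emph{density} $|\psi|\cdot|\eta_f\cdot f^*\eta_T|$ to which Aizenbud--Avni applies: a priori $\nu$ is only a functional, and one must invoke Lemma (ii)--(iii) — that a bounded distribution which is locally constant on a dense open agrees there with a locally bounded density — to realize $\nu$ on the trivializing locus as $\psi\cdot|\eta_f|\cdot|f^*s|^{\kappa}$ with $\psi$ locally $L^1$ and bounded, then cut down to compact support. A secondary point to verify carefully is that Aizenbud--Avni is applied to the morphism $f$ with the \emph{relative} canonical twist $|\om_f|$ and the pulled-back twist $|f^*L|^{\kappa}$; this is exactly the form in which their theorem (continuity of $f_*$ on $\SS(S(k),|\om_f|\ot|f^*L|^{\kappa})$) is available, so the matching of normalizations is automatic, but it should be stated explicitly. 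Everything else — Fubini for the fiber integration, continuity of the resulting section — is either standard or packaged inside the cited theorem.
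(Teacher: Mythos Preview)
Your overall strategy is right --- localize, bound by the push-forward of a Schwartz density, then invoke Aizenbud--Avni --- and this is exactly what the paper does. But there is a genuine gap in your execution, precisely at the place you flagged as the ``main obstacle''.

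You want to write $\nu=\psi\cdot|\eta_f|\cdot|f^*s|^{\kappa}$ with $\psi$ a bounded function, and you justify this via the lemma on bounded distributions (parts (ii)--(iii)). But that lemma has the hypothesis that $\nu|_U$ is \emph{locally constant} on some dense open $U$, and nothing of the sort is assumed here: $\nu$ is just an abstract bounded distribution. So the cited lemma does not apply, and you have not actually produced the density $\psi$. (The claim that a bounded distribution is represented by an $L^\infty$ density is in fact true, via $L^1$--$L^\infty$ duality on each compact ball, but you would have to argue this separately; it is not in the paper.)

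The paper's proof sidesteps this entirely by never attempting to represent $\nu$ as a density. Instead it applies the defining inequality of boundedness \emph{directly} to the pulled-back test function. Namely, $f_*\nu(g\cdot|\eta_T|/|s|^{\kappa})=\nu(f^*g\cdot|f^*\eta_T|/|f^*s|^{\kappa})$, and since $f^*g$ is locally constant with support in a compact $B\subset S(k)$, the boundedness of $\nu$ gives
\[
|\nu(f^*g\cdot|f^*\eta_T|/|f^*s|^{\kappa})|\le C\int_B|f^*g|\,|\eta_S| = C\cdot f_*(\de_B\cdot|\eta_S|)(|g|).
\]
Now Aizenbud--Avni is applied to the honest Schwartz density $\de_B\cdot|\eta_S|$ (no $\kappa$-twist needed here --- this is a plain measure), giving that $f_*(\de_B\cdot|\eta_S|)$ is a continuous density on $T(k)$, hence bounded on compacts, and the estimate follows. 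This is both shorter and avoids the representation problem.
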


\begin{proof}
Given a point $t\in T$, we can choose trivializations $\eta_T$ of $\om_T$ and $s$ of $L$ near $t$, as well as a locally constant function $g$ supported on a small
compact neighborhood of $t$. Without loss of generality we can assume that $\nu$
is supported on a small compact neighborhood $B$ of a point $s\in S$ such that $f(s)=t$. Let $\eta_f$ be a trivialization of $\om_f$ on a Zariski neighborhood of $s$ containing $B$,
and let $\eta_S:=\eta_f\ot f^*\eta_T$ be the induced trivialization of $\om_S$.
Then there exists a constant $C>0$ such that
$$|f_*\nu(g\cdot |\eta_T|/|s|^{\kappa})|=|\nu(f^*g\cdot |f^*\eta_T|/|f^*s|^{\kappa})|\le C\cdot \int_B |f^*g| |\eta_S|=C\cdot f_*(\de_B\cdot |\eta_S|)(g).$$
By Aizenbud-Avni's theorem (see \cite{AA}), $f_*(\de_B\cdot |\eta_S|)$ has form $g\cdot |\eta_T|$, for a continuous function $g$. Hence, there exists a constant
$C'$ such that $f_*(\de_B\cdot |\eta_S|)(g)\le C'\cdot \int |g| |\eta_T|$, which proves boundedness of $f_*\nu$.
\end{proof}



\begin{prop}\label{bir-mod-prop} 
Let 
\begin{diagram}
&& R\\
&\ldTo{f}&&\rdTo{g}\\
X&&&&Y
\end{diagram}
be a pair of proper birational morphisms between smooth projective varieties over $k$, such that there exists a Zariski open subset $U\sub R$ mapping isomorphically to 
open subsets in $X$ and $Y$
of codimension $\ge 2$. Let $L$ be a line bundle on $X$, and let $L'$ be the corresponding line bundle $Y$, via the isomorphism
$\Pic(X)\simeq \Pic(U)\simeq \Pic(Y)$.
Assume that $f^*K_X=g^*K_Y(-\sum_i m_iD_i)$ and $f^*L\simeq g^*L'(\sum n_iD_i)$ for effective irreducible divisors $D_1,\ldots,D_r$ on $R$.
Let also $\kappa$ be a complex number with $\Re(\kappa)\cdot n_i\le m_i$ for $i=1,\ldots,r$.
Then, given any $\mu\in C^0(X(k),|\om_X|\ot |L|^{\kappa})$ with compact support, $\mu|_U$, viewed as an element of $\DD(Y(k),|\om_Y|\ot |L'|^{\kappa})$,
is bounded.
\end{prop}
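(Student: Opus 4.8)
The plan is to reduce the statement to an integrability estimate on $R(k)$ computed in local coordinates near the exceptional divisors $D_i$, using the two divisor comparisons $f^*K_X = g^*K_Y(-\sum m_i D_i)$ and $f^*L \simeq g^*L'(\sum n_i D_i)$. First I would fix a point $y \in Y(k)$, choose trivializations $\eta_Y$ of $\om_Y$ and $s'$ of $L'$ near $y$, and a locally constant function $\varphi \ge 0$ supported on a small compact neighborhood of $y$; I must bound $\bigl|\mu|_U(\varphi \cdot |\eta_Y| \cdot |s'|^{-\kappa})\bigr|$ by a constant times $\int |\varphi|\,|\eta_Y|$. Since $\mu$ has compact support and $g$ is proper, I pull the whole computation up to $R$: writing $g^*\varphi$, $g^*\eta_Y$, $g^*s'$ on $g^{-1}(\text{nbhd of }y)$, the quantity to estimate becomes an integral over $R(k)$ of the density obtained by transporting $\mu$ (a continuous section of $|\om_X|\ot|L|^{\kappa}$, hence $f^*\mu$ a continuous section of $|f^*\om_X|\ot|f^*L|^{\kappa}$) against $g^*\varphi \cdot |g^*\eta_Y|\cdot|g^*s'|^{-\kappa}$.

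The key step is the local analysis along $D_1,\dots,D_r$. On the open set $U$, $f$ and $g$ are isomorphisms and the codimension-$\ge 2$ complements contribute nothing to any integral, so the only issue is integrability near the $D_i$. Near a generic point of $D_i$ (and, after a stratification, near the intersection loci), choose a local coordinate $t_i$ cutting out $D_i$; then $f^*\eta_X / g^*\eta_Y$ is, up to an invertible function, $\prod t_i^{-m_i}$ (by the canonical-divisor comparison) while $f^*s / g^*s'$ is, up to an invertible function, $\prod t_i^{n_i}$ (by the $L$ comparison — note the sign conventions: $f^*L$ has MORE sections/zeros along $D_i$). Therefore, writing $f^*\mu = h\cdot |f^*\eta_X|\cdot|f^*s|^{\kappa}$ with $h$ continuous (hence locally bounded), the density $f^*\mu \cdot g^*(\varphi\,|\eta_Y|\,|s'|^{-\kappa})$ equals, locally,
$$
h\cdot (g^*\varphi) \cdot \Bigl|\prod_i t_i^{-m_i}\Bigr| \cdot \Bigl|\prod_i t_i^{n_i}\Bigr|^{\Re(\kappa)} \cdot |g^*\eta_Y|
= h\cdot(g^*\varphi)\cdot\Bigl|\prod_i t_i^{\,\Re(\kappa)n_i - m_i}\Bigr|\cdot |g^*\eta_Y|,
$$
and the hypothesis $\Re(\kappa)\cdot n_i \le m_i$ makes each exponent $\le 0$. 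When the exponent is exactly $0$ the factor is harmless; when it is negative, $|t_i|^{-a}$ with $a = m_i - \Re(\kappa)n_i > 0$ — wait, this is the crucial point where I must be careful: $\int_{\OO_k} |t|^{-a}\,|dt|$ \emph{diverges} for $a \ge 1$. So the estimate I actually want is not naive $L^1$ bounds but the boundedness assertion, and I should instead push forward $\de_B\cdot|\eta_S|$-type densities through $f$ and use that $f$ has rational (indeed, on a smooth variety, klt) fibers — but here $f$ is birational, so the right move is: the claim that $\mu|_U \in \DD(Y,\dots)$ is bounded means I control $g_*$ of the transported density against $\int|\varphi||\eta_Y|$, and since all the exponents $\Re(\kappa)n_i - m_i \le 0$, the density transported to $R$ is \emph{dominated by} $|h|\cdot(g^*\varphi)\cdot|g^*\eta_Y|$ times a bounded function (the $|t_i|^{\le 0}$ factors are bounded below away from $D_i$ and on $U$ we are away from $D_i$; on $D_i$ itself, which has measure zero for the relevant pushforward, nothing is lost). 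Then $g_*$ of a density $\le C\cdot g^*(\varphi|\eta_Y|)$ is $\le C\cdot\varphi|\eta_Y|\cdot\vol(\text{fiber})$, and fibers of $g$ are proper, giving the bound.

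The main obstacle, and the place I expect to spend real effort, is making the measure-theoretic bookkeeping of the previous paragraph rigorous: on $U(k)$ the functions $|t_i|^{\Re(\kappa)n_i - m_i}$ need not be bounded (they blow up approaching $D_i$), so it is \emph{not} literally true that the transported density is pointwise $\le C\cdot g^*(\varphi|\eta_Y|)$ on all of $U(k)$. The correct argument must instead stratify $Y$ near $y$ by $g(D_i)$-loci, use that $g_*(|f^*\eta_X|)$ — i.e. the fiber integral $\int_{g^{-1}(y')} |\prod t_i^{-m_i}|\cdot|\text{(fiber form)}|$ — is \emph{finite and bounded in $y'$} precisely because $f^*K_X - g^*K_Y = -\sum m_i D_i$ with $m_i \ge 0$ is the statement that $f$ is crepant-or-better, equivalently that $X$ has canonical singularities pulled back… no: the clean way is to invoke that $g$ itself, being a proper birational morphism between smooth varieties, has fibers with rational singularities, so by Prop.\ \ref{push-forward-prop} (or rather its proof via Aizenbud--Avni) the pushforward $g_*(\de_B\cdot|\eta_S|)$ is continuous, hence locally bounded; then combine this with the exponent inequality to absorb the $|t_i|$ factors into a bounded continuous multiplier on $Y(k)$. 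So the structure is: (1) reduce to a local integral on $R(k)$; (2) identify the divisorial discrepancies and get the exponent inequality $\Re(\kappa)n_i - m_i \le 0$; (3) split off the bounded continuous factor on $Y$ coming from $g_*|\om_f|$ via Aizenbud--Avni, leaving a genuinely bounded integrand; (4) conclude by properness of $g$-fibers. Step (3), reconciling the pointwise blow-up along $D_i$ with the finiteness of the fiber integral, is the heart of the matter.
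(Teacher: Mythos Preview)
Your overall strategy---pull everything back to $R$, analyze the discrepancy factors locally near the $D_i$, and bound the resulting integral against $\int|\varphi|\,|\eta_Y|$---is exactly the paper's approach. However, you have a sign error in the crucial local computation, and this error is what sends you off into the unnecessary Aizenbud--Avni detour of your third paragraph.

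The relation $f^*K_X = g^*K_Y(-\sum m_iD_i)$ means $f^*\om_X \simeq g^*\om_Y\otimes\OO_R(-\sum m_iD_i)$, so a nowhere-vanishing local section $f^*\eta_X$ of $f^*\om_X$, viewed inside $g^*\om_Y$, vanishes to order $m_i$ along $D_i$: locally $f^*\eta_X = h_1\cdot g^*\eta_Y$ with $h_1\sim\prod t_i^{+m_i}$, not $t_i^{-m_i}$. Likewise $f^*L\simeq g^*L'(\sum n_iD_i)$ gives $f^*s = h_2^{-1}\cdot g^*s'$ with $h_2\sim\prod t_i^{n_i}$, i.e.\ $f^*s/g^*s'\sim\prod t_i^{-n_i}$. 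With the correct signs the combined weight is
\[
|h_1|\cdot|h_2|^{-\Re(\kappa)} \;\sim\; \prod_i |t_i|^{\,m_i - n_i\Re(\kappa)},
\]
and the hypothesis $\Re(\kappa)\,n_i\le m_i$ makes every exponent \emph{nonnegative}. Since $g$ is proper, $g^{-1}(B)$ is compact, so $|t_i|$ is bounded there and hence $\prod|t_i|^{m_i-n_i\Re(\kappa)}$ is a bounded function on $g^{-1}(B)\cap U$. That is the whole point: the integrand is pointwise dominated by $C\cdot g^*|\varphi|\cdot|g^*\eta_Y|$, and since $g|_U$ is an isomorphism onto an open of full measure in $B$, change of variables gives the bound $C\int_B|\varphi|\,|\eta_Y|$ directly.

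So there is no blow-up to worry about, no need to invoke rational singularities of fibers, and no fiber-integration argument: once the sign is right, steps (1) and (2) of your outline already finish the proof. Your instinct that ``the only issue is integrability near the $D_i$'' is correct, but the answer is simply that there is no issue.
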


\begin{proof}
For simplicity we assume that $r=1$, i.e., there is only one divisor $D_1$ (the general case is similar).

We can assume that $\mu=\a\cdot |\eta|\cdot |s|^{\kappa}$, 
where $\a$ is a compactly supported continuous function on $X(k)$ and $\eta$ and $s$ are trivializations of $\om_X$ and of $L$, defined on a support of $\a$.
Then $f^*\a$ is a compactly supported continuous function on $X(k)$, and $f^*\eta$ (resp., $f^*s$) is a section of $f^*\om_X$ (resp., $f^*L$)
defined on the support of $f^*\a$. Thus, we can view $f^*\eta$ (resp., $f^*s$)
as a section of $g^*\om_Y$ (resp., $g^*L'$) defined on the support of $f^*\a$. Hence, 
for every point $y\in Y$ and trivializations $\eta'$ of $\om_Y$ and $s'$ of $L'$ in a neighborhood of $y$, we can write
$f^*\eta=h_1\cdot g^*\eta'$, $f^*s=h_2^{-1}\cdot g^*s'$ where $h_1$ and $h_2$ are regular functions  
functions on a neighborhood of the support of $f^*\a$, invertible away from $D$ and vanishing to the order of $m$ and $n$ at $D$, respectively. 
Thus, for any locally constant function $\varphi$ on a compact
neighborhood $B$ of $y\in Y(k)$, the value of $\mu|_U$ on $\varphi\cdot |s'|^{-\kappa}$ is
$$\int_{U(k)\cap B}\varphi\cdot \a\cdot |s/s'|^{\kappa}\cdot |\eta|=\int_{g^{-1}(B)}g^*\varphi\cdot f^*\a\cdot |h_2|^{-\kappa}\cdot |f^*\eta|=
\int_{g^{-1}(B)}g^*\varphi\cdot f^*\a\cdot |h_2|^{-\kappa}\cdot |h_1|\cdot |g^*\eta'|.$$
We claim that the function $f^*\a\cdot |h_2|^{-\kappa}\cdot |h_1|$ is bounded on $g^{-1}(B)$.
Indeed, locally we can write $h_1=u_1\cdot t^m$, $h_2=u_2\cdot t^n$, so the assertion follows from the assumption that
$\Re(m-n\kappa)\ge 0$. Thus, we see that this integral converges and 
there exists a constant $C$ such that
$$|\mu|_U(\varphi)|\le C\cdot \int_{g^{-1}(B)}g^*\varphi\cdot |g^*\eta'|=C\int_B \varphi \cdot \eta',$$
which proves the assertion.
\end{proof}

\subsection{Push-forwards for admissible stacks and an extension result}


Let $X$ be a smooth variety over $k$ with an action of $G=\GL_N$, $L$ a $G$-equivariant line bundle on $X$, $U_0\sub X$ an open $G$-invariant subset,
such that $G$ acts freely on $U_0$ and the quotient $U_0/G$ exists. We pick a trivialization $\vol_{\fg}\in |{\bigwedge}^{top}\fg|^{-1}$.
Let us denote by $\pi_{U_0}:U_0\to U_0/G$ the natural projection.


For each $\varphi\in \SS(X(k),|L|^{\kappa})$ we can consider the restriction $\varphi|_{U_0}$ and ask whether the push forward
$\pi_{U_0,*}(\vol_{\fg}\cdot \varphi|_{U_0})$ is well defined, and if so, whether it is smooth, continuous, or bounded.

\begin{definition}
We say that a pair $(U_0,X)$ is $|L|^{\kappa}$-{\it admissible} (resp., {\it smooth}, resp., {\it continuous}, resp., {\it bounded}) if for every
$\varphi\in \SS(X(k),|L|^{\kappa})$, the distribution $\vol_{\fg}\cdot\varphi|_U$ is $\pi_{U_0}$-integrable in the sense of Definition \ref{int-def} (resp., the push-forward
distribution $\pi_{U_0,*}(\vol_{\fg}\cdot \varphi|_{U_0})$ is smooth, resp., continuous, resp., bounded).
\end{definition}

If $(U_0,X)$ is $|L|^{\kappa}$-{\it admissible} then $\pi_{U_0,*}(\vol_{\fg}\cdot \varphi|_{U_0})$ depends only the image of $\varphi$ in the space of
$G(k)$-coinvariants $\SS(X(k),|{\bigwedge}^{top}\fg|^{-1}\ot |L|^{\kappa})_{G(k)}=\SS([X/G],|L|^{\kappa})$, so we have a map
$$\pi_{X,U_0,\kappa}:\SS([X/G],|L|^{\kappa})\to \DD(U_0/G,|L|^{\kappa}).$$

\begin{lemma}\label{change-of-cover-lem}
Let $f:Y\to [X/G]$ be a smooth morphism, surjective on $k$-points, and let $f_{U_0}:f^{-1}(U_0/G)\to U_0/G$ denote the induced smooth morphism.
Assume that for any $\varphi\in \SS(Y(k),|\om_f|\ot |L|^{\kappa})$, the restriction $\varphi|_{f^{-1}(U_0/G)}$ is $f_{U_0}$-integrable. 
Then the pair $(U_0,X)$ is $|L|^{\kappa}$-admissible and for any $\varphi$ as above, we have
$$\pi_{X,U_0,\kappa}(f_*(\vol_{\fg}\cdot \varphi))=f_{U_0,*}(\vol_{\fg}\cdot\varphi|_{f^{-1}(U_0/G)}).$$
\end{lemma}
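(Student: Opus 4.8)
The plan is to reduce the statement about the pair $(U_0, X)$ to the push-forward picture on the smooth cover $Y$, using the factorization of $f$ over $U_0/G$ through $U_0$ and Lemma \ref{composition-integrable-lem} on composition of push-forwards. First I would set up the relevant commutative diagram: since $f : Y \to [X/G]$ is smooth and surjective on $k$-points, the base change $Y_{U_0} := Y \times_{[X/G]} U_0 \to U_0$ is a smooth $G$-torsor-ish cover of $U_0$ compatible with $\pi_{U_0} : U_0 \to U_0/G$, so we get $f^{-1}(U_0/G) = Y_{U_0}/G$ together with smooth morphisms $g : Y_{U_0} \to U_0$, $\pi_{U_0} : U_0 \to U_0/G$, and their composite $f_{U_0}\circ(\text{quotient}) = \pi_{U_0}\circ g$ up to the quotient by $G$. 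The point is that $f_{U_0} : f^{-1}(U_0/G)\to U_0/G$ is obtained from $\pi_{U_0}\circ g$ by passing to $G$-coinvariants, exactly as $\pi_{X,U_0,\kappa}$ is defined via $G(k)$-coinvariants of the Schwartz space.

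The key steps, in order: (1) Given $\varphi \in \SS(Y(k), |\om_f|\ot|L|^{\kappa})$, form $\vol_{\fg}\cdot\varphi$ and note $f_*(\vol_{\fg}\cdot\varphi) \in \SS([X/G],|L|^{\kappa})$, which by the identification of the stacky Schwartz space with $G(k)$-coinvariants is represented by a compactly supported Schwartz object upstairs on $X$. (2) Use Lemma \ref{composition-integrable-lem}(i) with the tower $Y_{U_0} \xrightarrow{g} U_0 \xrightarrow{\pi_{U_0}} U_0/G$: the hypothesis that $\varphi|_{f^{-1}(U_0/G)}$ is $f_{U_0}$-integrable, pulled back to the $G$-cover $Y_{U_0}$, says precisely that the relevant density is $(\pi_{U_0}\circ g)$-integrable; since the density in question is a genuine positive density (half-density modulus times volume form modulus, so of the form \eqref{nu-f-g-triv-eq} with $\psi \ge 0$ after choosing compatible trivializations $\eta_f, \eta_g$ on the étale-locally-trivial cover), the converse direction of Lemma \ref{composition-integrable-lem}(i) applies and yields that $g_*(\vol_{\fg}\cdot\varphi|_{Y_{U_0}})$ is $\pi_{U_0}$-integrable. (3) Identify $g_*(\vol_{\fg}\cdot\varphi|_{Y_{U_0}})$ with $\vol_{\fg}\cdot(f_*(\vol_{\fg}\cdot\varphi))|_{U_0}$ — here one uses that push-forward along the smooth cover $g$ computes the restriction to $U_0$ of the stacky push-forward $f_*$, which is again a Fubini-type statement over the base $U_0$ together with the defining property of the Schwartz space of $[X/G]$. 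This shows $\vol_{\fg}\cdot(f_*(\vol_{\fg}\cdot\varphi))|_{U_0}$ is $\pi_{U_0}$-integrable. (4) Since every element of $\SS([X/G],|L|^{\kappa})$ is of the form $f_*(\vol_{\fg}\cdot\varphi)$ for some such $\varphi$ (surjectivity of $f$ on points plus the standard fact that smooth surjections induce surjections on Schwartz spaces), this establishes $|L|^{\kappa}$-admissibility of $(U_0,X)$. (5) Finally, chasing the equalities $(\pi_{U_0}\circ g)_* = \pi_{U_0,*}\circ g_*$ from Lemma \ref{composition-integrable-lem}(i) and descending through $G(k)$-coinvariants gives the asserted identity $\pi_{X,U_0,\kappa}(f_*(\vol_{\fg}\cdot\varphi)) = f_{U_0,*}(\vol_{\fg}\cdot\varphi|_{f^{-1}(U_0/G)})$.

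The main obstacle I expect is step (3): carefully matching the push-forward along the smooth cover $g$ with the restriction to $U_0$ of the stack-level push-forward $f_*$, keeping the $|{\bigwedge}^{top}\fg|^{-1}$ twists and the $G(k)$-coinvariant identification straight. This is the place where one must invoke that $\SS([X/G],|L|^{\kappa})$ is, by definition via smooth presentations surjective on points, computed as $G(k)$-coinvariants of $\SS(X(k), |L|^{\kappa}\ot|{\bigwedge}^{top}\fg|^{-1})$, and that the two smooth presentations $X \to [X/G]$ and $Y \to [X/G]$ give the same Schwartz space — so the well-definedness of $f_*$ and its compatibility with restriction to the open substack $U_0/G \subset [X/G]$ both reduce to Lemma \ref{composition-integrable-lem}(ii) applied to the open embedding. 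Once the bookkeeping of twists is set up correctly, each individual step is an application of Fubini (Lemma \ref{composition-integrable-lem}) rather than anything deeper.
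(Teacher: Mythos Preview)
Your overall architecture matches the paper's: form the fiber product over $[X/G]$, use the two factorizations of the map to $U_0/G$, and apply Lemma~\ref{composition-integrable-lem} in both directions. But there is a real gap in your steps (2)--(3), and it is exactly the ``obstacle'' you flag but then try to sidestep.

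You write ``$\varphi|_{Y_{U_0}}$'' and speak of $\varphi|_{f^{-1}(U_0/G)}$ being ``pulled back to the $G$-cover $Y_{U_0}$''. But $\varphi$ lives on $Y$, and $Y_{U_0}\to f^{-1}(U_0/G)$ is a $G$-torsor with $G=\GL_N$ noncompact. The naive pullback $\pi'^*\varphi$ is smooth and even has relatively compact support over $U_0$, but it does \emph{not} have relatively compact support over $f^{-1}(U_0/G)$, so you cannot invoke the converse direction of Lemma~\ref{composition-integrable-lem}(i) for the tower $Y_{U_0}\to f^{-1}(U_0/G)\to U_0/G$. Worse, $\pi'^*$ of an $L^1$ density along a noncompact $G$-torsor is not $L^1$, so the claim ``$f_{U_0}$-integrable pulled back equals $(\pi_{U_0}\circ g)$-integrable'' fails as stated. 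Your step (4) has a related slip: admissibility of $(U_0,X)$ is a statement about every $\psi\in\SS(X(k),|L|^{\kappa})$, not about elements of $\SS([X/G],|L|^{\kappa})$.

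The paper's fix is exactly what you are missing: pass to the full fiber product $\wt{Y}=Y\times_{[X/G]}X$, with projections $\pi:\wt{Y}\to Y$ (a $G$-torsor) and $\wt{f}:\wt{Y}\to X$ (smooth, surjective on $k$-points). Since $\wt{f}_*:\SS(\wt{Y})\to\SS(X)$ is surjective, every Schwartz element on $X$ is $\wt{f}_*\wt{\varphi}$ for some \emph{compactly supported} $\wt{\varphi}$ on $\wt{Y}$; one then sets $\varphi:=\pi_*\wt{\varphi}\in\SS(Y)$ and applies the lemma's hypothesis to this $\varphi$. Because $\wt{\varphi}$ has honest compact support, its restriction to $\wt{f}^{-1}(U_0)=Y_{U_0}$ has relatively compact support over both $U_0$ and $f^{-1}(U_0/G)$, and now Lemma~\ref{composition-integrable-lem}(i) (including the $\psi\ge 0$ converse) applies cleanly to each of the two towers $Y_{U_0}\to f^{-1}(U_0/G)\to U_0/G$ and $Y_{U_0}\to U_0\to U_0/G$, together with Lemma~\ref{composition-integrable-lem}(ii) for the restriction identity $\wt{f}'_*(\wt{\varphi}|_{Y_{U_0}})=\wt{f}_*(\wt{\varphi})|_{U_0}$. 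Once you introduce $\wt{\varphi}$ this way, your steps (2)--(5) go through verbatim.
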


\begin{proof}
Replacing $X$, $Y$ and $U_0/G$ by opens of sufficiently fine open coverings, we can assume existence of trivializations $\eta_X$, $\eta_Y$ and $\eta_{U_0/G}$ of $\om_X$,
$\om_Y$ and $\om_{U_0/G}$. We can also assume $L$ to be trivial as a $G$-equivariant line bundle on $U_0$ and as a line bundle on $X$.
Let $\wt{f}:\wt{Y}\to X$ be the morphism from a $G$-torsor $\pi:\wt{Y}\to Y$, corresponding to $f$.
We have the following commutative diagram
\begin{diagram}
&&\wt{Y}&\rTo{\wt{f}}&X\\
&\ldTo{\pi}&\uTo{}&&\uTo{}\\
Y&&\wt{f}^{-1}(U_0)&\rTo{\wt{f'}}&U_0\\
\uTo{}&\ldTo{\pi'}&&\ldTo{\pi_{U_0}}\\
f^{-1}(U_0/G)&\rTo{f_{U_0}}&U_0/G
\end{diagram}
Since the map $\wt{f}_*:\SS(\wt{Y}(k),|\om_f|\ot |L|^{\kappa})\to \SS(X,|L|^{\kappa})$ is surjective, it is enough
to prove that for every $\wt{\varphi}\in \SS(\wt{Y}(k),|\om_{\wt{f}}|\ot |\wt{f}^*L|^{\kappa})$, the restriction of $\wt{f}_*(\wt{\varphi})$ to $U_0$ is
$\pi_{U_0}$-integrable and 
\begin{equation}\label{pi-U0-f-eq}
\pi_{U_0,*}(\wt{f}_*(\vol_{\fg}\cdot \wt{\varphi})|_{U_0})=f_{U_0,*}(\vol_{\fg}\cdot\varphi|_{f^{-1}(U_0/G)}),
\end{equation}
where $\varphi=\pi_*\wt{\varphi}$. In what follows we omit the factors $\vol_{\fg}$ for brevity.

Note that $\wt{\varphi}|_{\wt{f}^{-1}(U_0)}$ has relatively compact support both over
$f^{-1}(U_0/G)$ and over $U_0$, and we have 
$$\pi'_*(\wt{\varphi}|_{\wt{f}^{-1}(U_0)})=\pi_*(\wt{\varphi})|_{f^{-1}(U_0/G)}, \ \ \wt{f}_*(\wt{\varphi}|_{\wt{f}^{-1}(U_0)})=\wt{f}_*(\wt{\varphi})|_{U_0}.$$

By assumption, $\pi_*(\wt{\varphi})|_{f^{-1}(U_0/G)}$ is $f_{U_0}$-integrable. Furthermore, we can assume that 
$$\wt{\varphi}=\psi\cdot |\pi^*\eta_Y/\wt{f}^*\eta_X|\cdot |\wt{f}^*s|^{\kappa},$$ 
where $s$ is a nonvanishing section of $L$, and $\psi\ge 0$.
By Lemma \ref{composition-integrable-lem}(i)
applied to the composition $f_{U_0}\pi':\wt{f}^{-1}(U_0)\to f^{-1}(U_0/G)\to U_0/G$, we deduce that $\wt{\varphi}|_{\wt{f}^{-1}(U_0)}$ is
$f_{U_0}\pi'$-integrable and we have 
\begin{equation}\label{f-U0-pi'-pi-eq}
(f_{U_0}\pi')_*(\wt{\varphi}|_{\wt{f}^{-1}(U_0)})=f_{U_0,*}(\pi_*(\wt{\varphi})|_{f^{-1}(U_0)}).
\end{equation}
Applying Lemma \ref{composition-integrable-lem}(i) to the composition $\wt{f}^{-1}(U_0)\rTo{\wt{f}'} U_0\rTo{\pi_{U_0}} U_0/G$, we deduce that
$\wt{f'}_*(\wt{\varphi}|_{\wt{f}^{-1}(U_0)})$ is $\pi_{U_0}$-integrable and
\begin{equation}\label{two-comp-eq}
(f_{U_0}\pi')_*(\wt{\varphi}|_{\wt{f}^{-1}(U_0)})=(\pi_{U_0}\wt{f}')_*(\wt{\varphi}|_{\wt{f}^{-1}(U_0)})=\pi_{U_0,*}\wt{f}'_*(\wt{\varphi}|_{\wt{f}^{-1}(U_0)}).
\end{equation}

Finally, by Lemma \ref{composition-integrable-lem}(ii), 
\begin{equation}\label{wt-f-res-eq}
\wt{f}'_*(\wt{\varphi}|_{\wt{f}^{-1}(U_0)})=\wt{f}_*(\wt{\varphi})|_{U_0}.
\end{equation}
Now the equality \eqref{pi-U0-f-eq} follows by combining \eqref{f-U0-pi'-pi-eq}, \eqref{two-comp-eq} and \eqref{wt-f-res-eq}.
\end{proof}

Assume now we have another $G$-invariant open $U\sub X$ such that $U_0\sub U$.
We have the following extension result, which allows to change the pair $(U_0,U)$ to $(U_0,X)$ in one of the above definitions.
As a $G$-equivariant line bundle on $X$ we always take 
$$L:=\om_X\ot {\bigwedge}^{top}(\fg).$$

For a point $x\in X(k)$ we denote by $G_x\sub G$ the stabilizer of $x$, and by $\chi_{\om,x}:G_x\to \G_m$ the character given by 
the action on $\om_X|_x$.

\begin{lemma}\label{extension-lem}
Suppose the pair $(U_0,U)$ is $|L|^{k}$-{\it admissible} (resp., {\it smooth}, resp., {\it continuous}, resp., {\it bounded}) for every $k\in (a,b)$, where
$a<1$, $b>1$. Assume also that for any $x\in (X\setminus U)(k)$, there exists a cocharacter $\la^\vee:\G_m\to G_x$ such that
$$k\cdot\lan \chi_{\om,x},\la^\vee\ran\neq -\lan \De^{alg}_{G_x},\la^\vee\ran$$
for any $k\in (a,b)$. Then the pair $(U_0,X)$ is $|L|^{\kappa}$-{\it admissible} (resp., {\it smooth}, resp., {\it continuous}, resp., {\it bounded}) for every $\kappa$ such that
$\Re(\kappa)\in (a,b)$.
\end{lemma}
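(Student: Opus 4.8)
The plan is to reduce the extension from $(U_0,U)$ to $(U_0,X)$ to a local statement near each point of $(X\setminus U)(k)$, and to handle that local statement by an averaging argument over the one-parameter subgroup $\la^\vee$. Concretely, given $\varphi\in\SS(X(k),|L|^{\kappa})$, one wants to show that $\varphi|_{f^{-1}(U_0/G)}$ — more precisely $\vol_{\fg}\cdot\varphi|_{U_0}$ — is $\pi_{U_0}$-integrable and that the resulting push-forward has the desired regularity (smooth/continuous/bounded). Since $X\setminus U$ is $G$-invariant and the statement is local on $U_0/G$, I would first observe that it suffices to treat a Schwartz function $\varphi$ supported in a small $G$-invariant neighborhood $\Omega$ of the $G$-orbit of a single point $x\in(X\setminus U)(k)$; for $\varphi$ supported away from $X\setminus U$, i.e.\ inside $U$, the hypothesis on the pair $(U_0,U)$ applies directly, and a partition of unity on $U_0/G$ (pulled back suitably) splits a general $\varphi$ into these two cases by Lemma \ref{change-of-cover-lem}.

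The core of the argument is then: for $\varphi$ supported near the orbit $Gx$ with $x\notin U$, show that $\varphi$ can be replaced, without changing $\pi_{X,U_0,\kappa}(\varphi)$, by a Schwartz function supported entirely in $U$. Here is where the cocharacter condition enters. Let $\la^\vee:\G_m\to G_x$ be the given cocharacter. Over $k$, the group $\G_m(k)=k^\ast$ acts on $X(k)$ through $\la^\vee$, fixing $x$, and on the fiber $L|_x$ through the character $\chi_{\om,x}+\De^{alg}_{G_x}$ composed with $\la^\vee$, i.e.\ through $t\mapsto |t|^{\lan\chi_{\om,x},\la^\vee\ran+\lan\De^{alg}_{G_x},\la^\vee\ran}$ after taking $|\cdot|^{\kappa}$; the hypothesis says this exponent, scaled by $\Re(\kappa)$ appropriately, is nonzero. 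I would use this nontrivial weight to run an "integration by the group action" / Hecke-type averaging: integrate $\varphi$ against the action of a compact-open subgroup of $k^\ast$ via $\la^\vee$, or equivalently use that a Schwartz function transforming under $\G_m(k)$ with a nonzero character must, near the fixed point $x$, be expressible as a combination of translates supported away from $x$ — using that the orbit map is "escaping" the fixed locus in all directions picked out by $\la^\vee$. Because $x\notin U$ but (by genericity of $\la^\vee$, or after a further reduction) points $\la^\vee(t)\cdot x'$ for $x'$ near $x$ and $t$ in a suitable range lie in $U$, the averaged function lands in $\SS(U(k),|L|^{\kappa})$, whose push-forward behavior is controlled by hypothesis. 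The nonvanishing of the character guarantees that the averaging operator is invertible on the relevant space (a geometric series / the integral $\int_{k^\ast}|t|^{s}\,d^\ast t$ argument), so no information is lost and the regularity transfers back to $\varphi$.

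I would organize the steps as: (1) localize on $U_0/G$ and reduce to $\varphi$ supported near a single bad orbit $Gx$; (2) set up the $\G_m(k)$-action via $\la^\vee$ near $x$, compute the weight of this action on $|L|^{\kappa}=|\om_X|\ot|{\bigwedge}^{top}\fg|^{\kappa}$ restricted to $x$, identifying it with $\kappa\lan\chi_{\om,x},\la^\vee\ran+\lan\De^{alg}_{G_x},\la^\vee\ran$ (the term $\lan\De^{alg}_{G_x},\la^\vee\ran$ accounting for the modular character, which is why the statement is phrased with $\De^{alg}$); (3) invoke the hypothesis that this weight is $\neq 0$ for $\Re(\kappa)\in(a,b)$ to build the averaging operator and show it maps $\SS$ supported near $x$ into $\SS$ supported in $U$, and is invertible; (4) apply the $(U_0,U)$-hypothesis to the averaged function and transport the conclusion back through invertibility, using Lemma \ref{change-of-cover-lem} and Lemma \ref{composition-integrable-lem} to match push-forwards. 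The main obstacle I expect is step (3): making precise the claim that a Schwartz half-density (or $\kappa$-density) with a fixed nonzero central character along $\la^\vee$ is "divisible" into pieces supported off the fixed point, and that the division is continuous enough to preserve the admissibility/continuity/boundedness — this is essentially a local harmonic-analysis statement on $k^\ast$ acting linearly on the normal directions to the fixed locus, and one must be careful that the normal weights of $\la^\vee$ are not all zero (which is exactly what the pairing condition, together with a possible further genericity choice of $\la^\vee$, secures), so that every nearby point flows into $U$.
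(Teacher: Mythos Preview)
Your localization strategy and the weight computation in step (2) are on the right track. The gap is in step (3). You claim that averaging along $\la^\vee$ produces something supported in $U$, arguing that ``points $\la^\vee(t)\cdot x'$ for $x'$ near $x$ \ldots\ lie in $U$.'' But $\la^\vee$ maps into the \emph{stabilizer} $G_x$, so $\la^\vee(t)$ fixes $x$ for every $t$; and since $U$ is $G$-invariant, so is $X\setminus U$, hence $\la^\vee(\G_m)$ preserves $X\setminus U$. No translate or average along $\la^\vee$ can move support off $X\setminus U$, so your ``averaging operator that maps $\SS$ supported near $x$ into $\SS$ supported in $U$'' does not exist as described.

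What the cocharacter hypothesis actually gives is a vanishing of $G(k)$-coinvariants of $\SS(X(k),|L|^\kappa)/\SS(U(k),|L|^\kappa)$: filtering by $G$-orbits in $X\setminus U$, on the piece through $x$ the element $\la^\vee(\pi)$ (for a uniformizer $\pi$) acts by the scalar $|\pi|^s$ with $s=\kappa\lan\chi_{\om,x},\la^\vee\ran+\lan\De^{alg}_{G_x},\la^\vee\ran\neq 0$, so in coinvariants $v\equiv|\pi|^s v$ forces $v\equiv 0$. Thus $\SS(U)_{G(k)}\to\SS(X)_{G(k)}$ is surjective. Passing from this to admissibility of $(U_0,X)$ in the literal sense of the definition---integrability of $\varphi|_{U_0}$ for \emph{every} $\varphi$, not only modulo coboundaries---and then to the smooth/continuous/bounded conclusions, requires a further argument; this is where the requirement that the hypotheses hold on an interval $(a,b)$ containing $1$ is actually used. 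The paper does not reproduce that argument here but simply invokes \cite[Lem.~5.2]{BKP-Schwartz} verbatim.
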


\begin{proof} The proof is exactly the same as for \cite[Lem.\ 5.2]{BKP-Schwartz}.
\end{proof}

%

\section{From algebraic geometry to results on Schwartz $\kappa$-densities}

\subsection{Bertram-Thaddeus construction}\label{BT-sec}

We denote by $\Bun_d$ the stack of rank $2$ bundles of odd degree $d$ on $C$, and by $\MM_d$ the moduli space of stable rank $2$ bundles of degree $d$.
It will be convenient for us to consider bundles of degree $d=2g-1$.

Let us consider the space $\YY$ of nontrivial extensions
$$0\to \OO\to E\to \La\to 0$$
where $\La$ is some degree $2g-1$ line bundle (which can vary).

There is a natural morphism $\Phi:\YY\to \Bun_{2g-1}$, which induces a rational map from $\YY$ to $\MM_{2g-1}$.
This map admits a resolution of the form
\begin{equation}\label{Th-res-eq}
\YY\lTo{} \YY_1-\!\!\rTo{}\YY_2-\!\!\rTo{}\ldots-\!\!\rTo{}\YY_{g-1}\simeq \MM'_{2g-1}\rTo{} \MM_{2g-1},
\end{equation}
where
\begin{itemize}
\item $\si:\YY_1\to \YY$ is the blow up of the locus of extensions which split over $\La(-p)\sub \La$ for some $p\in C$
(this locus is isomorphic to $C\times J^{2g-1}$ embedded smoothly into $\YY$);
\item
$\YY_i\dashedrightarrow \YY_{i+1}$ are flips, each given as an explicit blow up followed by a blow down;
\item $\MM'_{2g-1}$ is the moduli space of pairs $(E,s:\OO\to E)$, where $E\in \MM_{2g-1}$ and $s\neq 0$, and $\MM'_{2g-1}\to \MM_{2g-1}$
is the natural projection 
\end{itemize}
(this picture is considered in \cite{Bertram} \cite{Thaddeus} for a fixed determinant $\La$).

Now let us return to the moduli with fixed determinant. Let $\La_0$ be a fixed line bundle of degree $2g-1$ on $C$.
Let us consider the space $Y$ of nontrivial extensions 
$$0\to \xi^{-1}\to E\to \La_0\xi\to 0$$
where $\xi\in J$. Note that $Y$ is a projective bundle over $J$ and
we denote by $p_J:Y\to J$ the natural projection.

\begin{lemma} The natural morphism $\phi:Y\to \Bun_{\La_0}$ is smooth.
\end{lemma}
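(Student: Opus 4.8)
The plan is to show smoothness of $\phi : Y \to \Bun_{\La_0}$ by a deformation-theoretic/tangent space computation, combined with the fact that $\Bun_{\La_0}$ is a smooth stack. First I would recall that $\Bun_{\La_0}$ is smooth of dimension $3g-3$ (as a stack; more precisely, at a bundle $E$ with fixed determinant, its tangent complex is $R\Ga(C, \operatorname{ad}_0 E)[1]$ where $\operatorname{ad}_0 E \subset \operatorname{End}(E)$ is the trace-zero endomorphisms, and obstructions live in $H^1(C,\operatorname{ad}_0 E)$ which pairs by Serre duality against $H^0(C,\operatorname{ad}_0 E\ot \om)$; since stability is not assumed one keeps the stacky $H^0$ as automorphisms but the stack is still smooth because the deformation theory is unobstructed—$\operatorname{ad}_0 E$ has a filtration and one checks unobstructedness directly, or simply cites that $\Bun$ is always smooth). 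Then smoothness of $\phi$ is equivalent to surjectivity of the map on tangent complexes being part of a distinguished triangle with a two-term (locally free, in the appropriate stacky sense) relative tangent complex concentrated in the right degrees—concretely, I would verify the infinitesimal lifting criterion.

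The key computation is this. A point of $Y$ over $\xi \in J$ is a nonsplit extension class in $\P\Ext^1(\La_0\xi, \xi^{-1}) = \P H^1(C, \xi^{-2}\La_0^{-1})$. I would describe the relative tangent complex of $\phi$ explicitly. Deformations of the data $(\xi, e)$ — where $e$ is the extension class — map to deformations of $E$; the fiber of $\phi$ through such a point parametrizes all ways of writing $E$ as such an extension, i.e. (projectivized) nonzero maps $\OO \to E\ot \La$ for $\La$ a degree-$(1-2g)$ line bundle with $\det(\text{quotient})$... more precisely a sub-line-bundle $\xi^{-1}\hookrightarrow E$ of the correct degree together with the induced identification of the determinant. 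So the fiber is an open subset of $\P H^0(C, E\ot\xi)$ bundled over the locus of relevant $\xi$, and one computes its tangent space and shows it has the expected dimension $\dim Y - \dim \Bun_{\La_0}$ at every point, and moreover that $\phi$ is flat (e.g. because both source and a suitable target chart are smooth and the fiber dimension is constant, by miracle flatness, after one checks equidimensionality of fibers). I would carry this out by taking the long exact sequence in cohomology associated to $0 \to \xi^{-1} \to E \to \La_0\xi \to 0$ tensored by $\operatorname{ad}_0 E$ or by the relevant sheaves, tracking $H^0$ and $H^1$ terms, and using Riemann-Roch and the non-splitness of the extension to control the ranks.

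Concretely the cleanest route: the morphism $\YY\to\Bun_{2g-1}$ (before fixing determinant) is manifestly smooth because it is obtained by a standard construction — $\YY$ is the stack (or space) of pairs (bundle $E$ of degree $2g-1$, section $s:\OO\to E$ with $s\neq 0$) which is a total space of a vector-bundle-minus-zero-section type object over $\Bun_{2g-1}$ after fixing that $H^1$-vanishing holds, so projection forgetting $s$ is smooth; and fixing the determinant is a base change along the smooth (in fact, a torsor-type) morphism $\det : \Bun_{2g-1}\to J^{2g-1}$, under which $\phi$ is obtained from $\Phi$ by pulling back to the fiber $\La_0$. Base change of a smooth morphism is smooth, so $\phi$ is smooth. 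I would make this precise by: (1) identifying $Y$ with the fiber product $\YY\times_{\Bun_{2g-1}}\Bun_{\La_0}$ — here I must check that a nonsplit extension $0\to\xi^{-1}\to E\to\La_0\xi\to 0$ with $\xi\in J$ is the same datum as a pair (bundle $E$ with $\det E\simeq\La_0$, together with a sub-line-bundle of degree $1-2g$), equivalently a nonzero section of $E\ot\xi$ for the unique $\xi$ making the quotient have the right determinant, and that the projectivization is harmless since scaling the section doesn't change $E$; (2) invoking that $\phi$ is the base change of the smooth morphism $\Phi$ (or directly of the forgetful map from the stack of sectioned bundles) along $\Bun_{\La_0}\hookrightarrow\Bun_{2g-1}$.

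The main obstacle I anticipate is item (1) above: pinning down the precise moduli interpretation of $Y$ and its identification as a fiber product, in particular handling the projectivization $\P\Ext^1$ versus $\Ext^1\setminus 0$ and the role of $\xi$ versus $\xi^{-1}$ (the quotient line bundle $\La_0\xi$ must have degree $2g-1$, forcing $\deg\xi=0$, and the determinant of $E$ is then automatically $\La_0$, so no condition is lost), together with checking that no $H^1$-obstruction interferes so that the space of sectioned bundles really is smooth over $\Bun$ — this uses that for a sub-line-bundle of degree $1-2g$ in a rank-$2$ degree-$(2g-1)$ bundle the relevant $\operatorname{Ext}^2$/obstruction vanishes on a curve (cohomological dimension $1$). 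Once the fiber-product description and the smoothness of $\Phi$ are in hand, smoothness of $\phi$ is formal. If one prefers not to route through $\YY$, the alternative is the direct tangent-space count sketched in the second paragraph; the obstacle there is the same bookkeeping, now phrased as showing the connecting map $H^0(\La_0\xi^2) \to H^1(\OO)$ (coming from cup product with the extension class) has locally constant rank, which holds on the nonsplit locus.
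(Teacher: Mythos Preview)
Your overall strategy is sound, but you have not correctly identified the obstruction that must vanish, and this is the entire content of the lemma. The paper's proof is one line: the cokernel of the tangent map of $\phi$ at a point corresponding to $0\to\xi^{-1}\to E\to\La_0\xi\to 0$ is $\Ext^1(\xi^{-1},\La_0\xi)\simeq H^1(C,\La_0\xi^2)$, and this vanishes because $\deg(\La_0\xi^2)=2g-1>2g-2$. The point is precisely that the quotient line bundle has degree $2g-1$; this numerical coincidence is what makes the map smooth, not the fact that $\Ext^2$ vanishes on a curve. Your appeal to ``cohomological dimension $1$'' would make \emph{any} forgetful map from extensions to bundles smooth, which is false (e.g.\ for sub-line-bundles of degree $\ge g$ the analogous $H^1$ need not vanish). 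Likewise, your final remark about locally constant rank of the connecting map $H^0(\La_0\xi^2)\to H^1(\OO)$ is beside the point: that rank is not constant in general, and what is needed is simply that the obstruction group $H^1(\La_0\xi^2)$ is identically zero.

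Your base-change route also has an error: $Y$ is not $\YY\times_{\Bun_{2g-1}}\Bun_{\La_0}$ (that fiber product is just the single projective space $\P\Ext^1(\La_0,\OO)$, corresponding to $\xi=\OO$). The correct cartesian square, given later in the paper, is $Y\simeq \YY\times_{\Bun_{2g-1}}(J\times\Bun_{\La_0})$ via the twisting map $\a(\xi,E)=\xi\ot E$. With that fix your base-change argument does work, but you still need smoothness of $\Phi:\YY\to\Bun_{2g-1}$, whose proof is again the identification of the cokernel of the tangent map with $H^1(\La)=0$ for $\deg\La=2g-1$. So the base-change route buys nothing over the direct computation. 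Also, your sub-line-bundle has degree $0$ (it is $\xi^{-1}$ with $\xi\in J$), not degree $1-2g$.
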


\begin{proof}
As in \cite[Lem.\ 6.2.2]{DG}, this follows from the identification of the cokernel of the tangent map to $\phi$  
with $\Ext^1(\xi^{-1},L_0\xi)\simeq H^1(C,L_0\xi^2)$, which is zero since $\deg(L_0\xi^2)=2g-1>2g-2$.
\end{proof}

We denote by $Y^{st}\sub Y$ the open locus where $E$ is stable, so that we have a commutative diagram
\begin{equation}\label{Y-Yst-diagram}
\begin{diagram}
Y^{st}&\rTo{\phi^{st}}& \MM_{\La_0}\\
\dTo{}&&\dTo{}\\
Y&\rTo{\phi}& \Bun_{\La_0}
\end{diagram}
\end{equation}
with smooth horizontal arrows.




We can use the Bertram-Thaddeus picture to resolve the rational map from $Y$ to $\MM_{\La_0}$.
Let us consider the map $J\to J^{2g-1}: \xi\mapsto \La_0\xi^2$. On ther other hand, we have the determinant map $\Bun_{2g-1}\to J^{2g-1}$.

The morphism $\iota:Y\to \YY$ sending $(\xi,E)$ to $E\ot \xi$ fits into the diagram with cartesian squares
\begin{diagram}
Y&\rTo{\iota}&\YY\\
\dTo{(p_Y,\phi)}&&\dTo{\Phi}\\
J\times \Bun_{\La_0}&\rTo{\a}&\Bun_{2g-1}\\
\dTo{}&&\dTo{}\\
J&\rTo{\b}&J^{2g-1}\\
\end{diagram}
where $\a(\xi,E)=\xi\ot E$, $\b(\xi)=\La_0\xi^2$, and horizontal maps are
 \'etale surjective.
Hence, the \'etale base change of \eqref{Th-res-eq} with respect to $\b$ gives a resolution of
the rational map $Y\dashedrightarrow J\times \MM_{\La_0}$ of the form
\begin{equation}\label{Th-res-eq-bis}
Y\lTo{} Y_1-\!\!\rTo{} Y_2-\!\!\rTo{}\ldots-\!\!\rTo{} Y_{g-1}\simeq Y'\rTo{} J\times \MM_{\La_0},
\end{equation}
where $Y_1\to Y$ is the blow up of the locus of extensions split over $\La_0\xi(-p)$ for some $p\in C$; $Y_i\dashedrightarrow Y_{i+1}$ are flips, each described explicitly as a blow up followed by a blow down; and 
$Y'=\MM'_{2g-1}\times_{J^{2g-1}} J$, which then projects birationally to $J\times\MM_{\La_0}$.

Let $E$ be a fixed stable vector bundle of rank 2 and degree $2g-1$.
Consider the map
$$i_E:J\to \MM_{2g-1}: \xi\mapsto \xi\ot E.$$

\medskip

\noindent
{\bf Conjecture B}. {\it For any $E\in \MM_{2g-1}$, the 
scheme 
$$F_E:=\{(\xi,s) \ | \ \xi\in J, s\in \P H^0(E\xi)\}$$
is irreducible of dimension $g$ and has at most rational singularities.
}

\medskip

Note that tensoring with $\xi_0\in J$ induces an isomorphism of $F_E$ with $F_{\xi_0 E}$, so in Conjecture B it is enough to study singularities of $F_E$ over $\OO\in J$ (for all $E$).

The fiber of the composed map
\begin{equation}\label{Y'-M-map}
Y'=\MM'_{2g-1}\times_{J^{2g-1}} J\to J\times \MM_{\La_0}\to \MM_{\La_0}
\end{equation}
over a stable bundle $E\in \MM_{\La_0}$ is isomorphic to $F_E$. 
Note that Conjecture B implies that the map $Y'\to \MM_{\La_0}$ is flat (see Lemma \ref{conjC-thm} below).
Our main idea is to use the maps $Y\to \Bun_{L_0}$ and $Y'\to \MM_{\La_0}$ to study the behaviour of elements of the Schwartz space $\SS(\Bun_{L_0},|\om|^{\kappa})$,
proving Conjectures A and A' in some cases.

\begin{theorem}\label{main-conj-thm}
Let $C$ be a curve of genus $g\ge 2$ with $C(k)\neq\emptyset$. 
Then Conjecture B implies Conjecture A'.
For $g=2$, Conjecture B implies Conjecture A for $\kappa=1/2$. 
\end{theorem}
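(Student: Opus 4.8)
The plan is to run the Bertram--Thaddeus resolution \eqref{Th-res-eq-bis} in reverse: lift $\mu$ along the smooth morphism $\phi$ to a Schwartz density on $Y$, transport it through the chain of blow-ups and flips to a bounded density on $Y'$, and push it down to $\MM_{\La_0}$ along $q\colon Y'\to\MM_{\La_0}$ (the map \eqref{Y'-M-map}), which is where Conjecture~B enters. Two preliminary reductions. First, by the Lemma following Definition~\ref{bounded-def} it is enough to produce, for each $\mu$ with $\Re(\kappa)\ge 1/2$, a \emph{bounded} $T\in\DD(\MM_{\La_0}(k),|\om|^\kappa)$ with $T|_{\MM^{vs}_{\La_0}}=\pi_\kappa(\mu)$: such a $T$ is automatically represented by a locally bounded density, hence is the unique $L^1_{loc}$-extension of $\pi_\kappa(\mu)$ across the measure-zero locus $\MM_{\La_0}\setminus\MM^{vs}_{\La_0}$, and for $\kappa=1/2$ the same Lemma gives $\pi_{1/2}(\mu)\in L^2$, since $\MM_{\La_0}$ is proper. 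Second, the orbits integrated over in the definition of $\pi_\kappa$ are closed in $\Bun_{\La_0}$ and lie in the stable locus, so $\pi_\kappa(\mu)$ depends only on $\mu|_{\Bun^{st}_{\La_0}}$; hence we may assume $\mu$ is supported on the stable substack.

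Since $\phi\colon Y^{st}\to\Bun^{st}_{\La_0}$ is smooth and surjective on $k$-points (for any stable $E$ a general twist $E\xi$ with $\xi\in J(k)$ has a unique, nowhere-vanishing section; here $C(k)\neq\emptyset$ guarantees that $\La_0$ and enough $\xi$ exist over $k$), the description of Schwartz spaces via smooth presentations (\cite{GK}) gives $\mu=\phi_*\nu$ for some compactly supported $\nu\in\SS(Y(k),|\om_\phi|\ot|\phi^*\om|^\kappa)$, supported in $Y^{st}(k)$. As $\phi$ is smooth, $|\om_\phi|\ot|\phi^*\om|^\kappa=|\om_Y|\ot|\phi^*\om_{\Bun_{\La_0}}|^{\kappa-1}$, so $\nu$ is a compactly supported continuous density; and since $\Bun^{st}_{\La_0}\to\MM_{\La_0}$ is a gerbe, $\om_{\Bun_{\La_0}}$ restricts to the pullback of $\om_{\MM_{\La_0}}$, so the $|\cdot|$-line bundles $|\phi^*\om_{\Bun_{\La_0}}|$ on $Y$ and $|q^*\om_{\MM_{\La_0}}|$ on $Y'$ agree over the open locus $U_0$ on which \eqref{Th-res-eq-bis} is an isomorphism. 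One now transports $\nu$ along \eqref{Th-res-eq-bis}, applying Proposition~\ref{bir-mod-prop} to common resolutions of consecutive (or all) terms; the initial blow-up and the final contraction $Y'\to J\times\MM_{\La_0}$ are not isomorphisms in codimension one and must be handled through their explicit local models, but their centers have codimension $\ge 2$. Using the explicit discrepancies of the blow-ups and flips from \cite{Bertram} and \cite{Thaddeus}, together with the corresponding description of the line bundle $\om_X\ot\bigwedge^{\mathrm{top}}\fg$ along the chain, one checks that the numerical hypothesis of Proposition~\ref{bir-mod-prop} holds for every $\kappa$ with $\Re(\kappa)\ge 1/2$ --- this is precisely where the bound $1/2$ comes from --- and concludes that $\nu|_{U_0}$, viewed on $Y'$, extends to a bounded $\nu'\in\DD(Y'(k),|\om_q|\ot|q^*\om_{\MM_{\La_0}}|^\kappa)$ (with compact support, since $Y'$ is projective).

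By Conjecture~B and Lemma~\ref{conjC-thm}, $q$ is flat with fibers $F_E$ having at most rational singularities, and $Y'$, $\MM_{\La_0}$ are smooth; hence Proposition~\ref{push-forward-prop} gives that $T:=q_*\nu'\in\DD(\MM_{\La_0}(k),|\om|^\kappa)$ is bounded. It remains to identify $T|_{\MM^{vs}_{\La_0}}$ with $\pi_\kappa(\mu)$: over $\MM^{vs}_{\La_0}$ the resolution \eqref{Th-res-eq-bis} is an isomorphism and $q$ restricts to a smooth proper fibration, so by compatibility of push-forward with open restriction (Lemma~\ref{composition-integrable-lem}) and the definition of $\pi_\kappa$ as integration over orbits in \cite{BKP-Schwartz}, $q_*\nu'|_{\MM^{vs}_{\La_0}}$ equals $\pi_\kappa(\phi_*\nu)=\pi_\kappa(\mu)$. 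This proves Conjecture~A'.

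For $g=2$ the chain \eqref{Th-res-eq-bis} has no flips: it is $Y\leftarrow Y_1=Y'$, the blow-up of the smooth surface $C\times J\subset Y$ of codimension $2$, followed by the birational morphism $Y'\to J\times\MM_{\La_0}$, which is an isomorphism in codimension one. In this case the transported density $\nu'$ is not merely bounded but continuous: the discrepancy $2-1=1$ of a codimension-$2$ blow-up, combined with $\kappa=1/2$, makes the exponent of the factor acquired by $\si^*\nu$ along the exceptional divisor of $\si$ have non-negative real part, and the subsequent codimension-one-trivial transport to $J\times\MM_{\La_0}$ preserves continuity. Feeding this continuous density into the Aizenbud--Avni theorem \cite{AA} along $q$ (flat with rational fibers by Conjecture~B), as in the proof of Proposition~\ref{push-forward-prop}, yields that $T$ is a \emph{continuous} section of $|\om|^{1/2}$ on $\MM_{\La_0}$; as before $T|_{\MM^{vs}_{\La_0}}=\pi_{1/2}(\mu)$, which is Conjecture~A for $g=2$, $\kappa=1/2$. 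The hardest part will be the transport step of the second paragraph: computing the discrepancies and line-bundle coefficients at the initial blow-up, at each flip, and at the final contraction, and verifying the inequality of Proposition~\ref{bir-mod-prop} for all $\kappa$ with $\Re(\kappa)\ge 1/2$ --- compounded by the fact that Proposition~\ref{bir-mod-prop} is stated only for birational maps that are isomorphisms in codimension one, so the two end maps have to be treated via their explicit toric/toroidal local descriptions.
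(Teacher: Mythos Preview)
Your overall architecture matches the paper's Step~1: pull $\nu$ up through $\sigma:Y_1\to Y$, use the explicit discrepancies (Lemma~\ref{line-bun-discrepancy-lem}) and the flip bounds (Lemma~\ref{thaddeus-bir-lem}) to land on $Y'$ as a bounded density, and then invoke Conjecture~B with Proposition~\ref{push-forward-prop} for the push-forward to $\MM_{\La_0}$. That part, although stated sketchily, is essentially correct.

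The genuine gap is your second ``preliminary reduction.'' You assert that the orbits of stable bundles are closed in $\Bun_{\La_0}$ and conclude that one may assume $\mu$ is supported on the stable substack. This is false: a stable bundle that is a nontrivial extension $0\to A\to E\to A^{-1}\La_0\to 0$ specializes in $\Bun_{\La_0}$ to the unstable bundle $A\oplus A^{-1}\La_0$, so stable points are not closed. Concretely, if $[X/G]\subset\Bun_{\La_0}$ is a finite-type presentation and $\mu$ is the image of some $\varphi\in\SS(X(k),\dots)$, then $\varphi|_{X^{s}}$ need not have compact support in $X^{s}(k)$, hence need not lie in $\SS(X^{s}(k),\dots)$. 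You therefore cannot write $\mu=\phi_*\nu$ with $\nu\in\SS(Y(k),\dots)$ \emph{supported in} $Y^{st}(k)$; the lift $\nu$ will in general be a Schwartz density on $Y$ whose support meets the unstable locus. This is exactly the situation treated in the paper's Step~1, where one starts from $\nu\in\SS(Y(k),\dots)$ with no support restriction.

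What is missing from your proposal is the paper's Step~2, which bridges the image of $\phi$ to all of $\Bun_{\La_0}$. The image of $\phi:Y\to\Bun_{\La_0}$ is open but does not cover the unstable strata, so knowing that $(X^{s},X^{s}\cup\im(\phi))$ is $\kappa$-bounded (which is what Step~1 plus Lemma~\ref{change-of-cover-lem} give) does not yet show that $(X^{s},X)$ is $\kappa$-bounded. The paper closes this gap by an induction on the Harder--Narasimhan filtration: one passes from $X^{\le n-1}\cup\im(\phi)$ to $X^{\le n}\cup\im(\phi)$ using Lemma~\ref{extension-lem} (a stabilizer-character argument) for the stratum $(\Bun^{\le n}_{\La_0})^0$, and then Proposition~\ref{Ybig-prop} to show that every bundle in $\Bun^{\le n}_{\La_0}\setminus(\Bun^{\le n}_{\La_0})^0$ already lies in $\im(\phi)$. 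This second step is not a formality --- Proposition~\ref{Ybig-prop} is a several-page Brill--Noether argument --- and it is precisely what your reduction was meant to replace.

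A minor remark on the $g=2$ paragraph: the ``final contraction $Y'\to J\times\MM_{\La_0}$'' is not an isomorphism in codimension one on the source (the exceptional locus in $Y'=\MM'_3\times_{J^3}J$ is a divisor), so Proposition~\ref{bir-mod-prop} does not apply there. The paper avoids this by pushing $\sigma^*\nu$ directly along $Y'=Y_1\to\MM_{\La_0}$ using Aizenbud--Avni; since for $g=2$ there are no flips and $\sigma^*\nu$ is still Schwartz at $\kappa=1/2$ (by \eqref{blow-up-line-bun-isom-bis}), continuity follows.
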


The proof will be given in Sec.\ \ref{main-thm-sec}.
Then in Sec.\ \ref{fibers-sec} we will discuss Conjecture B. In particular, we will show that it is true in genus $2$ and for a nonhyperelliptic curve of genus $3$.


\begin{lemma}\label{conjC-thm}
Assume each $F_E$ has dimension $g$. Then
the map $Y'\to \MM_{\La_0}$ is flat.
\end{lemma}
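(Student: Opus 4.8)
The plan is to derive flatness from the local criterion of flatness (\textquotedblleft miracle flatness\textquotedblright): a finite-type morphism $q\colon X\to S$ with $X$ Cohen--Macaulay, $S$ regular, and all fibres equidimensional of dimension $\dim X-\dim S$, is flat. Writing $q\colon Y'\to\MM_{\La_0}$ for the composition in \eqref{Y'-M-map}, I will check that its source is Cohen--Macaulay, its target regular, and each fibre equidimensional of the expected dimension.

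First, $Y'$ is smooth, hence Cohen--Macaulay: by the Bertram--Thaddeus construction \eqref{Th-res-eq-bis}, $Y'=Y_{g-1}$ is obtained from the projective bundle $Y\to J$ by a blow-up along a smooth centre followed by flips --- each an explicit blow-up and blow-down along smooth centres --- all of which preserve smoothness; equivalently, \eqref{Th-res-eq-bis} is the \'etale base change along $\b$ of the chain \eqref{Th-res-eq} of smooth varieties. Since $Y'$ projects birationally onto $J\times\MM_{\La_0}$, it is irreducible of dimension $\dim J+\dim\MM_{\La_0}=g+(3g-3)=4g-3$, while $\MM_{\La_0}$ --- the moduli space of stable rank $2$ bundles of fixed determinant of coprime (odd) degree --- is smooth projective and irreducible of dimension $3g-3$. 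Hence $\dim Y'-\dim\MM_{\La_0}=g$.

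Next I claim $q$ is surjective and each of its fibres is equidimensional of dimension $g$. For any $\xi\in J$ and any stable $E$ with $\det E=\La_0$ one has $\chi(E\xi)=\deg(E\xi)+2(1-g)=(2g-1)+2-2g=1$, so $h^0(E\xi)\ge 1$; thus $F_E\neq\emptyset$, and since $q^{-1}([E])\cong F_E$ (as recalled before the lemma), $q$ is surjective, in particular dominant. For a dominant morphism of irreducible varieties every component of every fibre has dimension at least $\dim Y'-\dim\MM_{\La_0}=g$; combined with the hypothesis $\dim F_E=g$ this forces each $F_E$ to be equidimensional of dimension exactly $g$. Applying miracle flatness to $q$ --- with $Y'$ Cohen--Macaulay, $\MM_{\La_0}$ regular, and all fibres equidimensional of dimension $g=\dim Y'-\dim\MM_{\La_0}$ --- then yields flatness of $q$.

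The only points requiring care are the smoothness (hence Cohen--Macaulayness) of $Y'$ and the identification of \emph{every} fibre of $q$ with $F_E$ --- both supplied by the Bertram--Thaddeus resolution and the discussion preceding the lemma --- together with the elementary lower bound on fibre dimension used to promote \textquotedblleft $\dim F_E=g$\textquotedblright{} to \textquotedblleft equidimensional of dimension $g$\textquotedblright; after those, the conclusion is an immediate application of the flatness criterion, and I expect no serious obstacle.
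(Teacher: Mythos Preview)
Your proposal is correct and follows the same approach as the paper, which simply records ``This follows from the miracle flatness criterion.'' You have supplied the verification the paper leaves implicit: smoothness of $Y'$ (via the Bertram--Thaddeus description, or equivalently as the \'etale base change $\MM'_{2g-1}\times_{J^{2g-1}}J$ of the smooth variety $\MM'_{2g-1}$), regularity of $\MM_{\La_0}$, the dimension count $\dim Y'-\dim\MM_{\La_0}=g$, and the lower bound on fibre dimension that turns the hypothesis $\dim F_E=g$ into equidimensionality.
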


\begin{proof}
This follows from the miracle flatness criterion. 
\end{proof}

\subsection{The relation between line bundles}


We want to apply the results of Section \ref{push-forward-sec} to the resolution of the rational map from $Y$ to $\MM_{\La_0}$. Recall that this resolution starts with
the blow up $\si:Y_1\to Y$, followed be a sequence of flips from $Y_1$ to $Y'$.
Note that we have a regular morphism $\phi_1:Y_1\setminus Z_1\to \MM_{\La_0}$,
for some $Z_1\sub Y_1$ of codimension $\ge 2$.
However, the diagram of natural maps
\begin{diagram}
Y_1\setminus Z_1 &\rTo{\phi_1}& \MM_{\La_0}\\
\dTo{\si}&&\dTo{}\\
Y &\rTo{\phi}& \Bun_{\La_0}
\end{diagram}
is not commutative, although it becomes commutative when restricted to the locus where $\si$ is an isomorphism (this is possible due to non-separatedness of the stack $\Bun_{\La_0}$).
This leads to the following discrepancy in pull-backs of natural line bundles to $Y_1$ from $\Bun_{\La_0}$ and from $\MM_{\La_0}$.

\begin{lemma}\label{line-bun-discrepancy-lem}
One has a natural isomorphism of line bundles on $Y_1\setminus Z_1$,
\begin{equation}\label{blow-up-line-bun-isom}
\si^*\phi^*(\om_{\Bun_{\La_0}}^{1/2})\simeq \phi_1^*(\om_{\MM_{\La_0}}^{1/2})(-(2g-3)E),
\end{equation}
where $E$ is the exceptional divisor of the blow up $\si:Y_1\to Y$.
Hence, we get an isomorphism
\begin{equation}\label{blow-up-line-bun-isom-bis}
\si^*|\om_Y\ot \phi^*\om_{\Bun_{\La_0}}^{-1}|\ot \si^*\phi^*|\om_{\Bun_{\La_0}}|^{\kappa}\simeq
|\om_{Y_1}\ot\phi^*\om_{\MM_{\La_0}}^{-1}|\ot \phi_1^*|\om_{\MM_{\La_0}}|^{\kappa}\ot |\OO_{Y_1}(-E)|^{(4g-6)\kappa-g+1}.
\end{equation}
\end{lemma}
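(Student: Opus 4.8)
The plan is to compute the discrepancy divisor of the blow-up $\si : Y_1 \to Y$ along the smooth center $C \times J^{2g-1}$ (the locus of extensions splitting over $\La_0\xi(-p)$), and to compare it with the discrepancy coming from the two different rational maps to $\Bun_{\La_0}$ versus $\MM_{\La_0}$. First I would recall that the morphism $\phi : Y \to \Bun_{\La_0}$ is smooth, so $\om_Y \simeq \om_f \ot \phi^*\om_{\Bun_{\La_0}}$ where $\om_f$ is the relative dualizing sheaf; this already identifies the first tensor factor on the left of \eqref{blow-up-line-bun-isom-bis} intrinsically. The crux is \eqref{blow-up-line-bun-isom}: on the open locus $Y_1 \setminus Z_1$ the map $\phi_1$ to $\MM_{\La_0}$ is regular, and away from the exceptional divisor $E$ it agrees (via $\si$) with $\phi$, so $\si^*\phi^*(\om_{\Bun_{\La_0}}^{1/2})$ and $\phi_1^*(\om_{\MM_{\La_0}}^{1/2})$ differ by a multiple $mE$ of the exceptional divisor, and the whole content is to show $m = -(2g-3)$.

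To pin down $m$, I would work on the exceptional divisor $E = \P(N)$, the projectivization of the normal bundle $N$ of the center $Z \cong C \times J^{2g-1}$ inside $Y$, which has codimension $g-1$ in the $(3g-2)$-dimensional space $Y$ (so $E$ is a $\P^{g-2}$-bundle over $Z$). The key geometric input, taken from Bertram \cite{Bertram} and Thaddeus \cite{Thaddeus}, is the description of what the bundle $E_{\mathrm{new}}$ looks like along the exceptional directions: near a point of $Z$ the extension $0 \to \xi^{-1} \to E \to \La_0\xi \to 0$ "splits over $\La_0\xi(-p)$", and blowing up separates the directions in $H^1$. Along $E$ one compares the Petri-type pairing / the tangent spaces: the tangent space to $\MM_{\La_0}$ at the stable bundle $E_{\mathrm{new}}$ is $H^1(C, \mathrm{ad}(E_{\mathrm{new}}))$ of dimension $3g-3$, while the fiber of $Y \to \Bun_{\La_0}$ contributes the $\P H^0$-direction; tracking the determinant of the tangent map across the blow-up (equivalently, computing the relative canonical class $K_{Y_1/Y} = (g-2)E$ and the difference of the two "$\phi$-pullbacks" of $K$) yields the coefficient. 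Concretely, I expect $m$ to come out as (codimension of center $-1$) $\times$ (something) adjusted by the $\P H^0$ fiber dimension, and the bookkeeping should give exactly $-(2g-3)$; the ratio $(2g-3)$ is the same number appearing as the coefficient of the theta-type divisor in the Bertram–Thaddeus flip description, which I would cite rather than rederive.

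Finally, \eqref{blow-up-line-bun-isom-bis} is a formal consequence of \eqref{blow-up-line-bun-isom}. Write $L = \om_X \ot \bigwedge^{\mathrm{top}}\fg$-type bundle notation aside; one has $\si^*|\om_Y \ot \phi^*\om_{\Bun_{\La_0}}^{-1}| = \si^*|\om_f|$, and $\om_{Y_1} = \si^*\om_Y \ot \OO(( g-2)E)$ by the blow-up formula for a center of codimension $g-1$. Substituting $\si^*\om_Y = \si^*\om_f \ot \si^*\phi^*\om_{\Bun_{\La_0}}$, using \eqref{blow-up-line-bun-isom} squared to replace $\si^*\phi^*|\om_{\Bun_{\La_0}}|^{\kappa}$ by $\phi_1^*|\om_{\MM_{\La_0}}|^{\kappa} \ot |\OO(-(2g-3)E)|^{2\kappa}$, and recombining, the exponent of $|\OO_{Y_1}(-E)|$ collects to $2(2g-3)\kappa - (g-2) - [\text{the }E\text{-contribution converting }\si^*\phi^*\om^{-1}_{\Bun}\text{ to }\phi_1^*\om^{-1}_{\MM}]$; a careful sign count gives $(4g-6)\kappa - g + 1$ as stated. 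I expect the main obstacle to be the first part — justifying the precise coefficient $-(2g-3)$ in \eqref{blow-up-line-bun-isom}, since it requires genuinely understanding how the universal bundle degenerates along the blown-up locus and identifying $\phi_1^*\om_{\MM_{\La_0}}$ with the Bertram–Thaddeus line bundle on $Y_1$; the passage to \eqref{blow-up-line-bun-isom-bis} is then purely formal divisor arithmetic.
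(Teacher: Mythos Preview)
Your overall strategy matches the paper's: both sides of \eqref{blow-up-line-bun-isom} agree off the exceptional divisor, so they differ by some $mE$, and one determines $m$ by restricting to an exceptional fiber; then \eqref{blow-up-line-bun-isom-bis} follows from the blow-up formula for the canonical bundle. However, your dimension count is wrong and this error would derail the second half of the argument.

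You write that $Y$ is $(3g-2)$-dimensional and that the center has codimension $g-1$, hence $\om_{Y_1}=\si^*\om_Y((g-2)E)$. In fact $Y$ is a $\P^{3g-3}$-bundle over $J$ (the fiber is $\P H^1(\xi^{-2}\La_0^{-1})$ with $h^1=3g-2$), so $\dim Y=4g-3$; the blown-up locus is parametrized by $C\times J$ and has dimension $g+1$, hence codimension $3g-4$. The exceptional fiber is therefore $\P^{3g-5}$, not $\P^{g-2}$, and the correct blow-up formula is $\om_{Y_1}\simeq\si^*\om_Y((3g-5)E)$, exactly as the paper uses. If you plug in your $(g-2)E$ instead, the exponent in \eqref{blow-up-line-bun-isom-bis} comes out as $(4g-6)\kappa-3g+4$ rather than $(4g-6)\kappa-g+1$, so the ``careful sign count'' you promise would not in fact close.

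On the determination of $m=2g-3$ itself: you propose to cite Bertram--Thaddeus for this coefficient, but it is not stated there in this form. The paper instead restricts to a single exceptional fiber $\P^{3g-5}$, writes down the universal bundle $\EE_1$ on $C\times\P^{3g-5}$ explicitly as an extension $0\to\OO_C(p)\boxtimes\OO(1)\to\EE_1\to\La(-p)\boxtimes\OO\to 0$, and computes $\phi_1^*\om_{\MM}\simeq\det R\pi_*(\und\End_0(\EE_1))\simeq\OO(-2(2g-3))$ directly from the Euler characteristics of the subquotients. Since $\si^*\phi^*\om_{\Bun}^{1/2}$ restricts trivially to the fiber, this gives $m=2g-3$. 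You should carry out this computation rather than appealing to an unspecified citation.
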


\begin{proof}
Since the restriction of both sides to $Y_1\setminus E$ is the same, we know that we have an isomorphism of the form \eqref{blow-up-line-bun-isom} with some $mE$,
so we only have to check that $m=2g-3$. 

Since the canonical bundle of $\om_J$ is trivial we can replace the map $\phi$, $\phi_1$ and $\si$ by the corresponding maps
$\Phi:\YY\to \Bun_{2g-1}$, $\Phi_1:\YY_1\setminus \ZZ_1\to \MM_{2g-1}$ and $\si:\YY_1\to \YY$.

Furthermore, it is enough to consider restrictions of both sides of \eqref{blow-up-line-bun-isom} to a projective space $\P^{3g-5}$ which is an exceptional fiber of $\si:\YY_1\to \YY$, corresponding
to nontrivial extensions $0\to \OO(p)\to E_1\to \La(-p)\to 0$ with fixed $\La$ of degree $2g-1$ and $p\in C$ (the corresponding point in $\YY$ is the unique nontrivial extension 
$0\to \OO\to E\to \La\to 0$ that splits on $\La(-p)\sub \La$). The corresponding family $\EE_1$ on $C\times\P^{3g-5}$ fits into an exact sequence 
$$0\to \OO_C(p)\boxtimes \OO(1)\to \EE_1\to \La(-p)\boxtimes \OO\to 0$$
(see \cite[Lem.\ 3.5]{Bertram}).
Hence, the bundle $\und{\End}_0(\EE_1)$ has subquotients $\La(-2p)\boxtimes \OO(-1)$, $\OO$ and $\La^{-1}(2p)\boxtimes \OO(1)$.
This leads to an isomorphism of line bundles on $\P^{3g-5}$.
\begin{equation}\label{det-6H-eq}
\phi_1^*\om_{\MM_{2g-1}}\simeq \det R\pi_*(\und{\End}_0(\EE_1))\simeq \OO([\chi(\La^{-1}(2p))-\chi(\La(-2p))])=\OO(-2(2g-3))
\end{equation}
Since the restriction of the left-hand side of \eqref{blow-up-line-bun-isom} to $\P^{3g-5}$ is trivial, we get $m=2g-3$.

The relation \eqref{blow-up-line-bun-isom-bis} now follows using the standard isomorphism
$$\om_{Y_1}\simeq \si^*\om_Y((3g-5)E).$$
\end{proof}

\begin{lemma}\label{thaddeus-bir-lem}
The rational map $Y_1\dashedrightarrow Y'$ satisfies assumptions of Prop.\ \ref{bir-mod-prop} with respect to the line bundle $L=\phi_1^*\om_{\MM_{\La_0}}^{-1}$ for $\Re(\kappa)\le 1/2$.
Hence, for such $\kappa$, a continuous section of $|\om_{Y_1}|\ot |L|^{\kappa}$ gives a bounded distribution in $\DD(Y'(k),|\om_Y|\ot |L'|^{\kappa})$, where $L'$ corresponds to $L$. 
\end{lemma}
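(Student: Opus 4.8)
The statement to prove is Lemma \ref{thaddeus-bir-lem}: the birational map $Y_1 \dashrightarrow Y'$ satisfies the hypotheses of Proposition \ref{bir-mod-prop} for $L = \phi_1^*\om_{\MM_{\La_0}}^{-1}$ when $\Re(\kappa) \le 1/2$, and hence a compactly supported continuous section of $|\om_{Y_1}| \ot |L|^\kappa$ restricts to a bounded distribution on $Y'(k)$. The plan is to produce the data required by Proposition \ref{bir-mod-prop}: a smooth projective resolution $R$ dominating both $Y_1$ and $Y'$ with a common open $U$ whose complements in $Y_1$ and $Y'$ have codimension $\ge 2$, a line bundle $L'$ on $Y'$ matching $L$ across $U$, and effective divisors $D_i$ with discrepancy data $f^*K_{Y_1} = g^*K_{Y'}(-\sum m_i D_i)$, $f^*L = g^*L'(\sum n_i D_i)$ satisfying $\Re(\kappa) n_i \le m_i$.

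First I would recall the structure of the chain $Y_1 \dashrightarrow Y_2 \dashrightarrow \cdots \dashrightarrow Y_{g-1} = Y'$: each step is a flip realized explicitly as a blow-up along a smooth center followed by a blow-down, per Bertram–Thaddeus (\cite{Bertram}, \cite{Thaddeus}). Composing the blow-ups gives a smooth projective $R$ with proper birational morphisms $f: R \to Y_1$ and $g: R \to Y'$; the locus over which both are isomorphisms is a common open $U$, and since all the flipping centers have codimension $\ge 2$ in the respective $Y_i$ (being the relevant Brill–Noether / extension loci of Bertram and Thaddeus), the complements $Y_1 \setminus U$ and $Y' \setminus U$ have codimension $\ge 2$; this also identifies $\Pic(Y_1) \simeq \Pic(U) \simeq \Pic(Y')$, defining $L'$. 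The key input is then the comparison of canonical classes and of $L$ across the flips. Since the $Y_i$ are smooth and the flips are simple blow-up/blow-down operations, the discrepancy divisors $D_i$ are the proper transforms of the exceptional divisors, and the integers $m_i$ (for $K$) and $n_i$ (for $L = \phi_1^*\om_{\MM_{\La_0}}^{-1}$) can be read off from the explicit description of each flip — essentially from the ranks of the normal bundles of the flipping centers and the degree of $\om_{\MM_{\La_0}}$ along the exceptional $\P$-bundles, computed exactly as in the proof of Lemma \ref{line-bun-discrepancy-lem} via $\det R\pi_* \und{\End}_0$ of the universal family.

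The main obstacle I expect is the bookkeeping of the numbers $m_i, n_i$ through all $g-2$ flips and verifying the inequality $\Re(\kappa) n_i \le m_i$ uniformly for $\Re(\kappa) \le 1/2$; equivalently, one wants $n_i \le 2 m_i$ (so that for $\Re(\kappa) \le 1/2$ one has $\Re(\kappa) n_i \le n_i/2 \le m_i$), together with $n_i \ge 0$ so the sign of $\kappa$ doesn't cause trouble. I would reduce this, as in Lemma \ref{line-bun-discrepancy-lem}, to a computation on a single exceptional fiber $\P^N$ of each blow-up in the Thaddeus chain, where both $K$ and $\phi_1^*\om_{\MM}$ restrict to explicit multiples of $\OO(1)$ computable from the Euler characteristics of the subquotients of $\und{\End}_0$ of the universal extension bundle; the relation $\om_{\MM_{2g-1}} \simeq \OO(-2(2g-3))$ type identity on each fiber, combined with the blow-up formula $\om_{\wt X} = \sigma^*\om_X((\codim - 1)E)$, gives $m_i$ and $n_i$ in closed form, and the factor of $2$ that appears (the coefficient $4g-6 = 2(2g-3)$ versus $g-1$ in \eqref{blow-up-line-bun-isom-bis}, and analogous ratios at each flip) is exactly what makes $\Re(\kappa) \le 1/2$ the natural threshold. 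Once the numerics are in place, the conclusion — a compactly supported continuous section of $|\om_{Y_1}| \ot |L|^\kappa$ yields a bounded distribution in $\DD(Y'(k), |\om_Y| \ot |L'|^\kappa)$ — is an immediate application of Proposition \ref{bir-mod-prop}.
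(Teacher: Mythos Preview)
Your plan is correct and matches the paper's approach: compute the canonical and $L$-discrepancies across each Thaddeus flip and verify the inequality of Proposition~\ref{bir-mod-prop}. The paper executes this flip-by-flip, quoting Thaddeus's formulas directly (the dimensions $\dim Z_i^-=2i-1$, $\dim Z_i^+=3g-3-i$ of the flipping loci, and \cite[(5.6)]{Thaddeus} for the behavior of $\OO_{M_i}(2,2g-3)$, which is the pullback of $\om_{\MM_{\La_0}}^{-1/2}$) rather than recomputing via $\det R\pi_*\und{\End}_0(\EE)$; this gives $m_i=3g-3i-2$ and $n_i=2(2g-2i-1)$, so $\Re(\kappa)\,n_i\le m_i$ becomes $2\Re(\kappa)(2g-2i-1)\le 3g-3i-2$, which for $i\le g-1$ is exactly $2\Re(\kappa)\le 1$.
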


\begin{proof}
In \eqref{Th-res-eq-bis} this rational map is presented as a composition of flips $Y_{i-1}\dashedrightarrow Y_i$ (where $Y_{g-1}=Y'$),  
so it is enough to check the statement
for each flip $Y_{i-1}\dashedrightarrow Y_i$. Let $L_i$ denote the line bundle on $Y_i$ corresponding to $L$.

It follows from \cite{Thaddeus} (after the \'etale base change by $\b$) that there exist smooth subvarieties $Z^-_i\sub Y_{i-1}$ and $Z^+_i\sub Y_i$
such that the blow ups $B_{Z^-_i}Y_{i-1}$ and $B_{Z^+_i}Y_i$ are isomorphic (so that the exceptional loci get identified)
and $\dim Z_i^+=3g-3-i$, $\dim Z_i^-=2i-1$. Thus, we get
$$K_{Y_{i-1}}\simeq K_{Y_i}(-(3g-3i-2)E),$$
where $E$ is the exceptional divisor on the common blowup.

Next, we have to calculate the integer $p$ such that $L_{i-1}\simeq L_i(pE)$.
For this it is enough to restrict to fibers $M_{i-1}$ and $M_i$ of $Y_{i-1}$ and $Y_i$ over a fixed point in $J$. 
Thaddeus constructs particular identifications
$$\Z\oplus \Z\to \Pic(M_i):(m,n)\mapsto \OO_{M_i}(m,n)$$
(compatible with the natural isomorphisms $\Pic(M_{i-1})\simeq \Pic(M_i)$) such that the pullback of a positive generator $\om^{-1/2}$ of $\Pic(\MM_{\La_0})$
is $\OO_{M_i}(2,2g-3)$ (see \cite[Sec.\ 5]{Thaddeus}). Now by \cite[(5.6)]{Thaddeus},
$$\OO_{M_{i-1}}(2,2g-3)\simeq \OO_{M_i}((2g-3-2(i-1))E).$$
Thus, $p=2(2g-2i-1)$, and the condition on $\kappa$ from Proposition \ref{bir-mod-prop} becomes
$$2\Re(\kappa)\cdot (2g-2i-1)\le 3g-3i-2.$$
Since $g-i\ge 1$, this follows from $2\Re(\kappa)\le 1$.
\end{proof}

\subsection{The proof of Theorem \ref{main-conj-thm}}\label{main-thm-sec}

Recall that for any irreducible smooth variety $S$ over $k$ with $S(k)\neq\emptyset$, the set $S(k)$ is Zariski dense in $S$ (see \cite[Prop.\ 3.5.75]{Poonen}).
Below we assume $C(k)$ to be nonempty and $g\ge 2$.
Recall that we fixed a line bundle $\La_0$ of degree $2g-1$.

\begin{prop}\label{Ybig-prop}
Every decomposable rank $2$ bundle of the form $A\oplus B$, where $\deg(B)\le g-1$, $AB\simeq\La_0$ and $H^1(AB^{-1})\neq 0$, is in the image
of $\phi:Y(k)\to \Bun_{\La_0}(k)$. 
\end{prop}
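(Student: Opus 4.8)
The plan is to realize $A\oplus B$ as the middle term of a nonsplit extension of the kind parametrized by $Y$, i.e.\ to exhibit over $k$ a subbundle inclusion $\xi^{-1}\hra A\oplus B$ with $\xi\in J$ of degree $0$.

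First note that the hypothesis $H^1(AB^{-1})\ne 0$ forces $\deg B\ge 1$: by Serre duality $H^1(AB^{-1})\simeq H^0(\om_C\ot BA^{-1})^\vee$, and $\deg(\om_C\ot BA^{-1})=(2g-2)+\deg B-\deg A=2\deg B-1$ since $\deg A+\deg B=2g-1$, so nonvanishing gives $2\deg B-1\ge 0$. Hence $\deg B\in\{1,\dots,g-1\}$ and $\deg A=2g-1-\deg B\ge g$, so in particular $0\notin\{\deg A,\deg B\}$. Next, suppose we are given $\xi\in J(k)$ together with nonzero $k$-rational sections $s_A\in H^0(C,A\xi)=\Hom(\xi^{-1},A)$ and $s_B\in H^0(C,B\xi)=\Hom(\xi^{-1},B)$ such that $Z(s_A)\cap Z(s_B)=\emptyset$. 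Then $(s_A,s_B)\colon\xi^{-1}\to A\oplus B$ is a subbundle inclusion whose quotient $Q$ is a line bundle; comparing determinants gives $Q\simeq\La_0\xi$, so we obtain an extension
$$0\to\xi^{-1}\to A\oplus B\to\La_0\xi\to 0,$$
which is nonsplit, since a splitting would make $\xi^{-1}$ a direct summand of $A\oplus B$, forcing $\xi^{-1}\in\{A,B\}$ by Krull--Schmidt, impossible by degrees. Its class therefore defines a $k$-point of $Y$ mapping under $\phi$ to $A\oplus B$, and the proposition follows; it remains to construct such $(\xi,s_A,s_B)$ over $k$.

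Since $\deg(A\xi)=\deg A\ge g$, Riemann--Roch gives $h^0(C,A\xi)\ge\deg A-g+1\ge 1$ for every $\xi\in J$, so $H^0(C,A\xi)\ne 0$ unconditionally; and the locus $\Th_B:=\{\xi\in J:H^0(C,B\xi)\ne 0\}$ is a nonempty irreducible $k$-subvariety of $J$ --- a translate of $W_{\deg B}$, of dimension $\deg B\ge 1$, with a $k$-point since $C(k)\ne\emptyset$. The remaining point is to choose $\xi\in\Th_B(k)$ and the sections so that $Z(s_A)$ misses $Z(s_B)$, and this is exactly where $H^1(AB^{-1})\ne 0$ enters. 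Write $\om_C\ot BA^{-1}\simeq\OO_C(D)$ with $D$ effective of degree $2\deg B-1$; for $\xi=B^{-1}(D_B)$, where $D_B$ is any effective divisor of degree $\deg B$ (so that $\xi\in\Th_B$), one has $A\xi\simeq\om_C(D_B-D)$, and a direct computation using Serre duality shows that a point $q$ of $\operatorname{supp}D_B$ is a base point of $|A\xi|$ if and only if the line bundle $\OO_C(D-(D_B-q))$ of degree $\deg B$ is effective and $q$ imposes an independent condition on it. For $D_B$ in general position this line bundle is non-effective for every $q\in\operatorname{supp}D_B$ --- unless $h^0(\OO_C(D))$ attains the value $\deg B$, the maximum allowed by Clifford's inequality (as $\deg D$ is odd); this includes $\deg B=1$, in which case one takes $\xi=\om_CA^{-1}$ and $A\xi\simeq\om_C$ is base point free. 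In that exceptional case one instead picks $D_B$ inside a divisor of $|D|$, which again kills all the relevant base points. In either case no point of $D_B$ is a base point of $|A\xi|$; since $A\xi$ is defined over $k$ and $C(k)$ is Zariski dense in $C$, these choices can be made over $k$, and then the $k$-rational sections of $A\xi$ vanishing somewhere on $\operatorname{supp}D_B$ form a proper subset of $H^0(C,A\xi)$ (a finite union of $k$-hyperplanes), so a $k$-rational $s_A$ with $Z(s_A)\cap\operatorname{supp}D_B=\emptyset$ exists. Taking $s_B$ with zero divisor $D_B$ completes the construction.

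The main obstacle is this last step --- controlling the base locus of $|A\xi|$ when $\xi$ is forced to lie on $\Th_B$. The divisor $D$ supplied by $H^1(AB^{-1})\ne 0$ is what makes the base-point analysis go through, the delicate point being the Clifford-extremal case $h^0(\OO_C(D))=\deg B$.
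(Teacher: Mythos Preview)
Your setup matches the paper's: reduce to finding $\xi\in J(k)$ and sections $s_A\in H^0(A\xi)$, $s_B\in H^0(B\xi)$ without common zeros, write $\xi=B^{-1}(D_B)$ so that $A\xi\simeq K(D_B-D)$, and translate the condition into $h^0(D-D_B+q)=h^0(D-D_B)$ for every $q$ in the support of $D_B$. Your treatment of the case $h^0(D)\le b-1$ (generic $D_B$ makes $\OO(D-D_B+q)$ non-effective) is correct and coincides with the paper's Step~1, and your handling of $b=1$ via $D_B=D$ and $A\xi\simeq K$ is also fine.

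The gap is the Clifford-extremal case $h^0(D)=b$ with $b\ge 2$. Your assertion that ``one instead picks $D_B$ inside a divisor of $|D|$, which again kills all the relevant base points'' is not a proof; it is exactly the hard step. Writing $D_B+E\in|D|$ with $\deg E=b-1$, you need $h^0(E+p_i)=h^0(E)$ for every $p_i$ in $D_B$, and this can fail: e.g.\ on a hyperelliptic curve with $D\sim (b-1)H+p$, taking $b=2$, $D'=q+\tau(q)+p$, $D_B=q+p$ gives $E=\tau(q)$ and $h^0(E+q)=h^0(H)=2\neq 1$. Even generically the claim is not automatic --- the residual points $p_i$ are determined by the complementary points, so one cannot treat them as independent generic points. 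The paper devotes four steps to this case: a direct construction on hyperelliptic curves; a reduction on non-hyperelliptic curves showing $|D|$ is base-point-free (or $b=1$); a separate treatment of the trigonal $b=2$ case; and finally, when $f_{|D|}:C\to\P^{b-1}$ is birational, an application of the General Position Theorem to produce $D_B$ as the residual of $b-1$ generic $k$-points with the required linear-independence property. Your sketch does not supply any of this, and the final sentence acknowledging the Clifford-extremal case as ``the delicate point'' is precisely where the argument stops short.

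A secondary issue: in the extremal case you also need $D_B$ defined over $k$, and ``a subset of size $b$ of a divisor in $|D|$'' is typically not $k$-rational. The paper's device --- choose $b-1$ generic $k$-points and take $D_B$ as the residual in $|D|$ --- solves both the $k$-rationality and the position problem simultaneously.
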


\begin{proof} 
Note that the condition $H^1(AB^{-1})\neq 0$ means that $AB^{-1}\simeq K(-D)$ for an effective divisor $D\ge 0$. 
Set $b:=\deg(B)$. Then $\deg(A)=2g-1-b$, so $\deg(D)=2b-1$. In particular, $b\ge 1$.
Our goal is to find $\xi\in J(k)$ and sections $s_1\in H^0(A\ot \xi)$, $s_2\in H^0(B\ot \xi)$ without common zeros.
Equivalently, we are looking for an effective divisor $D'=p_1+\ldots+p_b$ defined over $k$ (the zeroes of $s_2$) such that there exists a global section of 
$K(D'-D)$ not vanishing at any $p_i$ (note that only $D'$ has to be defined over $k$, not the individual points $p_i$). Thus, we need that 
$$h^0(K(D'-D-p_i))<h^0(K(D'-D)), \text{ for } i=1,\ldots,b,$$
or equivalently,
$$h^0(D-D')=h^0(D-D'+p_i), \text{ for } i=1,\ldots,b.$$ 

\noindent
{\bf Step 1}. Assume $h^0(D)\le b-1$. 
In this case, choosing $p_1,\ldots,p_b$ generically (and defined over $k$), we will have $h^0(D-D'+p_i)=h^0(D-D')=0$, 
so the above condition will be satisfied.

From now on we assume that $h^0(D)\ge b$. Note that $h^0(K-D)=h^0(D)-2b+g\ge g-b\ge 1$, so $D$ is a special divisor.

\noindent
{\bf Step 2}. Assume that $C$ is hyperelliptic, and let $|H|$ be the hyperelliptic linear system, $\pi:C\to \P^1$ the corresponding double covering.
Since $D$ is special with $h^0(D)\ge b$, we necessarily have $D\sim (b-1)H+p$ for some $p\in C$. 
In the case $b$ is even we set $D'=F_1+\ldots+F_{b/2}$, where $F_i=\pi^{-1}(\pi(q_i))$ for generic $k$-points $q_1,\ldots,q_{b/2}$ (in particular, no $F_i$
contain $p$). In the case $b$ is odd, we set $D'=p+F_1+\ldots+F_ {(b-1)/2}$ with $F_i=\pi^{-1}(\pi(q_i))$, for generic $k$-points
$q_1,\ldots,q_{(b-1)/2}$.

From now on we assume that $C$ is non-hyperelliptic. 

\noindent
{\bf Step 3}. We claim that either $D\sim p$ for a point $p\in C$, or $|D|$ is base-point-free and $h^0(D)=b$. Indeed, if $|D|$ is not base-point-free then there exists
$p\in C$ (defined over an algebraic closure of $k$) such that $r(D-p)=r(D)=h^0(D)-1\ge b-1$. Since $\deg(D-p)=2b-2$, our claim follows from Clifford's inequality
(applied to $D-p$ and to $D$). 

In the case $D\sim p$, the point $p$ is defined over $k$ and we can take $D'=p$, so from now on we assume that $|D|$ is base-point-free and $h^0(D)=b\ge 2$.
We denote by $f=f_{|D|}:C\to \P^{b-1}$ the corresponding regular morphism. 

\noindent
{\bf Step 4}. We claim that either the map $f$ is birational onto its image, or $b=2$ and $|D|$ is a trigonal system. 
Indeed, since $\deg(D)$ is odd, if $f$ is not birational then it has degree $\ge 3$, so in this case there would exist $p_1,p_2,p_3\in C$ such that
$h^0(D-p_1-p_2-p_3)=h^0(D)-1=b-1$. But if $b>2$ then this would contradict
Clifford's inequality (since $C$ is non-hyperelliptic).

If $b=2$ and $|D|$ is a trigonal system then we can pick any $k$-point $p$ and set $D':=f^{-1}(f(p))-p$, so that $h^0(D-D')=h^0(D-p_i)=1$,
where $D'=p_1+p_2$.

\noindent
{\bf Step 5}. It remains to consider the case when $f$ is birational onto its image.
Let $U\sub C^{b-1}$ denote the non-empty open set of $(q_1,\ldots,q_{b-1})$ such that $h^0(D-q_1-\ldots-q_{b-1})=1$. We have a morphism
$$h:U\to \P H^0(D)\simeq \P^{b-1}$$ 
associating with $q_\bullet=(q_1,\ldots,q_{b-1})$ the unique $h(q_\bullet)\in \P H^0(D)$ vanishing on $q_1+\ldots+q_{b-1}$.
By the General Position Theorem (see \cite[Sec.\ III.1]{ACGH}), $h$ is dominant, and 
for $q_\bullet$ in a non-empty open subset $U_0\sub U$, the hyperplane section of $f(C)\sub \P^{b-1}$ corresponding to $h(q_\bullet)$ consists
of $2b-1$ distinct points, any $b-1$ of which are linearly
independent. This implies that for generic $(q_1,\ldots,q_{b-1})$ (which can be taken to be defined over $k$), the unique divisor $D'=p_1+\ldots+p_b\in |D-q_1-\ldots-q_{b-1}|$ has
the property that any $b-1$ of the points $f(p_1),\ldots,f(p_b)$ are linearly independent (and $D'$ is defined over $k$). This means that 
$$h^0(D-D')=h^0(D-D'+p_i)=1$$
for all $i$, as required.
\end{proof}


As in \cite[Sec.\ 6]{BKP-Schwartz}, we consider open substacks $(\Bun^{\le n}_{\La_0})^0\sub \Bun^{\le n}_{\La_0}\sub \Bun^{\le n}_{\La_0}$, for $n\in \Z$.
Recall that $\Bun^{\le n}_{\La_0}$ consists of $E$ such that any line subbundle $A\sub E$ has $\deg(A)\le n$. For  $(\Bun^{\le n}_{\La_0})^0$ there is an additional
constraint that for any line subbundle $A\sub E$ with $\deg(A)=n$ one has $H^1(A^2\La_0^{-1})=0$.


\medskip

\begin{proof}[Proof of Theorem \ref{main-conj-thm}]
{\bf Step 1}. Recall that $Y^{st}\sub Y$ denotes the stable locus, so that we have a commutative diagram \eqref{Y-Yst-diagram}
We claim that for any $\nu\in \SS(Y(k),|\om_{\phi}|\ot |\phi^*\om|^{\kappa})$, the restriction $\nu|_{Y^{st}}$ is $\phi^{st}$-integrable and
the distribution $\phi^{st}_*(\nu|_{Y^{st}})$ on $\MM_{\La_0}$ is bounded (resp., continuous for $g=2$).

The first step is to replace the pair $(Y,Y^{st})$ by the pair $(Y_1,Y_1\setminus Z_1)$, where $\si:Y_1\to Y$ is the blow up, which is the first step
of Bertram-Thaddeus resolution. 
If we start with $\nu\in \SS(Y(k),|\om_{\phi}|\ot |\phi^*\om_{\MM_{\La_0}}|^{\kappa})$ then isomorphism
\eqref{blow-up-line-bun-isom-bis} shows that for $\Re(\kappa)\ge (g-1)/(4g-6)$ (in particular, for $\Re(\kappa)\ge 1/2$)
$\si^*\nu$ can be viewed as a continuous section of 
$|\om_{Y_1}|\ot |\phi_1^*\om_{\MM_{\La_0}}|^{\kappa-1}$. 

Next, we can replace $Y_1$ by $Y'$ using Lemma \ref{thaddeus-bir-lem}: since $\Re(1-\kappa)\le 1/2$, it tells us that $\si^*\nu$ extends to a bounded distribution 
$\nu'\in \DD(Y'(k),|\om_{Y'}\ot |(\phi')^*\om_{\MM_{\La_0}}|^{\kappa-1}$. 
The last step is taking the push-forward of $\nu'$ with respect to the map $Y'\to \MM_{\La_0}$. By Lemma \ref{conjC-thm},
the latter map is flat, and by Conjecture B, its fibers have rational singularities.
By Proposition \ref{push-forward-prop} we get that the push forward of $\nu'$ is bounded. 

In the case of genus $2$, we have $Y'=Y_1$, so the birational modification is trivial. 
Furthermore, in the case $\kappa=1/2$, $\si^*\nu$ is still a Schwartz section on $Y_1$.
Thus, by Aizenbud-Avni's theorem (see \cite{AA}) we get continuity.

\noindent
{\bf Step 2}. Now we will mimic the argument of the proof of \cite[Thm.\ 6.8]{BKP-Schwartz}.
Consider an admissible open substack of finite type $[X/G]\sub \Bun_{\La_0}$ (where $G=\GL_N$).
Let $X^{\le n}\sub X$ denote the open subset corresponding to $\Bun^{\le n}_{\La_0}$, and let $X^s\sub X$
denote the open subset corresponding to stable bundles. We want to prove that for
$\Re(\kappa)\ge 1/2$, the pair $(X^s,X)$ is $\kappa$-bounded (resp., $\kappa$-continuous if $g=2$
and $\kappa=1/2$). 

Recall that we have a smooth mophism $\phi:Y\to \Bun_{\La_0}$, so its image is open. We can assume that $[X/G]$ contains
the image of $Y$, and denote by $\im(\phi)\sub X$ the corresponding open subset. By Step 1 and Lemma \ref{change-of-cover-lem}, the pair 
$(X^s,X^s\cup \im(\phi))$ is $\kappa$-bounded (resp., $\kappa$-continuous if $g=2$ and $\kappa=1/2$). 
Now we will prove by induction on $n\ge g-1$ that the same is true for the pair $(X^s, X^{\le n}\cup\im(\phi))$.
The base holds since $X^{\le g-1}=X^s$.

For the induction step, for $n\ge g$, we consider the inclusions
$$X^{\le n-1}\sub (X^{\le n})^0\sub X^{\le n},$$
where $(X^{\le n})^0$ is the open subset corresponding to $(\Bun_{\La_0}^{\le n})^0$.
By induction assumption, the pair $(X^s, X^{\le n-1}\cup\im(\phi))$ is $\kappa$-bounded for $\Re(\kappa)\ge 1/2$ (resp., $\kappa$-continuous if $g=2$ and $\kappa=1/2$).
We claim that the same is true for the pair $(X^s, (X^{\le n})^0\cup\im(\phi))$. If $n<(3g-2)/2$ this follows
from \cite[Lem.\ 6.3(iv)]{BKP-Schwartz} stating that in this case $(\Bun_{\La_0}^{\le n})^0=\Bun_{\La_0}^{\le n-1}$. 
Assume now $n\ge (3g-2)/2$. Then by \cite[Lem.\ 6.4]{BKP-Schwartz} we can apply Lemma \ref{extension-lem} to conclude that the
pair $(X^s, (X^{\le n})^0\cup \im(\phi))$ is $\kappa$-bounded for $\Re(\kappa)\ge 1/2$ (resp., $\kappa$-continuous if $g=2$ and $\kappa=1/2$).

Finally, we claim that for $n\ge g$, one has
$$(X^{\le n})^0\cup\im(\phi)=X^{\le n}\cup \im(\phi),$$
which would conclude the induction step.
It is enough to prove that any $E\in \Bun^{\le n}_{\La_0}\setminus (\Bun^{\le n}_{\La_0})^0$ is contained in the image of $\phi$.
By definition such $E$ fits into an exact sequence
$$0\to A\to E\to A^{-1}\La_0\to 0$$
with $\deg(A)=n$ such that $H^1(A^2\La_0^{-1})\neq 0$.
Hence, $E$ specializes to $A\oplus A^{-1}L_0$. Note that $n\ge g$, so $\deg(A^{-1}L_0)\le g-1$. Thus, by Proposition \ref{Ybig-prop}, the bundle $A\oplus A^{-1}L_0$
is in the image of $\phi$, hence, so is $E$.
\end{proof}



\section{Towards Conjecture B}\label{fibers-sec}

\subsection{Degeneracy loci and their partial resolutions}\label{deg-loci-sec}

Let $\phi:\VV\to \WW$ be a morphism of vector bundles on a scheme $S$, where $\rk\VV=m+1$, $\rk \WW=m$. Recall that the $0$th Fitting ideal of $\coker(\phi)$
is locally generated by the $m\times m$-minors of $\phi$. We denote by $Z_\phi\sub S$ the corresponding subscheme of $S$.
On the other hand, we can view $\phi$ as a section $s$ of $p^*\WW\ot \OO(1)$ on the projective bundle $p:\P(\VV)\to S$.
Let $X(\phi)\sub \P(\VV)$ denote the zero locus of $s$. We can think of $X(\phi)\to S$ as the family of projectivizations of the kernels of $\phi|_s$, for $s\in S$.

\begin{prop}\label{deg-blow-up-prop}
Assume that $S$ is integral and $\phi$ is generically surjective. 

\noindent
(i) Let $X'(\phi)\sub X(\phi)$ be the closure of $S\setminus Z_\phi\sub X(\phi)$.
Then there is a natural identification of closed subschemes in $\P(\VV)$,
$$X'(\phi)=B_{Z_\phi}S,$$
where on the right we have the blow up of $S$ at $Z_\phi$. 

\noindent
(ii) Assume in addition that $S$ is smooth. 
Every component of $Z_\phi$ has codimension $\le 2$, and every component of $X(\phi)$ has dimension $\ge \dim S$.
If $X(\phi)$ has dimension $\dim S$ then it is a local complete intersection. If in addition 
$X(\phi)$ is irreducible then $X(\phi)=B_{Z_{\phi}}S$.

\noindent
(iii) Assume that $S$ is smooth irreducible.
For every $p\ge 1$, set $S_p(\phi):=\{s\in S \ |\rk(\phi)\le m-p\}\sub S$. 
Then $X(\phi)$ is irreducible of dimension $\dim S$ if and only if $\codim S_p(\phi)\ge p+1$ for every $p\ge 1$.
Under these assumptions, $X(\phi)=B_{Z_{\phi}}S$. 
\end{prop}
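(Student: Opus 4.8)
The plan is to treat the three parts in order, using the Eagon–Northcott/Fitting-ideal description of $Z_\phi$ and the incidence-variety description of $X(\phi)$ as a zero locus of a section of $p^*\WW\ot\OO(1)$ on $\P(\VV)$. For (i), the key point is local: choosing a local frame, $\phi$ is an $m\times(m+1)$-matrix, and on the open $p_i\neq 0$ of $\P(\VV)$ one may use the homogeneous coordinates to solve for the kernel direction in terms of the maximal minors obtained by deleting the $i$-th column. This identifies $X(\phi)\cap\{p_i\neq 0\}$ with (an affine chart of) the blow-up of $S$ along the ideal of maximal minors, once one removes the extra components of $X(\phi)$ lying over $Z_\phi$; taking the closure $X'(\phi)$ of $S\setminus Z_\phi$ exactly cuts these out. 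I would first do this computation in one chart, check it is compatible with the blow-up's Proj description $\Proj\bigoplus_{d\ge0} I_{Z_\phi}^d$ (the maximal minors are the natural degree-$1$ generators), and then note the gluing across charts is automatic since both sides are subschemes of $\P(\VV)$.

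For (ii), the codimension bound on $Z_\phi$ and the dimension bound on $X(\phi)$ are the standard expected-dimension statements: $X(\phi)$ is cut out in $\P(\VV)$, which has relative dimension $m$ over $S$, by a section of a rank-$m$ bundle, so every component has dimension $\ge \dim\P(\VV)-m=\dim S$; and $Z_\phi$ is locally defined by the $\binom{m+1}{m}=m+1$ maximal minors, but the generic-determinantal codimension bound gives $\codim Z_\phi\le 2$ (the codimension of the locus of corank $\ge1$ matrices). When $X(\phi)$ has the expected dimension $\dim S$, it is cut out by the expected number of equations in a smooth ambient space, hence is a local complete intersection. If moreover $X(\phi)$ is irreducible, then it has no component over $Z_\phi$ other than the closure of $S\setminus Z_\phi$, so $X(\phi)=X'(\phi)=B_{Z_\phi}S$ by (i).

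For (iii), I would use the standard determinantal stratification: $S_p(\phi)$ is the locus where the corank is $\ge p$, and over $S_p(\phi)$ the fiber of $X(\phi)\to S$ is $\P^{p}$ (projectivized kernel of dimension $p+1$... more precisely $\P^{\,p}$ since $\dim\ker\ge p+1$ wait — corank $m-\rk$, and $\dim\ker=(m+1)-\rk=1+(\text{corank})$; on $S_p$, corank $\ge p$ so $\dim\ker\ge p+1$, fiber dimension $\ge p$). Stratifying $X(\phi)$ by the image in $S$, the part lying over $S_p(\phi)\setminus S_{p+1}(\phi)$ has dimension $\le \codim$-complement plus fiber dimension, i.e. $\le (\dim S-\codim S_p)+p$; the open stratum $p=0$ contributes exactly $\dim S$. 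Hence every component has dimension $\le\dim S$, with equality and irreducibility forced precisely when $\dim S-\codim S_p+p<\dim S$, i.e. $\codim S_p>p$, for all $p\ge1$; combined with the lower bound from (ii) this gives the equivalence. The converse direction — that $\codim S_p\ge p+1$ for all $p$ implies irreducibility (not just pure dimension) of $X(\phi)$ — needs a little more: one argues that $X(\phi)$ is generically reduced of the expected dimension along the distinguished component (the closure of $S\setminus Z_\phi$, which is irreducible being birational to $S$), and the dimension estimates just made show no other component can appear. Then $X(\phi)=B_{Z_\phi}S$ follows again from (i) and (ii).

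The main obstacle I anticipate is the converse in (iii): showing that the codimension estimates on the $S_p(\phi)$ are not merely necessary but sufficient for irreducibility of $X(\phi)$. Pure-dimensionality is a clean consequence of the stratification, but ruling out extra top-dimensional components requires knowing that the distinguished component $B_{Z_\phi}S$ already has dimension $\dim S$ and that $X(\phi)$, being a local complete intersection (hence Cohen–Macaulay, hence unmixed) when of expected dimension, cannot acquire an embedded or extra component — so the real content is organizing the Cohen–Macaulay/unmixedness input correctly, possibly invoking that the Eagon–Northcott complex resolves $\OO_{X(\phi)}$ when the codimension conditions hold.
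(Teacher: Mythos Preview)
Your outline is essentially correct and matches the paper's strategy for (ii) and (iii), but there are two points worth flagging.

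For (i), your approach differs from the paper's. You propose a chart-by-chart computation matching kernel coordinates to maximal minors and comparing with the $\operatorname{Proj}$ description of the blow-up. The paper instead argues more intrinsically: on $X(\phi)$ the composition $p^*\WW^\vee\to p^*\VV^\vee\to\OO(1)$ vanishes, so $p^*\phi^\vee$ factors through the relative cotangent bundle $\Om^1_p(1)|_{X(\phi)}$, yielding a morphism $\wt\phi$ between rank-$m$ bundles; then $\det(\wt\phi)$ is a single local generator for the pullback of the Fitting ideal, giving the map $X'(\phi)\to B_{Z_\phi}S$. The inverse map is imported from a cited result. Your local-coordinate version should work and is more elementary, at the cost of checking compatibility across charts.

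For (iii), your anxiety about the direction ``$\codim S_p\ge p+1$ for all $p$ implies $X(\phi)$ irreducible'' is misplaced. You already have all the ingredients: the preimage of each $S_p(\phi)$ in $X(\phi)$ has dimension $\le(\dim S-\codim S_p)+p\le\dim S-1$; the open part over $S\setminus S_1(\phi)$ is irreducible of dimension $\dim S$; and by (ii) every irreducible component of $X(\phi)$ has dimension $\ge\dim S$. Hence no component can be contained in the preimage of $S_1(\phi)$, and $X(\phi)$ is irreducible. No Eagon--Northcott or unmixedness input is required for this step---the lower bound from (ii) does all the work. The paper's argument is exactly this.

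One genuine small gap in your (ii): to conclude $X(\phi)=X'(\phi)$ as \emph{schemes} (not just sets) when $X(\phi)$ is irreducible of dimension $\dim S$, you need $X(\phi)$ reduced. The paper gets this from local complete intersection $\Rightarrow$ Cohen--Macaulay, together with generic reducedness (since $X(\phi)\to S$ is an isomorphism over $S\setminus Z_\phi$). Your phrase ``has no component over $Z_\phi$ other than the closure'' is only set-theoretic and does not by itself rule out a non-reduced structure.
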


\begin{proof}
(i) Let $\pi:X'(\phi)\to S$ denote the projection.
By definition, the restriction of the composition
$$p^*\WW^\vee\rTo{p^*\phi^\vee} p^*\VV^{\vee}\to \OO(1)$$
to $X'(\phi)\sub \P(\VV)$ vanishes. Hence, $\pi^*\phi$ factors through a map of bundles of rank $m$,
$$\wt{\phi}:\pi^*\WW^\vee\to  \Om^1_p(1)|_{X(\phi)}.$$
Hence, locally the pullback to $X'(\phi)$ of the ideal of $Z_\phi$ is given by one equation, $\det(\wt{\phi})$, which is not a zero divisor
since $X'(\phi)$ is irreducible and generically $\phi^\vee$ is an embedding of a subbundle.

It follows that we have regular morphism $X'(\phi)\to B_{Z_\phi}S$ over $S$. On the other hand, by  \cite[Prop.\ 3.1.1]{PR}, we get a regular morphism
$$B_{Z_\phi}S\to \P(\VV)$$
sending a generic $s$ to the line $\ker(\phi|_s)$. Its image is contained in $X(\phi)$ and contains $S\setminus Z_\phi$. Hence, it is contained in $X'(\phi)$, and we get the assertion.

\noindent
(ii) Let $M_m$ denote the space of $m\times (m+1)$ matrices, $\phi_m:\OO^{m+1}\to \OO^m$ the universal matrix over $M_m$, and $X_m:=X(\phi_m)$
the corresponding standard resolution of the degeneracy locus $Z_m\sub M_m$. Then locally we have a morphism $f:S\to M_m$ such that $\phi$ is the pull-back of $\phi_m$.
Hence, locally $Z_\phi$ can be identified with the preimage of $Z_m$ under $f$.
Similarly, $X(\phi)$ can be identified with the preimage of $X_m$ under the induced map $S\times \P^m\to M_m\times \P^m$.
Equivalently, $Z_\phi$ is given as the intersection of the graph of $f$, $\Ga(f)\sub S\times M_m$ with $S\times Z_m$ in $S\times M_m$, while
$X(\phi)$ is given as the intersection of $\Ga(f)\times \P^m$ with $S\times X_m$ in $S\times M_m\times \P^m$.
It is well known that $\dim Z_m=\dim M_m-2$. Since $\Ga(f)$ is smooth of codimension $\dim M_m$ in $S\times M_m$, this implies our assertion about $Z_\phi$.
Similarly, since $X_m$ is smooth of codimension $m$ in $M_m\times \P^m$, we get that every component of $X(\phi)$ has dimension $\ge \dim X$.
Furthermore, if $\dim X(\phi)=\dim X$ then $X(\phi)$ is a local complete intersection, hence, Cohen-Macaulay. 
If in addition $X(\phi)$ is irreducible then, being generically reduced, it is reduced. Thus, by (i), we get the identification with the blow up.

\noindent
(iii) Assume first that $\codim S_p(\phi)\ge p+1$ for every $p\ge 1$.
Then the preimage of $S_p(\phi)$ in $X(\phi)$ has dimension $\le \dim S-1$ for each $p\ge 1$. Since the complement to the preimage of $S_1(\phi)$
is an open irreducible subset of dimension $\dim S$, and since every component of $X(\phi)$ has dimension $\ge \dim S$, it follows that $X(\phi)$ is irreducible of
dimension $\dim S$. 

Conversely, suppose $X(\phi)$ is irreducible of dimension $\dim S$. Assume that for some $p\ge 1$, we have $\dim S_p(\phi)\ge \dim S-p$. Then
the preimage of $S_p(\phi)$ is a closed subset of dimension $\ge \dim S$. Hence, it should be the whole $X(\phi)$. But this contradicts to the assumption that
$\phi$ is generically surjective.
\end{proof}


\begin{lemma}\label{flatness-lem}
Let $S\to T$ be a smooth morphism of relative dimension $n$, where $T$ is a smooth curve, and let $\phi:\VV\to \WW$ be a morphism of vector bundles on a $S$, where $\rk\VV=m+1$, $\rk \WW=m$. For each $t\in T$, let $\phi_t$ be the corresponding morphism over $S_t$. Suppose for each $t$, one has 
$\dim X(\phi_t)=\dim S_t=n$. Then the morphism $X(\phi)\to T$ is flat and its fiber over $t$ is isomorphic to $X(\phi_t)$. 
\end{lemma}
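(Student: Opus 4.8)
The plan is to reduce the statement to Proposition~\ref{deg-blow-up-prop}(ii), applied both on $S$ and on the individual fibers $S_t$, combined with the ``miracle flatness'' criterion — equivalently, with the elementary fact that a finite-type scheme over a smooth curve is flat over it precisely when all of its associated points dominate the curve.

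First I would identify the fibers by base change. Writing $X(\phi)\subset\P(\VV)$ as the zero scheme of the section $s$ of $p^*\WW\ot\OO(1)$ on $p:\P(\VV)\to S$ corresponding to $\phi$, we have $\P(\VV)\times_S S_t=\P(\VV|_{S_t})$, and the restriction of $s$ to this projective bundle is exactly the section attached to $\phi_t$. Since formation of the zero scheme of a section commutes with base change, $X(\phi)\times_T\{t\}=X(\phi)\times_S S_t\cong X(\phi_t)$, which gives the claimed description of the fiber. It then remains to prove flatness.

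Next comes the dimension count. Since $T$ is a smooth curve and $S\to T$ is smooth of relative dimension $n$, the scheme $S$ is smooth of dimension $n+1$ and every fiber $S_t$ is smooth of dimension $n$. Applying the local part of Proposition~\ref{deg-blow-up-prop}(ii) to $\phi$ over $S$, every irreducible component of $X(\phi)$ has dimension $\ge\dim S=n+1$; applying it to $\phi_t$ over $S_t$, every component of $X(\phi_t)$ has dimension $\ge n$, and with the hypothesis $\dim X(\phi_t)=n$ this shows $X(\phi_t)$ is of pure dimension $n$. A uniformizer at $t\in T$ cuts out $X(\phi_t)$ inside $X(\phi)$, so Krull's principal ideal theorem gives $\dim_x X(\phi)\le\dim_x X(\phi_t)+1\le n+1$ at every point $x$ lying over $t$. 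Hence every component of $X(\phi)$ has dimension exactly $n+1=\dim S$, so by Proposition~\ref{deg-blow-up-prop}(ii) the scheme $X(\phi)$ is a local complete intersection in the smooth scheme $\P(\VV)$, in particular Cohen--Macaulay.

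Finally, for flatness: being Cohen--Macaulay, $X(\phi)$ has no embedded associated points, so its associated points are the generic points of its irreducible components, each of dimension $n+1$. No such component can be contained in a fiber $X(\phi_t)$, since those have dimension $n$; hence every component, and every associated point, of $X(\phi)$ dominates $T$. Over the regular curve $T$ this is equivalent to $\OO_{X(\phi)}$ being torsion-free, hence flat, over $\OO_T$ locally, so $X(\phi)\to T$ is flat (equivalently, $X(\phi)$ is Cohen--Macaulay, $T$ is regular, and all fibers are equidimensional of dimension $\dim X(\phi)-\dim T$, so miracle flatness applies). The one point that I expect to require a line of care — rather than a serious obstacle — is that the Cohen--Macaulay/local-complete-intersection conclusion of Proposition~\ref{deg-blow-up-prop}(ii) is invoked here without assuming $S$ integral or $\phi$ generically surjective; this is legitimate because that part of the proposition is local on $S$ and uses only that the ambient projective bundle is smooth and that $X(\phi)$ is cut out locally by $m$ equations of the expected codimension (and the hypothesis $\dim X(\phi_t)=\dim S_t$ in any case forces $\phi$ to be generically surjective on every component).
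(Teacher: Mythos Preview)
Your argument is correct and follows essentially the same route as the paper's proof: base-change compatibility identifies the fibers, the fiberwise dimension bound forces $\dim X(\phi)=\dim S$, Proposition~\ref{deg-blow-up-prop}(ii) then gives that $X(\phi)$ is a local complete intersection (hence Cohen--Macaulay), and flatness over the smooth curve $T$ follows by miracle flatness. Your version is simply more explicit—spelling out the Krull step and the associated-points reformulation of flatness—and your caveat about the integrality/generic-surjectivity hypotheses in Proposition~\ref{deg-blow-up-prop}(ii) is well taken and correctly resolved.
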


\begin{proof} The formation of $X(\phi)$ is compatible with the base change, so $X(\phi)_t\simeq X(\phi_t)$. Hence, our assumption implies that
$\dim X(\phi)\le n+1=\dim S$. By Proposition \ref{deg-blow-up-prop}(ii), it follows that $X(\phi)$ is Cohen-Macaulay. Hence, the assertion follows by
the miracle flatness criterion.
\end{proof}

\begin{example} It is easy to see that $X(\phi)$ can be reducible.
Suppose $S=\A^2$, $\phi:\OO^2\to \OO^1$ is given by $\phi(e_1)=x$, $\phi(e_2)=0$. Then $X(\phi)\sub \A^2\times \P^1$ is the union of two components:
one is $(x=0)\times \P^1$ and another is $\A^2\times \{(e_2)\}$. The second component is equal to the blow up (which is isomorphic to $S$ in this case).
\end{example}


Recall that $\MM_{2g-1}$ denotes the moduli space of stable bundles of rank $2$ and degree $2g-1$
and $\MM'_{2g-1}$ denotes the moduli space of pairs $(E,s:\OO\to E)$, where $E\in \MM_{2g-1}$ and $s\neq 0$.
It is known that $\MM'_{2g-1}$ is smooth irreducible and
we have a natural birational morphism
$$\pi=\pi_{2g-1}:\MM'_{2g-1}\to \MM_{2g-1}$$
(see \cite[(3.6)]{Thaddeus}).
Let us denote by $Z\sub \MM_{2g-1}$ the Brill-Noether locus of $E$ such that $H^1(E)\neq 0$, or equivalently $H^0(E^\vee\ot K)\neq 0$ (defined using the appropriate Fitting ideal).

For a fixed $E\in\MM_{2g-1}$, we have an immersion 
$$\iota_E:J\to \MM_{2g-1}:\xi\mapsto \xi\ot E$$ 
such that $Z_E=\iota_E^{-1}Z$ as a subscheme of $J$, and $F_E$ is isomorphic to the fibered product $J\times_{\MM_{2g-1}} \MM'_{2g-1}$.

The variety $\MM'_{2g-1}\to \MM_{2g-1}$ (resp., $F_E\to J$) is of the form $X(\phi)$ for a morphism of vector bundles $\phi:\VV\to \WW$ such that the
complex $[\VV\to \WW]$ represents $Rp_*(\EE)\in D^b(\MM_{2g-1})$, 
where $\EE$ is the universal bundle on $C\times \MM_{2g-1}$, $p$ is the projection to $\MM_{2g-1}$
(resp., it's pull-back $\iota_E^*\phi$ to $J$). Since $\rk(\VV)-\rk(\WW)=1$, this is exactly the situation considered in Proposition \ref{deg-blow-up-prop}.

Since that $\MM'_{2g-1}$ is irreducible of the same dimension as $\dim \MM_{2g-1}$,
we see using Proposition \ref{deg-blow-up-prop} that for each $p\ge 1$, the locus $Z_p$ of $E\in \MM_{2g-1}$ such that $h^1(E)\ge p$ has codimension $\ge p+1$,
and $\MM'_{2g-1}$ is isomorphic to the blow up of $\MM_{2g-1}$ at the locus $Z=Z_1$, which has codimension $2$ in $\MM_{2g-1}$.

We will be more interested in applying Proposition \ref{deg-blow-up-prop} to the schemes $F_E\to J$ for a fixed stable bundle $E\in \MM_{2g-1}$.

\begin{lemma}\label{bun-deg-local-lem} 
For $E\in \MM_{2g-1}$, let $h=h^1(E)$. Then there exists a morphism of vector bundles $\phi:\VV\to \WW$ in a neighborhood of $[\OO]$ on $J$, with
$\rk \VV=h+1$, $\rk \WW=h$, such that $F(E)$ is isomorphic to $X(\phi)$ over a neighborhood of $[\OO]\in J$, and
$Z_E=Z(\phi)$ near $[\OO]$.
The first order terms of $\phi$ near $[\OO]$ are given by the pairing 
$$\mu=\mu_E: H^0(E^\vee K)\ot H^0(E)\to H^0(K)=T^*_{\OO}J.$$
\end{lemma}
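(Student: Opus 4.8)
The statement is essentially a local-coordinate reformulation of the general setup of Proposition \ref{deg-blow-up-prop} applied to $F_E\to J$, together with an identification of the linear part of the relevant morphism of vector bundles with the Brill-Noether pairing $\mu_E$. The plan is as follows. First I would recall that $F_E$ is the fibered product $J\times_{\MM_{2g-1}}\MM'_{2g-1}$, and that $\MM'_{2g-1}\to \MM_{2g-1}$ has the form $X(\phi)$ for a morphism of vector bundles $\phi:\VV\to\WW$ with $[\VV\to\WW]$ representing $Rp_*(\EE)$, as already recorded in the excerpt just before the lemma. Pulling back along $\iota_E:J\hra\MM_{2g-1}$, $\xi\mapsto \xi\ot E$, gives $F_E=X(\iota_E^*\phi)$ and $Z_E=Z(\iota_E^*\phi)$ as subschemes; so the content of the lemma is purely the \emph{local} statement near $[\OO]\in J$, where one can choose a bounded two-term complex of vector bundles computing $R\Ga(C,\xi\ot E)$ whose ranks are as small as possible.

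The second step is the standard local construction of such a two-term complex with minimal ranks. Choose an effective divisor $D$ on $C$ of large degree so that $H^1(C,\xi\ot E(D))=0$ for all $\xi$ in a neighborhood $V$ of $[\OO]$; then $R\Ga(C,\xi\ot E)$ is computed by the two-term complex $[H^0(\xi\ot E(D))\to H^0(\xi\ot E(D)|_D)]$, a morphism of \emph{trivial} (hence locally free) sheaves on $V$. A minimal free resolution of $Rp_*(\EE\ot p_C^*\OO(D))|_{V\times C}$, or equivalently the minimality of this complex at the point $[\OO]$, can be arranged so that the ranks of the two bundles drop by $h^0(\OO\ot E)=h$ and $h^1(\OO\ot E)=h$ respectively relative to what one gets naively --- concretely, one splits off the trivial summand and is left with $\phi:\VV\to\WW$ with $\rk\VV=\chi+h=(2g-1+2(1-g))+h=h+1$ and $\rk\WW=h$, where $\chi=\chi(E)=\deg E+2(1-g)=1$. (Here I use $\chi(\xi\ot E)=\deg E+2\cdot(1-g)=1$, independent of $\xi$, so $\rk\VV-\rk\WW=1$, as required for Proposition \ref{deg-blow-up-prop}.) This $\phi$, being minimal at $[\OO]$, vanishes at $[\OO]$, so its first-order Taylor expansion is a linear map $T_{\OO}J\to \Hom(\VV_{\OO},\WW_{\OO})=\Hom(H^1(E)^\vee,H^1(E))$ --- wait, more precisely, identifying $\VV_{\OO}\simeq H^0(E)$ (the extra summand) and $\WW_{\OO}\simeq\coker$, I get $\VV_{\OO}$ containing $H^0(E)$ and $\WW_{\OO}\simeq H^1(E)$, and the linearization is a map $T_{\OO}J\ot H^0(E)\to H^1(E)$.

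The third and key step is to identify that linearization with $\mu_E$. Since $T_{\OO}J\simeq H^1(C,\OO)$ and, by Serre duality, $H^1(E)\simeq H^0(E^\vee K)^\vee$, the map $T_{\OO}J\ot H^0(E)\to H^1(E)$ is the same as a pairing $H^1(\OO)\ot H^0(E)\to H^0(E^\vee K)^\vee$, i.e.\ dually a pairing $H^0(E^\vee K)\ot H^0(E)\to H^1(\OO)^\vee=H^0(K)=T^*_{\OO}J$, which is exactly the domain and codomain of $\mu_E$ in the statement. The identification that this dual pairing \emph{is} $\mu_E$ (cup product composed with the trace/evaluation $E^\vee\ot E\to\OO$) is the deformation-theoretic heart: the first-order deformation of $H^0(\xi\ot E)$ as $\xi$ moves in the direction $v\in H^1(\OO)$ is governed by cup product with $v$, which is the standard description of the differential of a ``jumping locus'' / the connecting map in the long exact sequence for the Atiyah-type extension. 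I would make this precise by working with the \v Cech or Dolbeault model for $R\Ga$ as $\xi$ varies over the first-order neighborhood $\Spec k[\epsilon]/\epsilon^2$ of $[\OO]$ in $J$, writing the family of complexes explicitly, and reading off that the $\epsilon$-linear term of the differential is cup product with the Kodaira-Spencer class; dualizing gives $\mu_E$.

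\textbf{Main obstacle.} The genuinely delicate point is the third step: tracking the identifications carefully enough to pin down the first-order term as \emph{exactly} $\mu_E$ (and not, say, its transpose, or a scalar multiple, or a sign). The ranks and the scheme-theoretic identifications in steps one and two are routine (they follow the pattern already set up for $\MM'_{2g-1}$ in the paragraph preceding the lemma, plus a standard minimal-complex argument), but the compatibility of the Serre-duality identification $H^1(E)\simeq H^0(E^\vee K)^\vee$ with the trace map on $E^\vee\ot E$, and the identification of the Kodaira-Spencer class of the family $\{\xi\ot E\}_{\xi\in J}$ at $\xi=\OO$ with the canonical class in $H^1(\OO)\ot(\text{trivial})$, need to be spelled out. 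I would handle this by choosing once and for all a \v Cech cover of $C$ and writing the family transition functions as $1+\epsilon\cdot(\text{cocycle for }v)$ times the identity, so that every map in sight becomes explicit and the pairing $\mu_E$ emerges directly from multiplying cocycles and cochains; the sign/transpose ambiguities then resolve themselves. Once the linearization is identified on the nose, Proposition \ref{deg-blow-up-prop} supplies everything else about $X(\phi)$ that is needed downstream.
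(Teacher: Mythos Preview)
Your proposal is correct and follows essentially the same route as the paper: reduce the universal two-term complex computing $Rp_*\EE$ to a minimal one near $[E]$ (giving the ranks $h+1$ and $h$), pull back along $\iota_E$, and identify the linearization with the cup-product pairing. The paper handles your third step by citing Laumon \cite[Prop.\ 2.10, Cor.\ 2.11]{Laumon} and \cite[Sec.\ 3.1]{P-BN} rather than unwinding the \v Cech model explicitly, so you can shortcut your ``main obstacle'' by invoking those references; your explicit computation would reprove exactly what is recorded there.
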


\begin{proof} Let $Rp_*\EE\simeq [\VV^{univ}\rTo{d} \WW^{univ}]$, so that the cohomology of the $d|_{[E]}$ is $H^0(C,E)$ and $H^1(C,E)$.
Locally near $[E]\in \MM_{2g-1}$, we can replace  $[\VV^{univ}\to\WW^{univ}]$ by a quasi-isomorphic complex, so that $\rk \VV^{univ}=h$ and $\rk \WW^{univ}=h+1$.
Then $F(E)=X(\phi)$, $Z_E=Z(\phi)$, where $\phi$ is the differential in the restriction $\iota_E^*[\VV^{univ}\to \WW^{univ}]$.
Furthermore, it is known that $\phi$ is given by the natural map
$$H^0(E)\to H^1(E)\ot H^1(\OO)^*$$
(see \cite[Prop.\ 2.10 and Cor.\ 2.11]{Laumon} or \cite[Sec.\ 3.1]{P-BN})
from which $\mu$ is obtained by dualization.
\end{proof}
%

\begin{remark}\label{Z-rem}
Using the identification of $Z$ with the locus where $h^0(E^\vee K)>0$, we get a surjective morphism
$$\pi_{2g-3}:\MM'_{2g-3}\to Z,$$
where $\MM'_{2g-3}$ is the moduli space of pairs $(E,s:\OO\to E)$ with $E\in \MM_{2g-3}$.
Since $\MM'_{2g-3}$ is smooth of dimension $4g-5=\dim Z$ (by \cite[(3.6)]{Thaddeus}), in the terminology of Kempf \cite{Kempf}, $\pi_{2g-3}$ is a well-presented
family of projective spaces. By \cite[Prop.\ 3]{Kempf}, this implies that the map $\MM'_{2g-3}\to Z$ induced by $\pi_{2g-3}$ is a rational
resolution. In particular, $Z$ has rational singularities (and is Cohen-Macaulay).
\end{remark}




\subsection{Irreducibility of $F_E$ and Conjecture C}

We will state another conjecture on stable bundles of rank $2$ and will show how it implies irreducibility of $F_E$.

\medskip

\noindent
{\bf Conjecture C}. Let $E\in \MM_{2g-1}$, $\La:=\det E$.

\noindent
(i) The locus $Z_E$ of $\xi\in J$ such that $H^1(\xi\ot E)\neq 0$ has dimension $\le g-2$.

\noindent
(ii) 
For every $d$, $1\le d\le g-1$, and every $r\ge 1$, the locus of pairs $(\xi,D)\in J\times C^{[d]}$ such that $H^1(\xi^2\La(-D))\neq 0$ and 
$h^0(\xi E(-D))\ge r$
has dimension $\le g-r-1$. 

\medskip

After the preprint of this work appeared, Conjecture C was proved by Debarre in \cite{D}.

\begin{remarks}
1. Let $Z'\sub J\times\MM_{\La_0}$ be the preimage of $Z\sub \MM_{2g-1}$ under the natural \'etale map $J\times \MM_{\La_0}\to \MM_{2g-1}$.
Then the map $Z'\to Z$ is \'etale and $Z_E$, for $E\in \MM_{\La_0}$, are precisely the fibers of the projection $Z'\to \MM_{\La_0}$.
Since $Z$ is Cohen-Macaulay (see Remark \ref{Z-rem}), so is $Z'$, and by miracle flatness, Conjecture C(i) is equivalent to the flatness of the morphism $Z'\to \MM_{\La_0}$.

\noindent
2.
It is possible that for $r\ge 2$ and $2\le d\le g-2$, the condition involving $H^1$ in Conjecture C(ii) is not necessary. On the other hand, 
it is easy to see that for $r=1$ it is necessary in the case when there exists an embedding $L\hra E$ where $L$ is a line bundle of degree $g-1$.
Indeed, then we have a $(g-1)$-dimensional family of $(\xi,D)$ such that $h^0(\xi L(-D))\ge 1$ (by taking $\xi=L^{-1}(D+D')$, where $D'$ is effective of degree $g-1-d$).
We also have examples when it is necessary with $r\ge 2$ and $d=1$ or $d=g-1$.
\end{remarks}



We will show that Conjecture C(ii) for $E\in\MM_{2g-1}$, where $g>2$ implies that $F_E=\{(\xi,s) \ | \ \xi\in J, s\in \P H^0(E\xi)\}$ is irreducible and normal of dimension $g$ (see
Prop.\ \ref{FE-irr-prop}). For $g=2$ and $g=3$ we will directly prove that $F_E$ is irreducible of dimension $g$ (see Prop.\ \ref{FE-irr-g2-prop} and Prop.\ \ref{g3-FE-prop}).
We will also show that Conjecture C(i) holds for general curves (see Theorem \ref{conjC-general-thm}), while the case $d=1$ of Conjecture C(ii) holds
for general curves of genus $g\ge 6$ (see Proposition \ref{Conj-Cii-mercat}).

\begin{lemma}\label{deg0-subbun-lem} Let $E$ be stable bundle of rank $2$ and degree $2g-1$.
Conjecture C(i) for $E$ implies that for $\xi$ in a nonempty open subset of $J$, there exists an embedding of a subbundle $\xi^{-1}\to E$.
\end{lemma}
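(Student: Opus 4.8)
The plan is to prove the equivalent statement that the set $T:=\{\xi\in J : \xi^{-1}\text{ embeds in }E\text{ as a subbundle}\}$ is Zariski dense in $J$. Since $T$ is the image, under $(\xi,s)\mapsto\xi$, of the open locus of $F_E$ on which the tautological section is nowhere vanishing, $T$ is constructible, so its density in the irreducible $J$ will force it to contain a nonempty open subset. Two remarks set the stage: first, $\chi(E\xi)=\deg E+2(1-g)=1$, so for $\xi\notin Z_E$ one has $h^0(E\xi)=1$, and such a $\xi$ lies in $T$ exactly when the unique nonzero section $s_\xi$ of $E\xi$ is nowhere vanishing; second, Conjecture C(i) gives $\dim Z_E\le g-2$, so $J\setminus Z_E$ is dense and open.

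Assuming $T$ is not dense, I would set $V:=J\setminus(Z_E\cup\overline{T})$, a nonempty dense open subset of $J$. For $\xi\in V$ the zero divisor $D_\xi$ of $s_\xi$ is nonempty and effective, so $\xi^{-1}(D_\xi)$ is a sub-line bundle of $E$ of degree $e_\xi:=\deg D_\xi$; stability of $E$ (of odd degree $2g-1$) forces $1\le e_\xi\le g-1$. Stratifying $V$ by the value of $e_\xi$ I may fix $e\in[1,g-1]$ with $\dim\{\xi\in V:e_\xi=e\}=g$. On this locus $\xi\mapsto(\xi^{-1}(D_\xi),D_\xi)$ is a morphism to $\operatorname{Sub}_e(E)\times C^{[e]}$, where $\operatorname{Sub}_e(E)\subset\Pic^e(C)$ denotes the closure of the locus of degree-$e$ sub-line bundles of $E$, and it is injective because $\xi=\bigl(\xi^{-1}(D_\xi)\bigr)^{-1}(D_\xi)$. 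Hence $\dim\operatorname{Sub}_e(E)\ge g-e$.

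Next I would exploit that each degree-$e$ sub-line bundle $M\subset E$ gives a surjection $E\to\La M^{-1}$, with $\La:=\det E$: twisting $0\to M\to E\to\La M^{-1}\to 0$ by the degree-$0$ line bundle $K\La^{-1}M(-D)$ for arbitrary $D\in C^{[e-1]}$, and using $H^2=0$ on the curve, produces a surjection $H^1\bigl(E\otimes K\La^{-1}M(-D)\bigr)\to H^1(K(-D))\cong H^0(\OO_C(D))^{\vee}$, whose target is nonzero; hence $K\La^{-1}M(-D)\in Z_E$. Therefore $Z_E$ contains a translate of $S+(-W_{e-1})$, where $S$ is a top-dimensional (hence irreducible) component of $\operatorname{Sub}_e(E)$ and $W_{e-1}\subset\Pic^{e-1}(C)$ is the Abel--Jacobi image of $C^{[e-1]}$, i.e.\ a sum of $e-1$ translates of a copy of $C$ which generates $J$ (as $g\ge 2$). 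A standard argument — a proper irreducible subvariety of $J$ strictly gains dimension upon adding such a curve — then gives $\dim Z_E\ge\min\bigl(\dim S+(e-1),\,g\bigr)\ge\min\bigl((g-e)+(e-1),\,g\bigr)=g-1$, contradicting Conjecture C(i).

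The decisive point is this last step. For $e=1$ the plain inclusion (up to translation) of $\operatorname{Sub}_1(E)$ into $Z_E$ already contradicts $\dim Z_E\le g-2$, but for $e\ge 2$ this is too weak; what saves the argument is that $Z_E$ carries the entire $(e-1)$-dimensional family of translates of $\operatorname{Sub}_e(E)$ obtained by varying $D\in C^{[e-1]}$, combined with the nondegeneracy of $W_{e-1}$ in the Jacobian. A minor technical matter is to check that $\xi\mapsto(\xi^{-1}(D_\xi),D_\xi)$ is genuinely a morphism, which follows from the existence of the relative zero divisor of a universal section of (a suitable twist of) $E$ over $V$ together with the universal property of $C^{[e]}$.
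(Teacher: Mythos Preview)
Your proof is correct and takes a genuinely different route from the paper's.

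The paper's argument is much shorter: it observes that a nonzero section of $\xi E$ fails to define a subbundle inclusion $\xi^{-1}\hookrightarrow E$ exactly when it vanishes at some $p\in C$, i.e.\ when $H^0(\xi E(-p))\neq 0$. Writing $Z_p=\{\xi: H^0(\xi E(-p))\neq 0\}$, one checks via Serre duality that $Z_p=-Z_{(K E^\vee)(p)}$, and since $(K E^\vee)(p)\in\MM_{2g-1}$ is stable, Conjecture C(i) applied to that bundle gives $\dim Z_p\le g-2$. Hence $\bigcup_{p\in C}Z_p$ has dimension at most $g-1$, and its complement is the desired open set.

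Your approach instead argues by contradiction: if no such open set exists, a dimension count produces a family of sub-line bundles $\operatorname{Sub}_e(E)$ of dimension at least $g-e$, and twisting the resulting extension by $K\La^{-1}M(-D)$ exhibits a translate of $\operatorname{Sub}_e(E)-W_{e-1}$ inside $Z_E$; the standard Minkowski-sum growth in the Jacobian then forces $\dim Z_E\ge g-1$.

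What your argument buys is fidelity to the hypothesis as literally stated: you use Conjecture C(i) \emph{only} for the given bundle $E$, whereas the paper's proof tacitly invokes it for the auxiliary bundles $(K E^\vee)(p)$ as $p$ varies. Since Conjecture C is posed uniformly over $\MM_{2g-1}$ this distinction is harmless in practice, but your proof is in that sense sharper. The cost is length: the paper's proof is three lines, yours requires the stratification, the injectivity of $\xi\mapsto(\xi^{-1}(D_\xi),D_\xi)$, and the growth lemma for sums with $C$ in an abelian variety. All the steps you outline are sound, including the semicontinuity needed to make $V_e$ locally closed and the passage to closures in the final inclusion.
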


\begin{proof} It is enough to prove that
for $\xi\in J$ in a nonempty Zariski open subset, one has $H^0(\xi\ot E(-p))=0$ for every $p\in C$. But Conjecture C(i) implies that for every $C$, the
locus $Z_p$ of $\xi$ with $H^0(\xi\ot E(-p))\neq 0$ has dimension $\le g-2$. Hence, the union $\cup_{p\in C}Z_p\sub J$ is a proper closed subset of $J$, and 
the assertion follows.
\end{proof}

\begin{lemma}\label{UE-smooth-lem}
For $E\in \MM_{2g-1}$, let $U_E\sub F_E$ denote the open subset where $\Ext^1(\xi^{-1},E/s(\xi^{-1}))=0$. Then $U_E$ is smooth of dimension $g$.
\end{lemma}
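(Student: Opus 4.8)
The plan is to recognize $F_E$ as a Quot scheme and then read off the smoothness and the dimension of $U_E$ from the standard deformation theory of Quot schemes.

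\emph{Step 1: identify $F_E$ with a Quot scheme.} First I would check that $F_E$ is isomorphic, as a scheme, to the Quot scheme parametrizing quotients $q\colon E\twoheadrightarrow Q$ with $Q$ of rank $1$ and degree $2g-1$. Indeed, for such a $q$ the kernel $S:=\ker q$ is a rank-$1$ subsheaf of the bundle $E$, hence a line bundle, with $\deg S=\deg E-\deg Q=0$; thus $S\simeq\xi^{-1}$ for a unique $\xi\in J$, and the inclusion $S\hookrightarrow E$ is the same datum as a nonzero section of $E\xi$ up to scalar, i.e. a point of $\P H^0(E\xi)$. Conversely $(\xi,[s])\in F_E$ gives the quotient $E\twoheadrightarrow Q:=E/s(\xi^{-1})$, and $\chi(Q)=\chi(E)-\chi(\xi^{-1})=1-(1-g)=g$ by Riemann--Roch, so $Q$ has the right rank and degree. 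These constructions are mutually inverse and extend to families — using that $C(k)\neq\emptyset$, so $J$ is a fine moduli space, and that the cokernel of a fibrewise-injective map of $T$-flat families of locally free sheaves is $T$-flat — which gives the asserted identification; it is consistent with the presentation $F_E=X(\iota_E^*\phi)$ of Section \ref{deg-loci-sec}.

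\emph{Step 2: smoothness on $U_E$.} I would then invoke the standard deformation theory of Quot schemes: at $[q\colon E\twoheadrightarrow Q]$ with kernel $S$, the Zariski tangent space is $\Hom(S,Q)$ and the obstructions lie in $\Ext^1(S,Q)$, so if $\Ext^1(S,Q)=0$ the Quot scheme is smooth there of dimension $\dim\Hom(S,Q)$. For $(\xi,[s])\in F_E$ we have $S=s(\xi^{-1})\simeq\xi^{-1}$ and $Q=E/s(\xi^{-1})$, so the obstruction space is precisely $\Ext^1(\xi^{-1},E/s(\xi^{-1}))$, which by definition vanishes on $U_E$. Hence $F_E$ is smooth at every point of $U_E$, so $U_E$ is smooth.

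\emph{Step 3: the dimension is $g$.} Since $\xi^{-1}$ is locally free, $\Hom(\xi^{-1},Q)=H^0(C,\xi\otimes Q)$ and $\Ext^1(\xi^{-1},Q)=H^1(C,\xi\otimes Q)$; on $U_E$ the latter vanishes, so $\dim_{(\xi,[s])}U_E=h^0(\xi\otimes Q)=\chi(\xi\otimes Q)$. Tensoring $0\to\xi^{-1}\to E\to Q\to 0$ by $\xi$ and using Riemann--Roch gives $\chi(\xi\otimes Q)=\chi(E\xi)-\chi(\OO_C)=1-(1-g)=g$. Thus $U_E$ is smooth of pure dimension $g$. (Openness of $U_E$ in $F_E$, already built into its definition, follows from upper semicontinuity of $h^1$ along the fibres of the universal cokernel twisted by the universal degree-$0$ line bundle.) The one genuinely delicate point is the scheme-theoretic identification in Step 1 in families; but since smoothness and dimension are local and unchanged by \'etale base change, an \'etale-local version of that identification — or, alternatively, a direct computation of the tangent and obstruction spaces of the zero-locus $X(\iota_E^*\phi)$, which reproduces the same $\Hom$ and $\Ext^1$ — already suffices, so I do not expect a serious obstacle there.
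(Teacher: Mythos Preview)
Your proposal is correct and follows essentially the same approach as the paper: identify $F_E$ with the Quot scheme of rank-$1$, degree-$(2g-1)$ quotients of $E$, then apply the standard smoothness criterion via the tangent space $\Hom(\xi^{-1},E/s(\xi^{-1}))$ and obstruction space $\Ext^1(\xi^{-1},E/s(\xi^{-1}))$. The paper's proof is a two-sentence sketch of exactly this argument; your version just fills in the Riemann--Roch computation of the dimension.
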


\begin{proof} It is easy to see that associating with a nonzero map $s:\xi^{-1}\to E$ the corresponding quotient $E\to E/s(\xi^{-1})$, induces an isomorphism
between $F_E$ and the Hilbert scheme of quotients $E\to Q$ such that $\rk Q=1$ and $\deg(Q)=2g-1$. Now we use the well known criterion for the
smoothness of the Hilbert scheme and the identification of the Zariski tangent space with $\Hom(\xi^{-1},E/s(\xi^{-1})$.
\end{proof}

\begin{lemma}\label{FE-codim1-lem} 
Conjecture C(ii) for $E$ implies that $F_E$ is of pure dimension $g$, smooth in codimension $1$.
\end{lemma}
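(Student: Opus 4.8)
The plan is to show that $\Sing(F_E)$ is contained in the complement of the smooth open locus $U_E\subseteq F_E$ of Lemma \ref{UE-smooth-lem}, and that $\dim(F_E\setminus U_E)\le g-2$, the latter being where Conjecture C(ii) enters. I would begin by recalling, from Lemma \ref{UE-smooth-lem}, that a point of $F_E$ is a pair $(\xi,s)$ with $0\neq s\colon\xi^{-1}\to E$ and quotient $Q:=\coker(s)$, that $U_E$ is the open locus where $\Ext^1(\xi^{-1},Q)=0$, and that $F_E$ is smooth of dimension $g$ on $U_E$; hence $\Sing(F_E)\subseteq F_E\setminus U_E$. Granting the bound $\dim(F_E\setminus U_E)\le g-2$, I would deduce pure dimensionality as follows: by Proposition \ref{deg-blow-up-prop}(ii) applied to the presentation $F_E=X(\iota_E^*\phi)$, every irreducible component $V$ of $F_E$ has $\dim V\ge g>\dim(F_E\setminus U_E)$, so $V$ is not contained in $F_E\setminus U_E$ and thus meets $U_E$ in a nonempty open subset; that subset is a union of components of $U_E$, each of dimension $g$, so $\dim V=g$. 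Hence $F_E$ is of pure dimension $g$, and then $\dim\Sing(F_E)\le g-2=\dim F_E-2$ is precisely smoothness in codimension $1$.

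The substance is the bound on $F_E\setminus U_E$. Given $(\xi,s)$ in it, I would let $\xi^{-1}(D)\hookrightarrow E$ be the saturation of the image of $s$, which is a line subbundle of the stable bundle $E$; stability gives $\deg(\xi^{-1}(D))<(2g-1)/2$, so $d:=\deg D\le g-1$, while $d\ge1$ because for $d=0$ the quotient $Q$ is a line bundle of degree $2g-1$ and $\Ext^1(\xi^{-1},Q)=H^1(Q\xi)=0$ (as $\deg(Q\xi)>2g-2$). Writing $L:=E/\xi^{-1}(D)$, which is a line bundle isomorphic to $\La\xi(-D)$ since $\det E=\La$, and using the exact sequence $0\to\OO_D\to Q\to L\to0$ together with the fact that the torsion sheaf $\OO_D$ does not contribute to $H^1$, I would obtain $\Ext^1(\xi^{-1},Q)\cong H^1(\xi\otimes Q)\cong H^1(\xi\otimes L)\cong H^1(\La\xi^2(-D))$, using that $\xi^{-1}$ is a line bundle. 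Consequently $F_E\setminus U_E$ lies in the image, under $(\xi,D,[s])\mapsto(\xi,[s])$, of $\bigsqcup_{d=1}^{g-1}W_d$, where $W_d$ is the projective bundle over $B_d:=\{(\xi,D)\in J\times C^{[d]}:h^0(E\xi(-D))\ge1,\ H^1(\La\xi^2(-D))\neq0\}$ with fibre $\P H^0(E\xi(-D))\subseteq\P H^0(E\xi)$.

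Finally I would stratify $B_d$ by $r:=h^0(E\xi(-D))$: the locus $\{(\xi,D)\in B_d:h^0(E\xi(-D))\ge r\}$ is exactly the Brill--Noether locus appearing in Conjecture C(ii) for this $d$ and $r$ (with $\La=\det E$), hence has dimension $\le g-r-1$ for $1\le d\le g-1$ and $r\ge1$, whereas $W_d$ has fibre dimension $r-1$ over it; so each such stratum of $W_d$ has dimension $\le(g-r-1)+(r-1)=g-2$, and, the indices $d,r$ ranging over finite sets, $\dim(F_E\setminus U_E)\le g-2$ (note $F_E\to J$ is surjective, each fibre $\P H^0(E\xi)$ being nonempty since $\chi(E\xi)=1$, so $F_E\neq\emptyset$ and the argument of the first paragraph applies). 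The main obstacle I anticipate is the cohomological bookkeeping of the second paragraph: establishing $d\le g-1$ from stability and, above all, the isomorphism $\Ext^1(\xi^{-1},Q)\cong H^1(\La\xi^2(-D))$, which is what matches $F_E\setminus U_E$ with the loci of Conjecture C(ii); once this is in place, and granting that the stratified dimension count is carried out carefully, the rest is formal given Lemma \ref{UE-smooth-lem} and Proposition \ref{deg-blow-up-prop}(ii).
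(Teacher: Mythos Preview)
Your proof is correct and follows essentially the same approach as the paper's: stratify $F_E\setminus U_E$ by $(d,r)$, identify $\Ext^1(\xi^{-1},Q)\cong H^1(\xi^2\La(-D))$ via the torsion/line-bundle decomposition of $Q$, and bound each stratum using Conjecture~C(ii). The only notable difference is that the paper invokes Lemma~\ref{deg0-subbun-lem} (hence implicitly Conjecture~C(i)) to see that $U_E\neq\emptyset$, whereas you obtain this more cleanly from $\chi(E\xi)=1>0$ together with the lower bound $\dim V\ge g$ for each component from Proposition~\ref{deg-blow-up-prop}(ii), which keeps the argument internal to the hypotheses of the lemma.
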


\begin{proof}
By Lemma \ref{UE-smooth-lem}, we have a smooth open subset $U=U_E$. 
We claim that $F_E\setminus U$ has dimension $\le g-2$. Indeed, consider the stratum $S_{d,r}\sub F_E\setminus U$, where $d\ge 1$, $r\ge 1$, consisting of $(\xi,s)$ such that
$s$ vanishes exactly on a divisor $D\sub C$ of degree $d$ and $h^0(\xi E(-D))=r$. Then we have an exact sequence
$$0\to \xi^{-1}(D)\to E\to M\to 0$$
where $M\simeq \xi\La(-D)$. By stability of $E$ we have
$$d=\deg(\xi^{-1}(D))\le g-1.$$
Also, we have 
$$\Ext^1(\xi^{-1},E/s(\xi^{-1}))\simeq \Ext^1(\xi^{-1},M)=H^1(\xi^2\La(-D))\neq 0.$$
The fibers of the projection $S_{d,r}\to J\times C^{[d]}$ are $(r-1)$-dimensional projective spaces. By Conjecture C(ii), the image of this projection 
has dimension $\le g-r-1$. Hence, $\dim S_{d,r}\le g-2$, as claimed.

Note also that by Lemma \ref{deg0-subbun-lem}, $U$ is nonempty. Indeed, if $\xi^{-1}\to E$ is an embedding of a subbundle then $E/s(\xi^{-1})$ is a line bundle of degree $2g-1$,
so $\Ext^1(\xi^{-1},E/s(\xi^{-1}))=0$. 
This finishes the proof of our assertion.
\end{proof}

\begin{lemma}\label{connect-lem}
Assume $g>2$. Then $F_E$ is connected for every $E\in \MM_{2g-1}$. 
\end{lemma}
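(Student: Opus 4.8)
The plan is to derive the connectedness of $F_E$ from that of $J$ via the projection $p_J\colon F_E\to J$, by checking that $p_J$ is proper, surjective, and has connected fibres.

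First I would record that, as recalled in Section \ref{deg-loci-sec}, $F_E\to J$ is of the shape $X(\phi)$ for a morphism of vector bundles $\phi\colon\VV\to\WW$ on $J$ with $\rk\VV-\rk\WW=1$; in particular $F_E=X(\phi)$ is a closed subscheme of the projective bundle $\P(\VV)\to J$, whence $p_J$ is proper. Next, Riemann--Roch gives, for every $\xi\in J$,
$$\chi(C,E\xi)=\deg(E\xi)+2(1-g)=(2g-1)+2(1-g)=1,$$
so $h^0(C,E\xi)=1+h^1(C,E\xi)\ge 1$. Since the formation of $X(\phi)$ commutes with base change (as in the proof of Lemma \ref{flatness-lem}), the fibre of $p_J$ over $\xi$ is $X(\phi_\xi)$, whose underlying set is $\P(\ker\phi_\xi)=\P H^0(C,E\xi)\cong\P^{h^0(C,E\xi)-1}$: a nonempty projective space, hence connected. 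Therefore $p_J$ is proper, surjective, and has connected fibres.

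The last step is the elementary fact that such a morphism over a connected base has connected source. Concretely, if $F_E$ were disconnected, write $F_E=T_1\sqcup T_2$ with $T_1,T_2$ nonempty and open-and-closed; each fibre of $p_J$, being connected, lies entirely in $T_1$ or in $T_2$, so $p_J(T_1)$ and $p_J(T_2)$ are closed (by properness), cover $J$ (by surjectivity), and are disjoint (the fibres are nonempty), contradicting the connectedness of the abelian variety $J$. This proves the statement; note the argument in fact makes no use of the hypothesis $g>2$, which is imposed here only because the connectedness will be combined with Lemma \ref{FE-codim1-lem}, whose hypothesis Conjecture C(ii) is formulated in the stated form for $g>2$.

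I do not expect a genuine obstacle: the only points needing a word of justification are the properness of $p_J$ and the identification of its set-theoretic fibres with the projective spaces $\P H^0(C,E\xi)$, both immediate from the determinantal description of $F_E$. Should one wish to avoid that description, one may instead identify $F_E$ with a relative Quot scheme over $J$ parametrizing rank-one degree-$(2g-1)$ quotients of $E$ (as in the proof of Lemma \ref{UE-smooth-lem}) and invoke properness of Quot schemes; a heavier alternative would be to apply a Fulton--Lazarsfeld-type connectedness theorem to the zero locus of the section of $p^*\WW\otimes\OO(1)$ on $\P(\VV)$ cutting out $X(\phi)$, but this would demand a positivity property of $\WW\otimes\VV^\vee$ on $J$ that is not obviously at hand, so the soft fibration argument is preferable.
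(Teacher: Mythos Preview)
Your argument is correct and in fact more elementary than the paper's. Since $\chi(E\xi)=\deg(E)+2(1-g)=1>0$ forces every fibre $\P H^0(E\xi)$ to be a nonempty projective space, the soft fibration argument (proper, surjective, connected fibres over the connected base $J$) suffices; your identification of the scheme-theoretic fibre of $X(\phi)\to J$ with the linear subspace $\P(\ker\phi_\xi)\subset\P(\VV_\xi)$ is exactly right.

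The paper instead takes precisely the ``heavier alternative'' you anticipate in your closing paragraph: it invokes the Fulton--Lazarsfeld connectedness machinery, reducing to the ampleness on $J$ of the bundle with fibre $H^0(\xi E(D))^*$ for $D$ a large effective divisor, and then verifies this ampleness by filtering $E$ by line bundles and citing the line-bundle case from \cite{FL}. That approach is robust in situations where the expected Euler characteristic could be nonpositive (so fibres might a priori be empty and surjectivity is not automatic), but here the positivity $\chi=1$ renders it unnecessary. Your remark that the hypothesis $g>2$ plays no role in the connectedness argument itself is also correct.
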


\begin{proof}
Consider $D=p_1+\ldots+p_m$, where $m$ is large and $p_i$ are distinct points. Similarly to the proof of \cite[Thm.\ 2.3]{FL}, it is enough
to prove that the bundle with the fiber $H^0(\xi E(D))^*$ over $\xi\in J$ is ample (where $\xi$ is trivialized at some point $p\in C$). Representing $E$ as an extension of a line bundle by another line bundle,
we reduce to the similar question with $E$ replaced by a line bundle, which is established in the proof of \cite[Lem.\ 2.2]{FL}.
\end{proof}

\begin{prop}\label{FE-irr-prop}
Assume $g>2$. If Conjecture C(ii) holds for $E$ then $F_E$ is an irreducible normal local complete intersection of dimension $g$.
\end{prop}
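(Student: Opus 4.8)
The plan is to assemble Proposition \ref{FE-irr-prop} from the structural results already established for $F_E=X(\iota_E^*\phi)$, using Serre's criterion together with the local-complete-intersection property coming from the Bertram-Thaddeus degeneracy-locus picture. First I would record that, by Lemma \ref{bun-deg-local-lem}, in a neighborhood of any point $[\xi]\in J$ the scheme $F_E$ is identified with $X(\phi)$ for a morphism of vector bundles $\phi:\VV\to\WW$ on (an open subset of) the smooth variety $J$ with $\rk\VV-\rk\WW=1$; hence Proposition \ref{deg-blow-up-prop}(ii) applies verbatim. By Lemma \ref{FE-codim1-lem}, Conjecture C(ii) for $E$ forces $F_E$ to be of pure dimension $g=\dim J$; combined with Proposition \ref{deg-blow-up-prop}(ii) this shows $F_E$ is a local complete intersection in $\P(\VV)$ (each component has dimension $\geq\dim J$, and equality with $\dim J$ is exactly the l.c.i.\ condition), and in particular Cohen-Macaulay, so condition $(S_2)$ of Serre holds.

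Next I would upgrade connectedness to irreducibility. Lemma \ref{connect-lem} (valid since $g>2$) gives that $F_E$ is connected. A connected Cohen-Macaulay scheme of pure dimension is equidimensional and has no embedded components; to conclude irreducibility I would invoke that $F_E$ is smooth in codimension $1$ (condition $(R_1)$), again from Lemma \ref{FE-codim1-lem}, which shows the non-smooth locus $F_E\setminus U_E$ has dimension $\leq g-2$. A connected scheme that is $(R_1)+(S_2)$ and hence normal cannot be a nontrivial union of irreducible components, because the components of a normal scheme are its connected components; so $F_E$ is irreducible. Putting $(R_1)$ and $(S_2)$ together, Serre's criterion yields that $F_E$ is normal. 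Thus $F_E$ is an irreducible normal local complete intersection of dimension $g$, as asserted.

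The main obstacle — and the place where the hypothesis Conjecture C(ii) is really used — is controlling the singular locus: everything hinges on the bound $\dim(F_E\setminus U_E)\leq g-2$, i.e.\ on the stratum estimate $\dim S_{d,r}\leq g-2$ proved in Lemma \ref{FE-codim1-lem} by pushing forward along $S_{d,r}\to J\times C^{[d]}$ and bounding the image via Conjecture C(ii), while the fibers are $(r-1)$-dimensional projective spaces. Once that codimension-$2$ bound on the bad locus is in hand, the remaining steps are formal applications of miracle flatness / Serre's criterion and of Proposition \ref{deg-blow-up-prop}. One should double-check that the degeneracy-locus description of Lemma \ref{bun-deg-local-lem} is valid near \emph{every} point of $J$ (not just $[\OO]$), which follows by translating $E$ by $\xi_0$, using the isomorphism $F_E\simeq F_{\xi_0 E}$ noted after the statement of Conjecture B; and that the l.c.i.\ conclusion of Proposition \ref{deg-blow-up-prop}(ii) is a local statement compatible with these local identifications. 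Finally, since $F_E$ is irreducible of the expected dimension $g$, Proposition \ref{deg-blow-up-prop}(ii) also identifies it with the blow-up $B_{Z_E}J$, which records the birational structure but is not needed for the present statement.
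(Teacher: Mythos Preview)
Your proof is correct and follows essentially the same route as the paper's: invoke Lemma \ref{FE-codim1-lem} for pure dimension $g$ and smoothness in codimension $1$, deduce the local-complete-intersection property, apply Serre's criterion for normality, and combine with Lemma \ref{connect-lem} to obtain irreducibility. The only cosmetic difference is that you justify the l.c.i.\ step via the local degeneracy-locus model of Lemma \ref{bun-deg-local-lem} and Proposition \ref{deg-blow-up-prop}(ii), whereas the paper phrases it as ``$F_E$ is a fiber of a morphism between smooth varieties with difference of dimensions equal to $g$''; these are equivalent packagings of the same count of equations.
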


\begin{proof}
By Lemma \ref{FE-codim1-lem}, $F_E$ is of pure dimension $g$, nonsingular in codimension $1$. Hence, it is a local complete intersection (as a fiber of a morphism 
between smooth varieties with the difference of dimension equal to $g$).
Therefore, by Serre's criterion, it is normal.
Since by Lemma \ref{connect-lem}, $F_E$ is connected, we deduce that it is irreducible.
\end{proof}

For $g=2$, we will check irreducibility of $F_E$ in a different way.

\begin{prop}\label{FE-irr-g2-prop} Assume $C$ has genus $2$. Then for any $E\in \MM_3$, $F_E$ is irreducible. 
\end{prop}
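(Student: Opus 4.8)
Fix a stable bundle $E$ of rank $2$ and degree $3$ on a genus $2$ curve $C$, and recall that $F_E=X(\phi)$ for the morphism of vector bundles $\phi:\VV\to\WW$ over $J$ with $\rk\VV=h+1$, $\rk\WW=h$ representing $Rp_*(\EE\ot(\cdot))$, where $h=h^1(E\xi)$ at the relevant point. By Proposition \ref{deg-blow-up-prop}(iii), applied with $S=J$ (smooth, irreducible, $\dim J=2=g$), irreducibility of $F_E$ of dimension $g$ is equivalent to the codimension estimate $\codim_J S_p(\phi)\ge p+1$ for every $p\ge 1$, where $S_p(\phi)=\{\xi\in J: h^1(E\xi)\ge p\}=Z_{E,p}$. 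So the plan is to verify these Brill--Noether-type codimension bounds directly, exploiting that in genus $2$ the numbers are small: $\deg(E\xi)=3$, $\chi(E\xi)=3+2(1-g)=1$, so $h^0(E\xi)-h^1(E\xi)=1$, and $h^1(E\xi)=h^0(E^\vee K\xi^{-1})=h^0(E^\vee\xi^{-1})$ since $K$ has degree $2=2g-2$ and we can absorb it; here $E^\vee K\xi^{-1}$ has degree $2\cdot 2 - 3 + 0 = 1$.

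\textbf{Key steps.} First I would show $S_1(\phi)=Z_E$ has dimension $\le g-2=0$, i.e.\ is finite. A point of $Z_E$ is a $\xi$ with $H^0(C,E^\vee K\xi^{-1})\ne 0$; since $E^\vee K\xi^{-1}$ has degree $1$, a nonzero section gives an embedding $\OO\hookrightarrow E^\vee K\xi^{-1}$, equivalently a surjection $E\xi\twoheadrightarrow K$... more precisely a nonzero map $E\to K\xi^{-1}$, whose image is a line subbundle $L\subseteq K\xi^{-1}$ of degree $\le 1$; by stability of $E$ any line subbundle quotient... I would instead use: a nonzero section of $E^\vee K\xi^{-1}$ of degree $1$ either is nowhere vanishing (so $E^\vee K\xi^{-1}\cong\OO(q)$ for the single zero $q$, forcing $\xi$ into a $1$-dimensional family a priori, which then must be cut down by the requirement that the section lift to $E^\vee$, i.e.\ that the corresponding sub/quotient be compatible with $E$) — the cleanest route is to invoke that $\MM_3'\to\MM_3$ is the blow-up of $Z=Z_1\subseteq\MM_3$ of codimension $2$ (stated in Section \ref{deg-loci-sec}), pull back along $\iota_E:J\to\MM_3$, and observe $\iota_E$ is a closed immersion with $Z_E=\iota_E^{-1}Z$; then a dimension count using that $\iota_E(J)$ is not contained in $Z$ (true since a generic $E\xi$ has $h^1=0$, as $\deg=3>2g-2=2$) gives $\dim Z_E\le \dim J - \mathrm{(something)}$. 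The robust argument: $Z_E$ is the zero scheme of $\det\phi$ where $\phi$ is a $1\times 1$ matrix (since $h^0(E\xi)=1$, $h^1(E\xi)=0$ generically forces $\rk\VV=1,\rk\WW=0$ generically — wait, then $h=0$). Let me restate: generically $h^1(E\xi)=0$ so near a generic point $F_E\to J$ is an isomorphism; near a point with $h^1(E\xi)=p$ we get the local model with $h=p$. So I would stratify: $S_1(\phi)$ is a proper closed subset of $J$ (since generic $\xi$ has $h^1=0$), hence $\dim S_1(\phi)\le 1$; to get $\le 0$ I would show $S_1(\phi)$ contains no curve. If $\dim S_1(\phi)=1$, there is a curve's worth of $\xi$ with a nonzero map $u_\xi:E\xi\to K$, i.e.\ with $E\to K\xi^{-1}$ nonzero; as $\xi$ varies over a curve, $K\xi^{-1}$ varies over a curve of degree $1$ line bundles, and nonzero maps $E\to M$ for $M$ of degree $1$ are constrained because $\Hom(E,M)=H^0(E^\vee M)$ with $\deg E^\vee M = -3+2 = 1$, so $h^0\le 1$ generically on that curve and $=0$ off a proper subset unless $E^\vee\otimes(\text{degree 1 line bundle})$ is effective along the whole curve — but $E^\vee$ twisted by a varying degree-$1$ bundle sweeps out bundles of a fixed determinant, and effectivity of all of them would contradict stability of $E$ (a sub-line-bundle of $E$ of degree $\ge \lceil 3/2\rceil = 2$). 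This is the crux and I'd push it through by the standard line-subbundle argument. Then $S_p(\phi)$ for $p\ge 2$ needs $\codim\ge 3 > \dim J = 2$, i.e.\ $S_p(\phi)=\emptyset$ for $p\ge 2$: indeed $h^1(E\xi)\ge 2$ means $h^0(E\xi)\ge 3$ on a degree-$3$ rank-$2$ bundle, giving a map $\OO^3\to E\xi$, hence (generically) a sub-line-bundle of $E\xi$ of degree $\ge 2$, contradicting stability of $E$ (degree $\ge 2 > 3/2$). Finally, combining $\codim S_1\ge 2$ and $S_p=\emptyset$ for $p\ge 2$ with Proposition \ref{deg-blow-up-prop}(iii) gives that $F_E=X(\phi)$ is irreducible of dimension $g=2$, and equals $B_{Z_E}J$.

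\textbf{Main obstacle.} The delicate point is the $S_1$ estimate, i.e.\ showing $Z_E$ is finite (dimension $\le g-2 = 0$) rather than merely a proper subset of $J$. The genus-$2$ input that makes it work is that a $1$-parameter family of nonzero maps $E\to(\text{line bundle of degree }1)$ would, by the elementary transformation / line-subbundle analysis, force $E$ to contain a sub-line-bundle of degree $2$, violating stability; one must be careful that the maps need not be surjective (can vanish on a point), but that only helps, as it lowers the degree of the image line bundle further. I expect to handle this by a direct argument on $C^{[1]}=C$ and the Abel--Jacobi map, or alternatively by citing the codimension-$2$ property of $Z\subseteq\MM_3$ together with flatness/transversality of $\iota_E$; the latter is cleanest if one first checks $\iota_E$ meets $Z$ properly, which again reduces to the same stability fact. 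Once $F_E$ is irreducible this Proposition is complete; rationality of singularities (Conjecture B in genus $2$) is then a separate matter handled later in Section \ref{fibers-sec}.
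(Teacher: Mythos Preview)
Your overall strategy---reducing to Proposition~\ref{deg-blow-up-prop}(iii) and verifying $\codim S_p\ge p+1$---is correct and genuinely different from the paper's approach. The paper argues directly: it presents $E$ (after twisting) as an extension $0\to KL^{-1}\to E\to K\to 0$, defines $S\subset F_E$ as the locus where $H^0(\xi KL^{-1})\ne 0$, and shows $\dim S\le 1$ while $F_E\setminus S$ is identified with an open subset of $C^{[2]}$, hence irreducible of dimension $2$; since every component of $F_E$ has dimension $\ge 2$, irreducibility follows. Your route is more systematic in that it plugs directly into the degeneracy-locus machinery, and in fact the paper establishes exactly your two inputs elsewhere in the genus-$2$ subsection: the finiteness of $Z_E$ is Conjecture~C(i) for $g=2$ (cited from Newstead), and $h^1(E\xi)\le 1$ is Lemma~\ref{K-ext-lem}. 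So your argument works once those two facts are in hand.

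That said, your sketched justifications of both inputs have real gaps. First, an arithmetic slip: $K\xi^{-1}$ has degree $2g-2=2$, not $1$, so the line-bundle quotients of $E$ at stake have degree $2$. More importantly, your claim that $h^0(E\xi)\ge 3$ gives ``(generically) a sub-line-bundle of $E\xi$ of degree $\ge 2$'' is not justified as written; three sections of a rank-$2$ bundle do not directly produce such a subbundle. The paper's argument (Lemma~\ref{K-ext-lem}) instead uses that a nonzero map $E\xi\to K$ exhibits $E\xi$ as an extension of $K$ by a degree-$1$ line bundle $L$, and then shows the coboundary $H^0(K)\to H^1(L)$ is nonzero. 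Similarly, your argument for finiteness of $Z_E$ is only a sketch: what actually happens is that every $\xi\in Z_E$ yields a \emph{surjection} $E\to K\xi^{-1}$ (a non-surjective map would force a sub-line-bundle of degree $\ge 2$, violating stability), hence a degree-$1$ line subbundle $L_\xi\subset E$ determining $\xi$; finiteness then follows from the classical fact that a stable rank-$2$ bundle on a genus-$2$ curve has only finitely many maximal line subbundles.
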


\begin{proof}
Since we can replace $E$ by $\xi E$ with $\xi\in J$, we can assume that $E$ is an extension
$$0\to KL^{-1}\to E\to K\to 0$$
with $\deg(L)=1$. The class of this extension $e$ lives in $H^1(L^{-1})\simeq H^0(KL)^*$. Let $S\sub F_E$ denote the closed subset where
$H^0(\xi KL^{-1})\neq 0$. We will check that $\dim S\le 1$ and that $F_E\setminus S$ is irreducible of dimension $2$. Since every irreducible component of
$F_E$ has dimension $\ge 2$, this will imply that $F_E$ is irreducible.

Note that for every $\xi\in F_E\setminus S$, the induced map $\xi^{-1}\to K$ is nonzero, so we have $\xi^{-1}\simeq K(-D)$ for $D\in C^{[2]}$. Furthermore, by assumption,
$H^0(\xi KL^{-1})=H^0(L^{-1}(D))=0$, or equivalently, $H^1(L^{-1}(D))=0$. We claim that we get an isomorphism of $F_E\setminus S$ with an open subset in $C^{[2]}$ consisting of
$D$ such that $H^0(L^{-1}(D))=0$. Indeed, for such $D$ we have $H^1(\xi KL^{-1})=H^1(L^{-1}D)=0$, so the map $\xi^{-1}\to K$ uniquely lifts to $\xi^{-1}\to E$.

It remains to check that $\dim S\le 1$. We can write $\xi^{-1}\simeq KL^{-1}(-p)$ for some $p\in C$. Then $\xi E$ fits into an exact sequence
$$0\to \OO(p)\to \xi E\to L(p)\to 0.$$
If $L(p)\not\simeq K$ then $H^0(L(p))$ is $1$-dimensional. We claim that there is only finitely many such $p$ for which
$h^0(\xi E)>1$, or equivalently, the connecting homomorphism $H^0(L(p))\to H^1(\OO(p))$ is zero.
Indeed, this connecting homomorphism is obtained by dualization from the pairing
$$H^0(L(p))\ot H^0(K(-p))\rTo{b_{L,p}} H^0(KL)\rTo{e} k.$$
Thus, we need to check that there is only finitely many $p$ such that the image of $b_{L,p}$ is contained in $\ker(e)$.
But the image of $b_{L,p}$ is precisely the $1$-dimensional subspace $H^0(KL(-\tau(p)))$, where $\tau:C\to C$ is the hyperelliptic involution
(so $K(-p)\simeq \OO(\tau(p))$). In other words, we are looking at $p$ such that $\tau(p)$ is in the preimage of $e$ under the map to $\P^1$ given by the linear system $|KL|$,
so there is only finitely many.

On the other hand, if $L(p)\simeq K$ (so $\xi=\OO$) then we claim that the connecting homomorphism $H^0(K)\to H^1(\OO(p))$ is nonzero,
which implies that $h^0(\xi E)\le 2$, so this case contributes at most $\P^1$ as a component of $S$. Indeed, the connecting homomorphism is obtained by dualization
from the composition
$$H^0(K)\ot H^0(K(-p))\to H^0(K^2(-p))\rTo{e} k.$$
But for any nonzero $s\in H^0(K(-p))$, the subspace $H^0(K)\cdot s\sub H^0(K^2(-p))$ is $2$-dimensional, so it cannot be contained in $\ker(e)$.
\end{proof}





\subsection{On some cases of Conjecture C}

\begin{theorem}\label{conjC-general-thm} 
Conjecture C(i) is true for any $E\in \MM_{2g-1}$ on a curve $C$ of genus $\ge 2$ such that $\End(J_C)=\Z$ (so it is true for general curve).
\end{theorem}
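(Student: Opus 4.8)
The plan is to realize $Z_E$ as a degeneracy locus in $J$ of expected codimension $2$ and to show that it cannot acquire a component of codimension $1$; the hypothesis $\End(J_C)=\Z$ — equivalently, $J_C$ is simple and $NS(J_C)=\Z\theta$ — is precisely what is needed for this exclusion.

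First I would reformulate. By Serre duality $h^1(\xi\ot E)\neq 0$ if and only if $h^0(\xi^{-1}\ot G)\neq 0$, where $G:=E^\vee\ot K$ is again stable of rank $2$, now of degree $2g-3$; so via $\xi\mapsto\xi^{-1}$ the locus $Z_E$ is identified with $W:=\{\zeta\in J\ :\ h^0(\zeta\ot G)\neq 0\}$, and it suffices to prove $\dim W\le g-2$. A nonzero section of $\zeta\ot G$ is a nonzero map $\zeta^{-1}\to G$ whose saturation is a line subbundle $N\sub G$; by stability of $G$ one has $0\le\deg N=:d\le g-2$, and $\zeta=N^{-1}\ot\OO(D)$ for a unique effective $D\in C^{[d]}$. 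Hence $W=\bigcup_{d=0}^{g-2}\bigl((-\Sigma_d)+W_d\bigr)$, where $\Sigma_d\sub\Pic^d(C)$ is the image of the scheme of degree-$d$ line subbundles of $G$ and $W_d\sub\Pic^d(C)$ is the Abel image, with $\dim W_d=d$. A degree-$d$ section of the $\P^1$-bundle $\P(G)\to C$ has normal bundle a line bundle of degree $\deg G-2d=2g-3-2d\in[1,2g-3]$, so by Clifford's inequality the scheme of such sections has dimension at most $g-1-d$; therefore $\dim\bigl((-\Sigma_d)+W_d\bigr)\le(g-1-d)+d=g-1$ for every $d$, and so $\dim W\le g-1$ unconditionally. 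The theorem is thus equivalent to excluding a component of $Z_E$ of dimension exactly $g-1$.

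The crucial point — which I expect to be the main obstacle — is this exclusion, and it is exactly here that the Clifford count above is genuinely one short and the simplicity of $J_C$ must be brought in. Suppose, for contradiction, that $V\sub Z_E$ is a component with $\dim V=g-1$; translating $E$ by a line bundle we may assume $[\OO]\in V$ and, at the general point of $V$, that $h^1(E)=1$ and $h^0(E)=2$ (the case of higher generic jump is handled the same way with the larger minors). By Lemma \ref{bun-deg-local-lem} the Zariski tangent space $T_{[\OO]}Z_E$ is the annihilator in $T_\OO J$ of the image of the multiplication map $\mu_E\colon H^0(E^\vee K)\ot H^0(E)\to H^0(K)=T^*_\OO J$, whose source is $1\cdot 2$-dimensional; hence $\dim T_{[\OO]}Z_E\ge g-1$ forces $\dim\im(\mu_E)\le 1$, i.e. for the canonical map $\psi\colon E\to K$ (spanning $H^0(E^\vee K)$) and a basis $s_1,s_2$ of $H^0(E)$ the sections $\psi s_1,\psi s_2\in H^0(K)$ are proportional; equivalently $\ker(\psi)\sub E$ — a line subbundle of degree between $1$ and $g-1$ by stability — has a nonzero section. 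Running this over all general points of $V$ and unwinding the constraint on determinants, together with $NS(J_C)=\Z\theta$ (which forces the irreducible divisor $V$ to be numerically $m\theta$) and the rigidity supplied by $\mu_E$ through the Gauss map of $V$ (whose image in $\P(H^0(K)^\vee)$ is independent of $V$), one is reduced to the case $m=1$, so that $V$ is a translate of the theta divisor $\Theta\sub J$.

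It then remains to contradict the statement "$Z_E$ contains a translate of $\Theta$". This is the hard part: a $(g-1)$-dimensional, $\Theta$-shaped family of degree-$0$ sub-line-bundles $\zeta^{-1}\hookrightarrow G$ has, by the computation above, normal bundles all of the Clifford-extremal type $K(-A)$, and passing to the quotients $G/\zeta^{-1}$ produces a $(g-1)$-dimensional family of line bundles of degree $2g-3$ that is \emph{nondegenerate} in $\Pic^{2g-3}(C)$ — a translate of $\Theta$ generates $J_C$ because $J_C$ is simple. I would derive a contradiction from the classical theory of maximal sub-line-bundles of a stable rank-$2$ bundle together with $\End(J_C)=\Z$: such a $\Theta$-shaped family of degree-$0$ sub-line-bundles cannot occur for $G=E^\vee\ot K$ stable, since the only way the Clifford bound $g-1$ is achieved along a subvariety of that dimension is by a family contained in a translate of a proper abelian subvariety of $J_C$, of which there are none. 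Thus the assumed component $V$ does not exist, $\dim Z_E\le g-2$, and the theorem follows. All of the difficulty is concentrated in this final step — controlling the excess of these rank-$2$ Brill--Noether loci — for which the rank-one N\'eron--Severi group of $J_C$ is the essential new input.
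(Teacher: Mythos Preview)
Your reformulation via $G=E^\vee\ot K$ and the Clifford-based dimension count is sound and gives the unconditional bound $\dim Z_E\le g-1$. But the core of the proof --- excluding a component of dimension $g-1$ --- is not actually carried out in your proposal. You assert that $NS(J_C)=\Z\th$ together with a Gauss-map argument reduces one to $V$ being a translate of $\Theta$, and then that such a $\Theta$-shaped family of sub-line-bundles cannot exist; but neither step is proved. The claim that the Gauss-map image is ``independent of $V$'' is not explained, and the final appeal to ``the classical theory of maximal sub-line-bundles'' together with simplicity of $J_C$ is a hope, not an argument. You yourself flag that all the difficulty is concentrated in this step, and indeed your proposal stops short of resolving it.

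The paper takes a completely different and much shorter route that avoids this difficulty entirely. It performs a generic Hecke modification $0\to E'\to E\to \OO_p\to 0$ to obtain a semistable bundle $E'$ of rank $2$ and degree $2g-2$, whose generalized theta divisor $\Theta_{E'}\sub J$ is always of codimension $1$ (Raynaud). From the exact sequence one has $Z_E\sub\Theta_{E'}$. One then chooses the Hecke data so that (a) $E'$ is stable and (b) there is a $\xi\in\Theta_{E'}\setminus Z_E$, both of which are arranged by picking $\xi$ generic with $h^0(\xi E)=1$ and taking $\ell=\lan s_\xi|_p\ran$. The hypothesis $\End(J_C)=\Z$ enters only to show that $\Theta_{E'}$ is \emph{irreducible}: since $\Theta_{E'}$ is numerically $2\th$, reducibility would force $\Theta_{E'}=\Theta_L+\Theta_{L'}=\Theta_{L\oplus L'}$, contradicting the injectivity of the map from semistable bundles to $|2\th|$ and the stability of $E'$. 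Irreducibility plus $Z_E\subsetneq\Theta_{E'}$ immediately gives $\dim Z_E\le g-2$. This bypasses the delicate Brill--Noether analysis your approach would require.
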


\begin{proof}
 Let $E$ be a stable vector bundle in $\MM_{\La_0}$. We define the bundle $E'$ as a generic Hecke transformation of $E$, so that we have exact sequence
\begin{equation}\label{Hecke-E-E'-ex-seq}
0\to E'\to E\to \OO_p\to 0
\end{equation}
We immediately see that $E'$ is semistable.
Let us consider the generalized theta-divisor $\Th_{E'}=\{\xi\in E' \ |\ H^1(\xi\ot E')\neq 0\}$ (it is always of codimension $1$ in $J$ by the result of Raynaud \cite{Raynaud}). Then we necessarily have
$Z_E\sub \Th_{E'}$. We want to check that for an appropriate choice of the Hecke transformation we have $Z_E\neq \Th_{E'}$.

Let us pick a generic $\xi\in J$, so that $h^1(\xi\ot E)=0$, i.e., $H^0(\xi\ot E)$ is $1$-dimensional. Let $s_{\xi}\in H^0(\xi\ot E)$ be a nonzero element. Then for generic point $p\in C$, we have $s_{\xi}|_p\neq 0$.
and we take the line $\ell=\lan s_{\xi}|_p\ran\ot \xi^{-1}|_p\sub E|_p$. Then
the composition 
$H^0(\xi\ot E)\to \xi|_p\ot E|_p/\ell$ will be zero, and we deduce that $h^0(\xi\ot E')=h^0(\xi\ot E)=1$, i.e., $\xi\in \Th_{E'}\setminus Z_E$.

Next, we claim that for a generic choice of $\ell$ and $p\in C$, $E'$ will be stable. Indeed, one can show that there is finitely many subbundles $L\sub E$ of degree $g-1$.
We need to make sure that $\ell\neq L|_p\sub E|_p$. But if $\xi$ is generic then $H^0(\xi\ot L)=0$, so $s_\xi$ projects to a nonzero section $\ov{s}_\xi$ of $\xi\ot (E/L)$. For a generic $p\in C$
we will have $\ov{s}_\xi|_p\neq 0$, so $\ell=\lan s_\xi|_p\ran\neq L|_p$ as required.

Finally, we claim that $\Th_{E'}$ is irreducible.
Indeed, since $\End(J)=\Z$, and $\Th_{E'}$ is algebraically equivalent to $2\th$ on $J$, if it is reducible it has form $\Th_{L}+\Th_{L'}$ for some line bundles $L$ and $L'$ of degree $g-1$.
In other words, it coincides with the generalized theta-divisor $\Th_{L\oplus L'}$. It is known that the map from the moduli space of semistable bundles to the linear system $|2\th|$ is injective,
so this would contradict stability of $E'$.

Since $Z_E$ is properly contained in $\Th_{E'}$ it has dimension $\le g-2$. On the other hand, $Z_E$ is obtained as the intersection of $Z'$ with $\{E\}\times J$ in $\MM_{\La_0}\times J$, so
each component of $Z_E$ has dimension $\ge g-2$.
\end{proof}

\begin{prop}\label{Conj-Cii-mercat} 
(i) Conjecture C(ii) always holds for $d=1$ and $r\le g-2$. 

\noindent (ii)
Conjecture C(ii) for $d=1$ holds for any $E\in\MM_{2g-1}$ on a curve $C$ with Clifford index $\ga_C\ge 2$ and genus $g\ge 6$.
\end{prop}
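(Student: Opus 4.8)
\emph{Proof plan.} For part (i), write an effective divisor of degree $1$ as $D=p\in C$. Since $\deg(\xi^2\La(-p))=2g-2=\deg K$, the condition $H^1(\xi^2\La(-p))\neq 0$ is equivalent to $\xi^2\La(-p)\simeq K$, i.e. $\xi^2\simeq K\La^{-1}(p)$; for each fixed $p$ this equation has exactly $\#J[2]=2^{2g}$ solutions, so the locus $\Ga:=\{(\xi,p)\in J\times C\mid \xi^2\La(-p)\simeq K\}$ is finite over $C$ and hence of dimension $1$. The locus appearing in Conjecture C(ii) for $d=1$ is contained in $\Ga$, so it has dimension $\le 1$, and $1\le g-r-1$ precisely when $r\le g-2$. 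This proves (i), and also shows that for part (ii) only the cases $r=g-1$ (finiteness) and $r\ge g$ (emptiness) remain; both concern the subset $T:=\{(\xi,p)\in\Ga\mid h^0(\xi E(-p))\ge g-1\}$, and it suffices to prove that $T$ is finite and that $h^0(\xi E(-p))\le g-1$ everywhere on $\Ga$.

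For $(\xi,p)\in\Ga$ set $F:=\xi\ot E(-p)$, a stable bundle of rank $2$ and degree $2g-3$ with $\det F\simeq K(-p)$, so that $F^\vee\simeq F\ot K^{-1}(p)$. Serre duality and Riemann--Roch then give $h^1(F)=h^0(\xi E)$ and $h^0(\xi E(-p))=h^0(\xi E)-1$; note also that $\xi E$ is stable of rank $2$, degree $2g-1$, with $\det(\xi E)\simeq K(p)$. Thus the two remaining assertions become: $h^0(\xi E)\le g$ on all of $\Ga$, with equality only on a finite subset.

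For the bound I would argue as follows. If $(\xi,p)\in\Ga$ has $h^0(\xi E)\ge g$, then $\mathrm{Cliff}(F)=\tfrac{2g-3}{2}-h^0(F)+2\le\tfrac32$, so one can invoke Mercat's (and Lange--Newstead's) results on the rank-$2$ Clifford index of $C$: under $\ga_C\ge 2$, $g\ge 6$, either $\mathrm{Cliff}(F)\ge\ga_C\ge 2$ (excluded) or $F$ is an extension $0\to N\to F\to M\to 0$ with $N,M$ line bundles computing the Clifford index (or equal to $\OO$ or $K$); combined with $\det F\simeq K(-p)$ and stability of $F$, this forces $\deg N\in\{0,1\}$, $h^0(N)=1$, $M\simeq K(-p)\ot N^{-1}$, whence $h^0(F)\le h^0(N)+h^0(M)\le g-1$. (The same trichotomy can also be obtained by hand: saturating a section gives $N\hra F$ with $N$ effective, $\deg N\le g-2$, $h^1(N)\ge 2$ and $h^0(F)\le h^0(N)+h^0(M)$; applying Clifford's bound to whichever of $N$, $M$ has two sections and Clifford's theorem for effective line bundles to the other, and using $\ga_C\ge 2$, $g\ge 6$, leaves only $\deg N\in\{0,1\}$, $h^0(N)=1$.) This already gives $h^0(\xi E)\le g$ everywhere, hence emptiness for $r\ge g$. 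For finiteness one must exclude $T$ containing a curve: when $\deg N=1$ one has $N\simeq\OO(q)$, $M\simeq K(-p-q)$, $h^0(M)=g-2$ (here $\ga_C\ge 2$ forbids a $g^1_2$), and the constraints $\xi^2\La(-p)\simeq K$ together with $\OO(q)\hra F$ cut out a finite set by a dimension count; when $\deg N=0$ the inclusion $\OO\hra F$ amounts to an embedding of a degree-$1$ sub-line-bundle $L:=\xi^{-1}(p)\hra E$, and $(\xi,p)\in\Ga$ forces $K\La^{-1}L^2$ to be effective, $\simeq\OO(q)$, which recovers $(\xi,p)$ from $L$; since the set of degree-$1$ sub-line-bundles $L\subset E$ with $K\La^{-1}L^2$ effective is finite (it injects into $C$ by $L\mapsto q$, a general $q$ admitting no such $L$ inside the fixed $E$), this part of $T$ is finite too. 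Hence $T$ is finite, which finishes the argument.

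The main obstacle is the Clifford-extremal case. The hypothesis $\ga_C\ge 2$ by itself only yields $h^0(\xi E)\le g$, so upgrading ``$\dim T\le 1$'' to ``$T$ finite'' requires a genuine understanding of the rank-$2$ bundles of minimal Clifford index, and a proof that the rigid constraints $\det(\xi E)\simeq K(p)$ and $\xi^2\La(-p)\simeq K$ cannot hold along a whole $1$-parameter family of such bundles. This is exactly where one leans on Mercat's structural results on the rank-$2$ Clifford index, whose range of validity is what forces the restrictions $\ga_C\ge 2$ and $g\ge 6$.
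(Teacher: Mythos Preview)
Your argument for part (i) is correct and matches the paper's proof exactly.

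For part (ii), however, you take a far more elaborate route than needed, and there is a gap. The paper's proof is a single line: Mercat's Theorem 2.1 gives directly that $h^0(F)\le g-2$ for \emph{every} stable rank-$2$ bundle $F$ of degree $2g-3$ on a curve with $\ga_C\ge 2$ and $g\ge 6$. Since $\xi E(-p)$ is such a bundle for any $(\xi,p)$, one has $h^0(\xi E(-p))\le g-2$ unconditionally, so the locus in Conjecture~C(ii) with $r\ge g-1$ is \emph{empty}. Combined with part (i), this finishes (ii).

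By contrast, you only extract the weaker bound $h^0(F)\le g-1$ from Mercat-type structure results, and then embark on a separate finiteness argument for the boundary case $r=g-1$. That weaker bound is itself not established: in your case $\deg N=0$ one has $N\simeq\OO$, $M\simeq K(-p)$, and then $h^0(N)+h^0(M)=1+(g-1)=g$, not $g-1$ as you assert; getting down to $g-1$ would require showing the coboundary $H^0(K(-p))\to H^1(\OO)$ is nonzero, which you do not do. Your finiteness sketch for $r=g-1$ is likewise incomplete (e.g.\ the injection of degree-$1$ subbundles $L\subset E$ with $K\La^{-1}L^2$ effective into $C$ is asserted but not proved). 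All of this becomes unnecessary once you invoke Mercat's actual bound $h^0(F)\le g-2$, which makes the case $r=g-1$ vacuous as well.
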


\begin{proof} (i) We observe that
$H^1(\xi^2\La(-p))\neq 0$ exactly when $\xi^2(-p)\simeq K\La^{-1}$, which is a $1$-dimensional family of $(\xi,p)$. Thus, the dimension of the needed locus is 
$\le 1\le g-r-1$.

\noindent
(ii) We claim that for $r>g-2$ the statement is vacuous. Indeed,
under our assumptions on $C$, one has $h^0(F)\le g-2$ for any stable bundle of rank $2$ and degree $2g-3$ (see \cite[Thm.\ 2.1]{Mercat}).
It follows that for any $p\in C$ and $\xi\in J$ one has $h^0(\xi E(-p))\le g-2$. 
\end{proof} 

\subsection{Singularities of $F_E$: easy cases}

\begin{prop}\label{FE-smooth-prop}
Let $E\in \MM_{2g-1}$.
Assume that the map $\mu: H^0(E^\vee K)\ot H^0(E)\to H^0(K)$ is injective. Then 
$F_E$ is smooth at any point over $\OO\in J$.
\end{prop}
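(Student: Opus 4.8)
The plan is to work locally near $[\OO]\in J$. By Lemma \ref{bun-deg-local-lem} we may identify $F_E$, over a neighbourhood of $[\OO]$, with $X(\phi)$ for a morphism of vector bundles $\phi:\VV\to\WW$ with $\rk\VV=h+1$, $\rk\WW=h$, where $h=h^1(E)$, whose restriction to the point $\OO$ has kernel $H^0(C,E)$ and cokernel $H^1(C,E)$. Since $\dim H^1(E)=h=\rk\WW$, the map $\phi|_{\OO}$ must vanish; hence $\VV|_{\OO}=H^0(E)$, $\WW|_{\OO}=H^1(E)$, and the fibre of $F_E$ over $\OO$ is the whole of $\P H^0(E)$. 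So fix $x=(\OO,[s_0])$ with $0\neq s_0\in H^0(E)$; the task is to show that $X(\phi)$ is smooth at $x$.

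Next, recall that $X(\phi)\sub\P(\VV)$ is the zero scheme of the section $\sigma$ of the rank-$h$ bundle $p^*\WW\ot\OO(1)$ on $p:\P(\VV)\to J$, and that $\dim\P(\VV)=g+h$. It therefore suffices to prove that the differential $d\sigma_x:T_x\P(\VV)\to(p^*\WW\ot\OO(1))_x$ (well defined since $\sigma(x)=0$) is surjective: this forces $X(\phi)$ to be smooth of codimension $h$, i.e.\ of dimension $g$, at $x$. To compute $d\sigma_x$, trivialize $\P(\VV)$ and $p^*\WW\ot\OO(1)$ near $x$, so that $\sigma$ becomes $(\xi,v')\mapsto\phi_\xi(s_0+v')$, with $\xi$ near $\OO$ and $v'$ ranging over a complement of $k s_0$ in $H^0(E)$. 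The derivative in the fibre directions is $\phi_{\OO}(v')=0$, because $\phi|_{\OO}=0$; the derivative in a base direction $v\in T_{\OO}J$ is $\partial_v\phi|_{\OO}(s_0)\in H^1(E)$, where $\partial_v\phi|_{\OO}\in\Hom(H^0(E),H^1(E))$ is the directional derivative of $\phi$ at $\OO$. Hence, up to the identification $(p^*\WW\ot\OO(1))_x\cong H^1(E)$, the differential $d\sigma_x$ is the map $T_{\OO}J\to H^1(E)$, $v\mapsto\partial_v\phi|_{\OO}(s_0)$.

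By Lemma \ref{bun-deg-local-lem} the linear term of $\phi$ at $\OO$, i.e.\ the element of $T^*_{\OO}J\ot\Hom(H^0(E),H^1(E))$ given by $v\mapsto\partial_v\phi|_{\OO}$, is identified — via the Serre dualities $T^*_{\OO}J=H^0(K)$ and $H^1(E)=H^0(E^\vee K)^*$ — with the multiplication pairing $\mu_E\in\Hom\bigl(H^0(E^\vee K)\ot H^0(E),H^0(K)\bigr)$. Tracing through these identifications, the map $v\mapsto\partial_v\phi|_{\OO}(s_0)$, viewed as a map $T_{\OO}J=H^0(K)^*\to H^1(E)=H^0(E^\vee K)^*$, is precisely the transpose of
$$\mu_E(-\ot s_0):H^0(E^\vee K)\longrightarrow H^0(K),\qquad \eta\longmapsto\mu_E(\eta\ot s_0).$$
A linear map of finite-dimensional vector spaces is injective if and only if its transpose is surjective. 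Since $s_0\neq0$, the map $\eta\mapsto\eta\ot s_0$ embeds $H^0(E^\vee K)$ into $H^0(E^\vee K)\ot H^0(E)$, so $\mu_E(-\ot s_0)$ is the restriction of $\mu_E$ to a subspace; as $\mu_E$ is injective by hypothesis, so is $\mu_E(-\ot s_0)$. Therefore $d\sigma_x$ is surjective, and $F_E$ is smooth of dimension $g$ at $x=(\OO,[s_0])$, for every $[s_0]\in\P H^0(E)$.

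The only delicate point is the bookkeeping in the third paragraph — identifying $d\sigma_x$ with the transpose of $\mu_E(-\ot s_0)$ through the several Serre-duality isomorphisms. The two supporting observations are easy: $\phi|_{\OO}=0$ (so the fibre-direction part of $d\sigma_x$ vanishes), and the restriction of an injective pairing to $H^0(E^\vee K)\ot k s_0$ remains injective.
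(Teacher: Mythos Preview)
Your proof is correct and uses the same key input as the paper (Lemma \ref{bun-deg-local-lem}, identifying the linear term of $\phi$ at $\OO$ with the pairing $\mu$), but the packaging differs. The paper observes that $\mu$ is the cotangent map at $\OO$ of the classifying morphism $f:J\to M_h$ to the space of $h\times(h+1)$ matrices; injectivity of $\mu$ means $df_{\OO}$ is surjective, so $f$ is smooth near $\OO$, and since $X(\phi)$ is the pullback of the smooth universal scheme $X_h\sub M_h\times\P^h$ under the smooth map $f\times\id$, one concludes at once that $X(\phi)$ is smooth over a neighbourhood of $\OO$. Your argument instead fixes a point $(\OO,[s_0])$ and checks surjectivity of the differential of the defining section $\sigma$ directly, reducing to injectivity of $\mu(-\ot s_0)$. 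This is the same phenomenon seen pointwise: the paper's ``$f$ smooth'' is exactly the statement that $\mu(-\ot s_0)$ is injective for \emph{every} $s_0$ simultaneously, which is what injectivity of $\mu$ gives. The paper's route is a bit more concise and immediately yields smoothness over an open neighbourhood, while yours makes the Jacobian criterion explicit; both are valid.
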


\begin{proof}
Indeed, by Lemma \ref{bun-deg-local-lem}, locally we have an $h\times (h+1)$-matrix of functions $\phi$, where $h=h^0(E^\vee K)$,
such that $F_E=X(\phi)$, and $\mu$ gives the dual to the tangent map of the corresponding morphism $f:J\to M_h$ to the space of $(h+1)\times h$-matrices. It
follows that $f$ is smooth, which implies that the map to the universal resolution $X(\phi)\to X_m$ is smooth, hence, $X(\phi)$ is smooth.
\end{proof}

\begin{prop}
Assume that $E\in \MM_{2g-1}$ is very stable.
Then $F_E$ is smooth of dimension $g$.
\end{prop}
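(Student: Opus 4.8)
The plan is to show that for a very stable $E$ the open subset $U_E\subseteq F_E$ produced in Lemma \ref{UE-smooth-lem} is in fact all of $F_E$. Since $U_E$ is smooth of dimension $g$, this gives the statement at once, and with no need for the reduction $F_E\simeq F_{\xi_0E}$ or for Proposition \ref{FE-smooth-prop}, as Lemma \ref{UE-smooth-lem} already yields smoothness over all of $J$. So the goal becomes $F_E\setminus U_E=\emptyset$.

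First I would analyze a hypothetical point $(\xi,s)\in F_E\setminus U_E$, following the proof of Lemma \ref{FE-codim1-lem}. If $s$ were nowhere vanishing then $E/s(\xi^{-1})\simeq\La_0\xi$ would have degree $2g-1>2g-2$, forcing $\Ext^1(\xi^{-1},E/s(\xi^{-1}))\simeq H^1(\La_0\xi^2)=0$, i.e.\ $(\xi,s)\in U_E$. Hence $s$ vanishes on a nonzero effective divisor $D$ of degree $d\ge 1$. Let $N:=\xi^{-1}(D)$ be the saturation of $s(\xi^{-1})$ in $E$; it is a line subbundle with $E/N\simeq\La_0\xi(-D)$, and since $N/s(\xi^{-1})$ is torsion, $\Ext^1(\xi^{-1},E/s(\xi^{-1}))\simeq\Ext^1(\xi^{-1},E/N)\simeq H^1(\La_0\xi^2(-D))$. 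Thus $(\xi,s)\notin U_E$ amounts, by Serre duality, to $H^0(K\La_0^{-1}\xi^{-2}(D))\neq 0$.

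The geometric input is the characterization of very stability in terms of line subbundles: $E$ is very stable if and only if $\Hom(E/N',N'\otimes K)=0$ for every line subbundle $N'\subseteq E$. Indeed, a nonzero nilpotent Higgs field $\Phi:E\to E\otimes K$ (i.e.\ $\tr\Phi=0$, $\Phi^2=0$) satisfies $\Phi(E)\subseteq N'\otimes K$ with $N'$ the saturation of $\ker\Phi$, and $\Phi|_{N'}=0$ because it is multiplication by some $\omega\in H^0(K)$ with $\omega^2=0$; hence $\Phi$ factors as $E\to E/N'\to N'\otimes K\hra E\otimes K$ with nonzero middle arrow, and conversely any nonzero element of $\Hom(E/N',N'\otimes K)$ produces such a $\Phi$. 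Applying this with $N'=N=\xi^{-1}(D)$ and using $E/N\simeq\La_0\xi(-D)$ gives $\Hom(E/N,N\otimes K)\simeq H^0(K\La_0^{-1}\xi^{-2}(2D))=0$.

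The last step — the one that actually carries the content — is to reconcile the two cohomology groups: multiplication by the tautological section of $\OO_C(D)$ embeds $H^0(K\La_0^{-1}\xi^{-2}(D))$ into $H^0(K\La_0^{-1}\xi^{-2}(2D))$, so the vanishing of the latter forces $H^0(K\La_0^{-1}\xi^{-2}(D))=0$, contradicting $(\xi,s)\notin U_E$. Therefore $F_E=U_E$, which is smooth of dimension $g$. I expect the only real care to be needed in correctly identifying $N=\xi^{-1}(D)$ with the saturation of $s(\xi^{-1})$ in $E$ and in tracking the twists by $D$ through the degree computations; everything else is formal once the very-stability characterization is in hand.
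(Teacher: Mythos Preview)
Your proof is correct and follows essentially the same approach as the paper: both reduce to showing $U_E=F_E$ via Lemma \ref{UE-smooth-lem}, and both derive a nonzero nilpotent Higgs field from the nonvanishing of $\Ext^1(\xi^{-1},E/s(\xi^{-1}))$. The paper is slightly more direct: from $\Ext^1(\xi^{-1},\La\xi(-D))\neq 0$ it takes, via Serre duality, a nonzero map $\La\xi(-D)\to \xi^{-1}K$ and forms the nilpotent Higgs field as the composition
\[
E\twoheadrightarrow \La\xi(-D)\to \xi^{-1}K\xrightarrow{\,s\otimes\id_K\,} EK,
\]
so it never needs the embedding $H^0(K\La^{-1}\xi^{-2}(D))\hookrightarrow H^0(K\La^{-1}\xi^{-2}(2D))$ or the general characterization of very stability via line subbundles. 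Your route through $\Hom(E/N,NK)=H^0(K\La^{-1}\xi^{-2}(2D))$ and then back down by the tautological section of $\OO_C(D)$ works, but it is a small detour. One cosmetic point: in your justification that $\Phi|_{N'}=0$, the phrase ``$\omega^2=0$'' is not quite the argument---the clean reason is that $\Phi|_{N'}$ factors through the torsion sheaf $N'/\ker\Phi$ into the torsion-free $EK$, hence vanishes.
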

\begin{proof}
By Lemma \ref{UE-smooth-lem}, it is enough to check that for every nonzero $s:\xi^{-1}\to E$ one has $\Ext^1(\xi^{-1},E/s(\xi^{-1}))=0$.
Assume this $\Ext^1$ is nonzero. Let $D\ge 0$ be a divisor of zeros of $s$, so that $s$ defines an embedding of a subbundle $\xi^{-1}(D)\to E$, and
$$E/s(\xi^{-1})\simeq T\oplus \La\xi(-D),$$
where $T$ is a torsion sheaf and $\La=\det(E)$. Then we should have $\Ext^1(\xi^{-1},\La\xi(-D))\neq 0$. By Serre duality, there exists a nonzero morphism
$\La\xi(-D)\to \xi^{-1}K$. But then we get a nonzero nilpotent Higgs field,
$$E\to \La\xi(-D)\to \xi^{-1}K\to EK,$$
which is a contradiction.
\end{proof}





\begin{lemma}\label{toric-sing-lem}
(i) A singularity of the form $x_1x_2=f(x_3,\ldots,x_n)$, where $f$ is nonzero formal power series, is rational.

\noindent
(ii) Let $D\sub X$ be a smooth divisor in a smooth variety $X$ and let $Z\sub D$ be a divisor. Then $B_Z X$ has rational singularities.
\end{lemma}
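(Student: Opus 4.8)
The plan is to reduce each statement to a known rationality result for toric (or locally toric) singularities, or alternatively to invoke Elkik's theorem on deformations of rational singularities mentioned in the introduction. For part (i), the singularity $X = \{x_1 x_2 = f(x_3,\ldots,x_n)\}$ sits in $\A^n$; after completing and using the Weierstrass preparation theorem I may assume $f$ is a polynomial in $x_3,\ldots,x_n$, or even just note $f \neq 0$. The clean approach: $X$ is the total space of a deformation. Consider the one-parameter family $\{x_1 x_2 = t\cdot f(x_3,\ldots,x_n)\}$ over $\A^1_t$; the special fiber $\{x_1 x_2 = 0\}\times \A^{n-2}$ is a normal crossing divisor times affine space — in particular it has rational singularities (it is seminormal, Cohen-Macaulay, and one checks $R^{>0}\rho_* \OO = 0$ for the obvious small resolution obtained by blowing up $\{x_1 = x_3 = \cdots = 0\}$, or directly because it is a toric variety whose fan is simplicial). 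Then, since $X$ is the total space and $\A^n = $ generic-fiber-type direction is smooth, I would instead argue directly: $X$ itself is a hypersurface, hence Cohen-Macaulay and Gorenstein; and blowing up the (codimension $\geq 2$, in fact using part (ii)) ideal or using that $X$ admits a small/crepant resolution will give rationality. Cleanest of all: when $f \neq 0$, write $f = u \cdot g$ with $g$ reduced or invoke that after a formal change of coordinates $x_1 x_2 = f$ is a hypersurface singularity that is a $cA_k$-type (compound Du Val in the surface slices when $n-2 \le $ small, but in general) — so I will instead simply use Lemma \ref{toric-sing-lem}(ii) applied to it, as explained next, which is why I would prove (ii) first and deduce (i).

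For part (ii), let $D \subset X$ be a smooth divisor in a smooth variety and $Z \subset D$ an (arbitrary, possibly non-reduced) divisor of $D$. Since the statement is local and blow-ups commute with smooth base change, I may work formally/étale-locally and assume $X = \A^n$ with coordinates $x_1,\ldots,x_n$, $D = \{x_1 = 0\}$, and $Z = \{x_1 = 0,\ g(x_2,\ldots,x_n) = 0\}$ for some $g$. The blow-up $B_Z X$ is covered by two charts: on one chart it is smooth (the chart where the $x_1$-coordinate of the exceptional $\P^1$ is a unit gives back $\A^n$, since the ideal $(x_1, g)$ pulled back becomes $(x_1)$ up to unit there — wait, one must be careful), and on the other chart it is the hypersurface $\{x_1 = w\cdot g\}$ inside $\A^{n+1}$ with coordinate $w$. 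That second chart is precisely a singularity of the form treated in part (i) (with the roles: $x_1 \leftrightarrow x_1$, $w \leftrightarrow x_2$, $g(x_2,\ldots,x_n) \leftrightarrow f$), so (i) and (ii) are genuinely equivalent and must be proved together in one stroke. So the real content is a single rationality statement: the hypersurface $\{x_1 x_2 = f(x_3,\ldots,x_n)\} \subset \A^n$ has rational singularities whenever $f \neq 0$.

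To prove that core statement I would proceed as follows. First handle the case $f$ reduced (or more generally $f$ with no repeated factors after a coordinate change): then $X$ is normal (Serre's criterion: it is a hypersurface hence $S_2$, and it is regular in codimension $1$ since $\Sing X \subseteq \{x_1 = x_2 = 0,\ df = 0\}$ which, for generic/reduced $f$, has codimension $\geq 2$ in $X$ — one checks this). Then resolve: blow up the Weil divisor $\{x_1 = 0,\ f = 0\}$ (equivalently $\{x_2 = 0,\ f = 0\}$); on $X$ this is a small modification in good cases, and in general an induction on $n$ or on the multiplicity of $f$ combined with toric geometry handles it — more efficiently, observe $\{x_1 x_2 = f\}$ is the total space of a family over $\A^{n-2} = \Spec k[x_3,\ldots,x_n]$ with fibers the $A_{\infty}$-type curve $x_1 x_2 = c$, and near points where $f \neq 0$ it is smooth, so the whole analysis localizes along $V(f) \subset \A^{n-2}$. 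For the general (non-reduced $f$) case I invoke \emph{Elkik's theorem} (\cite{Elkik}, already cited): $\{x_1 x_2 = f\}$ is a flat deformation over $\A^1$ (scaling $f \mapsto tf$ as above, or more robustly over the base of a family $f_s$ deforming $f$ to a reduced power series with the same zero set shape) of a singularity already shown to be rational, and deformations of rational singularities are rational. \textbf{The main obstacle} I anticipate is making the normality/regularity-in-codimension-one claim precise for arbitrary nonzero $f$ (where $V(f)$ may be highly singular or non-reduced), so that one is entitled to even speak of a resolution; the cleanest route around this is to avoid resolving $X$ directly and instead exhibit $X$ as an explicit flat degeneration (or deformation) of the normal crossing variety $\{x_1 x_2 = 0\} \subset \A^n$ — whose rationality is elementary — and then cite Elkik, at the cost of checking flatness of the total space, which for the hypersurface equation $x_1 x_2 - t\, f(x_3,\ldots,x_n) = 0$ over $\Spec k[t]$ is immediate since $t$ is a nonzerodivisor.
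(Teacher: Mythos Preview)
Your reduction of (ii) to (i) is correct and matches the paper exactly: in local coordinates with $D=\{x_1=0\}$ and $Z=\{x_1=g(x_2,\ldots,x_n)=0\}$, the interesting chart of $B_ZX$ is the hypersurface $x_0x_1=g(x_2,\ldots,x_n)$, which is precisely the shape in (i). So the content is indeed (i), and the paper proves (i) first, then deduces (ii) in one line.

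Your main proposed proof of (i), however, has a genuine gap. You want to place $\{x_1x_2=f\}$ in the family $x_1x_2=t\,f$ and apply Elkik with special fiber $\{x_1x_2=0\}$. But $\{x_1x_2=0\}\sub\A^n$ is \emph{not normal} (it is the union of two hyperplanes meeting in codimension one), so it does not have rational singularities in the sense required by Elkik's theorem. Your parenthetical ``it is seminormal, Cohen--Macaulay, and one checks $R^{>0}\rho_*\OO=0$'' conflates seminormality or Du~Bois--type conditions with rational singularities; these are genuinely different notions, and the standard form of \cite{Elkik} needs the special fiber to be normal with rational singularities. The various alternative sketches you mention (small resolution, induction on multiplicity) are not developed enough to serve as a substitute.

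The paper's fix is short: after reducing $f$ to a polynomial (Weierstrass), degenerate $f$ not to $0$ but to a single \emph{monomial} (e.g.\ by rescaling the variables $x_3,\ldots,x_n$ with suitable weights and sending the parameter to $0$). The resulting hypersurface $x_1x_2=x_3^{a_3}\cdots x_n^{a_n}$ is a \emph{normal} affine toric variety, hence has rational singularities. Then Elkik's theorem (stability under generization) gives rationality of the original $\{x_1x_2=f\}$. So the idea of using a one-parameter degeneration plus \cite{Elkik} was right; you just need the correct target for the special fiber.
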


\begin{proof}
(i) It is well known that we can assume $f$ to be a polynomial.
Using the fact that rational singularities are stable under generization (see \cite[Thm.\ IV]{Elkik}), we reduce to the case when $f$ is a monomial.
But then we have a (normal) toric singularity, hence rational.

\noindent
(ii) Formally locally we can choose coordinates $(x_1,\ldots,x_n)$ on $X$ such that $D$ is given by $(x_1=0)$ and $E\sub D$ is given by $f(x_2,\ldots,x_n)$.
Then the singular point on the blow up will be given by $x_0x_1=f(x_2,\ldots,x_n)$ so it is rational by part (i).
\end{proof}

\begin{lemma}\label{rat-sing-easy-case-lem}
Assume $E\in \MM_{2g-1}$ is such that $h^1(E)=1$ and $\dim_{\OO} Z_E=g-2$ (local dimension at the point $[\OO]\in J$). 
Then 
$Z_E$ is a locally complete intersection in $J$ near $[\OO]$, and 
$$F_E\simeq B_{Z_E} J$$
near $\OO\in J$.
Let $A=\ker(\a:E\to K)$, where $\a$ is a generator of the $1$-dimensional space $\Hom(E,K)$. Assume in addition that $h^0(A)\leq 1$. 
Then $F_E$ has rational singularities over $\OO\in J$.
\end{lemma}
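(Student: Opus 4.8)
The plan is to first dispose of the structural claims, then reduce the rationality statement to the toric model of Lemma \ref{toric-sing-lem}. Since $h^1(E)=1$, Lemma \ref{bun-deg-local-lem} gives, near $[\OO]\in J$, a morphism of vector bundles $\phi:\VV\to\WW$ with $\rk\VV=2$, $\rk\WW=1$, such that $F_E\simeq X(\phi)$ and $Z_E=Z(\phi)$ locally; i.e.\ $\phi$ is a pair of functions $(f_0,f_1)$ on (a formal/étale neighborhood of $[\OO]$ in) $J$, the locus $Z_E$ is cut out by $(f_0,f_1)$, and $X(\phi)\subset J\times\P^1$ is the zero locus of the corresponding section. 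The hypothesis $\dim_{\OO}Z_E=g-2$ says that $(f_0,f_1)$ is a regular sequence, hence $Z_E$ is a local complete intersection of codimension $2$; then Proposition \ref{deg-blow-up-prop}(ii) (the components of $X(\phi)$ have dimension $\ge\dim J=g$, combined with $\dim X(\phi)=g$ forced by flatness/Conjecture-C-type irreducibility, or simply by the explicit blow-up description once $(f_0,f_1)$ is a regular sequence) gives $F_E=X(\phi)=B_{Z_E}J$ near $[\OO]$. Concretely, $B_{Z_E}J$ is the closure in $J\times\P^1$ of the graph $[f_0:f_1]$ off $Z_E$, cut out by $x_0f_1=x_1f_0$ in the chart coordinates.

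For rationality, I would work in each of the two standard affine charts of $\P^1$. In the chart $x_1\ne 0$, set $t=x_0/x_1$; the blow-up is $\{tf_1=f_0\}\subset J\times\A^1_t$. This is a hypersurface in a smooth variety, and the point is to put it into the form of Lemma \ref{toric-sing-lem}(i), namely $u_1u_2=g(\text{other variables})$. The mechanism is exactly Lemma \ref{toric-sing-lem}(ii): near a point of $Z_E$ we may choose formal coordinates $(f_1,x_3,\dots,x_g)$ on $J$ in which $Z_E\subset\{f_1=0\}$ is the divisor $\{f_0=0\}$ of $\{f_1=0\}$ — this requires that $\{f_1=0\}$ be smooth at the point in question, which holds generically on $Z_E$ but must be arranged at each point — and then the blow-up equation $tf_1=f_0(0,x_3,\dots,x_g)+(\text{higher order in }f_1)$ has the shape $t\cdot f_1 = h(x_3,\dots,x_g)$ with $h\ne 0$, which is rational by Lemma \ref{toric-sing-lem}(i) after absorbing the higher-order correction (a formal change of the variable $t$). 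The content of the extra hypothesis $h^0(A)\le 1$ is to guarantee exactly this: via the description of the linearization $\mu_E:H^0(E^\vee K)\ot H^0(E)\to H^0(K)$ of $\phi$ from Lemma \ref{bun-deg-local-lem}, one of the two component functions of $\phi$ — the one pairing $H^0(E)$ against the generator $\a\in H^0(E^\vee K)=\Hom(E,K)$, whose "kernel direction" is governed by $h^0(A)$ — has nonzero linear part precisely when $h^0(A)\le 1$, so that the corresponding divisor in $J$ is smooth and can play the role of $\{f_1=0\}$ in Lemma \ref{toric-sing-lem}(ii).

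So the key steps are: (1) invoke Lemma \ref{bun-deg-local-lem} to get the $2\times 1$ matrix $\phi=(f_0,f_1)$ and identify $F_E$ with $X(\phi)=B_{Z_E}J$, using $\dim Z_E=g-2$ to see $(f_0,f_1)$ is a regular sequence (this gives the first two assertions); (2) translate $h^0(A)\le 1$ through the formula for $\mu_E$ into the statement that one of $f_0,f_1$, say $f_1$, has nonvanishing differential at every point of $Z_E$ over $[\OO]$, so $\{f_1=0\}$ is a smooth divisor there and $Z_E\subset\{f_1=0\}$ is a Cartier divisor; (3) apply Lemma \ref{toric-sing-lem}(ii) (equivalently (i) after a formal coordinate change killing higher-order terms) chart-by-chart on $\P^1$ to conclude $B_{Z_E}J$ has rational singularities over $[\OO]$. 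I expect step (2) to be the main obstacle: one must check carefully that $h^0(A)\le 1$ is really equivalent to smoothness of the relevant divisor in $J$ — in particular that the "first order term" $\mu_E$ from Lemma \ref{bun-deg-local-lem} does not degenerate along $Z_E$ — and one must handle the point of $\P^1$ (the direction $[0:1]$ versus $[1:0]$) where the roles of $f_0$ and $f_1$ in the chart swap, making sure the smooth divisor used is the one adapted to that chart; the rest is routine once the local model is pinned down.
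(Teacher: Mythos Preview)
Your proposal is correct and follows the paper's approach: Lemma \ref{bun-deg-local-lem} gives two local equations for $Z_E$, Proposition \ref{deg-blow-up-prop}(iii) (with $m=1$) yields the blow-up identification, and $h^0(A)\le 1$ means the map $H^0(E)\to H^0(K)$ induced by $\alpha$---which furnishes the linear parts of the two equations, indexed by a basis of $H^0(E)$, with kernel $H^0(A)$---is nonzero, so one equation defines a smooth divisor containing $Z_E$ and Lemma \ref{toric-sing-lem}(ii) applies. Your chart-by-chart worry is unnecessary: Lemma \ref{toric-sing-lem}(ii) handles the entire blow-up at once, and indeed the other chart is smooth over $[\OO]$ since its defining equation $s f_0 - f_1=0$ has differential $-df_1\neq 0$ there.
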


\begin{proof} 
The fact that $Z_E$ is given locally near $[\OO]$ by $2$ equations follows immediately from Lemma \ref{bun-deg-local-lem}. 
By Proposition \ref{deg-blow-up-prop}(iii) (with $m=1$), this implies that $F_E\simeq B_{Z_E} J$. 

To prove the second assertion, we note that the linear parts of the $2$ equations of $Z$ are given by the linear map $H^0(E)\to H^0(K)$ induced by the unique
nonzero map $E\to K$. By assumption, this map is nonzero. Hence, locally there exists a smooth divisor containing 
$Z_E$. Hence, the result follows from Lemma \ref{toric-sing-lem}(ii).
\end{proof}



\subsection{Proof of Conjecture B for genus $2$}

Let $C$ be a curve of genus $2$.
We already proved that for any $E\in \MM_3$, $F_E$ is irreducible of dimension $2$ (see Prop.\ \ref{FE-irr-g2-prop}).
The validity of Conjecture C(i) follows from the well known fact that for $E\in \MM_{\La_0}$ there is only finitely many line bundles $M$ of degree $2$ with $\Hom(E,M)\neq 0$ (see e.g., \cite{Newstead}).
Since $Z_E\sub J$ consists of $\xi$ such that $\Hom(E,\xi^{-1}K)\neq 0$, we see that each $Z_E$ is $0$-dimensional.

\begin{lemma}\label{K-ext-lem}
For any $E\in \MM_{\La_0}$ one has $h^1(E)\le 1$, or equivalently, $h^0(E)\le 2$.
\end{lemma}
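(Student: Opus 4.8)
The statement to prove is Lemma~\ref{K-ext-lem}: for $E \in \MM_{\La_0}$ (stable of rank $2$, degree $2g-1 = 3$ in genus $2$), one has $h^1(E) \le 1$, equivalently $h^0(E) \le 2$ (since $\chi(E) = \deg(E) + 2(1-g) = 3 - 2 = 1$, so $h^0(E) - h^1(E) = 1$). I would argue by contradiction: suppose $h^0(E) \ge 3$, so $h^1(E) \ge 2$, i.e. $h^0(E^\vee \ot K) \ge 2$. Since $E$ has rank $2$, $E^\vee \ot K \simeq E \ot (\det E)^{-1} \ot K = E \ot K \ot \La_0^{-1}$, a rank $2$ bundle of degree $2\cdot 2 - 3 = 1$.

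The plan is to extract a line subbundle from the large space of sections and contradict stability. First, from $h^0(E) \ge 3$ and $g = 2$: a nonzero section $s \colon \OO \to E$ vanishes on a divisor $D$ of some degree $d \ge 0$, giving a subbundle $\OO(D) \hookrightarrow E$ of degree $d$; by stability $d \le (\deg E - 1)/2 = 1$. Now consider the evaluation/base-locus behaviour of the linear system $|E|$: with $h^0(E) \ge 3$, I would show that either the generic section is nowhere vanishing (so $E$ is globally generated by a $3$-dimensional space, forcing $\det E = \La_0$ to have $h^0 \ge$ something incompatible with $\deg \La_0 = 3$, $g = 2$), or there is a common subbundle of positive degree shared by many sections. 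The cleanest route is probably: pick two linearly independent sections $s_1, s_2$ of $E$; if they are everywhere proportional they generate a common line subbundle $L$ of degree $\ge 0$ with $h^0(L) \ge 2$, hence $\deg L \ge g = 2$, violating stability ($\deg L \le 1$); if $s_1, s_2$ are generically independent they define a map $\OO^2 \to E$ which is an isomorphism in codimension $0$, so $\det E = \OO(\text{effective divisor of degree } 3)$ with $h^0(\det E) \ge$ — but also this forces $h^0(E) = h^0(\OO^2) + (\text{correction})$, and more carefully the cokernel $E \ot (\det E)^{-1} \ot(\cdots)$ analysis bounds $h^0(E) \le 2$.

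Alternatively — and this is likely what the authors do — use Serre duality and symmetry: $h^0(E) \le 2$ is equivalent to $h^0(F) \le 2$ for $F = E^\vee \ot K$ of rank $2$, degree $1$. A rank $2$ degree $1$ bundle $F$ with a subbundle structure coming from stability of $E$: if $h^0(F) \ge 2$ then (genus $2$, Clifford-type bound) pick $0 \ne t \in H^0(F)$ vanishing on $D'$ of degree $d'$, giving $\OO(D') \subset F$; the quotient is a line bundle of degree $1 - d' \le 1$, and one checks $h^0(F) \le h^0(\OO(D')) + h^0(\text{quotient}) \le 1 + 1 = 2$ using that on a genus $2$ curve any effective divisor $D'$ of degree $\le 1$ has $h^0(\OO(D')) = 1$, and any line bundle of degree $\le 1$ has $h^0 \le 1$ (degree $1$ line bundles have $h^0 \le 1$ since the curve is not rational). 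The one case to rule out is when the sub and quotient both contribute, i.e. when the sequence $0 \to \OO(D') \to F \to N \to 0$ has $\deg N = 1$, $h^0(N) = 1$, and the connecting map $H^0(N) \to H^1(\OO(D'))$ vanishes — this requires showing that such an $F$ cannot arise from a stable $E$, or directly that it forces $h^0(E) \le 2$ anyway.

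The main obstacle is the borderline case: a section $s \colon \OO \to E$ vanishing on a degree-$1$ divisor $D = p$, giving $\OO(p) \subset E$ of degree $1$ (the maximal allowed by stability), with the quotient line bundle $M = \La_0(-p)$ of degree $2$ possibly having $h^0(M) = 2$ (which happens iff $M \simeq K$). In that situation naively $h^0(E)$ could be as large as $h^0(\OO(p)) + h^0(K) = 1 + 2 = 3$, so I must show the connecting homomorphism $H^0(K) \to H^1(\OO(p))$ is nonzero. This is exactly the kind of computation already carried out in the proof of Proposition~\ref{FE-irr-g2-prop} (the pairing $H^0(K) \ot H^0(K(-p)) \to H^0(K^2(-p)) \xrightarrow{e} k$, nonvanishing because $H^0(K)\cdot s$ is $2$-dimensional for $0 \ne s \in H^0(K(-p))$ and hence not contained in the kernel of the nonzero functional $e$ given by the extension class — which is nonzero since $E$ is a nontrivial extension, itself a consequence of stability). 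So I expect the proof to reduce to this extension-class argument, and the bulk of the work is bookkeeping the possible $(\deg D, \det E \otimes \OO(-D))$ cases and invoking nontriviality of the relevant extension class from stability.
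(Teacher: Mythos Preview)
Your proposal wanders through several approaches and leaves a real gap. The paper's proof is short and enters from the dual side: from $h^1(E)\neq 0$ one has $\Hom(E,K)\neq 0$; any nonzero $\alpha\colon E\to K$ is surjective because its kernel is a line subbundle of degree $3-\deg(\im\alpha)\ge 1$, and stability forces equality, i.e.\ $\im\alpha=K$. So $E$ sits in $0\to L\to E\to K\to 0$ with $\deg L=1$. If $H^1(L)=0$ then $H^1(E)\simeq H^1(K)$ is one-dimensional; otherwise $L=K(-p)$ and one shows the boundary map $\delta\colon H^0(K)\to H^1(K(-p))$ is nonzero by identifying it (via the isomorphisms $H^1(\OO(-p))\simeq H^1(\OO)$ and $H^1(K(-p))\simeq H^1(K)$) with cup-product against a nonzero class in $H^1(\OO)$, which is nonzero since the Serre pairing $H^0(K)\otimes H^1(\OO)\to H^1(K)$ is perfect. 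This is exactly your ``connecting homomorphism'' computation, and your reference to the analogous step in Proposition~\ref{FE-irr-g2-prop} is apt; that part of your plan is correct and equivalent to the paper's argument.

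The gap is in your reduction to that exact sequence. You try to reach it by picking a section of $E$, but you never handle the case where the section is nowhere vanishing ($d=0$, quotient of degree $3$): your sketch ``globally generated \dots\ forcing $h^0(\det E)\ge\dots$'' does not go through as stated, and your alternative route through $F=E^\vee K$ only yields $h^0(F)\le 2$, i.e.\ $h^1(E)\le 2$, which is too weak. The paper sidesteps this entirely by working with maps \emph{out of} $E$ into $K$ rather than sections of $E$. If you want to salvage the section approach, there is a clean shortcut you missed: assuming $h^0(E)\ge 3$, the evaluation $H^0(E)\to E|_p$ has nontrivial kernel for \emph{every} $p\in C$, and by stability the resulting section saturates to $\OO(p)$; choosing $p$ with $\La_0(-p)\not\simeq K$ (at most one $p$ fails this) gives $h^0(E)\le h^0(\OO(p))+h^0(\La_0(-p))=1+1=2$, a contradiction with no connecting-map analysis needed.
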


\begin{proof}
We can assume that $h^1(E)\neq 0$, so $\Hom(E,K)\neq 0$, so $E$ is an extension
$$0\to L\to E\to K\to 0$$
for some line bundle $L$ of degree $1$. If $H^1(L)=0$ then we get $H^1(E)\simeq H^1(K)$, so it is $1$-dimensional.
Otherwise, we have $L=K(-p)$ for some point $p\in C$. Then we need to check that the coboundary map
$$\de:H^0(K)\to H^1(K(-p))$$
induced by the cup-product with a nonzero class $e\in H^1(\OO(-p))$ is nonzero. But the maps $H^1(\OO(-p))\to H^1(O)$ and
$H^1(K(-p))\to H^1(K)$ are isomorphisms, so the assertion follows from the fact that the pairing
$$H^0(K)\ot H^1(\OO)\to H^1(K)$$
is perfect.
\end{proof}

\begin{theorem}\label{ConjB-g2-thm} 
Conjecture B holds for any curve $C$ of genus $2$.
\end{theorem}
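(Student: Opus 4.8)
The plan is to prove Conjecture B for genus 2 by combining the irreducibility of $F_E$ (already established in Prop.\ \ref{FE-irr-g2-prop}) with a case-by-case analysis of the singularities of $F_E$ over a point $\xi\in J$, after reducing to $\xi=\OO$ via the tensoring isomorphism $F_E\simeq F_{\xi_0 E}$. By Lemma \ref{K-ext-lem} we have $h^1(E)\le 1$ for every $E\in\MM_3$, so there are only two cases to treat: $h^1(E)=0$ and $h^1(E)=1$. If $h^1(E)=0$, then by Lemma \ref{bun-deg-local-lem} the local model of $F_E$ at any point over $\OO$ involves a morphism of vector bundles $\phi:\VV\to\WW$ with $\rk\VV=1$, $\rk\WW=0$, i.e.\ $F_E\to J$ is an isomorphism near $\OO$, so $F_E$ is smooth there. (Alternatively one invokes Prop.\ \ref{FE-smooth-prop}, since $\mu$ is a map out of the zero space.)

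**Next I would** treat the case $h^1(E)=1$. Here $F_E$ is, near the fiber over $\OO$, the blow-up $B_{Z_E}J$ by Lemma \ref{rat-sing-easy-case-lem}, since $Z_E$ is $0$-dimensional (hence of the expected codimension $2$ in the surface $J$) as noted above. Let $\a:E\to K$ be a generator of the one-dimensional space $\Hom(E,K)$ and set $A=\ker(\a)$, a line bundle of degree $1$ with $AK(-\,\cdot\,)$... more precisely $\det E=\La_0$ forces $\deg A=1$. The second half of Lemma \ref{rat-sing-easy-case-lem} already gives rational singularities when $h^0(A)\le 1$; since $\deg A=1$ and $g=2$, Riemann--Roch gives $h^0(A)-h^1(A)=0$, so $h^0(A)\le 1$ automatically (indeed $h^0(A)=1$ iff $A\simeq\OO(p)$ for some $p\in C$, and is $0$ otherwise). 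Thus the hypothesis of Lemma \ref{rat-sing-easy-case-lem} is satisfied in all cases, and $F_E$ has rational singularities over $\OO$.

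**Assembling these pieces:** for every $E\in\MM_3$ and every $\xi\in J$, the scheme $F_E$ is, near the fiber over $\xi$, either smooth or a blow-up of a smooth surface along a smooth (automatically, since $0$-dimensional and reduced — one should check reducedness of $Z_E$, which follows from $F_E$ being irreducible of dimension $2$ together with Prop.\ \ref{deg-blow-up-prop}) $0$-dimensional center lying in a smooth divisor, hence has rational singularities by Lemma \ref{toric-sing-lem}(ii). Combined with Prop.\ \ref{FE-irr-g2-prop}, which gives irreducibility of dimension $2=g$, this is exactly the content of Conjecture B for $g=2$.

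**The main obstacle** I anticipate is not any single hard estimate but rather making sure the two cases genuinely exhaust all local pictures and that the codimension/reducedness bookkeeping for $Z_E$ is airtight — in particular, confirming via Lemma \ref{bun-deg-local-lem} that when $h^1(E)=1$ the two defining equations of $Z_E\subset J$ really have a nonzero linear part (so that a smooth divisor through $Z_E$ exists), which is where the hypothesis $h^0(A)\le 1$ — equivalently that the natural map $H^0(E)\to H^0(K)$ induced by $\a$ is nonzero — gets used. In genus $2$ this is forced by the degree count $\deg A=1$, so there is no residual difficulty; the argument is genuinely a clean assembly of the lemmas already proved.
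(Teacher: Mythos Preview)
Your proposal is correct and follows essentially the same route as the paper: reduce to $\xi=\OO$, use Lemma~\ref{K-ext-lem} to get $h^1(E)\le 1$, and in the nontrivial case apply Lemma~\ref{rat-sing-easy-case-lem} after noting $\dim Z_E=0$ and $h^0(A)\le 1$ for the degree~$1$ kernel $A$. One minor remark: your concern about reducedness of $Z_E$ is unnecessary, since Lemma~\ref{rat-sing-easy-case-lem} (via Lemma~\ref{toric-sing-lem}(ii)) does not require the center to be reduced or smooth, only that it sit inside a smooth divisor, which is exactly what the nonzero-linear-part condition $h^0(A)\le 1$ guarantees.
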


\begin{proof}
This follows from Lemmas \ref{rat-sing-easy-case-lem} and \ref{K-ext-lem}.
\end{proof}

\begin{proof}[Second proof of Conjecture B]
Replacing $E$ by $E^\vee K$, we reduce to considering
a vector bundle $E$ of rank $2$ and degree $1$, which is an extension
$$0\to \OO\to E\to L\to 0$$
where $\deg(L)=1$ and $H^0(L)\neq 0$. We consider the subscheme $Z_E\sub J$ of $\xi$ such that $H^0(\xi E)\neq 0$ (defined by the Fitting ideal).
We need to prove that the blow up $B_{Z_E} J$ has rational singularities.

We choose a generic divisor $D=p_1+p_2$ and a generic point $q$. We can realize $E$ as the kernel
$$0\to E\to \OO(D)\oplus L\rTo{\varphi} \OO(D)|_D\to 0.$$
One can check that for generic choices we will have $H^1(E(q))=0$.
Arguing as before, we need to consider $2\times 2$-minors of the morphism 
$$H^0(\xi(D))\oplus H^0(\xi L(q))\to \xi(p_1)|_{p_1}\oplus \xi(p_2)|_{p_2}\oplus \xi L(q)|_q$$
given locally by the matrix
$$\left(\begin{matrix} 0 & \th_{p_1}(\xi) & \th_{p_2}(\xi) \\ \th_{L}(\xi) & \varphi_1 & \varphi_2 \end{matrix}\right),$$
where either $\varphi_1$ or $\varphi_2$ is invertible. It follows that the subscheme $Z_E$ is contained in a divisor $D\sub J$ given by the equation
$$\varphi_2\th_{p_1}(\xi)-\varphi_1\th_{p_2}(\xi)=0.$$
Since the theta-divisors $\th_{p_i}(\xi)=0$, $i=1,2$, are smooth and transversal at $\xi=0$, we see that $D$ is smooth at $\xi=0$.
Assuming that $\varphi_1$ is invertible, we see that the subscheme $Z_E$ is cut out in $D$ by the single equation $\th_L(\xi)\th_{p_1}(\xi)=0$.

On the other hand, we know that $Z_E$ is $0$-dimensional. So we can assume that $D$ is given by $y=0$ in $\A^2$ with coordinates $x,y$, and $Z_E$ is given by the ideal $(x^d,y)$.
Then the singularity of the blow up looks like $x^d=yt$ which is a rational type A singularity.
\end{proof}



\subsection{Proof of Conjecture C(i) and of irreducibility of $F_E$ for genus $3$}

Let $C$ be a curve of genus $3$.

\begin{lemma}\label{h12-ext-lem} 
Let $E$ be a nontrivial extension
$$0\to L_2\to E\to L_3\to 0$$
for line bundles $L_2$ of degree $2$ and $L_3$ of degree $3$.
Then the dimension of the locus $Z_E$ of $\xi\in J$ such that $h^1(E\ot \xi)>0$ is $\le 1$.
\end{lemma}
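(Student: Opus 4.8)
The plan is to twist the extension by $\xi$, read $h^1(E\xi)$ off the long exact cohomology sequence, and reduce the whole statement to the non-vanishing of one connecting homomorphism for generic $\xi$. Twisting $0\to L_2\to E\to L_3\to 0$ by $\xi\in J$, one gets $h^1(E\xi)\neq 0$ precisely when $h^1(L_3\xi)\neq 0$, or $h^1(L_2\xi)\neq 0$ and the connecting map $\delta_\xi\colon H^0(L_3\xi)\to H^1(L_2\xi)$ is not surjective. Since $\deg(L_3\xi)=g=3$, the first locus is $\{\xi\mid K L_3^{-1}\xi^{-1}\text{ effective}\}$, a translate of the Abel--Jacobi image of $C$, of dimension $1$. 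Since $\deg(L_2\xi)=g-1=2$, the condition $h^1(L_2\xi)\neq 0$ forces $\xi$ into the translated theta divisor $\Theta_{L_2}=\{\xi\mid h^0(L_2\xi)\neq 0\}$, which is irreducible of dimension $2$, and on the dense open where $h^0(L_2\xi)=1$ we have $h^1(L_2\xi)=1$ by Riemann--Roch, so ``not surjective'' means $\delta_\xi=0$. Hence it is enough to show that $\delta_\xi\neq 0$ for $\xi$ in a dense open subset of $\Theta_{L_2}$.

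Next I would reformulate $\delta_\xi$ as a multiplication map. Put $N=L_3L_2^{-1}$ and $P=KN$, so $\deg P=5$ and $h^0(P)=3$ (as $h^1(P)=h^0(L_2L_3^{-1})=0$); the extension class is a nonzero functional $e\in\Ext^1(L_3,L_2)=H^1(L_2L_3^{-1})\cong H^0(P)^\vee$, and $\delta_\xi$ is cup product with the image of $e$ in $\Ext^1(L_3\xi,L_2\xi)\cong\Ext^1(L_3,L_2)$. Writing a generic $\xi\in\Theta_{L_2}$ as $\xi=L_2^{-1}\OO(D)$ with $D\in C^{[2]}$, so that $L_2\xi=\OO(D)$ and $L_3\xi=N(D)$, Serre duality and the compatibility of cup product with Serre duality identify the condition $\delta_\xi=0$ with the statement that the image of the multiplication map $H^0(K(-D))\otimes H^0(N(D))\to H^0(P)$ lies in the hyperplane $\ker e$. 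For generic $D$ both factors are one-dimensional, spanned by the canonical section $s_D$ vanishing on $D$ and by a section $t_D$ of $N(D)$; so $\delta_\xi=0$ iff the class $[s_D t_D]\in\P H^0(P)=\P^2$ lies on the line $\P(\ker e)$. Thus everything comes down to the geometry of the rational map $\Psi\colon C^{[2]}\to\P^2$, $D\mapsto[s_D t_D]$ (defined on a dense open): I must show that its image is \emph{not} contained in a line, for then $\Psi^{-1}(\P\ker e)$ is a proper closed subset of the surface $C^{[2]}$, hence of dimension $\leq 1$.

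The non-degeneracy of $\Psi$ is the main obstacle, and I would prove it by restricting $\Psi$ to the curves $\{p+q\mid q\in C\}$ for a general point $p$. For general $p$ one has $h^0(N(p))=1$, so for general $q$ the inclusion $H^0(N(p))\hookrightarrow H^0(N(p+q))$ of one-dimensional spaces is an isomorphism and $t_{p+q}$ may be taken equal to $t_p$, with divisor $F_p\in|N(p)|$ of degree $2$; hence $s_{p+q}t_{p+q}=s_{p+q}t_p$. As $q$ varies the sections $s_{p+q}$ span the two-dimensional space $H^0(K(-p))$ (the linear system $|K(-p)|$ being a base-point-free $g^1_3$ when $C$ is non-hyperelliptic, and having moving part a $g^1_2$ when $C$ is hyperelliptic), and multiplication by $t_p$ identifies this with the two-dimensional subspace $H^0(P(-F_p))\subset H^0(P)$. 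So $\Psi$ carries $\{p+q\mid q\in C\}$ onto the line $\P H^0(P(-F_p))$. If $\Psi$ mapped into a fixed line $\ell=\P W$, we would get $H^0(P(-F_p))=W$ for all general $p$; but $\OO(F_p)\cong N(p)$ moves in $\Pic^2(C)$, so for general $p\neq p'$ the divisors $F_p,F_{p'}$ are disjoint, forcing $W\subseteq H^0(P(-F_p-F_{p'}))$, which has dimension $\leq 1$ since $\deg P(-F_p-F_{p'})=1$ --- a contradiction.

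Finally, the image of $\Psi$ therefore spans $\P^2$, so $\{D\in C^{[2]}\mid\delta_{L_2^{-1}\OO(D)}=0\}$ is a proper closed subset of $C^{[2]}$. Combining this with the (at most one-dimensional) bad loci in $\Theta_{L_2}$ where $h^0(L_2\xi)\geq 2$, $h^0(N(D))\geq 2$, or $h^0(K(-D))\geq 2$, and with the one-dimensional locus where $h^1(L_3\xi)\neq 0$, we conclude $\dim Z_E\leq 1$.
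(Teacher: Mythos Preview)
Your approach is essentially the same as the paper's: both reduce the question to showing that, as the parametrizing degree-$2$ divisor varies, the $1$-dimensional images of the multiplication maps into $H^0(KN)$ (with $N=L_3L_2^{-1}$) span that $3$-dimensional space. The paper simply performs the Serre dualization at the outset by passing to $F=E^\vee K$ and studying $h^0(\xi F)$, while you dualize the connecting map $\delta_\xi$ instead; for the spanning, the paper argues directly by choosing three generic points $p_1,p_2,p_3$ with $H^0(KN(-p_1-p_2-p_3))=0$ and observing that the three lines $H^0(KN(-p_i-p_j))$ are in general position, whereas you restrict $\Psi$ to the curves $\{p+q:q\in C\}$.

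There is, however, a small gap in your last paragraph. The implication ``$\OO(F_p)\cong N(p)$ moves in $\Pic^2(C)$, so for general $p\neq p'$ the divisors $F_p,F_{p'}$ are disjoint'' fails when $h^0(N)=1$, i.e.\ when $N\simeq\OO(c)$ for some $c\in C$: then for general $p$ one has $h^0(N(p))=h^0(\OO(c+p))=1$ and $F_p=c+p$, so every $F_p$ contains $c$. The repair is immediate --- in that case $H^0(P(-F_p))=H^0(K(-p))$ inside $H^0(P)=H^0(K(c))\cong H^0(K)$, and these $2$-dimensional subspaces vary with $p$ because $|K|$ is base-point-free; alternatively, for two general points $p,p'$ one still gets $W\subset H^0(P(-c-p-p'))=H^0(K(-p-p'))$, which is $1$-dimensional, contradicting $\dim W=2$ --- but the disjointness claim as stated is incorrect and should be patched.
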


\begin{proof} Switching from $E$ to $F=E^\vee K$, we see that $F$ is a nontrivial extension
$$0\to L_1^{-1}L_2\to F\to L_2\to 0$$
for some line bundles $L_1$ of degree $1$ and $L_2$ of degree $2$, and we need to study the locus of $\xi\in J$ such that $h^0(\xi F)\neq 0$.
We have $H^0(\xi L_1^{-1}L_2)=0$ and $h^0(\xi L_2)\le 1$ for $\xi$ away from a locus of dimension $\le 1$, so it suffices to consider only such $\xi$.
We have to show that the coboundary homomorphism
$$H^0(\xi L_2)\to H^1(\xi L_1^{-1}L_2)$$
is nonzero away from a locus of dimension $\le 1$. We have $\xi L_2=\OO(D)$ for a unique effective divisor $D$ of degree $2$.
By Serre duality, we have to check nonvanishing of the pairing
$$H^0(\OO(D))\ot H^0(KL_1(-D))\to H^0(KL_1)\rTo{e} k$$
for generic $D$, where $e\in H^0(KL_1)^*\simeq H^1(L_1^{-1})$ is the class of our extension.

It is enough to show that the subspaces $H^0(KL_1(-D))$ over all $D$ span the entire $3$-dimensional space $H^0(KL_1)$.
To this end let us pick three generic points $p_1,p_2,p_3\in C$ such that $H^0(KL_1(-p_1-p_2-p_3))=0$.
Then we have $H^0(KL_1(-p_1))=H^0(KL_1(-p_1-p_2))+H^0(KL_1(-p_1-p_3))$ (since these two subspaces have trivial intersection and $H^0(KL_1(-p_1))$ is
$2$-dimensional. Furthermore, $H^0(KL_1(-p_1))$ has trivial intersection with $H^0(KL_1(-p_2-p_3))$, hence
\begin{align*}
&H^0(KL_1)=H^0(KL_1(-p_1))+H^0(KL_1(-p_2-p_3))=\\
&H^0(KL_1(-p_1-p_2))+H^0(KL_1(-p_1-p_3))+H^0(KL_1(-p_2-p_3)).
\end{align*}
\end{proof}


\begin{prop}\label{Conj-Ci-g3-prop}
Conjecture C(i) holds for any curve of genus $3$, i.e., for any stable bundle $E$ of rank $2$ and degree $5$ the locus $Z_E$ has dimension $\le 1$ (and hence,
is equidimensional of dimension $1$).
\end{prop}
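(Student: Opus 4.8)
The plan is to reduce the statement to the extension-type analysis already carried out in Lemma \ref{h12-ext-lem}, using the standard classification of subbundles of a stable rank $2$ bundle of degree $5$ on a genus $3$ curve. Recall that $Z_E\subset J$ is cut out by the appropriate Fitting ideal, so each of its components has dimension $\ge g-2=1$ (it arises as the fiber over $E$ of the projection $Z'\to\MM_{\La_0}$, and $Z'$ is Cohen-Macaulay of the expected dimension by Remark \ref{Z-rem}). Thus it suffices to prove the upper bound $\dim Z_E\le 1$, and the ``equidimensional of dimension $1$'' conclusion follows automatically. By Serre duality, $\xi\in Z_E$ iff $H^0(\xi^{-1}\ot E^\vee K)\neq 0$, so setting $F=E^\vee\ot K$ (a stable bundle of rank $2$ and degree $2\cdot 2\cdot 3-2-5=1$ — wait, $\deg F = 2\cdot\deg K-\deg E=4g-4-5=3$ for $g=3$), we must bound the locus of $\xi$ with $h^0(\xi^{-1}F)\neq 0$; replacing $\xi$ by $\xi^{-1}$ this is the locus of $\xi$ with $h^0(\xi F)\neq 0$, a translate of the theta-type locus $\Th_F$ — but here I want a bound $\le 1$, i.e.\ $Z_E$ is a \emph{proper} subvariety of $\Th_F$ (which has codimension $1$), so I cannot simply invoke Raynaud. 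Instead I will argue by the structure of $F$.

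First I would dispose of the case $h^0(F)=0$ for generic twist: if $\xi$ is generic then $h^0(\xi F)=\deg(\xi F)+2(1-g)+h^1(\xi F)$; here $\chi(\xi F)=3+2(1-3)=-1$, so generically $h^0(\xi F)=0$ and $Z_E$ is a proper closed subset of $J$, hence of dimension $\le 2$ — not yet enough. To get down to dimension $\le 1$ I would look at how $\xi F$ can acquire a section. Choose a maximal line subbundle $M\subset F$; by the Nagata--Segre bound on a genus $3$ curve, $\deg M\ge (\deg F-g)/2$, and since $F$ is stable $\deg M\le 1$. This gives finitely many possible degrees, and for each I would write $F$ as an extension $0\to M\to F\to N\to 0$ with $\deg M+\deg N=3$. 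A nonzero section of $\xi F$ either comes from a section of $\xi M$ or projects to a nonzero section of $\xi N$; in the first case $\xi M\simeq\OO(D)$ forces $\xi$ into a locus of dimension $\le\deg M\le 1$, and in the second case one is exactly in the situation of Lemma \ref{h12-ext-lem} (possibly after an elementary modification / twisting to match degrees $(1,2)$ or $(2,1)$), so the coboundary argument there bounds the bad locus by $1$. I also need to treat subbundles of intermediate degree and the possibility that $F$ has no line subbundle of degree $1$ (so the maximal one has degree $0$ or negative), but these are handled by the same dichotomy with the extension lemma, possibly applied to $F$ or to $F^\vee K$.

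The main obstacle I expect is bookkeeping the finitely many extension types: on a genus $3$ curve the degree $3$ bundle $F$ may fail to have a line subbundle realizing the exact numerology of Lemma \ref{h12-ext-lem}, so I may need one Hecke/elementary transformation $0\to F'\to F\to\OO_p\to 0$ to bring it into the form $0\to L_2\to F'\to L_3\to 0$ treated there, and then check that $Z_E$ for $F$ differs from the locus for $F'$ by at most a $1$-dimensional piece (the locus where the section vanishes at $p$). A secondary subtlety is the case $h^1(F)>0$ forcing $F$ special: here $\deg F=3<2g-2=4$, so $h^1(F)$ can indeed be positive, but then $F^\vee K=E$ again has a section, and I would loop back via the subbundle analysis of $E$ itself, which has degree $5>2g-2$ so $h^1(\xi E)\neq 0$ is already a proper condition. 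Once every type is matched to an application of Lemma \ref{h12-ext-lem} or to a ``section comes from a low-degree subbundle'' count, the bound $\dim Z_E\le 1$ follows, and combined with the lower bound from Cohen--Macaulayness of $Z'$ this gives equidimensionality of dimension $1$.
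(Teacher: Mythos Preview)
Your reduction to the case-split on the maximal line subbundle of $F=E^\vee K$ is the right idea, and when $F$ has a line subbundle of degree $1$ (equivalently, $E$ has a line subbundle of degree $2$) you are exactly in the situation of Lemma \ref{h12-ext-lem}, just as the paper does. The lower-bound argument via Cohen--Macaulayness of $Z'$ is also fine.

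The gap is in the complementary case, when the maximal line subbundle $M\subset F$ has degree $0$ (Nagata--Segre gives $\deg M\ge 0$, so this is the only remaining case). Your dichotomy then yields an extension $0\to M\to F\to N\to 0$ with $\deg N=3$, and for $\xi\neq M^{-1}$ you need the coboundary $H^0(\xi N)\to H^1(\xi M)$ to be injective off a $1$-dimensional locus. This is \emph{not} ``exactly the situation of Lemma \ref{h12-ext-lem}'': there the split is $(1,2)$, the image of the relevant multiplication map is a $1$-dimensional subspace of a $3$-dimensional $H^0(KL)$, and one checks these lines sweep out the whole space. In the $(0,3)$ split the target $H^0(KM^{-1}N)$ is $5$-dimensional and the images are $2$-dimensional subspaces varying over a $3$-parameter family; showing the bad locus has dimension $\le 1$ is a genuinely different (and harder) count, and you do not carry it out. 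Your proposed Hecke fix $0\to F'\to F\to \OO_p\to 0$ gives $\deg F'=2$, which cannot be written as an extension ``$0\to L_2\to F'\to L_3\to 0$'' with $\deg L_2=2$, $\deg L_3=3$; and in any case $Z_F\supset Z_{F'}$ goes the wrong way for an upper bound.

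The paper handles this case by a different and shorter argument, working with $E$ rather than $F$. Since $E$ has no quotient of degree $3$, any nonzero $\alpha:\xi E\to K$ is surjective, with kernel $A$ of degree $1$. Two linearly independent maps $\xi E\to K$ would produce a surjection onto some $K(-p)$ of degree $3$, which is excluded; hence $h^1(\xi E)=1$ whenever $\xi\in Z_E$. If $h^0(A)=0$, the linear part $H^0(\xi E)\to H^0(K)$ of the local equations (Lemma \ref{bun-deg-local-lem}) is injective of rank $2$, so $Z_E$ is smooth of dimension $1$ at $\xi$. If $h^0(A)\neq 0$ then $A=\OO(p)$ and $\det(\xi E)=\xi^2\det(E)\simeq K(p)$ forces $\xi^2\simeq \det(E)^{-1}K(p)$; as $p$ runs over $C$ this is a $1$-dimensional locus of $\xi$. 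This replaces your coboundary/Hecke bookkeeping entirely.
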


\begin{proof} If $E$ is an extension of a line bundle of degree $3$ by a line bundle of degree $2$ then this follows from Lemma \ref{h12-ext-lem}.

Now assume that $E$ cannot appear as such an extension, in other words, for any line bundle $L_3$ of degree $3$ one has $\Hom(E,L_3)=0$.
It follows that any nonzero morphism $\xi E\to K$ is automatically surjective. If the kernel of this map $A$ has no global sections then $Z_E$ is smooth of
dimension $1$ at $\xi$. Therefore, it is enough to check that locus of $\xi$ such that $\xi E$ fits into an extension 
$$0\to \OO(p)\to \xi E\to K\to 0$$
for some $p\in C$, has dimension $\le 1$. But this immediately follows from the isomorphism
$$\det(\xi E)\simeq \xi^2 \det(E)\simeq K(p),$$
which gives $\xi^2\simeq \det(E)^{-1}K(p)$ for some $p\in C$.
\end{proof}

\begin{lemma}\label{g3-h1-ext-lem}
Let $E\in \MM_5$ be such that $h^1(E)>1$. Then $E$ fits into an extension
\begin{equation}\label{L-E-Kp-ext}
0\to L\to E\to K(-p)\to 0
\end{equation}
for some point $p\in C$ and some line bundle $L$ of degree $2$.
\end{lemma}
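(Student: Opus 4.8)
The idea is to pass to the Serre dual, where the problem becomes a statement about a rank $2$ bundle of degree $3$ with at least two sections, and then to split off a sub-line-bundle of the very specific shape $\OO(p)$ by an evaluation-map argument combined with stability.

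First I would set $F:=E^\vee\ot K$. Then $F$ has rank $2$ and degree $-5+2\deg K=3$, and since $E$ is stable so is $E^\vee$, hence so is $F$. By Serre duality $h^0(F)=h^1(E)$, while Riemann--Roch gives $h^0(E)-h^1(E)=\chi(E)=1$; thus the hypothesis $h^1(E)>1$ translates into $h^0(F)\ge 2$. I claim the sought extension for $E$ is equivalent to the existence of a point $p\in C$ and a \emph{saturated} embedding $\OO(p)\hookrightarrow F$. Indeed, given such data, $M:=F/\OO(p)$ is a line bundle of degree $2$, and dualizing $0\to\OO(p)\to F\to M\to 0$ and twisting by $K$ (note $F^\vee\ot K\simeq E$) yields $0\to M^\vee\ot K\to E\to K(-p)\to 0$ with $\deg(M^\vee\ot K)=2$, which is the required sequence with $L:=M^\vee\ot K$.

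Next I would produce the point $p$, i.e.\ show that $h^0(F(-p))\neq 0$ for some $p\in C$. Suppose not; then the evaluation map $\ev_p\colon H^0(F)\to F|_p$ is injective for every $p$, so $h^0(F)\le 2$, hence $h^0(F)=2$ and every $\ev_p$ is an isomorphism. Equivalently, the sheaf map $H^0(F)\ot_k\OO_C\to F$ is a morphism of rank-$2$ vector bundles that is an isomorphism on every fiber, hence an isomorphism of $\OO_C$-modules; this is impossible because the source has degree $0$ while $\deg F=3$. So some $p$ works, and I choose $0\neq s\in H^0(F(-p))=\Hom(\OO(p),F)$; since $F$ is torsion-free and $\OO(p)$ is a line bundle, $s$ is injective as a sheaf map.

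Finally, stability upgrades $\OO(p)\hookrightarrow F$ to a sub-bundle: its saturation is a line subbundle $\OO(p+D')\subseteq F$ for an effective divisor $D'\ge 0$, and stability of $F$ forces $1+\deg D'=\deg\OO(p+D')<\tfrac32$, so $D'=0$. Thus $\OO(p)$ is already saturated, $M:=F/\OO(p)$ is a line bundle of degree $2$, and the reduction of the first paragraph finishes the argument. I do not expect a serious obstacle: the only genuine input is the Serre-duality reformulation and the degree contradiction in the evaluation argument, and the single point needing care --- that $\OO(p)$ sit in $F$ as a sub-bundle rather than merely a subsheaf --- is forced at once by stability. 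The hypothesis $h^1(E)>1$ is used exactly as the inequality $h^0(F)\ge 2$ that makes the evaluation-map argument go through.
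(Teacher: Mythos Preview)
Your proof is correct. It is essentially the Serre-dual of the paper's argument, and arguably a bit cleaner.

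The paper works directly with $E$: given two linearly independent maps $\alpha,\beta\colon E\to K$ (coming from $h^0(E^\vee K)>1$), it considers $f=(\alpha,\beta)\colon E\to K\oplus K$, splits into the cases where the image has rank $1$ or rank $2$, and in either case produces a nonzero map $E\to K(-p)$; stability then forces this to be surjective. Your evaluation map $H^0(F)\otimes\OO_C\to F$ is precisely the dual of the paper's $f$ (after twisting by $K$), and your degree contradiction ``$0\neq 3$'' is the dual of the paper's observation that the image of $f$ cannot be all of $K\oplus K$. What you gain by passing to $F$ is that the case split disappears: the single statement ``if $\ev_p$ were injective for all $p$ then $H^0(F)\otimes\OO_C\simeq F$'' handles everything at once, whereas the paper has to separately treat the possibility that $\alpha$ and $\beta$ factor through a common line subbundle of $K$. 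The stability step is identical in both versions, just on opposite sides of the duality.
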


\begin{proof}
By Serre duality, we have $h^0(E^\vee K)>1$. Let $\a,\b$ be linearly independent maps $E\to K$ and consider the corresponding map $f=(\a,\b):E\to K\oplus K$.
If the image of $f$ has rank $1$ then it factors through a line subbundle $K(-D)$, where $D>0$. If the image of $f$ has rank $2$, then $f$ factors through a subsheaf isomorphic to $K\oplus K(-p)$.
In either case $E$ has a nonzero map to $K(-p)$. Since $E$ is stable of slope $5/2$, this map has to be a surjection. Hence, $E$ fits into an extension 
of the form \eqref{L-E-Kp-ext}.
\end{proof}

\begin{lemma}\label{g3-EDp-lem}
Let $E$ be a stable vector bundle on $C$ of rank $2$ and degree $5$ with $h^1(E)>1$. 
(i) Assume that $C$ is non-hyperelliptic. 
Then $h^1(E)=2$ and there exist a point $p\in C$ and an effective divisor
$D$ of degree $2$ on $C$, such that 
$E=E_{D,p}$ is the unique nontrivial extension
$$0\to K(-D)\to E_{D,p}\to K(-p)\to 0$$
that splits over $K(-D-p)\sub K(-p)$.

\noindent
(ii) If $C$ is hyperelliptic then $h^1(E)=2$, and either $E=E_{D,p}$ for some $D$ and $p$ with $h^0(D)=1$, or $E$ is an extension
\begin{equation}\label{hyperell-EL-ext}
0\to L\to E\to L(q)\to 0
\end{equation}
where $L$ is the hyperelliptic system, $q\in C$, and the extension class corresponds to a rank $1$ quadratic form under the isomorphism
$$H^1(\OO(-q))\simeq H^1(\OO)\simeq H^0(K)^*\simeq S^2H^0(L)^*.$$
\end{lemma}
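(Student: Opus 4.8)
The plan is to analyze, for a stable $E\in\MM_5$ with $h^1(E)>1$, the structure of extensions provided by Lemma \ref{g3-h1-ext-lem}, namely $0\to L\to E\to K(-p)\to 0$ with $\deg L=2$. First I would compute $h^1$ of both the sub and the quotient. Since $\deg K(-p)=3=g$, one has $h^1(K(-p))=h^0(\OO(p))$, which is $1$ on a non-hyperelliptic curve and can be $\ge 1$ on a hyperelliptic curve only when $p$ is a Weierstrass point (but $h^0(\OO(p))=1$ always in genus $3$, so actually $h^1(K(-p))=1$ in all cases). Thus from the long exact sequence $h^1(E)\le h^1(L)+h^1(K(-p))=h^1(L)+1$, and $h^1(L)=h^0(K L^{-1})$ with $\deg(KL^{-1})=2$, so $h^1(L)\le 1$, forcing $h^1(E)\le 2$; combined with the hypothesis $h^1(E)>1$ this gives $h^1(E)=2$ and $h^1(L)=1$, i.e. $KL^{-1}\simeq\OO(D)$ for an effective divisor $D$ of degree $2$, equivalently $L\simeq K(-D)$. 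Moreover $h^1(E)=2$ forces the connecting map $H^0(K(-p))\to H^1(K(-D))$ to vanish, which is exactly the condition that the extension class lies in the kernel of the Serre-dual pairing — I will reinterpret this geometrically below.

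For part (i), $C$ non-hyperelliptic: I have $E$ sitting in $0\to K(-D)\to E\to K(-p)\to 0$ with $h^1(E)=2$. The extension class $e\in\Ext^1(K(-p),K(-D))=H^1(\OO(D-p))$, and I want to show $e$ is (up to scalar) the canonical class of the extension that splits over $K(-D-p)\subset K(-p)$. Consider the inclusion $K(-D-p)\hra K(-p)$ (valid when the divisors are chosen so that $D$ and $p$ are disjoint; the general case is handled by a torsion-sheaf bookkeeping as in \cite{Bertram}). Restriction gives a map $H^1(\OO(D-p))\to H^1(\OO(-p))$, and the image of $e$ under this map is the obstruction to splitting the pullback extension over $K(-D-p)$. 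The key computation is that $H^1(\OO(D-p))$ is $2$-dimensional (since $\deg(\OO(D-p))=1$ and $h^0=0$ on a non-hyperelliptic curve, so $h^1=g-1+0\cdots$ — more precisely $h^1(\OO(D-p))=h^0(K(p-D))$, which is $g-\deg=2$ for generic $D,p$, and I must check this is exactly $1$ or handle the jump), while the condition $h^1(E)=2$ pins down $e$ inside the line corresponding to the split-over-$K(-D-p)$ extension. Uniqueness of such an $E_{D,p}$ then follows because the relevant space of extension classes with the prescribed splitting behavior is one-dimensional. This is precisely the construction appearing in Bertram's exceptional fibers (cf. the $\P^{3g-5}$ discussion in Lemma \ref{line-bun-discrepancy-lem}), so I would cite \cite[Lem.\ 3.5]{Bertram} or reprove the small genus-$3$ case directly.

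For part (ii), $C$ hyperelliptic with hyperelliptic system $L_0$: the divisor $D$ of degree $2$ with $L\simeq K(-D)$ either satisfies $h^0(D)=1$, landing in the $E_{D,p}$ case exactly as in (i), or $h^0(D)=2$, which forces $D\in|L_0|$, i.e. $L\simeq K(-L_0)\simeq L_0$ (using $K\simeq L_0^{\otimes 2}$ in genus $3$ hyperelliptic). In that case the quotient $K(-p)\simeq L_0^{\otimes 2}(-p)=L_0(L_0-p)$; writing $L_0\simeq\OO(q+\tau(q))$ one gets $K(-p)\simeq L_0(q)$ after adjusting notation, so $E$ sits in $0\to L_0\to E\to L_0(q)\to 0$ as in \eqref{hyperell-EL-ext}. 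The extension class lives in $H^1(\OO(-q))\simeq H^1(\OO)\simeq H^0(K)^*$, and since $H^0(L_0(q))$ maps to $H^0(L_0)\otimes(\text{something})$ — more precisely the condition $h^1(E)=2$ says the composition $H^0(L_0(q))\otimes H^0(K L_0^{-1})\to H^0(K(q)L_0^{-1})\xrightarrow{e}k$, i.e. $H^0(L_0(q))\otimes H^0(L_0)\to H^0(L_0^{\otimes 2}(q)L_0^{-1})\to k$, vanishes on a subspace of the expected codimension, which translates via $H^0(K)\simeq S^2 H^0(L_0)$ (the hyperelliptic identification) into $e$ being a rank-$1$ quadratic form on $H^0(L_0)^*$. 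I would carry out this translation by choosing a basis of $H^0(L_0)$ and writing the multiplication map $S^2 H^0(L_0)\to H^0(K)$ explicitly.

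The main obstacle I anticipate is the bookkeeping in part (i) when $D$ and $p$ are not disjoint (e.g. $p\in\mathrm{Supp}(D)$ or $D=2p$): then $K(-D-p)$ is no longer simply a line subbundle of $K(-p)$ and the restriction-of-extension-class argument must be replaced by a torsion-sheaf diagram chase; handling this uniformly — rather than case-by-case — while still extracting the sharp statement "$h^1(E)=2$" together with the uniqueness of $E_{D,p}$ is the delicate point. A secondary difficulty is confirming that the relevant $H^1$-space is exactly one-dimensional (so that $E_{D,p}$ is genuinely unique), which requires care with the possible jumps in $h^0(K(p-D))$.
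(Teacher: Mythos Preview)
Your approach is the same as the paper's: use Lemma \ref{g3-h1-ext-lem} to write $E$ as an extension of $K(-p)$ by a degree-$2$ line bundle $L$, force the coboundary $\delta:H^0(K(-p))\to H^1(L)$ to vanish, and identify this with the splitting condition over $K(-D-p)\sub K(-p)$. But two steps are not correct as written.

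First, your bound $h^1(L)\le 1$ fails precisely when $C$ is hyperelliptic and $L$ is the hyperelliptic system: then $KL^{-1}\simeq L$ and $h^1(L)=h^0(L)=2$, so your deduction $h^1(E)\le 2$ breaks exactly in the case leading to \eqref{hyperell-EL-ext}. The paper argues via $h^0$ instead: from $h^0(E)\ge 3$ and $h^0(K(-p))=2$ one gets $h^0(L)\ge 1$; when $h^0(L)\le 1$ (automatic for non-hyperelliptic $C$) this forces $h^0(L)=1$ and $\delta=0$; when $h^0(L)=2$ one identifies $\delta$ (via $H^0(K(-p))\simeq H^0(L)$ and Serre duality) with the symmetric pairing $H^0(L)\ot H^0(L)\to H^0(K)\rTo{e}k$, and nontriviality of the extension (from stability of $E$) rules out $\delta=0$, giving $h^1(E)=2$ and the rank-$1$ condition on $e$. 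Second, the heart of part (i) --- that $\delta=0$ is equivalent to the extension splitting over $K(-D-p)$ --- you only assert and defer to \cite{Bertram}. The paper's argument is short and self-contained: since $h^0(D)=1$, the map $H^1(K(-D))\to H^1(K)$ is an isomorphism, so $\delta=0$ iff the cup-product $H^0(K(-p))\to H^1(K)$ with the image $e'$ of $e$ under $H^1(\OO(p-D))\to H^1(\OO(p))$ vanishes; Serre duality makes this pairing perfect, so $\delta=0\Leftrightarrow e'=0$, and $e'$ is exactly the obstruction to splitting over $K(-D-p)$. Finally, two smaller corrections: the extension class lives in $H^1(\OO(p-D))$, not $H^1(\OO(D-p))$; and the inclusion $K(-D-p)\hra K(-p)$ is a line-subbundle inclusion regardless of whether $p$ lies in the support of $D$, so the obstacle you anticipate does not arise.
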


\begin{proof}
By Lemma \ref{g3-h1-ext-lem}, $E$ fits into an extension of the form \eqref{L-E-Kp-ext} for some $p\in C$ and some line bundle $L$ of degree $2$.

Assume first that $h^0(L)\le 1$ (which is automatic if $C$ is non-hyperelliptic).
Since $h^0(E)\ge 3$, while $h^0(L)\le 1$ and $h^0(K(-p))=2$, we see that $h^0(L)=1$ and the coboundary map
$\de:H^0(K(-p))\to H^1(L)$ has to be zero. Let us write $L$ as $L=K(-D)$, where $D$ is an effective divisor of degree $2$. Then the class of the extension is an element
of $e\in H^1(\OO(p-D))$. Note that the natural map $H^1(K(-D))\to H^1(K)$ is an isomorphism. Hence, $\de$ is zero if and only if the map
$H^0(K(-p))\to H^1(K)$ given by the class $e'\in H^1(\OO(p))$ is zero, where $e'$ is obtained as the image of $e$ under the natural map $H^1(\OO(p-D))\to H^1(\OO(p))$.
By Serre duality, this happens if and only $e'$ is zero, which immediately gives our statement.

Next, assume $C$ is hyperelliptic and $L$ is the hyperelliptic system. Then the coboundary map $H^0(K(-p))\to H^1(L)$ should have a nonzero kernel. 
We have $K(-p)\simeq L(\tau(p))$, where $\tau$ is the hyperelliptic involution, and $H^0(K-p)=H^0(K(-p-\tau(p)))\simeq H^0(L)$. 
The extension class of $E$ is a nonzero element $e$ in $H^1(\OO(-\tau(p)))\simeq H^1(\OO)\simeq H^0(K)^*$.
By Serre duality, the coboundary map $H^0(L)\to H^1(L)$ corresponds to the pairing
$$H^0(L)\ot H^0(L)\to S^2H^0(L)\rTo{\sim} H^0(K)\rTo{e} k.$$
Thus, $e$ should correspond to a rank $1$ quadratic form on $H^0(L)$.
\end{proof}

\begin{lemma}\label{g3-Cii-lem} 
Assume that $E\in \MM_5$ is not an extension of a line bundle of degree $3$ by a line bundle of degree $2$.
Then Conjecture C(ii) holds for $E$. Hence, $F_E$ is irreducible of dimension $3$.
\end{lemma}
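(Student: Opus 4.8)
The plan is to prove Conjecture C(ii) for such $E$ and then to apply Proposition \ref{FE-irr-prop} (valid since $g=3>2$), which immediately gives that $F_E$ is an irreducible normal local complete intersection of dimension $3$. Since $g=3$, Conjecture C(ii) only involves $d\in\{1,2\}$, so I argue case by case.

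The key preliminary step is to rephrase the hypothesis: because $E$ is stable of slope $5/2$, every rank-$1$ subsheaf of $E$ of degree $2$ is automatically saturated, so $E$ being an extension of a line bundle of degree $3$ by a line bundle of degree $2$ is the same as $E$ admitting a sub-line-bundle of degree $2$. I therefore assume $E$ has no sub-line-bundle of degree $2$. Now fix $\xi\in J$ and an effective divisor $D$ of degree $d$; the bundle $\xi E(-D)$ is stable of slope $(5-2d)/2$, so the vertical zero divisor of any global section of $\xi E(-D)$ has degree $<(5-2d)/2$. For $d=2$ this degree must be $0$, so the section gives a sub-bundle $\OO(D)\subset\xi E$, i.e.\ a degree-$2$ sub-line-bundle $\xi^{-1}(D)\subset E$ --- impossible; hence, writing $W_{d,r}=\{(\xi,D)\in J\times C^{[d]}:H^1(\xi^2\La(-D))\ne0,\ h^0(\xi E(-D))\ge r\}$ for the locus appearing in Conjecture C(ii), we get $W_{2,r}=\emptyset$ for all $r$. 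For $d=1$ the same argument shows that every section of $\xi E(-p)$ is nowhere vanishing, since a vertical zero of degree $1$ would again produce a degree-$2$ sub-line-bundle of $E$. Finally, applying Lemma \ref{g3-h1-ext-lem} to $\xi E$ --- which is an extension of a degree-$3$ line bundle by a degree-$2$ line bundle exactly when $E$ is --- we obtain $h^1(\xi E)\le1$, hence $h^0(\xi E(-D))\le h^0(\xi E)\le2$, so $W_{d,r}=\emptyset$ for $r\ge3$ as well. Only $W_{1,1}$ and $W_{1,2}$ remain.

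The bound $\dim W_{1,1}\le 1=g-r-1$ is Proposition \ref{Conj-Cii-mercat}(i). For $W_{1,2}$ I must show it is finite. As $\deg(\xi^2\La(-p))=2g-2=4$, the condition $H^1(\xi^2\La(-p))\ne0$ means $\xi^2\La(-p)\cong K$, which already confines $(\xi,p)$ to a curve in $J\times C$. On this curve a nowhere vanishing section of $\xi E(-p)$ --- and, by the previous paragraph, every section is such --- yields a sub-bundle extension $0\to\OO(p)\to\xi E\to\xi^2\La(-p)\cong K\to0$, so $M:=\xi^{-1}(p)$ is a degree-$1$ sub-line-bundle of $E$; rewriting $\xi^2\La(-p)\cong K$ in terms of $M$ gives $M^2K\La^{-1}\cong\OO(p)$, so $p$, and hence $\xi=M^{-1}(p)$, is determined by $M$. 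Therefore $\dim W_{1,2}$ is at most the dimension of the set of degree-$1$ sub-line-bundles $M$ of $E$ for which $M^2K\La^{-1}$ is effective. Since $\{M\in\Pic^1(C):M^2K\La^{-1}\ \text{effective}\}$ is the preimage, under the finite morphism $M\mapsto M^2$, of the translate $K^{-1}\La\cdot W^0_1(C)$ of $W^0_1(C)$ --- a curve in $\Pic^2(C)$ --- it is itself a curve in $\Pic^1(C)$, and it suffices to show that $E$ carries only finitely many degree-$1$ sub-line-bundles lying on this curve. This finiteness is the technical core of the proof: since $E$ has no degree-$2$ sub-line-bundle it attains the Nagata bound $s_1(E)=g=3$, which tightly constrains the family of its degree-$1$ (hence maximal) sub-line-bundles, and I expect to finish by listing the possible configurations of this family --- in the spirit of Lemma \ref{g3-EDp-lem} --- and checking in each one that imposing the extra, codimension-two condition ``$M^2K\La^{-1}$ is effective'' leaves only finitely many $M$. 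The main obstacle is precisely ruling out a positive-dimensional family of such $M$. Once Conjecture C(ii) is established, Proposition \ref{FE-irr-prop} completes the proof.
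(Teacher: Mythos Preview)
Your reduction for $d=2$ and for $d=1,\ r=1$ matches the paper exactly. The gap is in the case $d=1,\ r=2$: you reduce to showing that $E$ has only finitely many degree-$1$ sub-line-bundles $M$ with $M^2K\La^{-1}$ effective, but you do not prove this, and your proposed route (classifying configurations of maximal sub-line-bundles) is neither carried out nor obviously finite. So as written the proof is incomplete precisely at the point you identify as the main obstacle.

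The paper closes this gap with a one-line trick that avoids the whole analysis of sub-line-bundles: instead of applying Lemma~\ref{g3-h1-ext-lem} to $\xi E$ (which only yields $h^0(\xi E(-p))\le 2$), apply it to the Serre-dual bundle
\[
F:=K\xi^{-1}E^\vee(p)\in \MM_5.
\]
By Serre duality $h^1(F)=h^0(\xi E(-p))$. If $F$ were an extension of a degree-$3$ line bundle by a degree-$2$ line bundle, then $F$ would have a degree-$2$ sub-line-bundle $N$, and untwisting shows $\Hom(E,\,K\xi^{-1}N^{-1}(p))\neq 0$ with $\deg(K\xi^{-1}N^{-1}(p))=3$, forcing $E$ to have a degree-$2$ sub-line-bundle, contrary to hypothesis. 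Hence, by the contrapositive of Lemma~\ref{g3-h1-ext-lem}, $h^1(F)\le 1$, i.e.\ $h^0(\xi E(-p))\le 1$ for \emph{every} $\xi$ and $p$. This disposes of all $r\ge 2$ at once, with no need to bound $W_{1,2}$ separately or to study families of maximal sub-line-bundles.
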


\begin{proof} By assumption, there are no nonzero morphisms from line bundles of degree $\ge 2$ to $E$, so we only need to consider the case
$d=1$ of Conjecture C(ii). The case $r=1$ follows from Proposition \ref{Conj-Cii-mercat}(i).

It remains to prove that for any $p\in C$, one has $h^0(\xi E(-p))\le 1$. Indeed, set $F=K\xi^{-1}E^\vee(p)\in \MM_5$. Then we need to check that $h^1(F)\le 1$.
But $F$ is not an extension of a line bundle of degree $3$ by a line bundle of degree 2. Hence the assertion follows from
Lemma \ref{g3-h1-ext-lem}.
\end{proof}

\begin{prop}\label{g3-FE-prop} For any $E\in \MM_5$,
$F_E$ is irreducible of dimension $3$.
\end{prop}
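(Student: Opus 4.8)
The plan is to reduce to the one case left open by Lemma~\ref{g3-Cii-lem} and then verify directly the codimension conditions of Proposition~\ref{deg-blow-up-prop}(iii).

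By Lemma~\ref{g3-Cii-lem} we may assume that $E$ fits into a (necessarily nontrivial) exact sequence $0\to L_2\to E\to L_3\to 0$ with $\deg L_i=i$. Recall that $F_E\to J$ is of the form $X(\phi)$ for a map of vector bundles $\phi\colon\VV\to\WW$ on $J$ with $\rk\VV=\rk\WW+1$ representing $Rp_*$ of the universal bundle, and that in the notation of Proposition~\ref{deg-blow-up-prop} one has $S_p(\phi)=\{\xi\in J\colon h^1(E\xi)\ge p\}$. By Proposition~\ref{deg-blow-up-prop}(iii) it suffices to show $\codim_J S_p(\phi)\ge p+1$ for every $p\ge 1$ (this moreover yields $F_E\simeq B_{Z_E}J$). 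For $p=1$ this is exactly $\dim Z_E\le g-2=1$, which is Lemma~\ref{h12-ext-lem}; for $p\ge 3$ it holds because $S_p(\phi)=\emptyset$, since by Lemmas~\ref{g3-h1-ext-lem} and~\ref{g3-EDp-lem} every stable rank-$2$ bundle of degree~$5$ has $h^1\le 2$. Thus the whole proof reduces to showing that $S_2(\phi)=\{\xi\in J\colon h^1(E\xi)\ge 2\}$ is finite.

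To prove this I would exploit the extension together with Clifford's theorem. From $0\to L_2\xi\to E\xi\to L_3\xi\to 0$ one gets $h^0(E\xi)\le h^0(L_2\xi)+h^0(L_3\xi)$, and since $\deg(L_2\xi)=2$, $\deg(L_3\xi)=3$, Clifford bounds these two terms by $1$ and $2$; hence $h^1(E\xi)\ge 2$, i.e.\ $h^0(E\xi)\ge 3$, forces $h^0(L_3\xi)=2$ (with, in the hyperelliptic case, the single extra possibility that $L_2\xi$ is the hyperelliptic system, contributing one point). Now $h^0(L_3\xi)=2$ means $L_3\xi\simeq K(-p)$ for some $p\in C$, so $\xi$ lies on a curve $\Ga\subset J$, and on $\Ga$ one needs in addition $h^0(L_2\xi)\ge 1$ together with the vanishing of the connecting map $\de_\xi\colon H^0(L_3\xi)\to H^1(L_2\xi)$. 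If $L_3\not\simeq L_2(s)$ for every $s\in C$, then $|KL_2L_3^{-1}|$ is a single divisor and effectivity of $L_2\xi$ already forces $p$ into its support, a finite set. If $L_3\simeq L_2(s)$, then $h^0(L_2\xi)=1$ on all of $\Ga$, and one instead analyzes $\de_\xi$: writing it as cup product with the fixed nonzero extension class $e\in H^1(L_2L_3^{-1})=H^1(\OO(-s))$ and dualizing, $\de_\xi=0$ becomes the requirement that the functional $e\in H^0(K(s))^*$ annihilate the $2$-dimensional subspace $H^0(L_3\xi)\cdot H^0(KL_2^{-1}\xi^{-1})$, which is identified with the hyperplane $H^0(K(-p))\subset H^0(K)$; so $\de_\xi=0$ precisely when the canonical image of $p$ equals the point $\ker(e)\in\P H^0(K)^*$, and again only finitely many $p$ occur. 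In every case $S_2(\phi)$ is finite, and Proposition~\ref{deg-blow-up-prop}(iii) applies.

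The only real difficulty is the finiteness of $S_2(\phi)$, and within it the degenerate configuration $L_3\simeq L_2(s)$: there the crude estimate $h^0(E\xi)\le h^0(L_2\xi)+h^0(L_3\xi)$ no longer isolates the relevant stratum, and one must compute the connecting homomorphism by hand, in the spirit of the genus-$2$ computation of Proposition~\ref{FE-irr-g2-prop}; the rest is routine. An alternative, essentially equivalent, route is to bound $\dim(F_E\setminus U_E)\le 1$ via the stratification $F_E\setminus U_E=\bigcup_{d,r}S_{d,r}$ of Lemma~\ref{FE-codim1-lem} and then finish as in Proposition~\ref{FE-irr-prop} using connectedness (Lemma~\ref{connect-lem}); the same exceptional configuration must be handled there too.
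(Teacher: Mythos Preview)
Your argument is correct and takes a genuinely different route from the paper's.

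The paper mimics the genus-$2$ proof (Proposition~\ref{FE-irr-g2-prop}): it stratifies $F_E$ by the closed locus $S=\{(\xi,s):H^0(\xi L_2)\neq 0\}$, identifies $F_E\setminus S$ with an open subset of $C^{[3]}$ (hence irreducible of dimension $3$), and then bounds $\dim S\le 2$ by a four-case analysis according to $h^0(L(D))$ and whether the connecting map $H^0(L(D))\to H^1(\OO(D))$ vanishes. Irreducibility then follows because every component of $F_E$ has dimension $\ge 3$.

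You instead verify the abstract criterion of Proposition~\ref{deg-blow-up-prop}(iii) directly: $\codim_J S_1(\phi)\ge 2$ is Lemma~\ref{h12-ext-lem}, $S_p(\phi)=\emptyset$ for $p\ge 3$ by Lemma~\ref{g3-EDp-lem}, and the crux is the finiteness of $S_2(\phi)=\{\xi:h^1(E\xi)\ge 2\}$, which you establish by the dichotomy on whether $L_3L_2^{-1}$ is effective. Your approach is shorter and more structural, and it yields the bonus identification $F_E\simeq B_{Z_E}J$ for these $E$ as well. The paper's approach, on the other hand, gives an explicit birational model of $F_E$ (an open in $C^{[3]}$), which may be useful elsewhere; its Case~4 is essentially your finiteness of $S_2(\phi)$, and indeed the key computation (reducing $\delta_\xi=0$ to $\ker(e)=H^0(K(-p))$ via the multiplication pairing) appears in both.

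One small remark: in your Case~2 you should note that for hyperelliptic $C$ the identity $h^0(L_2\xi)=1$ on $\Gamma$ can fail at the single point $p=\tau(s)$, but this contributes only one further $\xi$ and does not affect finiteness; you already flagged the analogous issue for $L_2\xi$ being the hyperelliptic system, so this is consistent.
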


\begin{proof} 
By Lemma \ref{g3-Cii-lem}, we can assume that
$E$ is a non-trivial extension 
\begin{equation}\label{L2-E-L3-ext}
0\to L_2\to E\to L_3\to 0
\end{equation}
where $\deg(L_2)=2$, $\deg(L_3)=3$.
We mimic the proof of Proposition \ref{FE-irr-g2-prop}. Set $L:=L_3L_2^{-1}$, and let $e\in H^1(L^{-1})\simeq H^0(KL)^*$ denote
the class of extension \eqref{L2-E-L3-ext}.
Let $S\sub F_E$ denote the closed subset of $(\xi,s\in \P H^0(\xi E))$ such that $H^0(\xi L_2)\neq 0$. We will check that $\dim S\le 2$ and that $F_E\setminus S$
is irreducible of dimenison $3$. This will imply that $F_E$ is irreducible (since we know that every irreducible component of $F_E$ has dimension $\ge g=3$).

For $\xi\in F_E\setminus S$ we have $H^0(\xi L_2)=H^1(\xi L_2)$, so the natural map $H^0(\xi E)\to H^0(\xi L_3)$ is an isomorphism. Thus, by looking at zeros of
the corresponding global section of $\xi L_3$, we identify $F_E\setminus S$ with an open subset of $C^{[3]}$. This proves that  $F_E\setminus S$
is irreducible of dimenison $3$. 

Next, let us check that $\dim S\le 2$. 
Now we consider separately several strata in $S$. Assume first that $h^0(\xi L_2)=2$. Then $\xi L_2$ is the hyperelliptic system, so $\xi$ is uniquely determined.
Since by Lemma \ref{g3-EDp-lem}, $h^0(\xi E)\le 3$ (which is equivalent to $h^1(\xi E)\le 2$), this stratum has dimension $\le 2$.
Thus, we are reduced to considering $(\xi,s)$ in $S$ such that $\xi L_2\simeq \OO(D)$, where $h^0(D)=1$, so $\xi E$ fits into an extension
$$0\to \OO(D)\to \xi E\to L(D)\to 0.$$

\noindent
{\bf Case 1}. First, we have the open locus where $h^0(L(D))=1$ and the connecting homomorphism $H^0(L(D))\to H^1(\OO(D))$ is nonzero.
On this locus $h^0(\xi E)=1$, so a point of this locus is uniquely determined by $D\in C^{[2]}$. Hence, the dimension is $\le 2$.

\noindent
{\bf Case 2}. Next, consider the locus where $h^0(L(D))=1$ and the connecting homomorphism $H^0(L(D))\to H^1(\OO(D))$ is zero.
Since in this case $h^0(\xi E)=2$, it is enough to prove that there is at most $1$-dimenisonal family of such $D$.
The above connecting homomorphism vanishes if and only if the composition
$$H^0(L(D))\ot H^0(K(-D))\rTo{b_{L,D}} H^0(KL)\rTo{e} k$$
is zero. Since $h^0(D)=h^1(D)=1$, we have the unique $D'\in C^{[2]}$ such that $K(-D)\simeq \OO(D')$. 
The image of $b_{L,D}$ is exactly the $1$-dimensional subspace $H^0(KL(-D'))\sub H^0(KL)$.
Thus, it suffices to prove that there exists $D'\in C^{[2]}$ such that $H^0(KL(-D'))$ is $1$-dimensional and is not contained in $\ker(e)$.
The linear system $|\ker(e)|$ gives a rational map $f$ from $C$ to $\P^1$. Let $p_1,p_2\in C$ be a pair of points where $f$ is defined
such that $f(p_1)\neq f(p_2)$. Then $H^0(KL(-p_1-p_2))\cap \ker(e)=0$, so $D'=p_1+p_2$ works.

\noindent
{\bf Case 3}. Now consider the stratum where $h^0(L(D))=2$ and the connecting homomorphism $H^0(L(D))\to H^1(\OO(D))$ is nonzero.
Then $L(D)\simeq K(-p)$ for some $p\in C$, so there is a $1$-dimensional family of such $D$, and $h^0(\xi E)=2$, so the dimenison of
this stratum is $\le 2$.

\noindent
{\bf Case 4}. Finally, consider the stratum where $h^0(L(D))=2$ and the connecting homomorphism $H^0(L(D))\to H^1(\OO(D))$ is zero.
Then $h^0(\xi E)=3$, so we need to prove that there is only finitely many $D$ like this.
The image of $b_{L,D}$ is the $2$-dimensional subspace $H^0(KL(-D'))\sub H^0(KL)$, where $K(-D')\simeq \OO(D)$, and our vanishing condition means
that this subspace coincides with $\ker(e)$.
Assume first that $h^0(L)=0$. Then
the linear system $|KL|$ defines a regular morphism $f:C\to \P^2$ of degree $5$, and $e$ corresponds to a point $[e]\in \P^2$.
If $H^0(KL(-D'))=\ker(e)$ then every point in the support of $D'$ lies in the fiber $f^{-1}([e])$, so there is finitely many possibilities for $D'$.
There remains the case $L=\OO(q)$ for some $q\in C$. Then we have a $2$-dimensional subspace $H^0(K(q-D'))\sub H^0(K(q))=H^0(K)$ that coincides with
$\ker(e)$. This means that $D'=q+p$ and $H^0(K(-p))$ is the fixed $2$-dimensional subspace of $H^0(K)$, which determines $p$ and hence $D'$ up to two
choices (uniquely if $C$ is non-hyperelliptic).
\end{proof}

\subsection{Proof of Conjecture B for non-hyperelliptic curves of genus $3$}

Let $C$ be a curve of genus $3$.
Let us study singularities of $F_E$ for the family of bundles $E_{D,p}$ defined in Lemma \ref{g3-EDp-lem}. 
We will use the following more explicit construction of these bundles.
Let $E=E_{D,p}$ and set $F=E^\vee K$. Then $F$ is a nontrivial extension 
\begin{equation}\label{FDp-extension}
0\to \OO(p)\to F\to \OO(D)\to 0,
\end{equation}
whose push-out under $\OO(p)\to \OO(p+D)$ splits, or equivalently the restriction to $\OO\sub \OO(D)$ splits. We fix such a splitting and
realize $F$ as the kernel
\begin{equation}\label{g3-E-deg3-def-seq}
0\to F\to \OO(p+D)\oplus \OO(D)\rTo{(1,\varphi)} \OO(p+D)|_D\to 0,
\end{equation}
where $\varphi\in \Hom(\OO(D),\OO(p+D)|_D)$ is any element not proportional to the natural projection $\OO(D)\to \OO(p+D)\to \OO(p+D)|_D$.

\begin{lemma}\label{two-maps-varphi-lem}
In the above situation, assume that either $h^0(D+p)=1$ or $C$ is non-hyperelliptic. Then
$H^0(K|_D)$ is spanned by the images of the maps
\begin{align*}
&r_1:H^0(K(-p))\to H^0(K(-p)|_D)\rTo{1} H^0(K|_D) \ \text{ and  }\\
&r_\varphi:H^0(K(-p))\to H^0(K(-p)|_D)\rTo{\varphi} H^0(K|_D).
\end{align*}
In particular, $r_1$ and $r_\varphi$ are not proportional.
\end{lemma}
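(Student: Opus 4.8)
The plan is to reduce the lemma to showing that the image of the restriction map to $D$ contains an $\OO_D$-module generator of $H^0(K(-p)|_D)$, and then to verify this in the two possible cases for $h^0(D+p)$.

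For the setup: $H^0(K(-p))$ is two-dimensional by Riemann--Roch (genus $3$, $\deg K(-p)=3$), and $H^0(K|_D)$ is two-dimensional since $K|_D$ is torsion of length $\deg D=2$. The statement to prove is that $\im r_1+\im r_\varphi=H^0(K|_D)$, and the closing clause follows from this because the images of proportional maps lie in a common line. Both $r_1$ and $r_\varphi$ factor as the restriction $\rho\colon H^0(K(-p))\to H^0(K(-p)|_D)$ followed by the maps on global sections induced by the $\OO_D$-linear sheaf maps
\[
\iota\colon K(-p)|_D\to K|_D,\qquad \wt\varphi\colon K(-p)|_D\to K|_D,
\]
where $\iota$ comes from the inclusion $K(-p)\hra K$ and $\wt\varphi$ is obtained from $\varphi$ by twisting with $K(-p-D)$; a direct check identifies the hypothesis that $\varphi$ is not proportional to the natural projection with the statement that $\iota$ and $\wt\varphi$ (both nonzero, since $\varphi\neq0$) are linearly independent over $k$ in the two-dimensional space $\Hom_{\OO_D}(K(-p)|_D,K|_D)$. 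Now $H^0(K(-p)|_D)$ is a free $\OO_D$-module of rank one, and for any generator $w_0$ of it lying in $\im\rho$, the evaluation $\psi\mapsto\psi(w_0)$ is an $\OO_D$-linear isomorphism $\Hom_{\OO_D}(K(-p)|_D,K|_D)\to H^0(K|_D)$; hence $\iota(w_0)$ and $\wt\varphi(w_0)$ are again $k$-linearly independent and span $H^0(K|_D)$, and since $\iota(w_0)\in\im r_1$ and $\wt\varphi(w_0)\in\im r_\varphi$, this proves the lemma. So it suffices to show that $\im\rho$ contains a generator of $H^0(K(-p)|_D)$.

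Since $\ker\rho=H^0(K-p-D)$ with $\deg(K-p-D)=1$, Riemann--Roch gives $h^0(D+p)=1+h^0(K-p-D)\in\{1,2\}$. If $h^0(D+p)=1$, then $\rho$ is surjective, so $\im\rho=H^0(K(-p)|_D)$ contains a generator. If $h^0(D+p)=2$, then the hypothesis forces $C$ to be non-hyperelliptic; here $\im\rho$ is one-dimensional, spanned by some $w_0$, and I claim $w_0$ is a generator. If it were not, then $w_0$ would be annihilated by a nonzero element of $\OO_D$, i.e. $\im\rho$ would lie in the kernel of evaluation at some reduced point $x\in D$; then every section of $K(-p)$ vanishes at $x$, so $h^0(K-p-x)=h^0(K-p)=2$. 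But $K-p-x$ has degree $2$ on a curve of genus $3$, and a degree-$2$ line bundle with two sections is a $g^1_2$, contradicting non-hyperellipticity. Hence $w_0$ is a generator, which completes the proof.

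The step that will need the most care is this zero-divisor dichotomy in the case $h^0(D+p)=2$ --- identifying $\ker\rho$ and showing that the only way $\im\rho$ can miss every generator is the presence of a $g^1_2$, which is exactly the hyperelliptic alternative excluded by the hypothesis --- together with checking that the length-$2$ local-algebra arguments (evaluation, generators, the twist defining $\wt\varphi$) go through uniformly whether or not $D$ is reduced and whether or not $p\in\mathrm{supp}\,D$.
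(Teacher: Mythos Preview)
Your proof is correct and follows essentially the same strategy as the paper's. Both arguments rest on the same two observations: (i) the maps induced by $1$ and $\varphi$ are linearly independent in $\Hom_{\OO_D}(K(-p)|_D,K|_D)\cong H^0(\OO(p)|_D)$, and (ii) the image of the restriction $\rho:H^0(K(-p))\to H^0(K(-p)|_D)$ contains an $\OO_D$-module generator. The paper phrases (ii) more tersely---``sections of $H^0(K(-p))$ generate $K(-p)|_D$''---and in the non-hyperelliptic case simply quotes that $K(-p)$ is globally generated; your contradiction via a would-be $g^1_2$ is exactly the unpacking of that one-liner. Your deduction of the final non-proportionality clause (images of proportional maps span at most a line) is slightly cleaner than the paper's, which instead notes that $r_1$ has rank $1$.
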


\begin{proof}
Since $H^0(\OO(p)|_D)$ is spanned by $1$ and $\varphi$, the natural map of sheaves
$$K(-p)|_D\oplus K(-p)|_D\to K|_D$$
is surjective. If $h^0(D+p)=1$ then $H^0(K(-D-p))=0$, so the restriction map $H^0(K(-p))\to H^0(K(-p)|_D)$ is
an isomorphism. If $C$ is non-hyperelliptic then $K(-p)$ is globally generated. In either case we see that the sections $H^0(K(-p))$ generate 
$K(-p)|_D$, so the first assertion follows. The second follows from this and from the fact that $r_1$ has rank $1$ (as it vanishes on $H^0(K(-D))\sub H^0(K(-p))$.
\end{proof}

\begin{lemma}\label{g3-genericE-lemma}
Assume that the effective divisor $D$ of degree $2$ and a point $p$ on a curve $C$ of genus $3$ are such that $h^0(\OO(D+p))=1$. 
Set $E=E_{D,p}$. Then
$F_E$ has rational singularities over $\OO\in J$.
\end{lemma}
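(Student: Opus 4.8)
The plan is to reduce to an explicit local computation near the fiber of $F_E$ over $\OO$, and then recognize the resulting hypersurface germs as rational singularities using Lemma~\ref{toric-sing-lem} (and, in borderline configurations, Elkik's theorem \cite{Elkik}).

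By Lemma~\ref{g3-EDp-lem} we have $h^1(E)=2$, so Lemma~\ref{bun-deg-local-lem} presents $F_E$, over a neighborhood of $\OO\in J$, as $X(\phi)$ for $\phi\colon\VV\to\WW$ with $\rk\VV=3$, $\rk\WW=2$ and linear part $\mu_E$ at $\OO$; since $F_E$ is irreducible of dimension $3$ (Prop.~\ref{g3-FE-prop}), it is a local complete intersection and equals $B_{Z_E}J$ by Prop.~\ref{deg-blow-up-prop}. To get a workable $\phi$, I would dualize the presentation \eqref{g3-E-deg3-def-seq}, which realizes $E$ in an exact sequence $0\to K(-p-D)\oplus K(-D)\to E\to\OO_D\to 0$ with $\OO_D$ a length-$2$ torsion sheaf on $D$. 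Applying $Rp_{J*}$ to the $\xi$-twisted family over $J$ and using $h^0(K(-p-D))=h^1(D+p)=h^0(D+p)-1=0$ (this is where the hypothesis enters), one obtains, in suitable local trivializations near $\OO$, a $2\times 3$ matrix of functions vanishing at $\OO$ of the shape
$$\phi=\begin{pmatrix}\alpha_1&\alpha_2&0\\ \beta_1&\beta_2&\gamma\end{pmatrix},$$
where $\gamma$ is a local equation of the generalized theta divisor $\Theta_D=\{\xi\in J\mid h^0(\xi K(-D))\neq 0\}$; as $h^0(K(-D))=1$, $\Theta_D$ is smooth at $\OO$ and $d\gamma|_\OO\neq 0$.

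The fiber of $F_E\to J$ over $\OO$ is $\P(\VV_\OO)=\P^2$. Writing the equations of $X(\phi)$ as $\alpha_1z_1+\alpha_2z_2=0$, $\beta_1z_1+\beta_2z_2+\gamma z_3=0$ and computing Jacobians, one sees $F_E$ is smooth at $([z],\OO)$ unless the covectors $z_1\,d\alpha_1|_\OO+z_2\,d\alpha_2|_\OO$ and $z_1\,d\beta_1|_\OO+z_2\,d\beta_2|_\OO+z_3\,d\gamma|_\OO$ in $T^*_\OO J\simeq H^0(K)$ are proportional. Now $d\alpha_1|_\OO,d\alpha_2|_\OO$ span the image of $H^0(E)\to H^0(K(-p))$, which is all of $H^0(K(-p))\subset H^0(K)$ (the coboundary in the defining sequence of $E_{D,p}$ vanishes), while $d\gamma|_\OO$ is the conormal of $\Theta_D$, i.e. the class $[s_0\si_D]\in H^0(K)$ of the section vanishing on $D_0+D$ (here $K\simeq\OO(D_0+D)$, with $s_0$ spanning $H^0(K(-D))$ and $\si_D$ spanning $H^0(\OO(D))$). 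Since $h^1(D+p)=0$ forces $p\notin\operatorname{supp}(D_0)$, and $p\notin\operatorname{supp}(D)$ (which may be arranged in the presentation $E\simeq E_{D,p}$, or handled separately if not), we get $[s_0\si_D]\notin H^0(K(-p))$, so $d\alpha_1|_\OO,d\alpha_2|_\OO,d\gamma|_\OO$ form a basis of $H^0(K)$. Together with the zero entry of $\phi$, this non-degeneracy forces the singular locus of $F_E$ inside $\P^2$ to be the point $[s_0]=[0:0:1]$ together with at most finitely many other points. Lemma~\ref{two-maps-varphi-lem} is used exactly to provide this non-degeneracy: the translation of ``$r_1,r_\varphi$ not proportional and spanning $H^0(K|_D)$'' into the independence of these three covectors is the point to spell out carefully.

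Finally, at $[s_0]$, solving the second equation for one coordinate (possible since $d\gamma|_\OO\neq 0$) presents the germ of $F_E$ there as a hypersurface $\{\tilde\alpha_1z_1+\tilde\alpha_2z_2=0\}$ in a smooth fourfold; the basis property above makes its quadratic part have rank $4$, so the germ is formally $\{x_1x_2=x_3x_4\}$, rational by Lemma~\ref{toric-sing-lem}(i). At each remaining singular point an analogous elimination exhibits $F_E$ as a hypersurface in a smooth fourfold with quadratic part of rank $\geq 3$, again rational by Lemma~\ref{toric-sing-lem}(i); when the rank drops to $2$ the higher-order terms make the germ a normal toric singularity $x_1x_2=x_3^n$, or one specializes to such a singularity and invokes \cite{Elkik}. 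Since these exhaust the points of $F_E$ over $\OO$, the lemma follows. The main obstacle is the penultimate step — the bookkeeping relating $d\alpha_i|_\OO$, $d\gamma|_\OO$ to the multiplication maps of Lemmas~\ref{bun-deg-local-lem} and \ref{two-maps-varphi-lem} precisely enough to pin down the singular locus in $\P^2$ and the type of each singularity; a secondary subtlety is the configuration $p\in\operatorname{supp}(D)$, which is either excluded by the choice of presentation or yields only a milder toric degeneration covered by Elkik's theorem.
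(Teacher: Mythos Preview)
Your overall strategy—present $F_E$ near the fiber over $\OO$ as $X(\phi)$ for a $2\times 3$ matrix, locate the singular points in the $\P^2$ fiber, and recognize each germ as a hypersurface with rational singularities—is reasonable, and your analysis at the distinguished point $[s_0]=[0{:}0{:}1]$ is essentially correct: after eliminating one $J$-coordinate using the second equation (whose differential $d\gamma|_\OO\neq 0$), the quadratic part of the remaining hypersurface is $\ell_1(y,z)z_1+\ell_2(y,z)z_2$ with $\ell_1,\ell_2$ independent, hence of rank $4$. (This works even without the structural zero in $\phi$; only the vanishing of the linear part $\mu_{13}$ is needed, and that follows from the extension structure.)

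However, there is a genuine gap at the ``remaining singular points.'' You correctly observe that these form a finite set in $\P^2$, but you do not determine them or verify the claimed rank-$\ge 3$ (or toric-degeneration) property. At such a point $[z_1{:}z_2{:}z_3]$ with $(z_1,z_2)\neq(0,0)$, the type of the hypersurface germ after elimination depends on the second-order terms of $\phi$ and on the specific linear relation between the rows of $\mu$ at that point; the assertion that the quadratic part has rank $\ge 3$, or that one can specialize to $x_1x_2=x_3^n$, is not justified. Likewise, the case $p\in\operatorname{supp}(D)$ cannot simply be ``arranged away'': a given $E_{D,p}$ need not admit an alternative presentation with $p\notin D$, and the paper treats this case separately with genuinely different matrices.

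The paper's argument sidesteps both difficulties by a different mechanism. Rather than analyzing $X(\phi)$ directly, it rescales the $J$-coordinates to produce a one-parameter family $\phi_t$ with $\phi_t\sim\phi$ for $t\neq 0$ and $\phi_0=\mu$ (the linear part), checks via Proposition~\ref{deg-blow-up-prop}(iii) and Lemma~\ref{flatness-lem} that this is a flat degeneration, and then invokes Elkik's theorem to reduce to $X(\mu)$. Since $\mu$ has homogeneous linear entries, $X(\mu)=B_{Z(\mu)}\A^3$ for an explicit monomial-type ideal, and the rationality of all its singularities (over every point of the $\P^2$ fiber simultaneously) is handled by the purely computational Lemma~\ref{rat-sing-quadratic-lem}. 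The three configurations $p\notin D$, $D=p+q$ with $q\neq p$, and $D=2p$ lead to three explicit normal forms for $\mu$, each covered by that lemma. This degeneration-to-$\mu$ trick is the missing ingredient in your sketch: it replaces the uncontrolled higher-order terms of $\phi$ by a single homogeneous model, for which the verification is finite and explicit.
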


\begin{proof} Set $F=E^\vee K$. 
The scheme $F_E$ locally near $\OO$ has form $X=X(\phi)$ (see Sec.\ \ref{deg-loci-sec}) for an $2\times 3$-matrix 
$\phi$ of functions on $J$,
vanishing at $0$, whose linear terms are given by the $2\times 3$-matrix describing the multiplication map
$$\mu: H^0(F)\ot H^0(E)\to H^0(K).$$
Since $h^0(\OO(D))=1$, the exact sequences for $E$ and $F$ induce exact sequences
$$0\to H^0(\OO(p))\to H^0(F)\to H^0(\OO(D))\to 0,$$
$$0\to H^0(K(-D))\to H^0(E)\to H^0(K(-p))\to 0.$$
Note that the splitting $\OO\to F$ induces a splitting of the first sequence, and we denote by $g\in H^0(F)$ the corresponding element.

If we let the first (resp., second) row of the matrix of $\mu$ to correspond to $1\in H^0(\OO(p))\sub H^0(F)$ (resp., $g\in H^0(F)$), then its entries correspond to
the natural projection $\phi_1:H^0(E)\to H^0(K(-p))\sub H^0(K)$ (resp., to the map $\phi_2:H^0(E)\to H^0(K)$ induced by the splitting $E\to K$ of the extension defining $E$).

\noindent
{\bf Case 1}. $p\not\in D$.

Then the assumption that $h^0(\OO(D+p))=1$, which is equivalent to $h^0(K(-D-p))=0$, implies that 
we have a decomposition
$$H^0(K)=H^0(K(-p))\oplus H^0(K(-D)).$$
Thus, if we let $x,y$ be a basis of $H^0(K(-p))$, $z$ a basis of $H^0(K(-D))$, and let the first two columns of $\mu$ correspond to liftings of $x$ and $y$ to $H^0(E)$,
while the last column correspond to $z$, then
the $2\times 3$-matrix describing $\mu$ will be of the form
$$\left(\begin{matrix} x & y & 0 \\ a & b & z \end{matrix}\right),$$
Note that the $2\times 2$ submatrix corresponding to the first two columns corresponds to the restriction of
$\mu$ to $H^0(F)\ot \lan \wt{x},\wt{y}\ran$, where $\wt{x},\wt{y}\in H^0(E)$ are liftings of $x,y\in H^0(K(-p))$.

Now we claim that the vector $(a,b) \mod (z)$ in $H^0(K)/(z)\oplus H^0(K)/(z)$ is not proportional to $(x,y)$.
Indeed, since $z$ is a generator of $H^0(K(-D))\sub H^0(K)$, we can identify
$a \mod z$ and $b \mod z$ with the images of $g\ot \wt{x}$ and $g\ot \wt{y}$ under the composition
\begin{equation}\label{mu-g-restriction}
\lan g \ran \ot H^0(E)\rTo{\mu} H^0(K)\to H^0(K|_D).
\end{equation}

The restriction of $\mu$ to $\lan g\ran \ot H^0(E)$ corresponds to the map $H^0(E)\to H^0(K)$ associated with the splitting of the extension defining $E$ 
under $K(-D)\to K$. The corresponding exact sequence
$$0\to E\to K\oplus K(-p)\rTo{(1,\varphi)} K|_D\to 0,$$
allows to realize $H^0(E)$ as the subspace of $H^0(K)\oplus H^0(K(-p))$ consisting of $(s,t)$ such that
$s|_D=\varphi\cdot t|_D$. 
Thus, the composition \eqref{mu-g-restriction} is given by
$$\lan g\ran \ot (s,t)\mapsto s|_D=\varphi\cdot t|_D,$$
so it factors as the composition $H^0(E)\to H^0(K(-p))\rTo{\varphi} H^0(K|_D)$.

On the other hand, the restriction of $\mu$ to $\lan 1\ran \ot H^0(E)$ is the natural projection $H^0(E)\to H^0(K(-p)): (s,t)\mapsto t$.
Thus, our claim follows immediately from Lemma \ref{two-maps-varphi-lem}.

Using elementary column operations, we can assume that $a$ and $b$ are in $\lan x,y\ran$.
Rescaling the variables we get a family of matrices $\phi_t$ over $\A^1$, with $\phi_t$ equivalent to $\phi$ for $t\neq 0$,
and $\phi_0=\mu$, the matrix of linear terms. 

We claim that $\mu$ satisfies the assumptions of Proposition \ref{deg-blow-up-prop}(iii), i.e., the locus $Z(\mu)$ where $\mu$ is degenerate has codimension $2$ in $\A^3$,
while the vanishing locus of $\mu$ is $0$-dimensional. The latter is clear, while the former follows from the fact that the ideal of $Z(\mu)$ is generated
by $xz$, $yz$ and $xb-ya$, which is a nonzero quadratic form in $x,y$. Hence, By Lemma \ref{flatness-lem}, we have a flat degeneration of $F_E=X(\phi)$ into
$X(\mu)$. By Elkik's Theorem \cite[Thm.\ IV]{Elkik}, it is enough to prove that $X(\mu)$ has rational singularities over the origin. Furthermore, by Proposition   \ref{deg-blow-up-prop}(iii),
$X(\mu)$ is the blow up of $\A^3$ at $Z(\mu)$.
Now the assertion follows from Lemma \ref{rat-sing-quadratic-lem}(ii) below.

\noindent
{\bf Case 2}. $p\in D$.

Note that in this case $\varphi|_p\neq 0$ in $\OO(p)|_p$ and $H^0(K(-D))\sub H^0(K(-p))$. 
We can choose a basis $(x,y)$ on $H^0(K(-p))$ so that $H^0(K(-D))$ is spanned by $x$. Then
arguing as before, we get that $\mu$ takes form
$$\left(\begin{matrix} x & y & 0 \\ a & b & x \end{matrix}\right).$$
As before we see that $a|_D=\varphi\cdot x|_D$, $b|_D=\varphi\cdot y|_D$. 

{\bf Subcase 2a.} $D=p+q$, where $q\neq p$.
Since $H^0(K(-2p-q))=0$ we see that $x|_p\neq 0$. Hence $a|_p\neq 0$, so $a\not\in H^0(K(-p))=\lan x,y\ran$.
Then we can take $a=z$ to be the third basis element in $H^0(K)$.
Let us write $b=\a x+\b y+\ga z$. Then we can perform the following elementary operations
$$\left(\begin{matrix} x & y & 0 \\ z & \a x+\b y+\ga z & x \end{matrix}\right)\mapsto
\left(\begin{matrix} x & y & 0 \\ z-\b x & \a x+\ga z & x \end{matrix}\right)\mapsto
\left(\begin{matrix} x & y-\ga x & 0 \\ z-\b x & (\a+\b\ga) x & x \end{matrix}\right)\mapsto
\left(\begin{matrix} x & y-\ga x & 0 \\ z-\b x & 0 & x \end{matrix}\right),
$$
so changing the variables, we can assume 
$$\mu=\left(\begin{matrix} x & y & 0 \\ z & 0 & x \end{matrix}\right).$$
This matrix satisfies the assumptions of Proposition \ref{deg-blow-up-prop}(iii), so as in Case 1, the assertion reduces to Lemma \ref{rat-sing-quadratic-lem}(ii).

{\bf Subcase 2b.} $D=2p$. Since $y\in H^0(K(-p))\setminus H^0(K(-2p))$, we have $y|_p\neq 0$. Hence $b|_p\neq 0$, so
we can take $b=z$. On the other hand, $x|_p=0$ in $K(-p)|_p$, hence $a|_p=0$, so $a\in \lan x,y\ran$. 
Note also, the since $H^0(K(-3p))=0$ by assumption, the restriction map $H^0(K(-p))\to K(-p)|_{2p}$ is an isomorphism, so $x|_{2p}\neq 0$.
Hence, $a|_{2p}\neq 0$, so $a\not\in \lan x\ran$.
Adding a multiple of the third column to the first,
we can assume that
$$\mu=\left(\begin{matrix} x & y & 0 \\ \b y & z & x \end{matrix}\right).$$
for some $\b\neq 0$. Rescaling the second row, the last column and $z$, we get
$$\mu=\left(\begin{matrix} x & y & 0 \\ y & z & x \end{matrix}\right),$$
and the assertion follows from Lemma \ref{rat-sing-quadratic-lem}(iii).
\end{proof}

\begin{lemma}\label{rat-sing-quadratic-lem}
\noindent
(i) Consider the hypersurface $H\sub \A^n$ (where $n\ge 3$) with the equation
$$x_1x_2=x_1M_1+x_2^dM_2,$$
where $d\ge 0$ and $M_1,M_2$ are monomials in $(x_i)_{i\ge 3}$.
Then $H$ has rational singularities.

\noindent
(ii) The blow up of $\A^3$ at a CM singularity of codimension $2$ given by the ideal $(xz,yz,q(x,y))$ where $q(x,y)$ is a nonzero quadratic form, has rational singularities over the origin.

\noindent
(iii) The variety $X(\mu)=B_{Z(\mu)}\A^3$ for 
$$\mu=\left(\begin{matrix} x & y & 0 \\ y & z & x \end{matrix}\right)$$
has rational singularities.
\end{lemma}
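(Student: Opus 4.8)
The plan is to handle (i) by an explicit change of coordinates reducing it to Lemma \ref{toric-sing-lem}(i), and to handle (ii) and (iii) by realizing the blow-up as the zero locus $X(\mu)\sub\A^3\times\P^2$ of a pair of bilinear equations and checking rationality on the standard affine cover of $\P^2$. For (i), assume (as holds in our application) that $M_1,M_2$ are nonzero monomials, and rewrite the equation as $x_1(x_2-M_1)=x_2^dM_2$. Since $M_1$ involves only $x_3,\ldots,x_n$, the substitution $y_2:=x_2-M_1$ is a change of coordinates; writing $x_2^d=(y_2+M_1)^d=M_1^d+y_2R$ with $R\in k[y_2,x_3,\ldots,x_n]$, the equation becomes $y_2(x_1-RM_2)=M_1^dM_2$, and the further triangular substitution $y_1:=x_1-RM_2$ puts it in the form $y_1y_2=M_1^dM_2$, whose right-hand side is a nonzero monomial in $x_3,\ldots,x_n$. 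Lemma \ref{toric-sing-lem}(i) then applies; the case $d=0$ is identical, the first substitution already sufficing.

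For (ii) and (iii), I would first identify the blow-up with $X(\mu)$ for a suitable $2\times3$ matrix $\mu$ over $\A^3$. In case (ii), choose linear forms $a,b$ in $x,y$ with $xb-ya=q$ and put $\mu=\left(\begin{matrix}x&y&0\\a&b&z\end{matrix}\right)$; then the ideal of $2\times2$ minors of $\mu$ is exactly $(xz,yz,q)$, so $Z(\mu)$ is the prescribed subscheme, and since $\codim S_1(\mu)=2$ and $S_2(\mu)=\{0\}$, Proposition \ref{deg-blow-up-prop}(iii) shows $X(\mu)$ is irreducible of dimension $3$ and equals $B_{Z(\mu)}\A^3$; case (iii) is the same with $\mu=\left(\begin{matrix}x&y&0\\y&z&x\end{matrix}\right)$, whose minors are $(xz-y^2,\,x^2,\,xy)$. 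Recall that $X(\mu)\sub\A^3\times\P^2_{[t_1:t_2:t_3]}$ is cut out by the two equations obtained by pairing the rows of $\mu$ with $(t_1,t_2,t_3)$. On each of the three charts $\{t_i\ne0\}$, which cover $X(\mu)$, one of these equations is solvable for one of $x,y,z$ — because the relevant entry of $\mu$ is that very coordinate — and after eliminating it the other equation defines, in suitable coordinates, either a smooth subvariety or a hypersurface of the form $uv=(\text{nonzero monomial in the remaining variables})$, to which Lemma \ref{toric-sing-lem}(i) applies. Since having rational singularities is a local property, this yields the assertion, in particular over the origin.

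The actual work is the chart bookkeeping. For (ii): on $\{t_3\ne0\}$ one eliminates $z$ and is left with $xt_1+yt_2=0$, an instance of Lemma \ref{toric-sing-lem}(i); on $\{t_1\ne0\}$ (resp.\ $\{t_2\ne0\}$) one eliminates $x$ (resp.\ $y$) and is left, using $q\ne0$, with $zt_3=-y\gamma(t_2)$ (resp.\ $zt_3=-x\delta(t_1)$) where $\gamma,\delta$ are the two dehomogenizations of $q$, hence nonzero. For (iii): on $\{t_3\ne0\}$ one eliminates $x$ and obtains $y(t_2-t_1^2)=zt_1t_2$, which after the substitutions $t_2\mapsto t_2+t_1^2$ and $y\mapsto y-zt_1$ becomes $yt_2=zt_1^3$; on $\{t_1\ne0\}$ one eliminates $x$ and gets $y(1-t_2t_3)+zt_2=0$, which a Jacobian computation shows is smooth; on $\{t_2\ne0\}$ one eliminates $y$, after which the second equation reads $z=x(t_1^2-t_3)$, so the chart is an affine space. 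I expect this elementary but fiddly chart analysis, together with the routine check via Proposition \ref{deg-blow-up-prop}(iii) that $X(\mu)$ really is the asserted blow-up, to be the bulk of the proof, while part (i) is essentially immediate.
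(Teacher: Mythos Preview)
Your argument is correct. For parts (ii) and (iii) you follow essentially the paper's approach: identify the blow-up with $X(\mu)$ via Proposition \ref{deg-blow-up-prop}(iii), write the two bilinear equations in $\A^3\times\P^2$, and check rationality chart by chart. Your bookkeeping differs only cosmetically (you keep $q$ general in (ii) rather than normalizing to $xy$ or $x^2$, and in (iii) you observe that the chart $t_1\ne0$ is actually smooth, whereas the paper is content to invoke part (i) there).

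For part (i), however, you take a genuinely more elementary route. The paper rewrites the equation as $x_1(x_2-M_1)=x_2^dM_2$, changes $x_2\mapsto x_2+M_1$ to obtain $x_1x_2=(x_2+M_1)^dM_2$, and then places this in the isotrivial family $x_1x_2=(tx_2+M_1)^dM_2$ whose central fiber $x_1x_2=M_1^dM_2$ is normal toric; rationality then follows from Elkik's theorem on deformations of rational singularities. Your observation that $(y_2+M_1)^d=M_1^d+y_2R$ lets one absorb the extra term by the triangular substitution $y_1=x_1-RM_2$ reaches the monomial hypersurface $y_1y_2=M_1^dM_2$ \emph{by an explicit isomorphism}, bypassing Elkik entirely. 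This is a nice simplification: it shows the hypersurface is not merely a deformation of a toric one but is isomorphic to it, and it keeps the lemma self-contained.
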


\begin{proof}
(i) Let us rewrite the equation as $x_1(x_2-M_1)=x_2^dM_2$. Changing $x_2$ to $x_2+M_1$, we get $x_1x_2=(x_2+M_1)^dM_2$.
The latter hypersurface fits into an isotrivial family 
$$x_1x_2=(tx_2+M_1)^dM_2.$$
Since the limiting hypersurface $x_1x_2=M_1^dM_2$ is normal toric, it has rational singularities.
Hence, by Elkik's Theorem (see \cite[Thm.\ IV]{Elkik}), so does the original hypersurface.

\noindent
(ii) Changing variables we reduce to considering the ideals,
$$I_1=(xz,yz,xy), \ \ I_2=(xz,yz,x^2),$$
which are degeneration ideals of the matrices
$$\phi_1=\left(\begin{matrix} x & y & 0 \\ 0 & y & z \end{matrix}\right) \ \text{ and }
\phi_2=\left(\begin{matrix} x & y & 0 \\ 0 & x & z \end{matrix}\right).$$
By Proposition \ref{deg-blow-up-prop}(iii), we have identitications of $X(\phi_i)$ with the corresponding blow ups.

By definition $X(\phi_1)$ is the closed subscheme of $\A^3\times \P^2$ given by the equations
\begin{align*}
&x\a+y\b=0\\
&y\b+z\ga=0,
\end{align*}
where $(\a:\b:\ga)$ are homogeneous coordinates on $\P^2$. Over each standard affine open in $\P^2$ we get a normal toric hypersurface,
so $X(\phi_1)$ has rational singularities. For $X(\phi_2)$ the equations are
\begin{align*}
&x\a+y\b=0\\
&x\b+z\ga=0,
\end{align*}
which still gives normal toric hypersurfaces on the standard affine opens, the most interesting one being 
$$y\b^2=z\ga$$
obtained for $\a=1$.


\noindent
(iii) The equations of $X(\mu)$ are
\begin{align*}
&x\a+y\b=0\\
&y\a+z\b+x\ga=0.
\end{align*}
Clearly, the open corresponding to $\b=1$ is smooth. For $\a=1$, we get the hypersurface
$$y+z\b=y\b\ga,$$
which has rational singularities (e.g., by part (i)).
Finally, for $\ga=1$, we get the hypersurface
$$y\b=(y\a+z\b)\a$$
which has rational singularities by part (i).
\end{proof}

\begin{remark}
Another way to prove rationality of singularities in Lemma \ref{rat-sing-quadratic-lem}(ii), is to observe that the blow ups of $I_i$ are toric, so it is enough to
check that the Rees algebras associated with the ideals $I_1$ and $I_2$ are integrally closed. 
This is equivalent to checking that all powers of these ideals are integrally closed. Since we have $3$ independent variables,
by Theorem of Reid-Roberts-Vitulli (see \cite[Thm.\ 1.4.10]{SH}) it is enough to check that $I_i$ and $I_i^2$ are integrally closed for $i=1,2$.
This can be easily done using the combinatorial recipe \cite[Prop.\ 1.4.6]{SH} for finding generators of the integral closures of monomial ideals.
\end{remark} 


Below we will need the following general statement.

\begin{lemma}\label{union-deg-locus-lem}
Let 
$$0\to F\to G\to H\to 0$$
be an exact sequence of vector bundles on $S\times C$, where $S$ is some base scheme and $C$ is a curve.
Let $Z_F$ (resp., $Z_G$, $Z_H$) be the locus in $S$ where $H^0(F_s)\neq 0$ (resp., $H^0(G_s)\neq 0$, resp., $H^0(H_s)\neq 0$) with its natural subscheme structure.
Then we have an inclusion of schemes
$$Z_G\sub Z_F\cup Z_H$$
where the scheme structure of $Z_F\cup Z_H$ is defined using the product of the corresponding ideal sheaves,
\end{lemma}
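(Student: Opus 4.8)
The plan is to reduce the statement to an elementary fact about ideals of minors of block-triangular matrices, after choosing convenient presentations of the derived pushforwards along $p\colon S\times C\to S$.

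Since the asserted inclusion of ideal sheaves $I_{Z_F}\cdot I_{Z_H}\subseteq I_{Z_G}$ may be checked Zariski-locally on $S$, I would first pass to a neighbourhood of an arbitrary point and choose an effective divisor $D$ on $C$ large enough that $R^1p_*$ of each of $F(D)$, $G(D)$, $H(D)$ vanishes there. Twisting the sequence $0\to\mathcal O_C(-D)\to\mathcal O_C\to\mathcal O_D\to 0$ by the locally free sheaves $F,G,H$ and pushing forward then represents $Rp_*F$, $Rp_*G$, $Rp_*H$ by two-term complexes of vector bundles in degrees $0,1$, say $[A_F\xrightarrow{d_F}B_F]$ with $A_F=p_*(F(D))$ and $B_F=p_*(F(D)|_{D\times S})$, and similarly for $G$ and $H$; by definition $Z_F$ is then cut out by the $0$th Fitting ideal of $\coker(d_F^\vee)$, i.e.\ by the ideal $I_{\rk A_F}(d_F)$ of maximal minors of $d_F$, independently of the choices made (and likewise for $Z_G$ and $Z_H$). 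Because $p_*$ is left exact, $D\times S\to S$ is finite (hence affine, so pushforward along it is exact), and the relevant higher pushforwards vanish, the inclusions $F\hookrightarrow G$ and $G\twoheadrightarrow H$ together with their restrictions to $D\times S$ assemble into a short exact sequence of complexes $0\to[A_F\to B_F]\to[A_G\to B_G]\to[A_H\to B_H]\to 0$ with vector bundle terms.

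Next I would shrink $S$ so that $0\to A_F\to A_G\to A_H\to0$ and $0\to B_F\to B_G\to B_H\to0$ split, giving identifications $A_G\cong A_F\oplus A_H$, $B_G\cong B_F\oplus B_H$ under which, by compatibility of $d_G$ with the maps of complexes, $d_G=\left(\begin{smallmatrix}d_F&\beta\\0&d_H\end{smallmatrix}\right)$ for some $\beta\colon A_H\to B_F$. The key elementary observation is then that if $R$ is a set of $\rk A_F$ rows of $B_F$ computing a maximal minor of $d_F$ and $R'$ a set of $\rk A_H$ rows of $B_H$ computing a maximal minor of $d_H$, then the square submatrix of $d_G$ on all columns and the rows $R\cup R'$ is block-triangular with diagonal blocks exactly those two submatrices, so its determinant is the product of the two minors. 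Hence every product of a generator of $I_{Z_F}$ with a generator of $I_{Z_H}$ occurs among the maximal minors of $d_G$, so $I_{Z_F}\cdot I_{Z_H}\subseteq I_{Z_G}$, which is precisely the scheme-theoretic inclusion $Z_G\subseteq Z_F\cup Z_H$. The degenerate configurations, in which $\rk A_F>\rk B_F$ or $\rk A_H>\rk B_H$ and so $Z_F$ or $Z_H$ is all of $S$ with the zero ideal, are immediate, and the remaining degenerate case $\rk A_G>\rk B_G$ forces one of those.

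I do not anticipate a genuine obstacle; the only points needing a little care are the verification that the ``natural subscheme structure'' in the statement coincides with the Fitting-ideal structure of any representing complex and is independent of $D$ and of the local splittings (standard invariance of Fitting ideals under stabilization by identity blocks), and keeping straight the orientation of the block decomposition of $d_G$ coming from the short exact sequence.
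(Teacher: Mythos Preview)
The paper states this lemma without proof, so there is no argument to compare against; the authors evidently regard it as standard. Your proof is correct and is the natural way to see the statement: after representing each $Rp_*$ by a two-term complex of vector bundles via a sufficiently positive twist, the short exact sequence of complexes forces $d_G$ into block upper-triangular form, and the product of a maximal minor of $d_F$ with a maximal minor of $d_H$ is then visibly one of the maximal minors of $d_G$. The independence of the Fitting ideal from the choice of $D$ and of the local splittings is indeed standard, and your handling of the degenerate cases (where one of $Z_F$, $Z_H$ is all of $S$) is fine. The only implicit hypothesis worth flagging is that to choose $D$ working uniformly on a neighbourhood you are using semicontinuity on a locally noetherian base, which is certainly satisfied in every application in the paper.
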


\begin{lemma}\label{g3-EDp-lemma}
Assume that the effective divisor $D$ of degree $2$ and a point $p$ on a 
non-hyperelliptic curve $C$ of genus $3$ are such that $H^1(\OO(D+p))\neq 0$.
Set $E=E_{D,p}$. Then $F_E$ has rational singularities over $\OO\in J$. 
\end{lemma}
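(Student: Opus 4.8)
The plan is to follow the strategy of the proof of Lemma \ref{g3-genericE-lemma}, the difference being that the hypothesis $H^1(\OO(D+p))\neq 0$ makes the linear part of the relevant matrix more degenerate, so the reduction to a toric normal form must be carried out afresh. Since $C$ is non-hyperelliptic and $h^1(\OO(D+p))\neq 0$, Riemann--Roch gives $h^0(\OO(D+p))=2$, $h^0(K(-D-p))=1$, $h^0(D)=1$, and by Lemma \ref{g3-EDp-lem}(i) one has $h^1(E)=2$. By Proposition \ref{g3-FE-prop} the scheme $F_E$ is irreducible of dimension $3$, so combining Lemma \ref{bun-deg-local-lem} with Proposition \ref{deg-blow-up-prop}(ii) we get, near $[\OO]\in J$, an identification $F_E\simeq X(\phi)\simeq B_{Z_E}J$, where $\phi$ is a $2\times 3$ matrix of functions on $J$ vanishing at $\OO$ whose matrix of linear terms is the multiplication map $\mu=\mu_E\colon H^0(E^\vee K)\ot H^0(E)\to H^0(K)$, and $Z_E=Z(\phi)$.

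Next I would compute $\mu$ explicitly, using the realization of $F=E^\vee K$ as the kernel \eqref{g3-E-deg3-def-seq} of the extension \eqref{FDp-extension}, exactly as in Lemma \ref{g3-genericE-lemma}. Because $h^0(\OO(D))=1$ and $h^0(K(-D))=h^0(D)=1$, one has short exact sequences $0\to H^0(\OO(p))\to H^0(F)\to H^0(\OO(D))\to 0$ and $0\to H^0(K(-D))\to H^0(E)\to H^0(K(-p))\to 0$; the new feature compared with Lemma \ref{g3-genericE-lemma} is that the image of $H^0(K(-D))$ in $H^0(K)$ now lies inside $H^0(K(-p))$ and in fact equals the line $H^0(K(-D-p))$. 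Choosing a generator $x$ of $H^0(K(-D-p))$, completing to bases $x,y$ of $H^0(K(-p))$ and $x,y,z$ of $H^0(K)$, taking $\psi_1\in H^0(F)$ to be the quotient $E\to K(-p)\hra K$ and $\psi_2=g$ a lift restricting to the inclusion on $K(-D)$, and choosing a basis $\bar x,\tilde x,\tilde y$ of $H^0(E)$ with $\bar x$ spanning $H^0(K(-D))\subset H^0(E)$ and $\tilde x,\tilde y$ lifting $x,y$, one obtains after elementary column operations
\[
\mu\sim\begin{pmatrix}0 & x & y\\ x & b_1 & b_2\end{pmatrix},\qquad b_1,b_2\in\langle y,z\rangle,
\]
with $b_1=g(\tilde x)$ and $b_2=g(\tilde y)$ taken modulo $\langle x\rangle$.

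I would then check $b_1\neq 0$, i.e. $g(\tilde x)\notin H^0(K(-D-p))$: this is the analogue of Lemma \ref{two-maps-varphi-lem}, but the fastest route is to observe that $b_1=0$ would make the $2\times 2$ minors of $\mu$ equal to $x^2,xy,xb_2$, whose common zero locus has dimension $\geq 2$, contradicting $\dim_{\OO}Z_E\leq 1$ from Proposition \ref{Conj-Ci-g3-prop}. Depending on whether $b_1\in\langle y\rangle$ or not, coordinate changes on $J$ together with further elementary operations bring $\mu$ to one of the normal forms $\left(\begin{smallmatrix}0 & x & y\\ x & y & z\end{smallmatrix}\right)$ or $\left(\begin{smallmatrix}0 & x & w\\ x & z' & cw\end{smallmatrix}\right)$; degenerate configurations (for instance $p\in D$, or the residual point $q$ defined by $D+p+q\sim K$ meeting $D\cup\{p\}$) require small modifications of the bases as in Cases 2a and 2b of Lemma \ref{g3-genericE-lemma}, but lead to the same normal forms.

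Finally, for each normal form $\mu_0$ the ideal of $2\times 2$ minors cuts out a $1$-dimensional locus and $\mu_0$ vanishes only at the origin, so by Proposition \ref{deg-blow-up-prop}(iii) and Lemma \ref{flatness-lem} there is a flat isotrivial degeneration of $F_E\simeq X(\phi)$ into $X(\mu_0)=B_{Z(\mu_0)}\A^3$; by Elkik's theorem \cite[Thm.\ IV]{Elkik} it suffices to show $X(\mu_0)$ has rational singularities over the origin. This is checked by covering $\P^2$ with its three standard affine charts, exactly as in the proof of Lemma \ref{rat-sing-quadratic-lem}: on each chart $X(\mu_0)$ becomes, after eliminating two variables, either a smooth affine space or, after a formal coordinate change, a hypersurface of the form $x_1x_2=(\text{monomial})$ or $x_1x_2=x_3x_4$, which is rational by Lemma \ref{toric-sing-lem}(i). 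The main obstacle is the explicit determination of $\mu$, in particular identifying $b_1,b_2$ and verifying $b_1\neq 0$ (the map $g\colon E\to K$ evaluated on $\tilde x$ being more delicate here than the map $\varphi$ of Lemma \ref{two-maps-varphi-lem}), together with the bookkeeping of the degenerate sub-configurations of $(D,p,q)$; invoking Proposition \ref{Conj-Ci-g3-prop} to exclude $b_1=0$ is what lets one bypass a harder bundle-theoretic computation.
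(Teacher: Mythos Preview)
There is a genuine gap at the step where you claim $b_1\neq 0$. In fact $b_1=0$, and this is precisely what makes the case $H^1(\OO(D+p))\neq 0$ qualitatively different from Lemma \ref{g3-genericE-lemma}. With your choice of bases, a lift $\tilde x\in H^0(E)$ of $x\in H^0(K(-D-p))=H^0(K(-D))$ has the form $(cs_1,s_1)$ in the realization $H^0(E)\subset H^0(K)\oplus H^0(K(-p))$, because $s_1|_D=0$ forces the first component to lie in $H^0(K(-D))=\langle s_1\rangle$; hence $g(\tilde x)=cs_1=cx$, and after subtracting $c$ times the first column you get $b_1=0$ on the nose. Your argument ``$b_1=0$ would force $\dim Z(\mu)\ge 2$, contradicting $\dim_{\OO}Z_E\le 1$'' confuses $Z(\mu)$ with $Z_E=Z(\phi)$: the minors of $\mu$ are only the quadratic parts of the generators of $I_{Z_E}$, and indeed those quadratic parts are $x^2,xy,xz$ (equivalently $s_1^2,s_1s_2,s_1s_3$ in the paper's notation), whose common zero locus is the plane $x=0$. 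This is entirely compatible with $\dim Z_E\le 1$; it just means the higher-order terms of $\phi$ are doing the work.

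Because $Z(\mu)$ has codimension $1$, the scheme $X(\mu)$ is not of dimension $3$, so Lemma \ref{flatness-lem} does not apply, the rescaling family $X(\phi_t)$ is not flat at $t=0$, and you cannot invoke Elkik to reduce to $X(\mu)$. The paper's proof takes a different route precisely to get around this: it uses the containment $\th_D\cdot I_{\iota_p(C)}\subset I_{Z_E}$ coming from the extension \eqref{FDp-extension} to pin down the form of $\phi$ beyond its linear part, obtaining
\[
\phi\sim\begin{pmatrix} x+h(z) & y & 0\\ -f(y,z) & z & x\end{pmatrix},\qquad h,f\in\fm^2,
\]
and then does a substantial geometric computation (identifying $Z_E\cap\Th_D$ set-theoretically and scheme-theoretically as a divisor in $C^{[2]}$) to show that the cubic part of $zh(z)+yf(y,z)$ is nonzero. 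Only then can one degenerate, via Lemma \ref{rat-sing-2-3-deg-lem}, to explicit models with rational singularities. So the missing idea is exactly this second-order analysis: without it the linear data are too degenerate to determine the singularity type.
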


\begin{proof}
We keep the notation above. 
Note that the condition $H^1(\OO(D+p))\neq 0$ is equivalent to $H^0(K(-D-p))\neq 0$, i.e., $H^0(K(-D-p))=H^0(K(-D))$ (since the latter space is $1$-dimensional),
which implies the inclusion 
$$H^0(K(-D))\sub H^0(K(-p)).$$

\noindent
{\bf Step 1}. Calculation of $\mu:H^0(F)\ot H^0(E)\to H^0(K)$.

Recall that the extension $E$ is determined by an element $\varphi\in H^0(\OO(p)|_D)$, not proportional to $1\in H^0(\OO(p)|_D)$, and we identify $H^0(E)$ 
with the set of pairs $(s,t)\in H^0(K)\oplus H^0(K(-p))$ such that $s|_D=\varphi\cdot t|_D$ in $H^0(K|_D)$.

Since $C$ is non-hyperelliptic, 
by Lemma \ref{two-maps-varphi-lem}, $H^0(K|_D)$ is spanned by the images of the maps
\begin{equation}\label{two-images-eq}
H^0(K(-p))\to H^0(K(-p)|_D)\rTo{1} H^0(K|_D) \ \text{ and  } H^0(K(-p))\to H^0(K(-p)|_D)\rTo{\varphi} H^0(K|_D)
\end{equation}
But both maps vanish on $H^0(K(-p-D))=H^0(K(-D))$, so have at most $1$-dimensional images. It follows that the kernel of both maps is $H^0(K(-D))$, both images are $1$-dimensional, and they span $H^0(K|_D)$.

Now we can choose bases of $H^0(E)$ and $H^0(K)$ in a special way. First, let $s_1$ by a generator of $H^0(K(-D))$, which we also view as an element $(s_1,0)$ of $H^0(E)$.
Next, let $s_2$ be an element of $H^0(K(-p))\setminus H^0(K(-D))$. Let us pick an element $s_3\in H^0(K)$ such that
$$s_3|_D=\varphi s_2|_D.$$
Then as we have shown above, $\varphi s_2|_D\neq 0$, so $s_3\not\in H^0(K(-p))$ (otherwise, we would get nonzero intersection of the images of the maps \eqref{two-images-eq}).
Hence, $s_1,s_2,s_3$ is a basis of $H^0(K)$. Also,
$$(0,s_1), \ (s_3,s_2), \ (s_1,0)$$
is a basis of $H^0(E)$.
Thus, if we use the basis $1,g$ of $H^0(F)$, we get the following matrix for $\mu:H^0(F)\ot H^0(E)\to H^0(K)$:
$$\left(\begin{matrix} s_1 & s_2 & 0 \\ 0 & s_3 & s_1 \end{matrix}\right).$$
Thus, the quadratic parts of the generators of the ideal of $Z$ are $s_1^2$, $s_1s_2$ and $s_1s_3$.


\noindent
{\bf Step 2}. Let us consider the embedding $\iota_p:C\hra J: q\mapsto \OO(q-p)$, and let $I_{\iota_p(C)}\sub \OO_J$ be the corresponding ideal. 
By Lemma \ref{union-deg-locus-lem} applied to the exact sequence \eqref{FDp-extension} we get
$$\th_D\cdot I_{\iota_p(C)}\sub I_Z,$$
where $\th_D$ is the local equation of the theta divisor $\Th_D=\{D'-D \ | D'\in C^{[2]}\}$.


\noindent
{\bf Step 3}. It is known that $s_1$ (the generator of $H^0(K(-D))$)
 is proportional to the first derivative of $\th_D$ at zero, and that $(s_1,s_2)^\perp$ is the tangent space to $\iota_p(C)\sub J$ at zero.
Thus, we can choose a formal system of coordinates $(x,y,z)$ at $0\in J$ with $x=\th_D$ such that the ideal of $C$ has generators $(x+h,y)$ with $h\in \fm^2$.
We claim that possibly changing $z$, we can assume the ideal $I_Z$ to be 
\begin{equation}\label{g3-IZ-eq}
(x(x+h),xy,(x+h)z+yf),
\end{equation}
where $f\in \fm^2$. 

Indeed, by Step 2 we know that $x(x+h)$ and $xy$ are in $I_Z$, whereas
from Step 1 we know that $I_Z$ is generated by $f_1=x^2+\ldots$, $f_2=xy+\ldots$ and $f_3=xz+\ldots$. 

Thus, 
$$x^2=a_1f_1+a_2f_2+a_3f_3.$$
Looking at quadratic terms we see that $a_1$ is invertible. Hence, we can replace $f_1$ by $x(x+h)$ in the set of generators of $I_Z$. Similarly,
we can replace $f_2$ by $xy$.

Also, from the inclusion $\OO(-p)\sub F$ we see that $I_Z$ is contained in the ideal $(x+h,y)$ of $\iota_p(C)$.
Hence, we can write $f_3=(x+h)g_1+yg_2$. Since the quadratic part of $f_3$ is $xy$, the linear part of $g_1$ is $z$ and $g_2\in \fm^2$.
Thus, $f_3=(x+h)(z+g)+yf$, where $f,g\in \fm^2$. Since $x(x+h)\in I_Z$, we can add a multiple of $x$ to $g$. Also, we can add multiples of $y$ to $g$ by changing
$f$. Hence, we can assume that $g$ depends only on $z$, and then change $z$ to $z+g(z)$.

\noindent
{\bf Step 4}. Next, we would like to compute the scheme-theoretic intersection $Z\cap \Th_D$ near $0$. From the form of the ideal we got in Step 4, 
we know that this a Cartier divisor in $\Th_D$.
We begin by compute this locus set-theoretically. 

Note that $\Th_D$ consists of $\xi$ of the form $\OO(D'-D)$, where $D'$ is an effective divisor of degree $2$.
Thus, we have an identification $\Th_D=C^{[2]}$, and $Z\cap \Th_D$ is a divisor in $C^{[2]}$.
We have a family of bundles over $C^{[2]}$
\begin{equation}\label{ED'-ex-seq}
0\to K(-D')\to E_{D'}\to K(D-p-D')\to 0
\end{equation}
with the fixed nonzero extension class $e\in H^1(\OO(p-D))$ which vanishes in $H^1(\OO(p))$,
and we are interested in the locus where $H^1(E_{D'})\neq 0$. 

The long exact sequence of cohomology shows that $H^1(E_{D'})\neq 0$ if either
(a) $H^1(K(D-p-D'))\neq 0$ or (b) $H^1(K(D-p-D'))=0$ and the cup product with $e$ gives a map $H^0(K(D-p-D'))\to H^1(K(-D'))$ of non-maximal rank.

Condition (a) is equivalent to $K(D-p-D')\simeq K(-q)$ for some $q\in C$, i.e., $D'+p\sim D+q$.
Let $D+q_1+q_2$ be the unique canonical divisor containing $D$. Assume $D'\neq D$, so $q\neq p$.
If $q\neq q_1,q_2$ then $h^0(D+q)=1$, so we should have $D'+p=D+q$. This is possible only if $D=p+q_0$ and $D'=q+q_0$ for some $q_0\in C$.
If $q=q_1$ then $D'$ is uniquely determined by $\OO(D')\simeq K(-q_2-p)$.
We can ignore the cases giving a finite number of possibilities for $D'$, since $Z\cap\Th_D$ is a divisor in $\Th_D$. Thus, condition (a) gives a component only in the case
$D=p+q_0$ consisting of $D'$ of the form $q+q_0$.

On the other hand, since $e$ corresponds to the linear functional on $H^0(K(D-p))$ vanishing on $H^0(K(-p))$, condition (b) states that $H^0(K(D-p-D'))$ is $1$-dimensional
and we have an inclusion $H^0(K(D-p-D'))\sub H^0(K(-p))$ of subspaces in $H^0(K(D-p))$, or $\OO(D')$ is the hyperelliptic system. The latter case gives an isolated point of $Z$, so
we can ignore it. 
If the supports of $D'$ and $D$ are disjoint then we have
$$H^0(K(D-p-D'))\cap H^0(K(-p))=H^0(K(-p-D')),$$
so this space should be nonzero. This implies that $K(-D')\sim p+q$ for some $q\in C$.
Conversely, if this is the case and $h^0(\OO(D+q))=1$ then condition (b) is satisfied (with no extra assumption on the supports).

Next, assume that $D=p_1+p_2$, and $D'=p_1+q$. Then condition (b) states that $H^1(K(p_2-p-q))=0$ and $H^0(K(p_2-p-q))\sub H^0(K(-p))$.
In other words, this holds if and only if $p_2\neq p$, $p_2\neq q$.

It follows that set-theoretically $Z\cap \Th_D$ is the union of three copies of the curve embedded into $\Th_D=C^{[2]}$:
$$C_1=p_1+C, \ \ C_2=p_2+C, \ \ C':=\{D'\sim K(-p-q) \ | \ q\in C\}$$
(where the first two subsets are the same if $p_1=p_2$). Note that $D$ is a point in $C'$ since $H^0(K(-D-p))\neq 0$.
The corresponding subsets in $J$ are
$$\iota_{p_2}(C)=C-p_2, \ \ \iota_{p_1}(C)=C-p_2, \ \ K-p-D-C=-\iota_{p_0}(C),$$
where $D+p+p_0\sim K$.


\noindent
{\bf Step 5}. We claim that the equation of $Z\cap \Th_D$ in $\Th_D=C^{[2]}$
is given by the determinant of the morphism of vector bundles on $C^{[2]}$,
$$H^0(K(D-p))\rTo{(1,e)} H^0(K(D-p)|_{D'})\oplus k,$$
where the functional $e$ corresponds to our class in $H^1(\OO(p-D))$ (so it vanishes on $H^0(K(-p))\sub H^0(K(D-p))$).
Equivalently, it is given by the determinant of the composed morphism
$$H^0(K(-p))\rTo{\a} K(-p)|_{D'}\rTo{\b_1} K(p_1-p)|_{D'}\rTo{\b_2} K(D-p)|_{D'}.$$
Thus, the equation of $Z\cap \Th_D$ in $\Th_D$ is given by $\det(\a)\det(\b_1)\det(\b_2)$.

To prove this, note that the exact sequence \eqref{ED'-ex-seq} over $C\times C^{[2]}$ gives rise to an exact triangle on $C^{[2]}$
$$R\pi_*(K(-D'))\to R\pi_*(E_{D'})\to R\pi_*(K(D-p-D'))\to \ldots$$
where $\pi:C\times C^{[2]}\to C^{[2]}$ is the projection.
Next, we observe that $L_0=R^0\pi_*(K(-D'))$ and $L_1=R^1\pi_*(K(-D'))$ are line bundles on $C^{[2]}$. Since the map $H^0(K(-D'))\to H^0(E_{D'})$ is always an
embedding, the dual map is a surjection, so considering the dual map to composition $L_0\to R\pi_*(K(-D'))\to R\pi_*(E_{D'})$, we see that locally we have a splitting
$$R\pi_*(E_{D'})=L_0\oplus P^\bullet,$$
where $P^\bullet$ fits into an exact triangle
$$P^\bullet\to R\pi_*(K(D-p-D'))\rTo{\de} L_2\to\ldots$$
where $\de$ is enduced by our class in $H^1(\OO(p-D))$.
Note that the equation $Z\cap \Th_D$ is obtained as the determinant of the differential $P^0\to P^1$ in the complex $P^\bullet$ (we can take any representative).

Now to prove our claim it remains to show that the map $\de$ is represented by the chain map
$$[\pi_*(K(D-p))\to \pi_*(K(D-p)|_{D'})]\rTo{e} \OO_{C^{[2]}}.$$
Note the following square in the derived category is commutative
\begin{diagram}
K(D-p-D')&\rTo{}&K(D-p)\\
\dTo{e} &&\dTo{e}\\
K(-D')[1]&\rTo{}&K[1]
\end{diagram}
Passing to $R\pi_*$ we see that $\de$ corresponds to the composition
$$R\pi_*(K(D-p-D'))\to R\pi_*(K(D-p))\rTo{e} R\pi_*K[1]\to H^1(K)\ot \OO_{C^{[2]}}.$$
Since the first arrow in this composition is represented by the chain map 
$$[\pi_*(K(D-p))\to \pi_*(K(D-p)|_{D'})]\to \pi_*(K(D-p)),$$
our claim follows.

\noindent
{\bf Step 6}. We claim that $\det(\a)$ generates the ideal of the component $C'\sub C^{[2]}$ consisting of $D'$ such that $H^0(K(-p-D'))\neq 0$,
while $\det(\b_i)$ generates the ideal of $C_i$, for $i=1,2$. 

For $C_i$, this follows from a simple local calculation. Let $t$ be an \'etale local coordinate on $C$ at $p_i$ (so that $t=0$ corresponds to $p_i$). We can model \'etale 
locally $C^{[2]}$ as the space of polynomials $t^2+at+b$, so that $C_i$ is given by $b=0$. Now our assertion follows from the fact that the determinant of the multiplication by $t$
on $k[t]/(t^2+at+b)$ is equal to $b$ (in some basis).

For $C'$, we first observe that it is enough to make a calculation at a generic point, so we can consider $D'$ disjoint from $p$. Then we can replace $\a$ with the map
$H^0(K)\to K|_{D'}\oplus K|_p$ and then with the map
$$H^0(K(-D'))\to K(-D')|_p.$$
We have an involution $\tau$ on $C^{[2]}$ defined by $D'+\tau(D')\sim K$, and locally we can choose an isomorphism $\OO(D'+\tau(D'))\simeq K$.
Thus, the above map can be replaced by the map $H^0(\OO)\to H^0(\OO(\tau(D')))\to \OO(\tau(D'))|_p$, or equivalently by the map
$$\OO|_p\to \OO(\tau(D'))|_p.$$
As before, the corresponding equation gives precisely the (reduced) locus of $\tau(D')$ such that $\tau(D')=p+q$ for some $q\in C$. 


\noindent
{\bf Step 7}. 
Recall that $I_Z$ is given by \eqref{g3-IZ-eq}, i.e., it is the degeneracy locus of the matrix
$$\phi=\left(\begin{matrix} x+h & y & 0 \\ -f & z & x \end{matrix}\right).$$ 
Using the fact that $xy\in I_Z$, we can assume $h=h(x,z)$ and $f=f(y,z)$.
Furthermore, making a change of variables of the form $x\mapsto x(1+c_1z+c_2z^2+\ldots)$, we can assume that $h=h(z)\in \fm^2$.

The scheme $Z\cap \Th_D$ is given in $\Th_D$ by the equation $zh(z)+yf(y,z)$. By Steps 5 and 6, the cubic part of the latter equation is nonzero.
Hence, the assertion follows from Lemma \ref{rat-sing-2-3-deg-lem} below.
\end{proof}

\begin{lemma}\label{rat-sing-2-3-deg-lem}
Consider the scheme $X(\phi)$ for 
$$\phi=\left(\begin{matrix} x+h(z) & y & 0 \\ f(y,z) & z & x \end{matrix}\right),$$ 
where $h,f\in\fm^2$ are such that either $h(z)=cz^2+\ldots$ with $c\neq 0$, or the quadratic part of $f$ is nonzero.
Then $X(\phi)$ is isomorphic to $B_{Z(\phi)}\A^3$ and has rational singularities. 
\end{lemma}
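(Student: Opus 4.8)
The plan is to make the scheme $X(\phi)$ completely explicit, deduce from Proposition \ref{deg-blow-up-prop} that it is the asserted blow-up, and then verify rationality of its singularities chart by chart on $\P^{2}$, reducing each chart to the normal form handled by Lemma \ref{toric-sing-lem}(i).

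First I would compute the $2\times 2$ minors of $\phi$, namely $x(x+h)$, $xy$ and $(x+h)z-yf$, so that $I_{Z(\phi)}=\big(x(x+h),\ xy,\ (x+h)z-yf\big)$. Set-theoretically $Z(\phi)$ decomposes into the part in $\{x=0\}$, cut out there by the single equation $h(z)\,z-y\,f(y,z)=0$, and the curve $\{x=-h(z),\ y=0\}$. Since $h(z)z$ involves only $z$ while $yf(y,z)$ is divisible by $y$, the difference $h(z)z-yf(y,z)$ can vanish identically only if $h=0$ and $f=0$; by hypothesis $h$ has a nonzero $z^{2}$-coefficient or $f$ has nonzero quadratic part, so this does not occur and $Z(\phi)$ is a curve, of codimension $2$. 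The locus where $\phi$ has rank $\le 0$ is the origin, of codimension $3$. Hence Proposition \ref{deg-blow-up-prop}(iii) (with $m=2$) applies and gives that $X(\phi)$ is irreducible of dimension $3$, a local complete intersection, and $X(\phi)=B_{Z(\phi)}\A^{3}$ near the origin.

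For the rationality of the singularities I would regard $X(\phi)\subset\A^{3}\times\P^{2}$ as the common zero locus of the two bilinear forms
$$(x+h(z))\a+y\b=0,\qquad f(y,z)\a+z\b+x\ga=0,$$
where $(\a:\b:\ga)$ are coordinates on $\P^{2}$, and look in the three standard affine charts. In the chart $\b=1$ one solves the first equation for $y$ and substitutes into the second; the resulting equation in $(x,z,\a,\ga)$ has linear term $z$ and all remaining terms in $(\a)\cdot\fm^{2}$, so this chart is smooth along the exceptional fibre. In the chart $\ga=1$ one solves the second equation for $x$, substitutes into the first, and rewrites the result as $\b\,(y-z\a)=f(y,z)\a^{2}-h(z)\a$; the substitution $y\mapsto y-z\a$ followed by absorbing the terms divisible by $\b$ into a new coordinate $\b'$ brings this to the form $y\,\b'=H_{0}(z,\a)$ with $H_{0}=\a\big(\a f(z\a,z)-h(z)\big)$. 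Symmetrically, in the chart $\a=1$ one solves the first equation for $x$, substitutes into the second, rewrites it as $\b\,(z-y\ga)=h(z)\ga-f(y,z)$, performs $z\mapsto z-y\ga$ and the analogous absorption, and reaches $z\,\b'=G_{0}(y,\ga)$ with $G_{0}=h(y\ga)\ga-f(y,y\ga)$. A short leading-order computation shows that the lowest-degree part of $G_{0}$ equals $y^{2}\big(h_{2}\ga^{3}-q(1,\ga)\big)$, where $h_{2}$ is the $z^{2}$-coefficient of $h$ and $q$ the quadratic part of $f$, and analogously for $H_{0}$; under either alternative of the hypothesis this polynomial in $\ga$ is nonzero, so $G_{0},H_{0}\neq 0$ and Lemma \ref{toric-sing-lem}(i) applies to each chart, yielding rational singularities.

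I expect the bookkeeping in the last step to be the only delicate point: checking that $y\mapsto y-z\a$, $z\mapsto z-y\ga$ and the absorptions into $\b'$ are genuine formal coordinate changes, and carrying out the leading-order analysis of $G_{0}$ and $H_{0}$ precisely enough to see that the two cases in the hypothesis are exactly what is needed for nonvanishing. Once the dimension count for $Z(\phi)$ is in place, the identification with the blow-up is immediate from Proposition \ref{deg-blow-up-prop}, and the chart $\b=1$ is routinely smooth.
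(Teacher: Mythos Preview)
Your argument is correct and in fact cleaner than the paper's.  The paper does not attack the general matrix $\phi$ directly: it first uses one-parameter rescalings and Elkik's theorem to degenerate $\phi$ to one of several explicit ``model'' matrices (distinguishing the cases $h_2\neq 0$; $h\in\fm^3$ with the $z^2$-, $yz$-, or $y^2$-coefficient of $f_2$ nonzero), and only then writes out the equations of $X(\phi_0)$ in the three affine charts, invoking Lemma~\ref{rat-sing-quadratic-lem}(i) on each.  You bypass both the degeneration step and the case split: you work with the general $\phi$, and in the charts $\a=1$ and $\ga=1$ you perform a single formal change of coordinates (the substitution $z'=z-y\ga$, resp.\ $y'=y-z\a$, followed by absorbing the terms divisible by $z'$, resp.\ $y'$, into $\b$) to reach the normal form $x_1x_2=F(x_3,x_4)$ handled by Lemma~\ref{toric-sing-lem}(i).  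Your leading-term computation $G_0=y^2\bigl(h_2\ga^3-q(1,\ga)\bigr)+O(y^3)$ is exactly what is needed to see that the two alternatives in the hypothesis are precisely the conditions making $G_0,H_0\neq 0$; this is the unified replacement for the paper's case analysis.

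Two small slips in wording, neither affecting the mathematics: in the chart $\ga=1$ you write ``absorbing the terms divisible by $\b$'', but what you are absorbing are the terms of the right-hand side divisible by the new $y$ (these get subtracted from $\b$ to form $\b'$).  And in the chart $\b=1$, the phrase ``all remaining terms in $(\a)\cdot\fm^{2}$'' overlooks the $x\ga$ term; the point you need is only that the linear part of the equation is $z$, which you have.  Finally, it would be worth noting explicitly that since the charts $\a=1$ and $\ga=1$ already cover $\P^2\setminus\{[0{:}1{:}0]\}$, the only task in the chart $\b=1$ is smoothness near $\a=\ga=0$, which you verify.
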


\begin{proof}
First of all, it is easy to check that this $\phi$ satisfies the assumptions of Proposition \ref{deg-blow-up-prop}(iii), hence we get $X(\phi)\simeq B_{Z(\phi)}\A^3$.
Now we consider several cases.

\noindent
{\bf Case 1}. $h\in \fm^3$ and $f=f_2(y,z)+\ldots$, where $f_2(y,z)$ is a nonzero quadratic form. We can include $\phi$ into the $1$-parametric family
\begin{equation}\label{phi-t-case1-eq}
\phi_t=
\left(\begin{matrix} x+t^{-2}h(tz) & y & 0 \\ t^{-2}f(ty,tz) & z & x \end{matrix}\right).
\end{equation}
Note that for $t\neq 0$, $\phi_t$ is equivalent to $\phi(t^2 x,ty,tz)$.
Thus, it suffices to check the assertion for $\psi=\phi_0$, i.e., we reduce to the case $h=0$ and $f=f_2\neq 0$.

{\bf Subcase 1a}. Coefficient of $z^2$ in $f_2$ is nonzero. Then we consider the family
$$\psi_t=
\left(\begin{matrix} x & y & 0 \\ t^{-2}f_2(t^2y,tz) & z & x \end{matrix}\right).$$ 
It is easy to see that for $t\neq 0$, $\psi_t$ is equivalent to $\psi(t^3x,t^2y,tz)$. Hence, it is enough to check the assertion for
$\psi_0$, i.e., we reduce to the case $f_2=c z^2$, where $c\neq 0$. Furthermore, we can replace $\psi_0$ by an equivalent matrix with $f_2=z^2$.

The equations of $X(\psi_0)$ in $\A^3\times\P^2$ are
\begin{align*}
&x\a+y\b=0\\
&z^2\a+z\b+x\ga=0.
\end{align*}
Over $\a=1$, this gives
$$z^2+z\b=y\b\ga,$$
which has rational singularities by Lemma \ref{rat-sing-quadratic-lem}(i).
Over $\b=1$, we get the smooth hypersurface $z^2\a+z+x\ga=0$. Finally, over $\ga=1$,
we get
$$y\b=(z^2\a+z\b)\a,$$
which has rational singularities by Lemma \ref{rat-sing-quadratic-lem}(i).

{\bf Subcase 1b}. $f_2=ayz+by^2$, where $a\neq 0$. Then we consider the family
$$\psi_t=
\left(\begin{matrix} x & y & 0 \\ t^{-3}f_2(t^2y,tz) & z & x \end{matrix}\right).$$ 
Then for $t\neq 0$, $\psi_t$ is equivalent to $\psi(t^4x,t^2y,tz)$. Hence, we reduce to the case of $\psi_0$, i.e., $f_2=yz$.
The equations of $X(\psi_0)$ in $\A^3\times\P^2$ are
\begin{align*}
&x\a+y\b=0\\
&yz\a+z\b+x\ga=0.
\end{align*}
Over $\b=1$, we get the hypersurface $(y\a+1)z=-x\ga$, which fits into the isotrivial family $(y\a+t)z=-x\ga$ with the normal toric limit $yz\a=-x\ga$.
Over $\a=1$ and over $\ga=1$, we get the hypersurfaces
$$yz+z\b=y\b\ga,$$
$$y\b=(y\a+\b)z\a.$$
Both have rational singularities by Lemma \ref{rat-sing-quadratic-lem}(i).

{\bf Subcase 1c}. $f_2=cy^2$, where $c\neq 0$. Then we can replace by an equivalent matrix with $f_2=y^2$.
The affine piece of $X(\psi_0)$ over $\b=1$ is the smooth hypersurface $y^2\a+z+x\ga=0$.
Over $\a=1$ and over $\ga=1$ we get 
$$y^2+z\b=y\b\ga,$$
$$y\b=(y^2\a+z\b)\a,$$
Both have rational singularities by Lemma \ref{rat-sing-quadratic-lem}(i).

\noindent
{\bf Case 2}. $h(z)=c\cdot z^2+\ldots$. Then we can use the family
$$\phi_t=
\left(\begin{matrix} x+t^{-2}h(tz) & y & 0 \\ f(t^3y,tz) & z & x \end{matrix}\right),$$
such that for $t\neq 0$, $\phi_t$ is equivalent to $\phi(t^2x,t^3y,tz)$, to reduce to the case
$h=cz^2$ and $f=0$. Futhermore, we can assume $c=1$.

It is easy to see that the affine piece of $X(\phi_0)$ over $\b=1$ is smooth. Over $\a=1$ and over $\ga=1$
we get hypersurfaces
$$\b z=\ga(z^2+\b\ga),$$
$$\b y=\a(\b z-z^2).$$
Both have rational singularities by Lemma \ref{rat-sing-quadratic-lem}(i).
\end{proof}

\begin{theorem}\label{ConjB-g3-thm}
Conjecture B holds for a non-hyperelliptic curve of genus $3$.
\end{theorem}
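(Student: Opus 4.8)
The plan is to combine the irreducibility already established with a short case analysis that reduces the rationality of singularities of $F_E$ to the lemmas proved above. By Proposition \ref{g3-FE-prop}, $F_E$ is irreducible of dimension $3$ for every $E\in\MM_5$, so only the statement about rational singularities remains. Since tensoring with $\xi_0\in J$ identifies $F_E$ with $F_{\xi_0\ot E}$, it suffices to verify rationality of singularities of $F_E$ at the points lying over $\OO\in J$, for all $E\in\MM_5$.

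The next step is to split into cases according to $h:=h^1(E)$, which by Lemma \ref{g3-EDp-lem}(i) is at most $2$ since $C$ is non-hyperelliptic. If $h=0$, then the source $H^0(E^\vee K)$ of the multiplication map $\mu$ of Proposition \ref{FE-smooth-prop} vanishes (Serre duality), so $\mu$ is injective and $F_E$ is smooth over $\OO$ by that proposition. If $h=1$, then by Proposition \ref{Conj-Ci-g3-prop} the locus $Z_E$ is equidimensional of dimension $1=g-2$; as $\OO\in Z_E$ this gives $\dim_\OO Z_E=g-2$. Writing $A=\ker(\a\colon E\to K)$ for a generator $\a$ of $\Hom(E,K)$, stability of $E$ forces $\deg A\le 2$, and since $C$ is non-hyperelliptic every line bundle of degree $\le 2$ has $h^0\le 1$; hence $h^0(A)\le 1$ and Lemma \ref{rat-sing-easy-case-lem} shows $F_E$ has rational singularities over $\OO$.

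The remaining case $h=2$ is the substantive one. Here Lemma \ref{g3-EDp-lem}(i) shows $E\simeq E_{D,p}$ for a point $p\in C$ and an effective divisor $D$ of degree $2$, and this is exactly the family analyzed above: if $H^1(\OO(D+p))=0$ one applies Lemma \ref{g3-genericE-lemma}, and if $H^1(\OO(D+p))\neq 0$ one applies Lemma \ref{g3-EDp-lemma}. In either case $F_E$ has rational singularities over $\OO$, which exhausts all $E$ and finishes the proof.

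The main obstacle lies entirely inside the case $h=2$: there is no apparent resolution of $F_{E_{D,p}}$, so one must instead write the local model $X(\phi)$ explicitly from $\mu$, degenerate it through Lemma \ref{flatness-lem} and Elkik's theorem to a matrix whose degeneracy-locus blow up decomposes into normal toric hypersurfaces, and — when $H^1(\OO(D+p))\neq 0$ — control the cubic term of the equation cutting $Z\cap\Th_D$ inside $\Th_D$, which requires the geometric identification of the three curve components of $Z\cap\Th_D$ together with their local equations. At the level of this theorem, however, the only remaining work is to check that the above trichotomy in $h^1(E)$ is exhaustive, which it is by Lemma \ref{g3-EDp-lem}(i), and then to assemble the cited lemmas as above.
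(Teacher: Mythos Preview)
Your proof is correct and follows essentially the same approach as the paper: split according to $h^1(E)$, use Proposition~\ref{Conj-Ci-g3-prop} and Lemma~\ref{rat-sing-easy-case-lem} for $h^1(E)=1$, and invoke Lemma~\ref{g3-EDp-lem}(i) together with Lemmas~\ref{g3-genericE-lemma} and~\ref{g3-EDp-lemma} for $h^1(E)=2$. You add the (trivial) $h^1(E)=0$ case and make explicit that the non-hyperelliptic hypothesis is what guarantees $h^0(A)\le 1$, but otherwise the arguments coincide.
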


\begin{proof}
Let $E$ be a stable rank $2$ bundle of degree $5$ with $h^1(E)\neq 0$. We want to check that $F_E$ has rational singularities over $\OO\in J$.
Assume first that $h^1(E)=1$. 
Since the kernel $A$ of the nonzero map $E\to K$ has degree $\le 2$, we have $h^0(A)\le 1$, and 
our assertion follows from Lemma \ref{rat-sing-easy-case-lem}. Note that the assumption on dimension of $Z_E$ is satisfied by Proposition \ref{Conj-Ci-g3-prop}.

If $h^1(E)>1$ then by Lemma \ref{g3-EDp-lem}, $E$ has form $E_{D,p}$, so the assertion follows from Lemmas \ref{g3-genericE-lemma} and \ref{g3-EDp-lemma}. 
\end{proof}


\begin{thebibliography}{99}
\bibitem{AA} A.~Aizenbud, N.~Avni, {\it Representation Growth and Rational Singularities of the Moduli Space of Local Systems}, Invent. Math. 204 (2016), no. 1, 245--316.
\bibitem{ACGH} E.~Arbarello, M.~Cornalba, P.~A.~Griffiths, J.~Harris, {\it Geometry of Algebraic Curves}, vol.\ I, Springer-Verlag, New York, 1985.
\bibitem{Bertram} A.~Bertram, {\it Moduli of rank 2 vector bundles, theta-divisors, and the geometry of curves in projective space},
J. Differential Geom. 35 (1992), no. 2, 429--469.
\bibitem{BK} A.~Braverman, D.~Kazhdan, {\it Automorphic functions on moduli spaces of bundles on curves over local fields: a survey}, arXiv: 2112.08139. 
\bibitem{BKP} A.~Braverman, D.~Kazhdan, A.~Polishchuk, {\it Hecke operators for curves over non-archimedean local fields and related finite rings}, arXiv:2305.09595.
\bibitem{BKP-Schwartz} A.~Braverman, D.~Kazhdan, A.~Polishchuk,
   {\it Schwartz $\kappa$-densities for the moduli stack of rank $2$ bundles on
 a curve over a local field}, arXiv:2401.01037.
\bibitem{D} O.~Debarre, {\it On a conjecture of Kazhdan and Polishchuk}, arXiv:2411.02069.
\bibitem{DG} V.~Drinfeld, D.~Gaitsgory, {\it Compact generation of the category of D-modules on the stack of  G -bundles on a curve},
Camb. J. Math. 3 (2015), no. 1-2, 19--125.
\bibitem{EFK1} P.~Etingof, E.~Frenkel, D.~Kazhdan, {\it An analytic version of the Langlands correspondence for complex curves}, arXiv:1908.09677. 
\bibitem{EFK2} P.~Etingof, E.~Frenkel, D.~Kazhdan, {\it Hecke operators and analytic Langlands correspondence for curves over local fields}, arXiv:2103.01509.
\bibitem{EFK3} P.~Etingof, E.~Frenkel, D.~Kazhdan, {\it A general framework for the analytic Langlands correspondence}, arXiv:2311.03743.
\bibitem{Elkik} R.~Elkik, {\it Singularit\'es rationnelles et d\'eformations}, Invent. Math. 47 (1978), no. 2, 139--147.
\bibitem{FL} W.~Fulton, R.~Lazarsfeld, {\it On the connectedness of degeneracy loci and special divisors}, Acta Math.146(1981), no.3-4, 271--283.
\bibitem{GK} D.~Gaitsgory, D.~Kazhdan, {\it Algebraic groups over a $2$-dimensional local field: some further constructions}, in 
{\it Studies in Lie theory}, 97--130, Birkh\"auser Boston, MA, 2006
\bibitem{SH} C.~Huneke, I.~Swanson, {\it Integral Closure of Ideals, Rings, and Modules}, Cambridge University Press, Cambridge, 2006
\bibitem{Kempf} G.~Kempf, {\it On the geometry of a theorem of Riemann}, Ann. of Math. (2) 98 (1973), 178--185.
\bibitem{Laumon} G.~Laumon, {\it Fibr\'es vectoriels sp\'eciaux}, Bull. Soc. Math. France 119 (1991), no. 1, 97--119.
\bibitem{Mercat} V.~Mercat, {\it Clifford's theorem and higher rank vector bundles}, Internat. J. Math. 13 (2002), no. 7, 785--796.
\bibitem{Newstead} P.~E.~Newstead, {\it Stable bundles of rank $2$ and odd degree over a curve of genus $2$}, Topology 7 (1968), 205--215.
\bibitem{P-BN}  A.~Polishchuk,        {\it $A_{\infty}$-structures, Brill-Noether loci and the 
          Fourier-Mukai transform}, Compositio Math. 140 (2004), 459--481.
\bibitem{PR} A.~Polishchuk, E.~Rains, {\it Hyperelliptic limits of quadrics through canonical curves and ribbons}, arXiv:1905.12113.
\bibitem{Poonen} B.~Poonen, {\it Rational points on varieties}, AMS, Providence RI, 2017.
\bibitem{Raynaud} M.~Raynaud, {\it Sections des fibr\'es vectoriels sur une courbe}, Bull. Soc. Math. France 110 (1982), no. 1, 103--125.
\bibitem{Thaddeus} M.~Thaddeus, {\it Stable pairs, linear systems and the Verlinde formula}, Invent. Math. 117 (1994), no. 2, 317--353.
\end{thebibliography}
\end{document}